\numberwithin{equation}{section}
\newtheorem{theorem}{Theorem}[section]
\newtheorem{lemma}[theorem]{Lemma}
\newtheorem{proposition}[theorem]{Proposition}
\newtheorem{corollary}[theorem]{Corollary}
\theoremstyle{definition}
\newtheorem{definition}[theorem]{Definition}
\newtheorem{example}[theorem]{Example}
\theoremstyle{remark}
\newtheorem{remark}[theorem]{\bf{Remark}}
\newcommand{\Hom}{{\rm{Hom}}}
\newcommand{\R}{{\mathbb{R}}}
\newcommand{\CA}{{\mathcal{A}}}
\newcommand{\C}{{\mathbb{C}}}
\newcommand{\Z}{{\mathbb{Z}}}
\renewcommand{\>}{{\rangle}}
\newcommand{\CC}{{\mathcal{C}}}
\newcommand{\CH}{{\mathcal{H}}}
\newcommand{\CE}{{\mathcal{E}}}
\newcommand{\CD}{{\mathcal{D}}}
\newcommand{\CB}{{\mathcal{B}}}
\newcommand{\CJ}{{\mathcal{J}}}
\newcommand{\cK}{{\mathcal{K}}}
\newcommand{\cH}{{\mathcal{H}}}
\newcommand{\cI}{{\mathcal{I}}}
\newcommand{\CI}{{\mathcal{I}}}
\newcommand{\CR}{{\mathcal{R}}}
\newcommand{\cR}{{\mathcal{R}}}
\newcommand{\CL}{{\mathcal{L}}}
\newcommand{\cL}{{\mathcal{L}}}
\newcommand{\CM}{{\mathcal{M}}}
\newcommand{\wedgeq}{{\wedge\kern-5pt\cdot}}
\newcommand{\cg}{\mathfrak{g}}
\newcommand{\cX}{{\mathfrak{X}}}
\newcommand{\p}{{}_{\scriptscriptstyle{\hat{+}}}}
\newcommand{\np}{{}_{\scriptscriptstyle{\hat{[+]}}}}
\newcommand{\m}{{}_{\scriptscriptstyle{\hat{-}}}}
\newcommand{\nm}{{}_{\scriptscriptstyle{\hat{[-]}}}}
\newcommand{\tens}{\otimes}
\newcommand{\ot}{\otimes}
\newcommand{\tuno}[1]{{#1}{}{}^{\scriptscriptstyle{<1>}}}
\newcommand{\tdue}[1]{{#1}{}{}^{\scriptscriptstyle{<2>}}}
\newcommand{\id}{{\rm id}}
\newcommand{\z}{{}_{\scriptscriptstyle{(0)}}}
\renewcommand{\o}{{}_{\scriptscriptstyle{(1)}}}
\renewcommand{\t}{{}_{\scriptscriptstyle{(2)}}}
\renewcommand{\th}{{}_{\scriptscriptstyle{(3)}}}
\newcommand{\fo}{{}_{\scriptscriptstyle{(4)}}}
\newcommand{\fiv}{{}_{\scriptscriptstyle{(5)}}}
\newcommand{\si}{{}_{\scriptscriptstyle{(6)}}}
\newcommand{\sev}{{}_{\scriptscriptstyle{(7)}}}
\newcommand{\rz}{{}_{\scriptscriptstyle{[0]}}}
\newcommand{\ro}
{{}_{\scriptscriptstyle{[1]}}}
\newcommand{\rt}{{}_{\scriptscriptstyle{[2]}}}
\newcommand{\extd}{{\rm d}}
\newcommand{\del}{{\partial}}
\newcommand{\eps}{\epsilon}
\newcommand{\ev}{{\rm ev}}
\newcommand{\coev}{{\rm coev}}
\newcommand{\und}{\underline}
\newcommand{\Rbullet}{\ \und\bullet\ }
\renewcommand{\imath}{\mathrm{i}}
\newcommand{\la}{{\triangleright}}
\newcommand{\ra}{{\triangleleft}}
\begin{document}

\title{*-Hopf algebroids}
\keywords{Bialgebroid, Hopf algebroid, $*$-algebra, bar monoidal category, noncommutative differential geometry}

\subjclass[2000]{16T05, 20G42, 46L87, 58B32}
\thanks{XH and SM were supported by a Leverhulme Trust project grant RPG-2024-177}

\author{Edwin Beggs${}^1$,  Xiao Han${}^2$ and Shahn Majid${}^2$}
\address{ ${}^1$ Department of Mathematics, Bay Campus, Swansea University, SA1 8EN, UK. ${}^2$ Queen Mary University of London,
School of Mathematical Sciences, Mile End Rd, London E1 4NS, UK}

\email{e.j.beggs@swansea.ac.uk, x.h.han@qmul.ac.uk,    s.majid@qmul.ac.uk}


\begin{abstract}
We introduce a theory of $*$-structures for bialgebroids and Hopf algebroids over a $*$-algebra,  defined in such a way that the relevant category of (co)modules is a bar category. We show that if $H$ is a Hopf $*$-algebra then the action Hopf algebroid $A\# H$ associated to a braided-commutative algebra in the category of $H$-crossed modules is a full $*$-Hopf algebroid and the Ehresmann-Schauenburg Hopf algebroid $\CL(P,H)$ associated to a Hopf-Galois extension or quantum group principal bundle  $P$ with fibre $H$ forms a $*$-Hopf algebroid pair, when the relevant (co)action  respects $*$. We also show that Ghobadi's bialgebroid  associated to a $*$-differential structure $(\Omega^1,\extd)$ on $A$ forms a $*$-bialgebroid pair and its quotient in the pivotal case a $*$-Hopf algebroid pair when the pivotal structure is compatible with $*$. We show that when $\Omega^1$ is simultaneously free on both sides, Ghobadi's Hopf algebroid is isomorphic to $\CL(A\#H,H)$ for a smash product by a certain Hopf algebra $H$. \end{abstract}
\maketitle

\section{Introduction}

Bialgebroids and Hopf algebroids over a possible noncommutative base $A$ are the analogue of `quantum groupoids' and have been of interest since the early days of quantum groups. The axioms of a bialgebroid in this context are somewhat settled while for a Hopf algebroid there as several notions of different strength. The strongest, that of a `full Hopf algebroid' \cite{BS}, has an antipode $S$ that is antimultiplicative and its natural formulation involves a pair $(\cL,\cR)$ of resp left and right bialgebroids with the same algebra and connected via $S$. Probably the weakest is the notion\cite{schau1} that certain `Galois maps' are invertible and does not involve an antipode at all. Recently in \cite{HM22} we introduced an intermediate `weak antipode' $S$ and showed that this applies to the Ehresmann-Schauenburg Hopf algebroid $\cL(P,H)$ associated to a Hopf-Galois extension or quantum principal bundle when $H$ is coquasitriangular, and that otherwise this is a Hopf algebroid in the Galois sense. In \cite{HM23} we revisited Lu's construction\cite{Lu} of an action Hopf algebroids $A\# H$ associated to a braided-commutative algebra $A$ in the category of $H$-crossed (or Drinfeld-Yetter) modules and showed that it is a Hopf algebroid in the Galois sense. In the present work we also fill in, in passing, that it is a full Hopf algebroid, see Proposition~\ref{propact}. This result is largely in \cite{Lu} with the same antipode map, but put into the modern context.

Also recently, in \cite{AryGho1}, Ghobadi introduced a third general construction for bialgebroids with potentially noncommutative base, this time associated to a differential algebra $(A,\Omega^1,\extd)$ in the sense of an $A$-bimodule $\Omega^1$ equipped with a derivation $\extd:A\to \Omega^1$ and the assumption that the induced map $A\tens A\to \Omega^1$ given by $a\extd b$ is surjective. Given this data, there is a monoidal category ${}_A\CE_A$ in \cite{BM} of $A$-bimodules equipped with left bimodule connections, and a full submonoidal category which we will denote ${}_A\cI\CE_A$,  where the associated generalised braiding induced by the connection is invertible. There are forgetful functors to $A$-bimodules and by the Eilenberg-Moore reconstruction theorem one expects\cite{BM} bialgebroids whose left modules can be identified with these categories, which are the Ghobadi bialgebroids here denoted $\CL(\Omega^1)$ and $\cI\CL(\Omega^1)$ respectively. The former is generated by $A$,  `right vector fields'  $\cX^R:=\Hom_A(\Omega^1,A)$ and $\cX^R\tens\Omega^1$, while the latter has additional generators $\Omega^1\tens_A\cX^L$ for left vector fields $\cX^L:={}_A\Hom(\Omega^1,A)$. It is assumed that $\Omega^1$ is finitely generated projective (f.g.p.) and that there are chosen covevaluations. Finally, a bimodule is said to be {\em pivotal} if there is a single bimodule $\cX$ providing both a left and right dual, i.e. equipped with both left and right evaluation and coevaluation maps simultaneously (the evaluation maps then render $\cX$ as hom spaces on the two sides). When $\Omega^1$ is pivotal, Ghobadi introduced a quotient Hopf algebroid in the Galois sense which we will denote $\cI\cI\CL(\Omega^1)$ and the modules of which can be identified with a further submonoidal category  ${}_A\cI\cI\CE_A$, where the generalised braiding is {\em binvertible} in a certain sense. As a warm up, Ghobadi in \cite{AryGho1} also introduced bialgebroids $\CB(\Omega^1), \CI\CB(\Omega^1)$ and a full Hopf algebroid which we will denote $\CI\CI\CB(\Omega^1)$, reconstructed from categories ${}_A\CM_A^{\Omega^1}$, ${}_A\CI\CM_A^{\Omega^1}$, ${}_A\CI\CI\CM_A^{\Omega^1}$ with objects pairs $(M,\sigma_M)$, where $M$ is an $A$-bimodule and $\sigma_M:M\tens_A\Omega^1\to \Omega^1\tens_A M$ is a bimodule map, invertible and biinvertible in the latter two cases. Here, $\Omega^1$ is a fixed object in the $A$-bimodule category (not necessarily part of a differential structure) and assumed pivotal for the Hopf algebroid case. Ghobadi's constructions are less well-known than the action and Ehresmann-Schauenburg ones but are important for noncommutative geometry. They have a further quotient  which is rather involved and  generates flat connections of the above types, and hence in the classical limit lands on the algebra of differential operators on a manifold.

What is still missing, however, but rather critical for applications in mathematical physics and (potentially) in quantum computing, is the correct notion of a $*$-structure. This is needed to express and maintain `unitarity' of various constructions. In the present paper, we address this fundamental problem and come up with a notion of a full $*$-Hopf algebroid and a weaker notion of a $*$-Hopf algebroid pair or $*$-bialgebroid pair sufficient to cover the three general constructions above. Here $A$ is a $*$-algebra over $\C$ and for the Ghobadi constructions $(\Omega^1,\extd)$ is a $*$-differential calculus in the standard sense in noncommutative geometry\cite{Con,BM}.  We show in Theorem~\ref{thmact} that if $H$ is a Hopf $*$-algebra in the usual sense (see \cite{BM}, for example) with a compatible action then the action bialgebroid is a full $*$-Hopf algebroid. We also show,  Proposition~\ref{propEScom},  that the  Ehresmann-Schauenburg bialgebroid $\cL(P,H)$ is a full $*$-Hopf algebroid if $H$ is commutative. An example here is provided by the $\theta$-deformed sphere in the sense of Landi and Suijlekom\cite{LS04}, in Section~\ref{secthetasph}.  Section~3 contains further general results, notably Proposition~\ref{propfullbar} that the category of comodules of a full $*$-Hopf algebroid is a bar category in the sense \cite{BM09,BM}, generalising the case of usual Hopf $*$-algebras. We also show, see Section~\ref{sec:cross}, that the recently introduced crossed or Drinfeld-Yetter modules of a full Hopf algebroid\cite{XH23} are a bar category. Section~\ref{secpair} then introduces the notions of a $*$-bialgebroid pair and $*$-Hopf algebroid pair. This requires a pair $(\cL,\cR)$ where $\CR$ is a right-bialgebroid over the {\em same} base as $\cL$, which have the additional properties of being $*$-related and reflexive, with result in Theorem~\ref{thmpairbar} that the left modules of $\CL$ and the right modules of $\CR$ can be identified and are bar categories. This is now sufficient, see Proposition~\ref{propES} to cover $\cL(P,H)$ in general where $H$ could now be noncommutative. Corollary~\ref{coractfull} also provide a  $*$-Hopf algebroid pair and in the process a full $*$-Hopf algebroid different from that obtained from the previous full Hopf $*$-algebroid structure (the two are not related in the same way, the pair notion being more general).

Section~\ref{sec:dif} then covers the $*$-structures for Ghobadi bialgebroids and Hopf algebroids. Here, Sections~\ref{sec:precalc} and Section~\ref{sec:preconn} provide the background respectively of noncommutative differentials and vector fields, and bimodule connections. Section~\ref{secBpiv} shows when $\Omega^1$ is a  $*$-bimodule that ${}_A\CM^I_A$ is a bar category of $\CI\CB(\Omega^1)$-modules. In fact, $\CI\CB(\Omega^1)$ forms a $*$-bialgebroid pair though we do not digress to give details. Building from this, Proposition~\ref{Prop. full star Hopf algebroid H omega} shows when $\Omega^1$ is pivotal that $\CI\CI\CB(\Omega^1)$ is a full $*$-Hopf algebroid. These results are  a  warm-up for the cases of interest in Section~\ref{secIL}, where $(\Omega^1,\extd)$ is a differential structure on $A$ and we show that $\CI\CL(\Omega^1)$ forms a $*$-bialgebroid pair, and in Section~\ref{sec:difpiv} where in the pivotal case $\CI\CI\CL(\Omega^1)$ forms a $*$-Hopf algebroid pair. Section~\ref{secK} shows how Ghobadi's $\CI\CL(\Omega^1)$ construction arises naturally from a more general construction on Proposition~\ref{propodot}. Finally, Section~\ref{sec:ex} provides examples when $\Omega^1$ is pivotal. In fact, this happens automatically when $\Omega^1$ is equipped with a generalised metric $\cg\in \Omega^1\tens_A\Omega^1$ in the sense of quantum Riemannian geometry \cite{BM}. It also happens automatically of $\Omega^1$ is biparallelisable, i.e. a free module on both sides with a joint basis. We show in Proposition~\ref{propGhoES} that in this case $\CI\CI\CL(\Omega^1)\cong \CL(P,H)$ where $P=A\#H$. This is a cocycle Hopf algebroid in the sense of \cite{HM22}, with trivial cocycle. This applies to some of the most well-studied quantum Riemannian geometries (we illustrate this for the integer line graph and the fuzzy sphere). The Ghobadi construction is, however, much more general. Notably, the case of graph calculi, which are not typically free, was studied in \cite{AryGho2}.

\section{Preliminaries}\label{sec:pre}

We consider bialgebroids with base an algebra $B$. We denote by  ${}_B\CM_B$ the monoidal category of $B$-bimodules, and set $B^e=B\tens B^{op}$. Let $b\in B$, we denote $\und{b}$ as an element of $B^{op}$. Namely, we view the underline as a map from $B$ to $B^{op}$ which is the identity map $\id_{B}$ on the vector space $B$. So we have $\und{a}\,\und{b}=\und{ba}$, for any $a, b\in B$. In some contexts, we will use $A$ for the base algebra and where both $A,B$ are used in the same construction, they will be related by $B=A^{op}$ (this is needed for full Hopf algebroids where one half has the opposite base).

\subsection{Left bialgebroids} Begin with an algebra $\mathcal{L}$ with algebra map $s_{L} :B\to \mathcal{L}$, antialgebra map $t_{L} :B\to \mathcal{L}$, and suppose that
all $s_{L}(a)$ commute with all $t_{L}(b)$ for $a,b\in B$.
Then $\mathcal{L}$ is a left $B^e$ module by
$(a\tens b).X=s_{L}(a)\, t_{L}(b)\, X$ for $X\in \mathcal{L}$. This also makes $\mathcal{L}$
into an $B$-bimodule by
\[ a.X.b=s_{L}(a)\, t_{L}(b)\, X.\]
 The above data can be characterised as making $\CL$ an algebra in the category ${}_{B^e}\CM_{B ^e}$, of which we use only the left $B^e$ action. The algebra map $\eta_{L}(a\tens b)=s_{L}(a)t_{L}(b)$ is the unit morphism of this algebra. We then define the Takeuchi product (summation implicit)
 \begin{align*}
\mathcal{L} \times_B \mathcal{L} = \big\{ X\tens Y\in \mathcal{L} \tens_B \mathcal{L} \, \big|\,
X\,t_{L}(a)\tens Y = X\tens Y\, s_{L}(a)\quad \forall a\in B\big\} \  .
\end{align*}
This forms an algebra with pairwise multiplication $(X\tens Y)(U\tens V)=XU\tens YV$. A left bialgebroid is such an $\CL$ equipped with a $B$-coring $\Delta_{L}:\CL\to \CL\tens_B\CL, \varepsilon_{L}:\CL\to B$ in the category ${}_B\CM_B$, where $\Delta_{L}$ has its image in $\mathcal{L} \times_B \mathcal{L}$ and is an algebra map. Also,  $\varepsilon$ satisfies $\varepsilon_{L}(XY)=\varepsilon_{L}(X s_{L}(\varepsilon_{L}(Y)))=\varepsilon_{L}(X t_{L}(\varepsilon_{L}(Y)))$ for any $X,Y\in \CL$. In the following, we will denote the image of the coproduct of left bialgebroids by sumless Sweedler notation $\Delta_{L}(X)=X\o\ot X\t$, for all $X\in \CL$.

\begin{definition}\label{def. left Hopf and anti-Hopf algebroids}

A left $B$-bialgebroid $\cL$ is a left Hopf algebroid (\cite{schau1}, Thm and Def 3.5.) if
\[\lambda: \cL\ot_{B^{op}}\cL\to \cL\ot_{B}\cL,\quad
    \lambda(X\ot_{B^{op}} Y)=X\o \ot_{B}X\t Y\]
is invertible, where the $B^{op}$-bimodule structure is
\[ a.X.b=  t_{L}(a) X  t_{L}(b) \]
 (so $Xt_{L}(a)\ot_{B^{op}}Y=X\ot_{B^{op}}t_{L}(a)Y$, for all $X, Y\in \cL$ and $b\in B$). A left $B$-bialgebroid $\cL$ is an anti-left Hopf algebroid if
\[\mu: \cL\ot^{B}\cL\to \cL\ot_{B}\cL,\quad
    \mu(X\ot^{B} Y)=Y\o X\ot_{B}Y\t\]
is invertible, where we write $\tens^{B}$ to avoid confusion as $\tens_B$ but with respect to a different $B$-bimodule structure $B$-bimodule structure
\[a.X.b=s_{L}(a)Xs_{L}(b).\]
More precisely, the balanced tensor product $\ot^B$ is defined by $s_{L}(a)X\ot^{B}Y=X\ot^{B}Ys_{L}(a)$, for all $X, Y\in \cL$ and $a\in B$.
\end{definition}

We adopt the shorthand
\begin{equation}\label{X+-} X_{+}\ot_{B^{op}}X_{-}:=\lambda^{-1}(X\ot_{B}1),\end{equation}
\begin{equation}\label{X[+][-]} X_{[-]}\ot^{B}X_{[+]}:=\mu^{-1}(1\ot_{B}X).\end{equation}
 It is easy to see that if  $\cL$ be a left $B$-bialgebroid then $\cL^{cop}$ is a left $B^{op}$-bialgebroid with the same underlying algebra structure as $\cL$ and
\begin{equation}\label{LLcop} s^{cop}_{L}:=t_{L}:B^{op}\to \cL,\quad t^{cop}_{L}:=s_{L}:B\to \cL,\quad \Delta^{cop}_{L}(X):=X\t\ot X\o \end{equation}
for any $X\in \cL$.

\subsection{Right bialgebroids} Begin with an algebra $\mathcal{R}$ with linear algebra map $s_{R} :A\to \mathcal{R}$,
linear antialgebra map $t_{R} :A\to \mathcal{R}$, and suppose that
all $s_{R}(a)$ commute with all $t_{R}(b)$ for $a,b\in A$.
Then $\mathcal{R}$ is a right $A^e=A\tens A^{op}$ module by
$X.(a\tens b)=X\, s_{R}(a)\, t_{R}(b)$ for $X\in \mathcal{R}$. This makes $\mathcal{R}$
into an $A$-bimodule by
\[ a.X.b=X\, t_{R}(a)\, s_{R}(b)\, \]
as part of $\CR$ an algebra in the category ${}_{A^e}\CM_{A^e}$. The unit of the algebra is $\eta_{R}(a\tens b)=s_{R}(a)t_{R}(b)$.  We define the Takeuchi crossed product (summation implicit)
 \begin{align*}
\mathcal{R} \times_A \mathcal{R} = \big\{ X\tens Y\in \mathcal{R} \tens_A \mathcal{R}  \, \big|\,
s_{R}(a)\, X\tens Y = X\tens t_{R}(a)\, Y\quad \forall a\in A\big\} \ .
\end{align*}
which is again an algebra and we define $\Delta_{R},\varepsilon_{R}$ similarly as before. We will denote the image of the coproduct of right bialgebroids by sumless Sweedler notation $\Delta_{R}(X)=X\ro\ot X\rt$ for all $X\in \CR$ (this is a slightly different from the left case to help distinguish them).

\begin{definition}\label{def. right Hopf and anti-Hopf algebroids}
A right $A$-bialgebroid  $\cR$  is a right Hopf algebroid if the canonical map
\[\hat{\lambda}: \cR\ot_{A^{op}}\cR\to \cR\ot_{A}\cR,\quad
    \hat{\lambda}(X\ot_{A^{op}} Y)=XY\ro\ot_{A} Y\rt\]
is invertible, where the $A^{op}$-bimodule structure is
\[a.X.b=t_{R}(a)Xt_{R}(b)\]
(so $Xt_{R}(a)\ot_{A^{op}}Y=X\ot_{A^{op}}t_{R}(a)Y$, for all $X, Y\in \cR$ and $a\in A$). A right $A$-bialgebroid $\cR$ is an anti-right Hopf algebroid if the canonical map
\[\hat{\mu}: \cR\ot^{A}\cR\to \cR\ot_{A} \cR,\quad
    \hat{\mu}(X\ot^{A} Y)=X\ro\ot_{A} YX\rt\]
is invertible, where we write $\ot^{A}$ to avoid confusion as $\ot_{A}$ but respect to a different $A$-bimodule structure
\[a.X.b=s_{R}(a)Xs_{R}(b).\]
More precisely, the balanced tensor product $\ot^A$ is defined by $s_{R}(a)X\ot^{A}Y=X\ot^{A}Ys_{R}(a)$, for all $X, Y\in \cR$ and $a\in A$.
\end{definition}
We adopt the shorthand
\begin{align}
    \label{X+-1} X\m\ot_{A^{op}} X\p :=\hat{\lambda}^{-1}(1\ot_{A} X),
\end{align}
\begin{align}\label{X[+][-]1} X\np\ot^{A}X\nm:=\hat{\mu}^{-1}(X\ot_{A} 1).
\end{align}

Similarly to the left case,  if $\cR$ is a right $A$-bialgebroid then $\cR^{cop}$ is a left $A^{op}$-right bialgebroid with the same underlying algebra structure as $\cR$ and
 \begin{equation}s^{cop}_{R}:=t_{R}:A^{op}\to \cR,\quad t^{cop}_{R}:=s_{R}:A\to \cR,\quad \Delta^{cop}_{R}(Y):=Y\rt\ot Y\ro\end{equation}
for any $Y\in \cR$. We similarly have

\begin{proposition}\label{prop. oppo bialgebroid}
 If $\cL$ is a left $B$-bialgebroid then $\cL^{op}$ is a right $B$-bialgebroid with the opposite algebra $\cL^{op}$ and source and target maps
  \[s_{L}^{op}:=t_{L}:B\to \cL^{op},\quad t_{L}^{op}:=s_{L}:B^{op}\to \cL^{op}\]
    for any $X\in \cL$. In view of (\ref{LLcop}),  $\cL^{bop}:=\cL^{cop,op}$ is then a right $B^{op}$-bialgebroid.
\end{proposition}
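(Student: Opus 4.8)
The plan is to verify directly that the data $(\cL^{op}, s_L^{op}, t_L^{op}, \Delta_L, \varepsilon_L)$ satisfies the axioms of a right $B$-bialgebroid as laid out in Definition~\ref{def. right Hopf and anti-Hopf algebroids} and the surrounding discussion. The key observation is that the coring structure $(\Delta_L, \varepsilon_L)$ is left untouched; only the algebra structure is opposed and the roles of source and target are swapped. So first I would record that $\cL^{op}$, being the opposite algebra of $\cL$, is indeed an associative unital algebra, and that $s_L^{op} := t_L$ becomes an algebra map $B\to \cL^{op}$ precisely because $t_L: B\to\cL$ is an \emph{anti}algebra map, while $t_L^{op}:=s_L$ becomes an antialgebra map $B^{op}\to\cL^{op}$ for the dual reason; the commutation of all $s_L(a)$ with all $t_L(b)$ in $\cL$ gives the required commutation of $s_L^{op}(B)$ with $t_L^{op}(B)$ in $\cL^{op}$ (commutation is symmetric under reversing the product).

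**The bimodule structures and the Takeuchi product.** Next I would check that the $A$-bimodule (here $B$-bimodule) structure that a right bialgebroid $\cR$ carries, namely $a.X.b = X\, t_R(a)\, s_R(b)$ evaluated in $\cR=\cL^{op}$ with $s_R=s_L^{op}=t_L$, $t_R=t_L^{op}=s_L$, reproduces the original left-bialgebroid bimodule structure $a.X.b = s_L(a)\, t_L(b)\, X$ on $\cL$: indeed $X\cdot_{op} t_L(a)\cdot_{op} s_L(b) = s_L(b)\, t_L(a)\, X$ in $\cL$, which agrees after using that $s_L$ and $t_L$ images commute, with a relabelling. This ensures $\Delta_L:\cL\to\cL\tens_B\cL$ and $\varepsilon_L$ are still $B$-bimodule maps for the relevant structure. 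Then one must see that the image of $\Delta_L$, known to lie in the left Takeuchi product $\cL\times_B\cL$, lies in the right Takeuchi product $\cR\times_B\cR$ for $\cR=\cL^{op}$: the defining condition $s_R(a)X\tens Y = X\tens t_R(a)Y$ becomes $t_L(a)X\o\tens X\t = X\o\tens s_L(a)X\t$ in $\cL^{op}$, i.e. $X\o\, t_L(a)\tens X\t = X\o\tens X\t\, s_L(a)$ in $\cL$ — which is exactly the left Takeuchi condition satisfied by $\Delta_L(X)$. Finally I would verify that $\Delta_L:\cL^{op}\to\cL^{op}\tens_B\cL^{op}$ is an algebra map for the opposite product (it is, since $\Delta_L$ is an algebra map for the original product and opposing is functorial on algebra maps and their codomains $\cL\times_B\cL$), and that $\varepsilon_L$ satisfies the right-bialgebroid counit identities, which are the left ones read with source and target interchanged.

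**Second statement and the main obstacle.** For the last sentence, combining Proposition~\ref{prop. oppo bialgebroid} with \eqref{LLcop}: from $\cL$ a left $B$-bialgebroid we get $\cL^{cop}$ a left $B^{op}$-bialgebroid by \eqref{LLcop}, and then applying the first part of the present proposition to $\cL^{cop}$ yields $\cL^{bop}:=(\cL^{cop})^{op}$ a right $B^{op}$-bialgebroid, with source/target maps obtained by composing the two swaps. I would just note explicitly what $s$, $t$, and $\Delta$ become under the composite so the reader can use it later. I expect no serious obstacle here: the whole proof is bookkeeping, and the only place to be careful is keeping straight which of the several distinct $B$-bimodule structures (the "standard" one, versus the $\tens^B$ and $\tens_{B^{op}}$ ones from Definition~\ref{def. left Hopf and anti-Hopf algebroids}) is in play at each step, and making sure the Takeuchi-product condition is translated with the correct side conventions — a sign-of-the-exponent-style trap rather than a genuine difficulty.
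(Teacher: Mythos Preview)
Your proposal is correct. The paper states this proposition without proof, treating it as a routine and well-known fact; your direct verification of the right-bialgebroid axioms (source/target become algebra/anti-algebra maps under opposing, bimodule structures coincide, left and right Takeuchi conditions translate into one another, multiplicativity of $\Delta_L$ survives opposing, and the counit identities swap into the right-handed form) is exactly the natural argument one would supply, and all steps you outline go through.
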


 Moreover, if $\cL$ is a left Hopf algebroid, we have the following Lemma
\begin{lemma}
   (i)  If $\cL$ is a left $B$-Hopf algebroid, then $\cL^{cop}$ is an anti-left Hopf algebroid and $\cL^{op}$ is an anti-right Hopf algebroid with respectively,
    \[X_{[+]^{cop}}\ot X_{[-]^{cop}}=X_{+}\ot X_{-},\quad X_{\hat{[+]}^{op}}\ot X_{\hat{[-]}^{op}}=X_{+}\ot X_{-}.\]
  (ii)   If $\cL$ is an anti-left $B$-Hopf algebroid then $\cL^{cop}$ is a left Hopf algebroid and $\cL^{op}$ is a right Hopf algebroid with respectively
    \[X_{+^{cop}}\ot X_{-^{cop}}=X_{[+]}\ot X_{[-]},\quad X_{\hat{+}^{op}}\ot X_{\hat{-}^{op}}=X_{[+]}\ot X_{[-]}\]
(iii)  If $\cR$ is a right $A$-Hopf algebroid  then $\cR^{cop}$ is an anti right Hopf algebroid with
    \[X_{\hat{[+]}^{cop}}\ot X_{\hat{[-]}^{cop}}=X_{\hat{+}}\ot X_{\hat{-}}\]
    (and similarly for $\CR^{op}$).

   \noindent (iv)  If $\cR$ is an anti-right $A$-Hopf algebroid then $\cR^{cop}$ is a right Hopf algebroid with
    \[X_{\hat{+}^{cop}}\ot X_{\hat{-}^{cop}}=X_{\hat{[+]}}\ot X_{\hat{[-]}}\]
    and similarly for $\CR^{op}$.
\end{lemma}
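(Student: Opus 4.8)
The plan is to reduce every assertion to the invertibility of a single canonical map for the transformed bialgebroid, and to recognise that map, up to the flip $\tau(U\ot V)=V\ot U$ and a relabelling of balanced tensor products, as one of the four canonical maps $\lambda,\mu,\hat\lambda,\hat\mu$ of the original bialgebroid. None of the bialgebroid or coring axioms for the transformed objects need to be reproved, since those are already recorded in (\ref{LLcop}) and Proposition~\ref{prop. oppo bialgebroid} (and its obvious right-handed analogue).

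Take (i) as the template. Using $\Delta^{cop}_{L}(Y)=Y\t\ot Y\o$ and $s^{cop}_{L}=t_{L}$, the anti-left Hopf map of $\cL^{cop}$ reads $\mu^{cop}(X\ot^{B^{op}}Y)=Y\t X\ot_{B^{op}}Y\o$, while $\lambda(Y\ot_{B^{op}}X)=Y\o\ot_{B}Y\t X$; hence $\tau\circ\mu^{cop}=\lambda\circ\tau$, once one checks that $\tau$ carries the source $\cL^{cop}\ot^{B^{op}}\cL^{cop}$ onto $\cL\ot_{B^{op}}\cL$ and the target $\cL^{cop}\ot_{B^{op}}\cL^{cop}$ onto $\cL\ot_{B}\cL$. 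Thus $\mu^{cop}=\tau^{-1}\circ\lambda\circ\tau$ is invertible, and feeding $1\ot_{B^{op}}X$ through $(\mu^{cop})^{-1}$ gives $X_{[-]^{cop}}\ot X_{[+]^{cop}}=\tau^{-1}\lambda^{-1}(X\ot_{B}1)=X_{-}\ot X_{+}$, which is the stated identity. The second half of (i) is the same computation with $s^{op}_{L}=t_{L}$, $t^{op}_{L}=s_{L}$ and the opposite multiplication of $\cL^{op}$ (a right $B$-bialgebroid by Proposition~\ref{prop. oppo bialgebroid}): one finds $\hat\mu^{op}=\lambda$ outright, so $X_{\hat{[+]}^{op}}\ot X_{\hat{[-]}^{op}}=X_{+}\ot X_{-}$.

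Parts (ii)--(iv) run by the identical mechanism, starting from a different one of the four canonical maps: in (ii) one gets $\lambda^{cop}=\tau^{-1}\circ\mu\circ\tau$ and $\hat\lambda^{op}=\mu$; in (iii)/(iv) one gets, e.g., $\hat\mu^{cop}(X\ot Y)=X\rt\ot YX\ro=\tau^{-1}\hat\lambda\tau(X\ot Y)$, so $(\hat\mu^{cop})^{-1}(X\ot_{A^{op}}1)=\tau^{-1}\hat\lambda^{-1}(1\ot_{A} X)=X_{\hat{+}}\ot X_{\hat{-}}$, and the $\cR^{op}$ variants follow either by the same bookkeeping or by applying the already-proven cases to $\cR^{op}$, which is a left $A$-bialgebroid with $\cR^{op,op}=\cR$. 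In each case the translation-map formula is a one-line chase of the unit element once invertibility is established.

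The only genuine work — and the place slips occur — is checking that $\tau$, together with the indicated relabelling of source and target maps, really is an isomorphism between the relevant balanced tensor products ($\ot_{B}$, $\ot_{B^{op}}$, $\ot^{B}$ for $\cL$ versus $\cL^{cop},\cL^{op}$; the right-handed analogues for $\cR$). This is a direct unwinding of Definitions~\ref{def. left Hopf and anti-Hopf algebroids} and \ref{def. right Hopf and anti-Hopf algebroids} with (\ref{LLcop}) and Proposition~\ref{prop. oppo bialgebroid}: for example the defining relation $s_{L}(a)X\ot^{B}Y=X\ot^{B}Ys_{L}(a)$ of $\ot^{B}$ for $\cL^{cop}$ becomes, under $\tau$ and $s^{cop}_{L}=t_{L}$, the relation $Xt_{L}(a)\ot Y=X\ot t_{L}(a)Y$ defining $\ot_{B^{op}}$ for $\cL$, and similarly on the target side. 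With these identifications fixed, all eight statements and their translation-map formulas follow uniformly.
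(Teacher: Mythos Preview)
Your proposal is correct and follows essentially the same route as the paper. The paper verifies directly that the candidate element $X_{+}\ot X_{-}$ satisfies the two identities characterising $(\mu^{cop})^{-1}(1\ot X)$, namely $X_{+}\t X_{-}\ot X_{+}\o=1\ot X$ and $X\o{}_{-}X\t\ot X\o{}_{+}=1\ot X$; you package the same computation by first observing $\tau\circ\mu^{cop}=\lambda\circ\tau$ (and $\hat\mu^{op}=\lambda$) and then reading off the inverse, which is a slightly cleaner way of organising the identical bookkeeping on the balanced tensor products.
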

\begin{proof}
    We only show the first statement. Let $\cL$ be a left Hopf algebroid, then it is not hard to check the formulae is well defined over the balanced tensor product. We can also see
    \begin{align*}
        X_{[+]^{cop}}\t X_{[-]^{cop}}\ot X_{[+]^{cop}}\o=X_{+}\t X_{-}\ot X_{+}\o=1\ot X,
    \end{align*}
    and
    \begin{align*}
        X\o{}_{[-]^{cop}}X\t \ot X\o{}_{[+]^{cop}}=X\o{}_{-}X\t\ot X\o{}_{+}=1\ot X.
    \end{align*}
\end{proof}

\subsection{Full Hopf algebroids}

\begin{definition}
\label{def. full Hopf algebroid1}
    A left $B$-bialgebroid $\cL$ is a full Hopf algebroid, if there is an invertible anti-algebra map $S:\cL\to \cL$, such that
     \begin{enumerate}
       \item $S\circ t_{L}=s_{L}$,
        \item $(S^{-1}X\t)\o \ot_{B} (S^{-1}X\t)\t X\o  = S^{-1}(X) \ot_{B} 1_\cL$
        \item $S(X\o)\o X\t \ot_{B} S(X\o)\t  = 1_\cL \ot_{B} S(X)$.
    \end{enumerate}
\end{definition}

\begin{proposition}\label{def. full Hopf algebroid2}\cite{BS} Let $(\cL, \varepsilon_{L}, \Delta_{L}, s_{L}, t_{L}, m)$ be a left bialgebroid over $B$. Then $\cL$ is a full Hopf algebroid if and only if $\cR=\cL$ as an algebra forms  right bialgebroid $(\cR, \varepsilon_{R}, \Delta_{R}, s_{R}, t_{R}, m)$ over $A=B^{op}$, such that
   \begin{enumerate}
        \item $s_{L}(B)=t_{R}(A)$ and $t_{L}(B)=s_{R}(A)$ as subalgebras of $\cL=\cR$;
        \item $(\id\ot_{B}\Delta_{R})\circ\Delta_{L}=(\Delta_{L}\ot_{A}\id)\circ\Delta_{R}$ and $(\id\ot_{A}\Delta_{L})\circ\Delta_{R}=(\Delta_{R}\ot_{B}\id)\circ\Delta_{L}$;
        \item $\cL$ is a left-Hopf and anti-left Hopf algebroid.
    \end{enumerate}
    \end{proposition}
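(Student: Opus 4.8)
This proposition is the B\"ohm--Szlach\'anyi characterisation of full Hopf algebroids \cite{BS}. I would prove it as two mutually inverse constructions: the ``only if'' direction is essentially a change of viewpoint transporting the data of $\cL$ through the antipode $S$, while the ``if'' direction, where $S$ itself must be manufactured from the two Hopf-type conditions, carries the real content.

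\emph{The ``only if'' direction.} Given the invertible antipode $S$, the plan is to put a right bialgebroid structure on the same algebra $\cR:=\cL$, now over $A:=B^{op}$. Since $S$ is anti-multiplicative with $S\circ t_L=s_L$, one also has $S^{-1}\circ s_L=t_L$, so $S$ exchanges the commuting subalgebras $s_L(B)$ and $t_L(B)$; I would set $s_R:=t_L$, $t_R:=s_L$, and define $\varepsilon_R$ and $\Delta_R$ by conjugating $\varepsilon_L$ and $\Delta_L$ with $S$ (with $\Delta_R$ also flipped), which is essentially the only choice consistent with conditions (1)--(2) of the proposition. Verifying the right $A$-bialgebroid axioms for $(\cR,\varepsilon_R,\Delta_R,s_R,t_R,m)$ --- well-definedness over $\ot_A$, landing in the Takeuchi product $\cR\times_A\cR$, coassociativity, multiplicativity, counitality --- is then bookkeeping from the left bialgebroid axioms of $\cL$ and the antipode axioms of Definition~\ref{def. full Hopf algebroid1}. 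Conditions (1) and (2) hold by construction, and (3) follows because $\lambda^{-1}(X\ot_B1):=X\o\ot_{B^{op}}S(X\t)$ and $\mu^{-1}(1\ot_BX):=S^{-1}(X\t)\ot^BX\o$ are two-sided inverses of $\lambda$ and $\mu$, which is a direct rewriting of antipode axioms (2)--(3) and their $S^{-1}$-analogues.

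\emph{The ``if'' direction.} Here I would first note that, given the right bialgebroid $\cR$ with (1)--(2), the conditions on $S$ in Definition~\ref{def. full Hopf algebroid1} amount to asking that $S$ be an anti-algebra map with $S\circ t_L=s_L$ satisfying $S(X\o)X\t=s_R(\varepsilon_R(X))$ and $X\ro\,S(X\rt)=s_L(\varepsilon_L(X))$. Left-Hopf-ness supplies the translation map $X\mapsto X_+\ot_{B^{op}}X_-$ of (\ref{X+-}), and I would \emph{define}
\[ S(X):=s_R\big(\varepsilon_R(X_+)\big)\,X_- . \]
One checks it is well defined over $\ot_{B^{op}}$ (using the counit axioms of $\cR$ and $s_R=t_L$), is anti-multiplicative, satisfies $S\circ t_L=s_L$, and obeys the first antipode identity --- all from the formal translation-map identities of a left Hopf algebroid (Schauenburg's calculus \cite{schau1}) --- while the second antipode identity $X\ro\,S(X\rt)=s_L(\varepsilon_L(X))$ is then forced by the coproduct compatibility (2) linking $\Delta_L$ and $\Delta_R$. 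Finally, anti-left-Hopf-ness --- equivalently, by the Lemma above, left-Hopf-ness of $\cL^{cop}$ --- yields by the mirror formula a candidate inverse $\bar S$, and one verifies $S\circ\bar S=\bar S\circ S=\id$ using the antipode identities, whereupon $\cL$ equipped with $S$ is a full Hopf algebroid.

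\emph{Main obstacle.} I expect the crux to be exactly the two steps just flagged in the ``if'' direction: extracting the second antipode identity from the compatibility (2), and proving that the left-Hopf antipode $S$ and the anti-left-Hopf candidate $\bar S$ are mutually inverse. Conceptually these are the familiar convolution-inverse and uniqueness-of-the-antipode manipulations, but executing them means juggling the four balanced tensor products $\ot_B,\ot_{B^{op}},\ot^B,\ot_A$ and both Sweedler conventions $X\o\ot X\t$, $X\ro\ot X\rt$ at once, while keeping every source and target insertion consistent with the Takeuchi-product constraints --- which is the natural discipline for keeping the argument honest.
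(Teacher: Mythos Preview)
Your overall plan matches the paper's: it too defers the full verification to \cite{BS} and only records the constructions --- transport of the left data through $S$ for the ``only if'' direction, and the formula $S(X)=s_R(\varepsilon_R(X_+))X_-$ (with $S^{-1}(X)=t_R(\varepsilon_R(X_{[+]}))X_{[-]}$ from the anti-left Hopf side) for the ``if'' direction. Your ``if''-direction formula for $S$ is exactly the paper's.

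There is, however, a concrete slip in the ``only if'' direction. You set $t_R:=s_L$, whereas the paper takes $s_R(\underline a):=t_L(a)$ and $t_R(\underline a):=S^{-1}(t_L(a))$; these agree only when $S^2$ is the identity on $t_L(B)$. With your choice and $\varepsilon_R:=\varepsilon_L\circ S$, the right-bialgebroid counit axiom already fails: $\varepsilon_R(t_R(\underline a))=\varepsilon_L(S(s_L(a)))$, and the antipode axiom gives only $S\circ t_L=s_L$, not $S\circ s_L=t_L$, so this does not return $a$. Your observation that $S$ ``exchanges the subalgebras $s_L(B)$ and $t_L(B)$'' is true at the level of images --- which is all condition~(1) demands --- but it does not pin down $t_R$ as a map; the choice $t_R=S^{-1}\circ t_L$ is precisely the one for which $S\circ t_R=s_R$ and hence $\varepsilon_R\circ t_R=\id$.

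The same issue resurfaces in your Galois-inverse formulas. You write $X_+\ot X_-=X\o\ot S(X\t)$ and $X_{[-]}\ot X_{[+]}=S^{-1}(X\t)\ot X\o$ using the \emph{left} coproduct, but the paper uses the \emph{right} one: $X_+\ot X_-=X\ro\ot S(X\rt)$ and $X_{[-]}\ot X_{[+]}=S^{-1}(X\ro)\ot X\rt$. The point is that $\id\ot S$ does not descend from $\cL\ot_B\cL$ to $\cL\ot_{B^{op}}\cL$: the $\ot_B$-relation $t_L(b)X\ot Y=X\ot s_L(b)Y$ is sent to $t_L(b)X\ot S(Y)=X\ot S(Y)S(s_L(b))$, which is not a $\ot_{B^{op}}$-relation. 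By contrast, $\id\ot S$ applied to $\Delta_R(X)\in\cR\ot_A\cR$ lands cleanly in $\cL\ot_{B^{op}}\cL$ exactly because $S\circ t_R=s_R=t_L$ under the paper's choice of $t_R$. This is the ``four balanced tensor products'' discipline you yourself flag as the crux, and it bites already at the level of writing down the candidate inverses.
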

 \begin{proof} Details are in \cite{BS}, here we just recall the construction of the required maps.  Given a full Hopf algebroid  $(\cL, \varepsilon_{L}, \Delta_{L}, s_{L}, t_{L}, m)$ with $S$ as in definition \ref{def. full Hopf algebroid1}, we let
 \begin{align}\label{equ. left source target maps to right source and target maps}
  s_{R}(\und{a}):=t_{L}(a), \quad t_{R}(\und{a}):=S^{-1}\circ t_{L}(a),\quad \forall \und{a}\in B^{op}=A,
\end{align}
recall that we view the underline as a map from $B$ to $B^{op}$ which is the identity map  $\id_{B}$ on the vector space $B$.
\begin{align}\label{equ. left coproduct to right coproduct}
    \Delta_{R}(X):=S(S^{-1}(X)\t)\ot_{A} S(S^{-1}(X)\o)=S^{-1}(S(X)\t)\ot_{A} S^{-1}(S(X)\o);
\end{align}
\begin{align}\label{equ. left counit to right counit}
\varepsilon_{R}:=\varepsilon _{L}\circ S.
\end{align}
We can also see $\cL$ is a left Hopf algebroid and an anti-left Hopf algebroid with
\begin{align}   X_{+}\ot_{B^{op}}X_{-}=X\ro\ot_{B^{op}}S(X\rt),\quad X_{[-]}\ot^{B}X_{[+]}=S^{-1}(X\ro)\ot^{B}X\rt.
\end{align}
Conversely, given the left and right bialgebroid as above, we define $S$ and $S^{-1}$ by
\begin{align}\label{equ. antipode in terms of left Hopf structure}
    S(X)=s_{R}(\varepsilon_{R}(X_{+}))X_{-},\quad S^{-1}(X)=t_{R}(\varepsilon_{R}(X_{[+]}))X_{[-]}.
\end{align}
\end{proof}
We also have
\begin{align}
    S^{\pm}(X)\ro\ot_{A} S^{\pm}(X)\rt&=S^{\pm}(X\t)\ot_{A} S^{\pm}(X\o),\\
    S^{\pm}(X)\o\ot_{B}S^{\pm}(X)\t&=S^{\pm}(X\rt)\ot_{B}S^{\pm}(X\ro).
\end{align}

Moreover, $\cR$ is a right Hopf algebroid and anti-right Hopf algebroid with
\begin{align}\label{equ. right Hopf and anti-right Hopf}
    X\m\ot_{A}X\p=S(X\o)\ot_{A}X\t,\quad X\np\ot^{A}X\nm=X\o\ot^{A}S^{-1}(X\t).
\end{align}

 Given this equivalence, we will use $(\cL, \cR, S)$ or $\CH$ to denote the full Hopf algebroid with the left $B$-bialgebroid $\cL$ and right $A$-bialgebroid $\cR$ built on $\cR=\cL$ as an algebra as in Proposition~\ref{def. full Hopf algebroid2}, equipped with antipode $S$ and with $A=B^{op}$.

 \subsection{Modules and comodules}

 (1) If $\CL$ is a left Hopf algebroid over $B$, the category ${}_\CL\CM$ just means left modules of $\CL$ as an algebra. However, in the bialgebroid case there
is a forgetful functor $F:{}_{\mathcal{L}}\mathcal{M} \to {}_B\mathcal{M} {}_B$ given by pullback along $\eta$ as
${}_{\mathcal{L}}\mathcal{M} \to {}_{B^e}\CM$ and the identification of the latter with ${}_B\mathcal{M} {}_B$, which means
\[ a. m.b=s_{L}(a)\, t_{L}(b)\la m.\]
We make ${}_{\mathcal{L}}\mathcal{M} $ into a monoidal category by using
$\tens_B$ with respect to this $B$-bimodule structure, and the action of $\CL$ given by the coproduct, i.e.,
\[
x\la (m\tens n) = (x\o\la m)\tens_{B} (x\t\la n)\ .
\]

The  forgetful functor $F:{}_{\mathcal{L}}\mathcal{M} \to {}_B\mathcal{M} {}_B$ now becomes a strong monoidal functor. This is mostly obvious, except for the actions of $B$ on a tensor product. For this we note that
$\Delta_{L}:\mathcal{L} \to \mathcal{L} \times_B \mathcal{L}$ is an $B$-bimodule map, and from this we deduce
\[
\Delta_{L}(s_{L}(b))=s_{L}(b)\tens 1\ ,\quad \Delta_{L}(t_{L}(b))=1\tens t_{L}(b)\ .
\]
Then we have
\begin{align*}
&b. (m\tens n) =s_{L}(b)\la(m\tens n) =s_{L}(b)\o \la m\tens s_{L}(b)\t\la n=s_{L}(b)\la m\tens n =b.m\tens n\cr
& (m\tens n).b =t_{L}(b)\la(m\tens n) =t_{L}(b)\o \la m\tens t_{L}(b)\t\la n=m\tens t_{L}(b)\la n =m\tens n.b\ .
\end{align*}

(2)  Similarly for right $\CR$-modules.  The  forgetful functor
 $F:\mathcal{M}{}_{\mathcal{R}} \to {}_A\mathcal{M} {}_A$ via the pull-back along $\eta_R$ to an right $A^e$-module resultings in
\[ a. m.b= m\ra t_{R}(a)s_{R}(b).\]

(3) Following  \cite{HS24} we define a right $\cL$-comodule of a left $B$-bialgebroid $\CL$  to be a $B^{op}$-bimodule $\Gamma$, together with a $B^{op}$-bimodule map $\delta_{L}: \Gamma\to \Gamma\times_{B}\cL\subseteq\Gamma\ot_{B}\cL$ (where the balanced tensor product $\ot_{B}$ is such that $\underline{b}\rho\ot_{B}X=\rho\ot_{B}s_{L}(b)X$ for any $\underline{b}\in B^{op}$ since $\Gamma$ can be viewed as a $B$-bimodule with $b\rho b':=\underline{b'}\rho\underline{b}$), written $\delta_{L}(\rho)=\rho\z\ot_{B}\rho\o$ ($\delta_{L}$ is a $B^{op}$-bimodule map in the sense that $\delta_{L}(\underline{b}\rho\underline{b'})=\rho\z\ot t_{L}(b)\rho t_{L}(b')$), such that
    \[(\delta_{L}\ot_{B}\id_{\cL})\circ \delta_{L}=(\id_{\Gamma}\ot_{B}\Delta_{L})\circ \delta_{L},\qquad (\id_{\Gamma}\ot_{B}\varepsilon_{L})\circ \delta_{L}=\id_{\Gamma},\]
where
\begin{equation*} \Gamma\times_{B}\cL :=\{\ \sum_i \rho_i \ot_{B} X_i\in \Gamma\otimes_{B}\cL\ |\ \sum_i \rho_i\underline{b}  \ot_{B} X_i=
\rho_i  \ot_{B} X_i s_{L}(b),\quad \forall \underline{b}\in B^{op}\ \}.
\end{equation*}

\begin{definition}\label{def. right comodule of right bialgebroid}
    Let $\cR$ be a right bialgebroid over $A$, a right $\cR$-comodule is a $A$-bimodule $\Gamma$, together with a $A$-bimodule map $\delta_{R}: \Gamma\to \Gamma\times_{A}\cR\subseteq\Gamma\ot_{A}\cR$, written $\delta_{R}(\rho)=\rho\rz\ot_{A}\rho\ro$ ($\delta_{R}$ is a $A$-bimodule map in the sense that
$\delta_{R}(a\rho a')=\rho\rz \ot_{A} s_{R}(a)\rho\ro s_{R}(a')$), such that
    \[(\delta_{R}\ot_{A}\id_{\cR})\circ \delta_{R}=(\id_{\Gamma}\ot_{A}\Delta_{R})\circ \delta_{R},\qquad (\id_{\Gamma}\ot_{A}\varepsilon_{R})\circ \delta_{R}=\id_{\Gamma},\]
where
\begin{equation*} \Gamma\times_{A}\cR :=\{\ \sum_i \rho_i \ot_{A} X_i\in \Gamma\otimes_{A}\cR\ |\ \sum_i a\rho_i  \ot_{A} X_i=
\rho_i  \ot_{A} t_{R}(a)X_i,\quad \forall a\in A\ \}.
\end{equation*}
\end{definition}

In the following, we are going to introduce a more symmetric Hopf algebroid, namely, a bialgebroid with bijective antipode that is given in \cite{BS}

 Recall that\cite{Boehm}
\begin{definition}\label{def. comodule of full Hopf algebroid}
    Let $\CH=(\cL, \cR, S)$ be a full Hopf algebroid (with $\cL$ be a left $B$-bialgebroid), a right comodule of $\CH$ is a $A$-bimodule $\Gamma$, such that it is a right $\cL$-comodule with coaction $\delta_{L}$ and a right $\cR$-comodule with coaction $\delta_{R}$ satisfying
    \begin{enumerate}
        \item The underlying $B^{op}$-bimodule structure (associated to the right $\cL$-comodule structure) is the same with the underlying $A$-bimodule structure (associated to the right $\cR$-comodule structure) of $\Gamma$.
        \item $(\id\ot_{B}\Delta_{R})\circ\delta_{L}=(\delta_{L}\ot_{A}\id)\circ\delta_{R},\quad (\id\ot_{A}\Delta_{L})\circ\delta_{R}=(\delta_{R}\ot_{B}\id)\circ\delta_{L}.$
    \end{enumerate}
\end{definition}

\section{Full $*$-Hopf algebroids}\label{sec:fullstar}

We assume that the base $A$ is a $*$-algebra and look at associated antilinear structures on left and right bialgebroids.

\begin{definition} \label{pye}
Suppose that $(\mathcal{L},A, s_{L},t_{L},\Delta_{L},\varepsilon_{L})$ is a left $A$-bialgebroid and  that
$(\mathcal{R},A, s_{R},t_{R},\Delta_{R},\varepsilon_{R})$ is a right $A$-bialgebroid. We say that these are $*$-related if
 there is an invertible antilinear anti-algebra map $\circledast:\mathcal{L}\to \mathcal{R}$
so that
\begin{align*}
&s_{R}(a^*)=s_{L}(a)^\circledast\ ,\quad t_{R}(a^*)=t_{L}(a)^\circledast \ ,\cr
& \mathrm{flip}(\circledast \tens \circledast)\Delta_{L} (X)=\Delta_{R}(X^\circledast)\ ,\quad       \varepsilon_{R}(X^\circledast)= \varepsilon_{L}(X)^*\ .
\end{align*}
(Because of the flip here,  $\circledast$ is an anti-coalgebra map.) We say $\CL,\CR$ are anti-$*$-related if $\CL^{cop}$ and $\CR$ are $*$-related.
\end{definition}

To show that this makes sense we will check:

\begin{lemma}\label{lem. star exchange the balanced tensor product}If $\circledast$ is antilinear and obeys the conditions on the respective $s,t$ maps then $\mathrm{flip}(\circledast \tens \circledast)$ sends
$\mathcal{L} \times_A \mathcal{L}$ into $\mathcal{R} \times_A \mathcal{R}$. \end{lemma}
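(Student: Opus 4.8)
The plan is to exhibit $\Phi:=\mathrm{flip}(\circledast\tens\circledast)$ concretely as the assignment $X\tens Y\mapsto Y^\circledast\tens X^\circledast$ on the unbalanced tensor products and then verify two things in turn: (i) that $\Phi$ descends to a well-defined map $\mathcal{L}\tens_A\mathcal{L}\to\mathcal{R}\tens_A\mathcal{R}$ of the $A$-balanced tensor products; (ii) that it carries the Takeuchi subspace $\mathcal{L}\times_A\mathcal{L}$ into $\mathcal{R}\times_A\mathcal{R}$. As a preliminary I will record the two $A$-bimodule structures at play: on $\mathcal{L}$ one has $a.X.b=s_L(a)t_L(b)X$, so $\mathcal{L}\tens_A\mathcal{L}$ is balanced by $t_L(a)X\tens Y=X\tens s_L(a)Y$; on $\mathcal{R}$ one has $a.X.b=X\,t_R(a)s_R(b)$, so $\mathcal{R}\tens_A\mathcal{R}$ is balanced by $X\,s_R(a)\tens Y=X\tens Y\,t_R(a)$. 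Throughout, only the antilinearity and anti-multiplicativity of $\circledast$, the identities $s_R(a^*)=s_L(a)^\circledast$ and $t_R(a^*)=t_L(a)^\circledast$, and the bijectivity of $a\mapsto a^*$ on $A$ are used; the coproduct/counit conditions in Definition~\ref{pye} play no role.

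For (i), I will apply $\Phi$ to both sides of the $\mathcal{L}$-balancing relation and use anti-multiplicativity together with the $s,t$ identities:
\begin{align*}
\Phi\big(t_L(a)X\tens Y\big)&=Y^\circledast\tens (t_L(a)X)^\circledast=Y^\circledast\tens X^\circledast\, t_R(a^*),\\
\Phi\big(X\tens s_L(a)Y\big)&=(s_L(a)Y)^\circledast\tens X^\circledast=Y^\circledast\, s_R(a^*)\tens X^\circledast.
\end{align*}
These two elements coincide in $\mathcal{R}\tens_A\mathcal{R}$, since $U\,s_R(b)\tens V=U\tens V\,t_R(b)$ is exactly the $\mathcal{R}$-balancing relation (take $U=Y^\circledast$, $V=X^\circledast$, $b=a^*$). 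Hence $\Phi$ passes to the quotient.

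For (ii), I will start from $X\tens Y\in\mathcal{L}\times_A\mathcal{L}$, i.e.\ $X\,t_L(a)\tens Y=X\tens Y\,s_L(a)$ for all $a\in A$, apply $\Phi$, and obtain by the same two ingredients that $Y^\circledast\tens t_R(a^*)X^\circledast=s_R(a^*)Y^\circledast\tens X^\circledast$ for all $a\in A$. Since $a\mapsto a^*$ is onto, this is equivalent to $s_R(a)Y^\circledast\tens X^\circledast=Y^\circledast\tens t_R(a)X^\circledast$ for all $a\in A$, which is precisely the defining condition of $\mathcal{R}\times_A\mathcal{R}$; thus $\Phi(\mathcal{L}\times_A\mathcal{L})\subseteq\mathcal{R}\times_A\mathcal{R}$.

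I do not anticipate any genuine obstacle: the computation is short and essentially forced. The only points needing care are bookkeeping of which $A$-bimodule structure underlies each balanced tensor product and the observation that $\circledast$ interchanges source and target maps — so a left $A$-action on $\mathcal{L}$ corresponds to a right $A$-action on $\mathcal{R}$ and vice versa — together with the single appeal to surjectivity of $*$ on $A$ in step (ii), which is the only place where the full $*$-algebra hypothesis (rather than merely an algebra equipped with an antilinear anti-endomorphism) is used.
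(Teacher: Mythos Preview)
Your proof is correct and takes essentially the same approach as the paper's: first verify that $\mathrm{flip}(\circledast\tens\circledast)$ is well defined on $\mathcal{L}\tens_A\mathcal{L}$ by tracking the balancing relation through anti-multiplicativity and the $s,t$ identities, then apply the same computation to the Takeuchi condition. Your write-up is in fact slightly more careful than the paper's in explicitly invoking surjectivity of $*$ on $A$ to conclude that the resulting condition holds for all $a\in A$.
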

\proof   For $X,Z\in \mathcal{L}$ we have
\begin{align*}
\mathrm{flip}(\circledast \tens \circledast)(X.a\tens Z) &= \mathrm{flip}(\circledast \tens \circledast)(t_{L}(a)\,X\tens Z) = Z^\circledast  \tens  X^\circledast   \, t_{L}(a)^\circledast \cr
&= Z^\circledast  \tens  X^\circledast   \, t_{R}(a^*) = Z^\circledast  \tens  a^*.X^\circledast \ , \cr
\mathrm{flip}(\circledast \tens \circledast)(X\tens a.Z) &= \mathrm{flip}(\circledast \tens \circledast)(X\tens s_{L}(a)\,Z) = Z^\circledast \, s_{L}(a)^\circledast  \tens  X^\circledast   \cr
&= Z^\circledast \, s_{R}(a^*) \tens  X^\circledast    = Z^\circledast .a^* \tens  X^\circledast
\end{align*}
so we get a well defined map on $\mathcal{L} \tens_A \mathcal{L}$. Now we take the left
Takeuchi crossed product condition (summation implicit) and apply $\mathrm{flip}(\circledast \tens \circledast)$,
\begin{align*}
\mathrm{flip}(\circledast \tens \circledast)(X\,t_{L}(a)\tens Z) &= Z^\circledast \tens t_{L}(a)^\circledast\, X^\circledast
= Z^\circledast \tens t_{L}(a^*)\, X^\circledast\ ,\cr
\mathrm{flip}(\circledast \tens \circledast)(X\tens Z\,s_{L}(a)) &= s_{L}(a)^\circledast\, Z^\circledast \tens X^\circledast
= s_{L}(a^*)\, Z^\circledast \tens X^\circledast\ ,
\end{align*}
and equality of these is just the right Takeuchi crossed product condition.
 \endproof

By the above Lemma, we can define a full $*$-Hopf algebroid:

\begin{definition}\label{def. star Hopf algebroids}
    A full Hopf algebroid $(\cL, \cR, S)$ is a full $\star$-Hopf algebroid (with $\cL$ be a left $B$-bialgebroid), if $B,\cL$ are $\star$-algebras and $\cL$ and
    \[ s_{L}(b)^{\star}=t_{R}(\underline{b^{*}}),\quad t_{L}(b)^{\star}=s_{R}(\underline{b^{*}}),\quad \und{\varepsilon_{L}(X^{\star})}=\varepsilon_{R}(X)^{*}\]
    \[X^{\star}\ro\ot X^{\star}\rt=X\o{}^{\star}\ot X\t{}^{\star}\]
    for any $X\in \cL$ and $b\in B$. \end{definition}

\begin{remark}\label{fullcase} This is a special case of $(\cL,\cR)$ are anti-$\star$-related (or  $(\CL^{cop},\CR)$ $*$-related). Indeed, by (\ref{LLcop}), if $(\CL, \CR)$ is a left-right pair as for a full Hopf algebroid in Proposition~\ref{def. full Hopf algebroid2} then $(\CL,\CR)$ anti-star related amounts to an invertible antilinear map $\circledast: \CL\to \CL$ making $\CL$ a $*$-algebra and the conditions as stated, where we simply denote $\circledast$ as $*$ in view of this.
\end{remark}

\begin{lemma}\label{lem. full star Hopf algebroid}
    Let $(\cL, \cR, S)$ be a full $\star$-Hopf algebroid, then the canonical maps obey
    \[X^{\star}{}_{+}\ot X^{\star}{}_{-}=X\np{}^{\star}\ot X\nm{}^{\star},\quad X^{\star}{}_{[+]}\ot X^{\star}{}_{[-]}=X\p{}^{\star}\ot X\m{}^{\star},\]
    for any $X\in\cL$.
\end{lemma}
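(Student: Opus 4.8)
The plan is to verify the two asserted identities by applying the defining maps $\hat\mu$ (resp.\ $\hat\lambda$) of the right bialgebroid $\CR$ to the candidate expressions and checking that one lands on the appropriate generator, using that $\circledast=\star$ intertwines the left and right coproducts, source/target maps and counits as in Definition~\ref{def. star Hopf algebroids}. Concretely, for the first identity recall that $X_+\ot_{B^{op}}X_-=\lambda^{-1}(X\ot_B 1)$ with $\lambda(U\ot_{B^{op}}V)=U\o\ot_B U\t V$; so I must show $\lambda\big(X^\star{}_+\ot X^\star{}_-\big)$ (with $X^\star{}_+\ot X^\star{}_-$ replaced by the candidate $X\np{}^\star\ot X\nm{}^\star$) equals $X^\star\ot_B 1_\cL$. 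Here $X\np\ot^A X\nm=\hat\mu^{-1}(X\ot_A 1)$, i.e.\ $X\np\ro\ot X\np\rt X\nm=X\ot_A 1$. Applying $\star$ to the right bialgebroid relation, using that $\star$ is an antialgebra map and reverses coproducts ($X^\star\o\ot X^\star\t=X\rt{}^\star\ot X\ro{}^\star$ by inverting the stated axiom), I should get $X\np{}^\star\o\ot X\nm{}^\star\, X\np{}^\star\t = 1\ot X^\star$ up to the correct placement, which after matching against the formula $\lambda(U\ot V)=U\o\ot U\t V$ is exactly the claim. The second identity is entirely parallel, using $\mu^{-1}$ in terms of $\hat\lambda^{-1}$.

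A cleaner route, which I would actually prefer to write, is to invoke the already-established structure rather than redo everything: by Remark~\ref{fullcase}, a full $\star$-Hopf algebroid is the special case of an anti-$\star$-related pair, so $\star$ is precisely the map $\circledast$ of Definition~\ref{pye} viewed as an antilinear anti-algebra automorphism of $\cL=\cR$ that additionally satisfies $\star\circ t_L=s_R\circ(\underline{\phantom{a}}\circ *)$ etc. Then from the explicit formulas (\ref{equ. right Hopf and anti-right Hopf}), $X\m\ot X\p=S(X\o)\ot X\t$ and $X\np\ot X\nm=X\o\ot S^{-1}(X\t)$, and from the left-side formulas $X_+\ot X_-$, $X_{[-]}\ot X_{[+]}$ expressed via $S,S^{-1}$ and the coproducts (as recalled in the proof of Proposition~\ref{def. full Hopf algebroid2}), the identities reduce to the single compatibility between $\star$ and the antipode. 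So the real content is the lemma:
\[
S(X^\star)=S^{-1}(X)^\star\qquad\text{equivalently}\qquad \star\circ S=S^{-1}\circ\star.
\]
I would establish this first, by comparing $S^{-1}(X)^\star$ with $S(X^\star)$ using the characterisation of $S$ via $S(Y)=s_R(\varepsilon_R(Y_+))Y_-$ and of $S^{-1}$ via $S^{-1}(Y)=t_R(\varepsilon_R(Y_{[+]}))Y_{[-]}$ from (\ref{equ. antipode in terms of left Hopf structure}), together with the $\star$-compatibility of $s_R,t_R,\varepsilon_R$ with $s_L,t_L,\varepsilon_L$; alternatively, one checks directly that $\star\circ S\circ\star$ is an anti-algebra map satisfying the three defining conditions of $S^{-1}$ (it obviously does the $t_L\mapsto s_L$ one since $\star$ swaps $s,t$ and reverses products, and the two coproduct conditions follow because $\star$ reverses $\Delta_L$) and then use uniqueness of the antipode.

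Given $\star\circ S=S^{-1}\circ\star$, the first displayed identity follows: using the full-Hopf formula $X\np\ot^A X\nm=X\o\ot S^{-1}(X\t)$ we compute
\[
X\np{}^\star\ot X\nm{}^\star = X\o{}^\star\ot S^{-1}(X\t)^\star = X\o{}^\star\ot S(X\t{}^\star),
\]
and since $\star$ reverses the coproduct, $X\o{}^\star\ot X\t{}^\star = X^\star{}\t\ot X^\star{}\o$ up to flip, so the right side matches $S(X^\star{}\o)\ldots$ — comparing with $X_+\ot X_-$ expressed through $S$ and $\Delta_L$ gives exactly $X^\star{}_+\ot X^\star{}_-$; the balanced-tensor-product compatibility is guaranteed by Lemma~\ref{lem. star exchange the balanced tensor product} applied in this context. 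The second identity is obtained the same way from $X\m\ot X\p=S(X\o)\ot X\t$ and the expression for $X_{[+]}\ot X_{[-]}$.

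The main obstacle I anticipate is purely bookkeeping: keeping the $B$ versus $A=B^{op}$ bimodule structures and the four distinct balanced tensor products ($\ot_B$, $\ot_{B^{op}}$, $\ot^B$, $\ot_A$, $\ot_{A^{op}}$, $\ot^A$) straight, and checking at each stage that the candidate expressions are well defined over the correct balanced tensor product — which is where Lemma~\ref{lem. star exchange the balanced tensor product} does the heavy lifting. The algebraic heart, $\star\circ S=S^{-1}\circ\star$, is short once one commits to the uniqueness-of-antipode argument.
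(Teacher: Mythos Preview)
Your first paragraph is exactly the paper's approach: apply $\lambda$ to the candidate $X\np{}^\star\ot X\nm{}^\star$ and verify it lands on $X^\star\ot_B 1$, using the $\star$-compatibility of the coproducts and antimultiplicativity. Two bookkeeping slips to fix: the $\hat\mu$ identity reads $X\np\ro\ot_A X\nm\,X\np\rt=X\ot_A 1$ (your factors in the second slot are reversed), and the derived coproduct relation is $X^\star\o\ot X^\star\t=X\ro{}^\star\ot X\rt{}^\star$ with \emph{no} flip. In a full $\star$-Hopf algebroid $\star$ is anti-$*$-related to the pair, meaning $\star$ intertwines $\Delta_L$ and $\Delta_R$ \emph{without} a flip (the flip in Definition~\ref{pye} is already absorbed by passing to $\cL^{cop}$; see Remark~\ref{fullcase}). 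Once these are corrected the paper's three-line computation goes through.

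Your ``cleaner route'' has a genuine gap, and it is precisely the misconception above. You repeatedly invoke ``$\star$ reverses $\Delta_L$'' to argue that $\star\circ S\circ\star$ satisfies the conditions of Definition~\ref{def. full Hopf algebroid1}; but $\star$ does not send $\Delta_L$ to $\Delta_L^{cop}$, it sends $\Delta_L$ to $\Delta_R$. Consequently, applying $\star$ to conditions (2)--(3) of Definition~\ref{def. full Hopf algebroid1} produces statements about $\Delta_R$, not the $\Delta_L$-conditions you would need for a uniqueness argument. Your alternative suggestion, to use the formula $S(Y)=s_R(\varepsilon_R(Y_+))Y_-$, is circular: computing $S(X^\star)$ this way requires knowing $(X^\star)_+\ot(X^\star)_-$, which is the content of the lemma. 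In fact the paper runs the logic in the opposite order: it proves the present lemma directly and then \emph{derives} $S(X^\star)=S^{-1}(X)^\star$ as Corollary~\ref{coro. antipode with star}. So stick with your first paragraph; the ``cleaner route'' is not available without independent work.
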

\begin{proof}
    It is not hard to see that the formulae are well defined over the balanced tensor products. We can see on the one hand,
    \begin{align*}
        X^{\star}{}_{+}\o\ot X^{\star}{}_{+}\t X^{\star}{}_{-}=&X\np{}^{\star}{}\o\ot X\np{}^{\star}{}\t X\nm{}^{\star}=X\np{}\ro{}^{\star}\ot X\np{}\rt{}^{\star} X\nm{}^{\star}\\
        =&X\np{}\ro{}^{\star}\ot (X\nm X\np{}\rt){}^{\star}
        =X^{\star}\ot 1.
    \end{align*}
    On the other hand,
    \begin{align*}
        X^{\star}{}\o{}_{+}\ot X^{\star}{}\o{}_{-}X^{\star}{}\t=&X\ro{}^{\star}{}_{+}\ot X\ro{}^{\star}{}_{-}X\rt{}^{\star}=X\ro{}\np{}^{\star}\ot X\ro{}\nm{}^{\star}X\rt{}^{\star}\\
        =&X\ro{}\np{}^{\star}\ot (X\rt X\ro{}\nm){}^{\star}=X^{\star}\ot 1.
    \end{align*}
\end{proof}

The following corollary is analogous to a well-known fact for ordinary $\star$-Hopf algebras
\begin{corollary}\label{coro. antipode with star}
    If $(\cL, \cR, S)$ be a full $\star$-Hopf algebroid then $S\circ \star\circ S\circ \star=\id$.
\end{corollary}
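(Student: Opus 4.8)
\textbf{Proof proposal for Corollary~\ref{coro. antipode with star}.}

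The plan is to show that $T:=S\circ \star\circ S\circ \star$ is the identity by exhibiting it as an algebra map which agrees with $\id$ on the source and target subalgebras and is compatible with the coproduct, then using a standard uniqueness argument. First I would record the elementary structural facts: $S$ is an invertible antilinear\,--\,no, $\C$-linear\,--\,anti-algebra map and $\star$ is an invertible antilinear anti-algebra map on $\cL$, so $T$ is a composite of two $\C$-linear anti-algebra maps and two antilinear anti-algebra maps; hence $T$ is an invertible $\C$-linear \emph{algebra} map $\cL\to\cL$. Next I would check what $T$ does to $s_L,t_L$: using $S\circ t_L=s_L$ (axiom (1) of Definition~\ref{def. full Hopf algebroid1}), the derived identity $S\circ s_L=t_L$ (which follows since $S$ is invertible anti-algebra, hence $S(s_L(b))=S(S(t_L(b')))$\ldots one gets $S s_L = t_R$ actually --- more carefully, from the full Hopf algebroid relations $s_R(\und a)=t_L(a)$, $t_R(\und a)=S^{-1}t_L(a)$ one reads off $S\, t_L = s_L$ and $S\, s_R = t_R$, and combined with the $\star$-relations $s_L(b)^\star=t_R(\und{b^*})$, $t_L(b)^\star = s_R(\und{b^*})$), I would chase $t_L(b)\xmapsto{\star} s_R(\und{b^*})\xmapsto{S}\cdots\xmapsto{\star}\cdots\xmapsto{S} t_L(b)$ and similarly for $s_L$, to conclude $T\circ s_L=s_L$ and $T\circ t_L=t_L$. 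In particular $T$ fixes $\eta_L(B^e)$ pointwise.

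The heart of the argument is to show $T$ is a coalgebra map for $\Delta_L$, i.e. $(T\tens T)\Delta_L = \Delta_L\circ T$ and $\varepsilon_L\circ T=\varepsilon_L$. For the coproduct I would combine the three coproduct-reversal facts available in the excerpt: (a) $S$ reverses the left coproduct into the right one, $S(X)\o\tens S(X)\t = S(X\rt)\tens S(X\ro)$ and similarly for $S^{-1}$, together with the analogous statements relating $\Delta_R$ to $\Delta_L$ via $S$ from \eqref{equ. left coproduct to right coproduct}; (b) the $\star$-relation $X^\star{}\ro\tens X^\star{}\rt = X\o{}^\star\tens X\t{}^\star$ from Definition~\ref{def. star Hopf algebroids}, i.e. $\star$ intertwines $\Delta_L$ (up to no flip!) with $\Delta_R$. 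Applying these in sequence $\Delta_L\xrightarrow{\star}\Delta_R\xrightarrow{S}\Delta_L\xrightarrow{\star}\Delta_R\xrightarrow{S}\Delta_L$, being careful about which of $\Delta_L,\Delta_R$ appears at each stage and about the Sweedler leg-ordering, should yield $(T\tens T)\circ\Delta_L=\Delta_L\circ T$ with no residual flip (two flips, if any appear, cancel). For the counit, I would chain $\und{\varepsilon_L(X^\star)}=\varepsilon_R(X)^*$ from Definition~\ref{def. star Hopf algebroids} with $\varepsilon_R=\varepsilon_L\circ S$ from \eqref{equ. left counit to right counit} and the compatibility $\varepsilon_R(X^\circledast)=\varepsilon_L(X)^*$ of $\circledast=\star$; two applications give $\varepsilon_L\circ T=\varepsilon_L$.

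Once $T$ is known to be a left-bialgebroid endomorphism of $\cL$ fixing $s_L,t_L$, I would finish by the standard uniqueness/counit trick: for any $X\in\cL$, write $X = s_L(\varepsilon_L(X\o))\cdots$ — more precisely use that $\id_\cL$ is characterised among such maps, or directly that an algebra and coalgebra endomorphism fixing the source/target maps and compatible with $\varepsilon_L$ must be the identity because $X = (\varepsilon_L\tens\id)\Delta_L$-type expansions force $T(X)$ and $X$ to agree; alternatively, mimic the Hopf-algebra proof that $S\circ\ast\circ S\circ\ast=\id$ by showing $T$ and $\id$ are both convolution inverses to the same map, or that $T\ast S$ and $\id\ast S$ agree. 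I expect the main obstacle to be bookkeeping: keeping straight the four different "plus/minus" and Sweedler conventions ($X\o\tens X\t$, $X\ro\tens X\rt$, and the flips attached to $S$ versus the non-flip attached to $\star$), and in particular verifying that the two coproduct flips genuinely cancel rather than compose to something nontrivial; getting that cancellation right is what makes the statement true and is where a careful index chase is unavoidable. A secondary subtlety is justifying the final step in the noncommutative-base setting, where one must use the bialgebroid counit axioms $\varepsilon_L(XY)=\varepsilon_L(Xs_L\varepsilon_L(Y))$ rather than a naive Hopf-algebra argument; I would lean on Proposition~\ref{def. full Hopf algebroid2} to transport the claim, if needed, to a statement purely about the pair $(\cL,\cR)$ where the symmetry is manifest.
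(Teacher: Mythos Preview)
Your final step has a genuine gap. You propose to conclude $T=\id$ from the facts that $T$ is an algebra map, a $\Delta_L$-coalgebra map, fixes $s_L,t_L$, and satisfies $\varepsilon_L\circ T=\varepsilon_L$. But this does not force $T=\id$: even for ordinary Hopf algebras, a bialgebra automorphism need not be trivial, and the same is true for bialgebroids. Your proposed ``expansion'' $X=(\varepsilon_L\tens\id)\Delta_L(X)$ applied to $T(X)$ only returns $T(X)$ again, so nothing is gained. The convolution-inverse alternative you mention is closer to a viable strategy, but you do not develop it, and in the Hopf algebroid setting the relevant convolution structure and uniqueness of $S$ are considerably more delicate than for Hopf algebras (indeed, antipodes on bialgebroids are not unique in the naive sense).

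The paper's proof is entirely different and much shorter: it does not attempt to characterise $T$ abstractly, but instead computes $S(X^\star)$ directly using the explicit formula $S(X)=s_R(\varepsilon_R(X_+))X_-$ from \eqref{equ. antipode in terms of left Hopf structure} together with Lemma~\ref{lem. full star Hopf algebroid}, which gives $X^\star{}_+\otimes X^\star{}_-=X\np{}^\star\otimes X\nm{}^\star$. Chaining this with the identity $X\np\otimes X\nm=X\o\otimes S^{-1}(X\t)$ from \eqref{equ. right Hopf and anti-right Hopf} and the $\star$-relations on $s_R,t_L,\varepsilon_R,\varepsilon_L$ yields $S(X^\star)=S^{-1}(X)^\star$ in a few lines, which is equivalent to the claim. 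The moral is that the preceding lemma on the interaction of $\star$ with the canonical Galois maps is doing the real work; once you have it, no uniqueness argument is needed.
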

\begin{proof}
    By (\ref{equ. antipode in terms of left Hopf structure}), we have
    \begin{align*}
        S(X^{\star})=&s_{R}\circ\varepsilon_{R}(X^{\star}{}_{+})X^{\star}{}_{-}=s_{R}\circ\varepsilon_{R}(X\np{}^{\star})X\nm{}^{\star}=(t_{L}\circ\varepsilon_{L}(X\np)){}^{\star}X\nm{}^{\star}\\
        =&(X\nm t_{L}\circ\varepsilon_{L}(X\np))^{\star}=(S^{-1}(X\t)t_{L}\circ\varepsilon_{L}(X\o))^{\star}=(S^{-1}(s_{L}\circ\varepsilon_{L}(X\o) X\t))^{\star}\\
        =&S^{-1}(X)^{\star}.
    \end{align*}
    where the second step uses Lemma \ref{lem. full star Hopf algebroid}, the 5th step uses (\ref{equ. right Hopf and anti-right Hopf}).
\end{proof}

\subsection{Full $*$-Hopf algebroid structure of action bialgebroids}\label{sec:fullaction}

Let $H$ be a Hopf algebra with invertible antipode and $A$ a braided-commutative algebra in the right $H$-crossed module (or Drinfeld-Yetter-module) category ${}\CM^H_H$. The latter means a right action $\ra$ and right coaction $a\mapsto a\z\tens a\o$ such that
\begin{align}\label{equ. Drinfeld-Yetter equation}
    (a\triangleleft h\t)\z\tens h\o (a\triangleleft h\t)\o=a\z\ra h\o\tens a\o h\t,
\end{align}
for all $a\in B$ and $h\in H$. This is equivalent to
\[ (a\ra h)\z\tens (a\ra h)\o=a\z\ra h\t\tens (S h\o) a\o h\th.\]
Braided-commutativity of $A$ and an equivalent version of it are
\[ b\z(a\ra b\o)=ab,\quad  (a\ra S^{-1}b\o) b\z=ba\]
In this situation, there is a left Hopf algebroid $\CL=B\# H^{op}$ with algebra, $s$, $t$ and coalgebra\cite{Lu,HM23}:
\[ (a\tens h)(b\tens g)=a(b\ra h\o)\tens g h\t,\quad s_{L}(a)=a\tens 1,\quad t_{L}(a)=a\z\tens a\o,\]
\[ \varepsilon_{L}(a\tens h)=a\varepsilon(h),\quad \Delta_{L}(a\tens h)=(a\tens h\o)\tens (1\tens h\t).\]
In fact these are known to also be Hopf algebroids in a strong sense. The smash product views the right action of $H$ as a left action of $H^{op}$.

Next, suppose that $A$ is a $*$-algebra and $H$ a Hopf $*$-algebra. We recall that an action and coaction on $A$ are unitary if
\[ a^*{}\z\tens a^*{}\o=a\z{}^*\tens a\o{}^*,\quad (a\ra h)^*= a^*\ra S^{-1}(h^*)\]
respectively. These are standard notions\cite{Ma:book} (but note the different convention in \cite{BM} where the inverse was omitted). It can be shown\cite{BM09} that these relations are compatible with the crossed module structure and we say that the $A$ is unitary as an algebra in $\CM^H_H$. (In the case of $H$ finite-dimensional, this is equivalent to a certain Hopf $*$-algebra structure on the quantum double $D(H)$ as in \cite{Ma:book} and requiring that $A$ is unitary with respect to its action.)

\begin{proposition}\label{propact}
   If $B$ is a braided-commutative algebra in $\CM^H_H$ and $\CL=B\ot H^{op}$ as above, then $\cL$ is a full Hopf algebroid, where the antipode and its inverse are
    \begin{align*}
        S(a\ot h)=&a\z\ra S^{-2}(a\o) S^{-1}(h\t)\ot S^{-2}(a\t) S^{-1}(h\o),\\
        S^{-1}(a\ot h)=&a\z\ra S(h\t)\ot a\o S(h\o).
    \end{align*}
\end{proposition}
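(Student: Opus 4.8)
The plan is to verify the three defining conditions of a full Hopf algebroid from Definition~\ref{def. full Hopf algebroid1} for the given $S$, and separately to check that the stated $S^{-1}$ is a genuine two-sided inverse of $S$. First I would record the ingredients from the known left bialgebroid structure $\CL = B\# H^{op}$: the product $(a\ot h)(b\ot g) = a(b\ra h\o)\ot gh\t$, the source/target $s_L(a)=a\ot 1$, $t_L(a)=a\z\ot a\o$, the counit $\varepsilon_L(a\ot h)=a\varepsilon(h)$, and the coproduct $\Delta_L(a\ot h)=(a\ot h\o)\ot(1\ot h\t)$. I would also make repeated use of the crossed-module (Drinfeld--Yetter) identity (\ref{equ. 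Drinfeld-Yetter equation}), its equivalent form, and the two braided-commutativity relations $b\z(a\ra b\o)=ab$ and $(a\ra S^{-1}b\o)b\z=ba$, together with the unitarity relations only if needed (they are not, since this is the non-$*$ Proposition~\ref{propact}, which is what the displayed statement actually proves).

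The first step is to check that $S$ is an antialgebra map. Using $\Delta$ of $H$ and the crossed-module axiom, one computes $S\big((a\ot h)(b\ot g)\big)$ and $S(b\ot g)\,S(a\ot h)$ and matches them; the heart of the matching is moving the $H$-coaction on $a$ past the action of $h$ on $b$, which is exactly what the Drinfeld--Yetter condition and braided-commutativity are designed to control, and the $S^{-2}$'s appear precisely to undo the two antipodes introduced when an action is rewritten via the equivalent crossed-module form. Condition~(1), $S\circ t_L = s_L$, is a short direct computation: $S(t_L(a)) = S(a\z\ot a\o)$ and one uses $a\z\ra a\o = a$ (a consequence of the counit axiom for the coaction together with the obvious compatibility), collapsing the $H$-leg to $1$ and the $B$-leg to $a$. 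Conditions~(2) and~(3) are the substantive ones: I would first establish, as a lemma, the left-Hopf and anti-left-Hopf translation maps, i.e.\ compute $X_+\ot_{B^{op}} X_-$ and $X_{[-]}\ot^B X_{[+]}$ explicitly for $X=a\ot h$ — these are known from \cite{HM23} where $\CL$ is shown to be a (anti-)left Hopf algebroid — and then feed them into the formula $S(X)=s_R(\varepsilon_R(X_+))X_-$ and its $S^{-1}$ counterpart from (\ref{equ. antipode in terms of left Hopf structure}) to see that they reproduce exactly the displayed $S$ and $S^{-1}$; conditions~(2) and~(3) then follow from the general equivalence in Proposition~\ref{def. full Hopf algebroid2}. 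Alternatively one verifies (2) and (3) head-on by applying $S$ (or $S^{-1}$), $\Delta_L$, and the multiplication, again reducing everything to the crossed-module and braided-commutativity identities.

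The remaining point is $S\circ S^{-1}=S^{-1}\circ S=\mathrm{id}$. I would compute $S(S^{-1}(a\ot h))$ by substituting the formula for $S^{-1}(a\ot h)=a\z\ra S(h\t)\ot a\o S(h\o)$ into the formula for $S$; this produces an expression with several nested antipodes and coactions, which I would collapse using: coassociativity of the $H$-coproduct, the antipode axioms $h\o S(h\t)=\varepsilon(h)=S(h\o)h\t$, the coaction-coassociativity $(a\z)\z\ot(a\z)\o\ot a\o = a\z\ot (a\o)\o\ot (a\o)\t$, and one application of the Drinfeld--Yetter identity to commute a leftover action past a coaction; the $S^{-2}$'s in $S$ cancel against the $S$'s produced by $S^{-1}$. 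The other composite $S^{-1}\circ S$ is analogous.

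I expect the main obstacle to be condition~(2) (equivalently, establishing the correct form of the anti-left Hopf translation map $X_{[-]}\ot^B X_{[+]}$ and threading it through (\ref{equ. antipode in terms of left Hopf structure})): the bookkeeping of the two antipodes and the coaction legs is delicate, and getting the $S^{-2}$ placement exactly right in the $B$-slot of $S$ is where an erroneous calculation would most plausibly go wrong. A clean route around this is to observe that the Proposition is ``largely in \cite{Lu}'' with the same antipode, so one can instead just verify the two composites $S\circ S^{-1}=\mathrm{id}=S^{-1}\circ S$ and the three axioms directly as above, using Lu's computations as a cross-check rather than re-deriving the translation maps from scratch.
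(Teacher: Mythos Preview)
Your main route---direct verification of the three axioms of Definition~\ref{def. full Hopf algebroid1} together with the two composites $S\circ S^{-1}$ and $S^{-1}\circ S$---is exactly what the paper does, and the ingredients you list (crossed-module identity, braided-commutativity, antipode axioms in $H$) are the right ones. Two small corrections: first, the paper checks that $S^{-1}$ (not $S$) is anti-multiplicative, which is computationally lighter since $S^{-1}$ has no $S^{-2}$'s; second, your sketch of $S\circ t_L=s_L$ via ``$a\z\ra a\o=a$'' is not correct---that identity does not hold in general, and the actual collapse comes from the Hopf-algebra antipode identities $S^{-2}(a\t)S^{-1}(a\th)=\varepsilon$ and $S^{-2}(a\o)S^{-1}(a\fo)=\varepsilon$ on adjacent legs, followed by the counit of the coaction.

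Your alternative route through (\ref{equ. antipode in terms of left Hopf structure}) is circular as stated: the formula $S(X)=s_R(\varepsilon_R(X_+))X_-$ presupposes the right bialgebroid data $s_R,\varepsilon_R$, but in the paper those are \emph{defined} from $S$ via (\ref{equ. left source target maps to right source and target maps}) and (\ref{equ. left counit to right counit}). To make that route non-circular you would have to independently posit and verify a right $B^{op}$-bialgebroid structure on $\CL$ satisfying conditions (1)--(3) of Proposition~\ref{def. full Hopf algebroid2}, which is more work than the direct check. Stick with your primary plan.
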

\begin{proof}
    To check $S$ is invertible, we have on the one hand
    \begin{align*}
        S\circ S^{-1}(a\ot h)=&S(a\z\ra S(h\t)\ot a\o S(h\o))\\
        =&(a\z\ra S(h\t))\z\ra S^{-2}((a\z\ra S(h\t))\o)S^{-1}(a\o\t S(h\o)\t)\\
        &\ot  S^{-2}((a\z\ra S(h\t))\t)S^{-1}(a\o\o S(h\o)\o)\\
        =&a\z\ra S(h\fiv)h\si S^{-2}(a\o) S^{-1}(h\fo) h\o S^{-1}(a\fo)\\
        &\ot h\sev S^{-2}(a\t) S^{-1}(h\th)h\t S^{-1}(a\th)\\
        =&a\ot h,
    \end{align*}
    on the other hand,
    \begin{align*}
        S^{-1}\circ S(a\ot h)=&S^{-1}(a\z\ra S^{-2}(a\o) S^{-1}(h\t)\ot S^{-2}(a\t) S^{-1}(h\o))\\
        =&S^{-1}(a\z\ra S^{-2}(a\o) S^{-1}(h\t))\z\ra S(S^{-2}(a\t)\t S^{-1}(h\o)\t)\\
        &\ot S^{-1}(a\z\ra S^{-2}(a\o) S^{-1}(h\t))\o  S(S^{-2}(a\t)\o S^{-1}(h\o)\o)\\
        =&a\z\ra S^{-2}(a\th)S^{-1}(h\fo)h\o S^{-1}(a\si)\\
        &\ot h\fiv S^{-1}(a\t) a\o S^{-2}(a\fo) S^{-1}(h\th)h\t S^{-1}(a\fiv)\\
        =&a\ot h.
    \end{align*}
    To check $S$ is an anti-algebra map, it is sufficient to check $S^{-1}$ is an anti-algebra map. We have on the one hand
    \begin{align*}
     S^{-1}((a\ot h)(b\ot g))=&S^{-1}(a(b\ra h\o)\ot gh\t)\\
     =&(a(b\ra h\o))\z\ra S(g\t h\th)\ot (a(b\ra h\o))\o S(g\o h\t)\\
     =&(a\z(b\z\ra h\t))\ra (S(h\fiv)S(g\t))\ot a\o S(h\o)b\o h\th S(h\fo) S(g\o)\\
     =&(a\z(b\z\ra h\t))\ra (S(h\th)S(g\t))\ot a\o S(h\o)b\o  S(g\o)
    \end{align*}
    on the other hand,
    \begin{align*}
        S^{-1}(b\ot g)S^{-1}(a\ot h)=&(b\z\ra S(g\t)\ot b\o S(g\o))(a\z\ra S(h\t)\ot a\o S(h\o))\\
        =&(b\z\ra S(g\th))(a\z\ra S(h\t)b\o S(g\t))\ot a\o S(h\o)b\t S(g\o)\\
        =&(b\z(a\z\ra S(h\t)b\o))\ra S(g\t)\ot a\o S(h\o)b\t S(g\o)\\
        =&((a\z\ra S(h\t))b\z)\ra S(g\t)\ot a\o S(h\o)b\o S(g\o)\\
        =&(a\z(b\z\ra h\t))\ra (S(h\th)S(g\t))\ot a\o S(h\o)b\o  S(g\o).
    \end{align*}
    We can also see
    \begin{align*}
        S\circ t_{L}(a)=&S(a\z\ot a\o)=a\z\ra S^{-2}(a\o) S^{-1}(a\fo)\ot S^{-2}(a\t) S^{-1}(a\th)\\
        =&a\ot 1=s_{L}(a).
    \end{align*}
    And
    \begin{align*}
        S(a\ot& h\o)\o (1\ot h\t)\ot_{B} S(a\ot h\o)\t\\
        =&(a\z\ra S^{-2}(a\o)S^{-1}(h\th)\ot S^{-2}(a\t)S^{-1}(h\t))(1\ot h\fo)\\
        &\ot_{B} (1\ot S^{-2}(a\th)S^{-1}(h\o))\\
        =& (a\z\ra S^{-2}(a\o)S^{-1}(h\th)\ot h\fo S^{-2}(a\t) S^{-1}(h\t))\ot_{B} (1\ot S^{-2}(a\th)S^{-1}(h\o)) \\
        =& (a\z\z\ra S^{-2}(a\o)\t S^{-1}(h\th)\ot h\fo S(S^{-2}(a\o)\o)a\z\o S^{-2}(a\o)\th  S^{-1}(h\t))\\
        &\ot_{B} (1\ot S^{-2}(a\t)S^{-1}(h\o)) \\
        =& (a\z\z\ra S^{-2}(a\o)\t S^{-1}(h\th)\ot h\fo S(S^{-2}(a\o)\o)a\z\o S^{-2}(a\o)\th  S^{-1}(h\t))\\
        &\ot_{B} (1\ot S^{-2}(a\t)S^{-1}(h\o)) \\
        =& (a\z\ra S^{-2}(a\o)S^{-1}(h\t))\z\ot (a\z\ra S^{-2}(a\o)S^{-1}(h\t))\o\\
        &\ot_{B}1\ot S^{-2}(a\t)S^{-1}(h\o)\\
        =&t_{L}(a\z\ra S^{-2}(a\o)S^{-1}(h\t))\ot_{B}1\ot S^{-2}(a\t)S^{-1}(h\o)\\
        =&1\ot 1\ot_{B}a\z\ra S^{-2}(a\o)S^{-1}(h\t)\ot S^{-2}(a\t)S^{-1}(h\o)\\
        =&1\ot 1\ot_{B}S(a\ot h).
    \end{align*}
    And
    \begin{align*}
        S^{-1}(1\ot& h\t)\o\ot_{B}S^{-1}(1\ot h\t)\t (a\ot h\o)\\
        =&1\ot S(h\t)\o\ot_{B}(1\ot S(h\t)\t)(a\ot h\o)\\
        =&1\ot S(h\fo)\ot_{B} a\ra S(h\th)\ot h\o S(h\t)\\
        =&1\ot S(h\t)\ot_{B} a\ra S(h\o)\ot 1\\
        =&1\ot S(h\t)\ot_{B} s_{L}(a\ra S(h\o))\\
        =&t_{L}(a\ra S(h\o))(1\ot S(h\t))\ot_{B} 1\ot 1\\
        =&(a\ra S(h\o))\z \ot S(h\t)(a\ra S(h\o))\o\ot_{B} 1\ot 1\\
        =&a\z\ra S(h\t)\ot S(h\fo)S^{2}(h\th)a\o S(h\o)\ot_{B} 1\ot 1\\
        =&S^{-1}(a\ot h)\ot_{B} 1\ot 1.
    \end{align*}
\end{proof}

By (\ref{equ. left source target maps to right source and target maps}), (\ref{equ. left coproduct to right coproduct}) and (\ref{equ. left counit to right counit}), $B\#H^{op}$ also has a right bialgebroid structure:
\begin{align}
    s_{R}(\und{a})=t_{L}(a)=a\z\ot a\o,\quad t_{R}(\und{a})=S^{-1}\circ t_{L}(a)=a\z\ra S(a\o)\ot 1, \forall\und{a}\in B^{op},
\end{align}
\begin{align}
    \Delta_{R}(a\ot h)=S(S^{-1}(a\ot h)\t)\ot_{A} S(S^{-1}(a\ot h)\o)=1\ot h\o S^{-1}(a\o)\ot_{A} a\z\ot h\t,
\end{align}
\begin{align}
    \varepsilon_{R}(a\ot h)=\und{a\z\ra S^{-2}(a\o)S^{-1}(h)}.
\end{align}

\begin{theorem}\label{thmact}
    Let $B$ be a braided-commutative algebra in $\CM^H_H$ and $\CL=B\#H^{op}$ as above. If $B$ is a $*$-algebra and $H$ a Hopf $*$-algebra with unitary action and coaction, then $\cL$ is a  full $\star$-Hopf algebroid with
    \begin{align*}
        (b\ot h)^{\star}=b^{*}\z\ra S(b^{*}\o)h^{*}\o\ot h^{*}\t
    \end{align*}
\end{theorem}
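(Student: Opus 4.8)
The plan is to verify that the displayed map $\star$ satisfies all the axioms of Definition~\ref{def. star Hopf algebroids} for the full Hopf algebroid $\cL=B\#H^{op}$ of Proposition~\ref{propact} (which already supplies the underlying full Hopf algebroid structure and the explicit $s_R,t_R,\Delta_R,\varepsilon_R$ recorded after it). Concretely I must check: (a) $\star$ is an antilinear anti-algebra involution of the algebra $\cL$; (b) $s_L(b)^\star=t_R(\und{b^*})$ and $t_L(b)^\star=s_R(\und{b^*})$; (c) $\und{\varepsilon_L(X^\star)}=\varepsilon_R(X)^*$; (d) $X^\star{}\ro\ot X^\star{}\rt=X\o{}^\star\ot X\t{}^\star$. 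It is worth noting at the outset why the formula is not simply the naive smash-product $*$-operation $(b\ot h)\mapsto(b^*\ra h^*{}\o)\ot h^*{}\t$: the Drinfeld--Yetter unitarity $(a\ra h)^*=a^*\ra S^{-1}(h^*)$ differs from genuine $H^{op}$-module-$*$-algebra unitarity by a power of $S^2$, and the extra twist $b^*{}\z\ra S(b^*{}\o)$ is exactly what absorbs this discrepancy (the coaction in a crossed module ``implements'' $S^2$). Keeping that bookkeeping straight is the point of the proof.

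Antilinearity is immediate, since every ingredient of the formula is $\C$-linear except for the single occurrences of $b\mapsto b^*$ and $h\mapsto h^*$. For the involutivity $\star\circ\star=\id$ I would compute $\big((b\ot h)^\star\big)^\star$ directly, using $(a\ra k)^*=a^*\ra S^{-1}(k^*)$, the Hopf $*$-algebra identities $(Sk)^*=S^{-1}(k^*)$ (equivalently $(S^{-1}k)^*=S(k^*)$, $(S^{-2}k)^*=S^2(k^*)$) and $\Delta(k^*)=(k\o)^*\ot(k\t)^*$, the unitarity of the coaction $b^*{}\z\ot b^*{}\o=(b\z)^*\ot(b\o)^*$, and the crossed-module condition~(\ref{equ. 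Drinfeld-Yetter equation}); this is the step in which unitarity of \emph{both} the action and the coaction is genuinely used. For anti-multiplicativity, exactly as $S^{-1}$ was handled in the proof of Proposition~\ref{propact}, I would expand $\big((b\ot h)(c\ot g)\big)^\star$ and $(c\ot g)^\star(b\ot h)^\star$ from the product rule $(a\ot h)(c\ot g)=a(c\ra h\o)\ot gh\t$ and match them using braided-commutativity $d\z(c\ra d\o)=cd$ (and its variant $(c\ra S^{-1}d\o)d\z=dc$) together with~(\ref{equ. Drinfeld-Yetter equation}).

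For (b): $s_L(b)^\star=(b\ot1)^\star=b^*{}\z\ra S(b^*{}\o)\ot1$, which is precisely $t_R(\und{b^*})$ by the formula $t_R(\und{a})=a\z\ra S(a\o)\ot1$; and $t_L(b)^\star=(b\z\ot b\o)^\star$ reduces, after coassociativity of the coaction and unitarity $b^*{}\z\ot b^*{}\o=(b\z)^*\ot(b\o)^*$, to $b^*{}\z\ot b^*{}\o=t_L(b^*)=s_R(\und{b^*})$. For (c): one gets $\varepsilon_L\big((b\ot h)^\star\big)=b^*{}\z\ra\big(S(b^*{}\o)h^*\big)$, while $\varepsilon_R(b\ot h)^*=\big(b\z\ra S^{-2}(b\o)S^{-1}(h)\big)^*$ in $B$; applying $(a\ra k)^*=a^*\ra S^{-1}(k^*)$, $(S^{-1}k)^*=S(k^*)$, $(S^{-2}k)^*=S^2(k^*)$ and unitarity of the coaction converts the latter into the former, giving (c) after passing through the underline. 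For (d): I would expand $\Delta_R\big((b\ot h)^\star\big)$ via $\Delta_R(a\ot h)=1\ot h\o S^{-1}(a\o)\ot_A a\z\ot h\t$, expand $(\star\tens\star)\Delta_L(b\ot h)=(b\ot h\o)^\star\ot_A(1\ot h\t)^\star$ noting $(1\ot g)^\star=1\ot g^*$, and match the two over $\ot_A$ using $\Delta(h^*)=(h\o)^*\ot(h\t)^*$, coassociativity, and~(\ref{equ. Drinfeld-Yetter equation}) applied to the coaction on $b^*{}\z\ra\big(S(b^*{}\o)h^*{}\o\big)$; well-definedness over the balanced tensor products is guaranteed by Lemma~\ref{lem. star exchange the balanced tensor product} in view of Remark~\ref{fullcase}.

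The main obstacle is the bookkeeping in (a) (anti-multiplicativity and involutivity) and in (d): these are long computations with Sweedler legs interleaving the coproduct and antipode of $H$ with the action and coaction of the crossed module, and the identities~(\ref{equ. Drinfeld-Yetter equation}) and braided-commutativity must be applied at exactly the right positions — the same style of multi-line manipulation already seen in the proof of Proposition~\ref{propact}. Once anti-multiplicativity and involutivity are established, the compatibilities (b), (c), (d) with $s_L,t_L,s_R,t_R,\varepsilon,\Delta$ are comparatively short.
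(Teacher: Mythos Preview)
Your proposal is correct and follows essentially the same approach as the paper: both verify directly the axioms of Definition~\ref{def. star Hopf algebroids}, checking involutivity, anti-multiplicativity, the source/target compatibilities, the counit identity, and the $\Delta_R$--$\Delta_L$ intertwining, using precisely the same toolkit (the crossed-module condition~(\ref{equ. Drinfeld-Yetter equation}), braided commutativity, unitarity of the (co)action, and the Hopf $*$-algebra identities for $S$ and $*$). Your explanatory remark on why the naive $*$-operation fails and how the twist $b^*{}\z\ra S(b^*{}\o)$ compensates is a nice addition not made explicit in the paper.
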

\begin{proof}
To see $\star$ is involutive, we have
\begin{align*}
   (b\ot h)^{\star\star}=&(b^{*}\z\ra S(b^{*}\o)h^{*}\o\ot h^{*}\t)^{\star}\\
   =&(b^{*}\z\ra S(b^{*}\o)h^{*}\o)^{*}\z\ra S((b^{*}\z\ra S(b^{*}\o)h^{*}\o)^{*}\o)h^{*}\t{}^{*}\o\ot h^{*}\t{}^{*}\t\\
   =&(b\z\ra S^{-1}(h\o S^{-1}(b\o)))\z\ra S((b\z\ra S^{-1}(h\o S^{-1}(b\o)))\o)h\t\ot h\th\\
   =&b\z\z\ra S^{-1}(h\o S^{-1}(b\o))\t\\
   &S\Big(S\big(S^{-1}(h\o S^{-1}(b\o))\o\big)b\z\o S^{-1}(h\o S^{-1}(b\o))\th \Big)h\t\ot h\th\\
   =&b\z\ra S^{-2}(b\th)S^{-1}(h\t)h\o S^{-1}(b\fo)S(b\o)b\t S(h\th)h\fo\ot h\fiv\\
   =&b\ot h.
\end{align*}
To see $\star$ is an anti-algebra map, we have on the one hand
\begin{align*}
    ((a\ot h)(b\ot g))^{\star}=&(a(b\ra h\o)\ot gh\t)^{\star}\\
    =&(a(b\ra h\o))^{*}\z\ra S((a(b\ra h\o))^{*}\o)(gh\t)^{*}\o\ot (gh\t)^{*}\t\\
    =&(b^{*}\ra S^{-1}(h^{*}\z)a^{*})\z\ra S((b^{*}\ra S^{-1}(h^{*}\z)a^{*})\o)h^{*}\t g^{*}\o\ot h^{*}\th g^{*}\t\\
    =&(b^{*}\z\ra S^{-1}(h^{*}\t)a^{*}\z)\\
    &\ra S(a^{*}\o)h^{*}\o S(b^{*}\o)S(h^{*}\th)h^{*}\fo g^{*}\o\ot h^{*}\fiv g^{*}\t\\
    =&(b^{*}\z\ra S^{-1}(h^{*}\t)a^{*}\z)\ra S(a^{*}\o)h^{*}\o S(b^{*}\o) g^{*}\o\ot h^{*}\th g^{*}\t,
\end{align*}
on the other hand,
\begin{align*}
    (b&\ot g)^{\star}(a\ot h)^{\star}\\
    =&(b^{*}\z\ra S(b^{*}\o)g^{*}\o\ot g^{*}\t)(a^{*}\z\ra S(a^{*}\o)h^{*}\o\ot h^{*}\t)\\
    =&(b^{*}\z\ra S(b^{*}\o)g^{*}\o)(a^{*}\z\ra S(a^{*}\o)h^{*}\o g^{*}\t)\ot h^{*}\t g^{*}\th\\
    =&\big((b^{*}\z\ra S(b^{*}\o))(a^{*}\z\ra S(a^{*}\o)h^{*}\o)\big) \ra g^{*}\o\ot h^{*}\t g^{*}\t\\
    =&\big(b^{*}\z(a^{*}\z\ra S(a^{*}\o)h^{*}\o b^{*}\o)\big) \ra S(b^{*}\t) g^{*}\o\ot h^{*}\t g^{*}\t\\
    =&\big((a^{*}\z\ra S(a^{*}\o)h^{*}\o)b^{*}\z\big) \ra S(b^{*}\o) g^{*}\o\ot h^{*}\t g^{*}\t\\
    =&\big((a^{*}\z\ra S(a^{*}\o))(b^{*}\z\ra S^{-1}(h^{*}\t))\big) \ra h^{*}\o S(b^{*}\o) g^{*}\o\ot h^{*}\th g^{*}\t\\
     =&\big(a^{*}\z (b^{*}\z\ra S^{-1}(h^{*}\t)a^{*}\o)\big) \ra S(a^{*}\t) h^{*}\o S(b^{*}\o) g^{*}\o\ot h^{*}\th g^{*}\t\\
    =&(b^{*}\z\ra S^{-1}(h^{*}\t)a^{*}\z)\ra S(a^{*}\o)h^{*}\o S(b^{*}\o) g^{*}\o\ot h^{*}\th g^{*}\t.
\end{align*}
We can also check
\begin{align*}
    s_{L}(b)^{\star}=b^{*}\z\ra S(b^{*}\o)\ot 1=t_{R}(\und{b^{*}}),
\end{align*}
and
\begin{align*}
    t_{L}(b)^{\star}=b^{*}\z\ra S(b^{*}\o)b^{*}\t\ot b^{*}\th=b^{*}\z\ot b^{*}\o=s_{R}(\und{b^{*}})
\end{align*}
and
\begin{align*}
    \varepsilon_{R}(a\ot h)^{\star}=&\und{(a\z\ra S^{-2}(a\o)S^{-1}(h))^{\star}}=\und{a\z{}^{*}\ra S^{-1}((S^{-2}(a\o)S^{-1}(h))^{*})}\\
    =&\und{a\z{}^{*}\ra \big(S^{-1}\circ\ast\circ S^{-2}(a\o) S^{-1}\circ\ast\circ S^{-1}(h) \big)}\\
    =&\und{a\z{}^{*}\ra S^{-1}(a\o{})^{*} h^{*}}=\und{a\z{}^{*}\ra S(a\o{}^{*}) h^{*}}=\und{\varepsilon_{L}((a\ot h)^{\star})}.
\end{align*}
Moreover, we have on the one hand,
\begin{align*}
    (b\ot h)\o{}^{\star}\ot_{A} (b\ot h)\t{}^{\star}=&(b\ot h\o){}^{\star}\ot_{A} (1\ot h\t){}^{\star}\\
    =&b^{*}\z\ra S(b^{*}\o)h^{*}\o\ot h^{*}\t\ot_{A} 1\ot h^{*}\th,
\end{align*}
on the other hand,
\begin{align*}
    (b\ot& h)^{\star}\ro\ot_{A}(b\ot h)^{\star}\rt\\
    =&(b^{*}\z\ra S(b^{*}\o)h^{*}\o\ot h^{*}\t)\ro\ot_{A}(b^{*}\z\ra S(b^{*}\o)h^{*}\o\ot h^{*}\t)\rt\\
    =&1\ot h^{*}\t\o S^{-1}((b^{*}\z\ra S(b^{*}\o)h^{*}\o)\o)\ot_{A} (b^{*}\z\ra S(b^{*}\o)h^{*}\o)\z\ot h^{*}\t\t\\
    =&1\ot h^{*}\t\o S^{-1}\big(S(S(b^{*}\o)\o h^{*}\o\o)b^{*}\z\o S(b^{*}\o)\th h^{*}\o\th\big)\\
    &\ot_{A} b\z\z\ra S(b^{*}\o)\t h^{*}\o\t\ot h^{*}\t\t\\
    =&1\ot S(b^{*}\t)h^{*}\o\ot_{A} b^{*}\z\ra S(b^{*}\o)h^{*}\t\ot h^{*}\th\\
    =&1\ot S(b^{*}\o)h^{*}\o\ot_{A} (1\ot h^{*}\t)t_{R}(b^{*}\z)\\
    =&(1\ot S(b^{*}\o)h^{*}\o)s_{R}(b^{*}\z)\ot_{A} (1\ot h^{*}\t)\\
    =&b^{*}\z\ra S(b^{*}\th)h^{*}\o\ot b^{*}\o  S(b^{*}\t)h^{*}\t\ot_{A}1\ot h^{*}\th\\
    =&b^{*}\z\ra S(b^{*}\o)h^{*}\o\ot h^{*}\t\ot_{A}1\ot h^{*}\th.
\end{align*}
\end{proof}

\begin{example}\rm ($*$-structure for pair bialgebroid). A canonical example \cite{HM23} is what can be called the `pair Hopf algebroid' where $A=H$ itself right crossed module structure
\[   h\ra g:=(S g\o)h g\t,\quad h\z\tens h\o:=h\o\tens h\t\]
with resulting cross product $H\# H^{op}$ isomorphic to $H\tens H$ as an algebra. Here $H\tens H\to H\# H^{op}$ is $h\tens g\mapsto h g\o\tens Sg\t$. Using this isomorophism, the coproduct and other structures transfer back to $H\tens H$.  The new part is to suppose that $H$ is a Hopf $*$-algebra. Then the crossed-module structure is unitary:
\[ (h\ra g)^*=g^*\o h^* S^{-1}g^*\t=h^*\ra S^{-1}(g^*),\quad (h^*)\z\tens (h^*)\o=\Delta( h^*)=(*\tens *)\Delta h= (h\z)^*\tens (h\o)^*.\]
Hence the theory above applies.
\end{example}

\begin{example}\rm ($*$-structure for Weyl bialgebroids) If $H$ is finite-dimensional (or more generally dually paired with a Hopf algebra in the role of $H^*$, we have
\[ a\ra h= \<a\o,h\>a\t,\quad   \<a\z, b\>a\o= b\o a S b\t\]
for all $b\in H^*$. He cross product in this case is a version of the Weyl algebra of any Hopf algebra (also sometimes called the Heisenberg double). The new part is to check in the Hopf $*$-algebra case that the crossed-module structure is unitary:
\[ (a\ra h)^*=\overline{\<a\o,h\>}(a\t)^*= \<S^{-1}(a\o^*),h^*\>a\t^*= \<a\o^*,S^{-1}(h^*)\>a\t^*=a^*\ra S^{-1}(h^*)\]
using the duality pairing conventions $\<a,h^*\>=\overline{\<S^{-1}(a^*),h\>}$ in \cite{Ma:book} (which differs by $S^{-2}$ from the convention in \cite{BM}). For the coaction, applying $*$,
\begin{align*}
    a^*\z\<b^*,S^{-1}(a^*\o)\>=&S^{-1}(b^*\t)a^* b^*\o=(a\z \<b, a\o\>)^*\\
    =& (a\z)^*\<S^{-1} (b^*),(a\o)^*\>=(a\z)^*\<b^*,S^{-1} ((a\o)^*)\>
\end{align*}
for all $b$, from which we deduce that the coaction is a $*$-algebra map (this is a standard argument for the adjunction of a unitary action to a unitary coaction).
\end{example}

\subsection{Full $*$-Hopf algebroid for quantum principal bundles with classical fibre}\label{sec:fullES}

\begin{definition} \label{def:hg}
Let $H$ be a Hopf algebra, $P$ a $H$-comodule algebra with coaction $\delta_R$ and $B:= P^{coH}=\big\{b\in P ~|~ \delta_R (b) = b \ot 1_H \big\} \subseteq
P$ the coinvariant subalgebra.
The extension $B\subseteq P$ is called a \textup{$H$--Galois} if the
\textit{canonical map}
\begin{align*}
\chi := (m \ot \id) \circ (\id \ot _B \delta_R ) :
P \ot _B P \longrightarrow P \ot H ,
\quad q \ot_B p  &\mapsto q p\z \ot p\o
\end{align*}
is an isomorphism.
\end{definition}

Since the canonical map $\chi$ is left $P$-linear, its inverse is
determined by the restriction $\tau:=\chi^{-1}_{|_{1_P \ot H}}$, named \textit{translation map},
\begin{eqnarray*}
\tau =\chi^{-1}_{|_{1_P \ot H}} :  H\to P\ot _B P ~ ,
\quad h \mapsto \tau(h) = \tuno{h} \ot_B \tdue{h} \, .
\end{eqnarray*}
A Hopf-Galois extension is said to be  faithfully flat if $\tens_BP$ is faithfully flat, i.e. $P$ is flat as a left $B$-module. Then it is known, e.g. \cite[Lemma 34.4]{BW} that it obeys
\begin{align}\label{equ. translation map 1}
  \tuno{h} \ot_B \tdue{h}\z \ot \tdue{h}\o &= \,\tuno{h\o} \ot_B \tdue{h\o} \ot
h\t,\\
\label{equ. translation map 2}
~~ \tuno{h\t}  \ot_B \tdue{h\t} \ot S h\o&= \tuno{h}\z \ot_B {\tdue{h}}  \ot \tuno{h}\o,\\
\label{equ. translation map 3}
\tuno{h}\tdue{h}\z\ot \tdue{h}\o &= 1_{P} \ot h,\\
\label{equ. translation map 4}
    p\z\tuno{p\o}\ot_{B}\tdue{p\o} &= 1_{P} \ot_{B}p,\\
\label{equ. translation map 5}    \tuno{h\o}\ot_{B}\tdue{h\o}\tuno{h\t}\ot_{B}\tdue{h\t} &=\tuno{h}\ot_{B}1\ot_{B}\tdue{h}.
\end{align}
for all $h\in H$ and $p\in P$.

\begin{lemma}\label{lem. *-Hopf Galois extension}
    Let $B\subseteq P$ be a $H$-Galois extension. If $P$ is a $*$-algebra and $H$ is a Hopf $*$-algebra such that the coaction is unitary  then
    \[h^{*}\tuno{}\ot_{B}h^{*}\tdue{}=S^{-1}(h)\tdue{}{}^{*}\ot_{B}S(h)\tuno{}{}^{*},\]
    for any $h\in H$.
\end{lemma}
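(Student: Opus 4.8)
The plan is to use that the translation map $\tau\colon H\to P\ot_B P$, $h\mapsto h\tuno{}\ot_B h\tdue{}$, is by construction $\chi^{-1}(1_P\ot(-))$, where $\chi$ is the (bijective) canonical map of Definition~\ref{def:hg}. Hence to prove the asserted equality of elements of $P\ot_B P$ it suffices to apply $\chi$ to the right-hand side and check that the outcome is $1_P\ot h^{*}$, which is exactly $\chi$ applied to the left-hand side $\tau(h^{*})$.

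First I would record the preliminary remark that the antilinear map $\mathrm{flip}\circ(*\ot*)\colon P\ot_B P\to P\ot_B P$, $p\ot_B p'\mapsto (p')^{*}\ot_B p^{*}$, is well defined: the balancing over $B$ is preserved precisely because of the flip, using that $*$ is an anti-algebra map on $P$ and that $B$ is a $*$-subalgebra. In particular the right-hand side, being this map applied to $\tau(S^{-1}(h))$, is a genuine element of $P\ot_B P$. Next, for $g\in H$ I would compute, using $\chi(q\ot_B p)=qp\z\ot p\o$,
\[
\chi\big((\tdue{g})^{*}\ot_B(\tuno{g})^{*}\big)=(\tdue{g})^{*}\,\big((\tuno{g})^{*}\big)\z\ot\big((\tuno{g})^{*}\big)\o,
\]
then apply unitarity of the coaction, $(p^{*})\z\ot(p^{*})\o=(p\z)^{*}\ot(p\o)^{*}$, and antimultiplicativity of $*$, to rewrite this as $\big(\tuno{g}\z\,\tdue{g}\big)^{*}\ot\big(\tuno{g}\o\big)^{*}$. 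Now I would invoke \eqref{equ. translation map 2} in the form $\tuno{g}\z\ot_B\tdue{g}\ot\tuno{g}\o=\tuno{g\t}\ot_B\tdue{g\t}\ot Sg\o$; multiplying the first two tensor legs and using $\tuno{k}\,\tdue{k}=\varepsilon(k)1_P$ (which follows from \eqref{equ. translation map 3} and the counit axiom) shows $\tuno{g}\z\,\tdue{g}\ot\tuno{g}\o=1_P\ot Sg$, so that applying $*\ot*$ gives $\chi\big((\tdue{g})^{*}\ot_B(\tuno{g})^{*}\big)=1_P\ot (Sg)^{*}$.

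To conclude I would take $g=S^{-1}(h)$ and use the Hopf $*$-algebra antipode relations $S\circ *=*\circ S^{-1}$ and $*\circ S=S^{-1}\circ *$, both equivalent to $S\circ *\circ S\circ *=\id$, whence $(Sg)^{*}=\big(S(S^{-1}(h))\big)^{*}=h^{*}$. Thus $\chi$ sends the right-hand side to $1_P\ot h^{*}=\chi(\tau(h^{*}))$ and injectivity of $\chi$ finishes the argument. The whole computation is essentially bookkeeping; the points demanding care are the well-definedness of $\mathrm{flip}\circ(*\ot*)$ on the balanced tensor product and the consistent placement of $S$ versus $S^{-1}$ throughout (in particular one should double-check whether both legs on the right should really carry $S^{-1}(h)$), while the one structural input is \eqref{equ. translation map 2}, which is where faithful flatness of the Hopf--Galois extension enters.
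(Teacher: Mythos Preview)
Your proof is correct and follows essentially the same approach as the paper: apply $\chi$ to the right-hand side, use unitarity of the coaction and antimultiplicativity of $*$ to rewrite the result as $(\tuno{g}\z\,\tdue{g})^{*}\ot(\tuno{g}\o)^{*}$, invoke \eqref{equ. translation map 2}, collapse via $\tuno{k}\tdue{k}=\varepsilon(k)1_P$, and conclude by injectivity of $\chi$. The paper works directly with $g=S^{-1}(h)$ from the outset rather than proving the intermediate identity for general $g$ and then specialising, but this is purely cosmetic; you are also right that the occurrence of $S(h)$ in the displayed statement should be $S^{-1}(h)$, as the paper's own proof confirms.
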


\begin{proof}
    We can check that
    \begin{align*}
        \chi(S^{-1}(h)\tdue{}{}^{*}\ot_{B}S^{-1}(h)\tuno{}{}^{*})=&S^{-1}(h)\tdue{}{}^{*}S^{-1}(h)\tuno{}{}^{*}\z\ot S^{-1}(h)\tuno{}{}^{*}\o\\
        =&S^{-1}(h)\tdue{}{}^{*}S^{-1}(h)\tuno{}\z{}^{*}\ot S^{-1}(h)\tuno{}\o{}^{*}\\
        =&(S^{-1}(h)\tuno{}\z{}S^{-1}(h)\tdue{})^{*}\ot S^{-1}(h)\tuno{}\o{}^{*}\\
        =&(S^{-1}(h)\t\tuno{}S^{-1}(h)\t\tdue{})^{*}\ot S(S^{-1}(h)\o){}^{*}\\
        =&1\ot h^{*}.
    \end{align*}
\end{proof}

By \cite{HS24}, we can also write $h^{*}\tuno{}\ot_{B}h^{*}\tdue{}=h^{[2]*}\ot_{B}h^{[1]*}$, where $h^{[1]}\ot_{B}h^{[2]}$ is the image of anti-right translation map. Recall that \cite{schau4},
\begin{definition}\label{def:ec}
Let $B=P^{co H}\subseteq P$ be a faithfully flat Hopf Galois extension. The \textup{Ehresmann-Schauenburg Hopf algebroid} is
\begin{equation*}
\cL(P, H)=(P\ot P)^{co(H)} := \{p\ot  q\in P\ot  P \, \quad|\quad p\z\ot  q\z \ot  p\o q\o=p\ot  q\ot  1_H \}. \label{ec2}
\end{equation*}
 with $B$-bimodule inherited from $P$ and $B$-coring coproduct and counit
 \[
\Delta_{L}(p\otimes q)=p\z\otimes p\o\tuno{}\ot_{B}p\o\tdue{}\otimes q,\quad  \varepsilon_{L}(p\otimes q)=pq.
\]
Moreover, $\cL(P, H)$ is a $B^e$-ring with the product and unit
\[ (p\otimes q)(r\otimes u)=pr\otimes uq,\quad \eta_{L}(b\tens c)=b\tens c
\]
for all $p\otimes q$, $r\otimes u \in \cL(P, H)$ and $b\tens c\in B^e$. Here $s_{L}(b)=b\otimes 1$ and $t_{L}(b)=1\otimes b$. \end{definition}
For any $p\ot q\in \cL(P, H)$, we also have $p\z\ot S(p\o)\ot q=p\ot q\o\ot q\z$. In general, $\cL(P, H)$ is not a full Hopf algebroid, but only a left Hopf algebroid\cite{HM22}. In addition, by \cite{DLZ24}, if $H$ is commutative, $\cL(P, H)$ is  a full Hopf algebroid with antipode given by
$S(p\ot q)=q\ot p$ for any $p\ot q\in \cL(P,H)$.

\begin{proposition}\label{propEScom}
    Let $B\subseteq P$ be a faithfully flat $H$-Galois extension, if $P$ is a $*$-algebra and $H$ is a commutative Hopf $*$-algebra such that the coaction is unitary, then
   $\cL(P, H)$ is a full $*$-Hopf algebroid. More precisely,
   \[(p\ot q)^{\star}=p^{*}\ot q^{*},\]
   for any $p\ot q\in \cL(P, H)$.
\end{proposition}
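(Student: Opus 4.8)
The plan is to verify directly that the map $(p\ot q)^\star := p^*\ot q^*$ is a well-defined invertible antilinear anti-algebra map $\cL(P,H)\to\cL(P,H)$ satisfying the four conditions of Definition~\ref{def. star Hopf algebroids} (with $\cR=\cL$ as an algebra and the right bialgebroid structure coming from $S(p\ot q)=q\ot p$ via Proposition~\ref{def. full Hopf algebroid2} and \cite{DLZ24}). By Remark~\ref{fullcase}, it is equivalent to exhibit $\star$ as an antilinear involutive anti-algebra automorphism of $\cL(P,H)$ compatible with the $s,t$ maps, the right counit, and the right coproduct.

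First I would check that $\star$ lands in $\cL(P,H)$: given $p\ot q$ with $p\z\ot q\z\ot p\o q\o = p\ot q\ot 1_H$, apply $*\ot*\ot *$ and use unitarity of the coaction $\delta_R(p^*)=(p\z)^*\ot(p\o)^*$ together with antimultiplicativity of $*$ on $H$ to get $p^*\z\ot q^*\z\ot q^*\o p^*\o = p^*\ot q^*\ot 1_H$; since $H$ is commutative this rearranges to the defining condition for $q^*\ot p^*$, hence also for $p^*\ot q^*$ after noting $\cL(P,H)$ is symmetric under the flip in the commutative-$H$ case (indeed $S$ is the flip). Antilinearity and involutivity are immediate from those of $*$ on $P$. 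For the anti-algebra property: $(p\ot q)(r\ot u)=pr\ot uq$, so $\big((p\ot q)(r\ot u)\big)^\star=(pr)^*\ot(uq)^*=r^*p^*\ot q^*u^*$, while $(r\ot u)^\star(p\ot q)^\star=(r^*\ot u^*)(p^*\ot q^*)=r^*p^*\ot q^*u^*$; these agree. It sends the unit $1_P\ot 1_P$ to itself.

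Next I would check the structural compatibilities. For the source/target: $s_L(b)^\star=(b\ot 1)^\star=b^*\ot 1$ and $t_L(b)^\star=(1\ot b)^\star=1\ot b^*$; using the identifications $s_R=t_L$, $t_R=S^{-1}s_L=S s_L$ with $S$ the flip, $t_R(\und{b^*})=S(b^*\ot 1)=1\ot b^*$ and $s_R(\und{b^*})=b^*\ot 1$ — wait, one must be careful with which is which; $S\circ t_L=s_L$ forces $s_R(\und a)=t_L(a)=1\ot a$ and $t_R(\und a)=S^{-1}t_L(a)=S(1\ot a)=a\ot 1$, and then $s_L(b)^\star=b^*\ot1=t_R(\und{b^*})$, $t_L(b)^\star=1\ot b^*=s_R(\und{b^*})$, as required. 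For the counit: $\varepsilon_R=\varepsilon_L\circ S$, so $\varepsilon_R(p\ot q)=\varepsilon_L(q\ot p)=qp$, and $\und{\varepsilon_L((p\ot q)^\star)}=\und{\varepsilon_L(p^*\ot q^*)}=\und{p^*q^*}=\und{(qp)^*}=\varepsilon_R(p\ot q)^*$ as elements of $B^{op}$. Finally, the coproduct condition $X^\star{}\ro\ot X^\star{}\rt = X\o{}^\star\ot X\t{}^\star$: compute $\Delta_L(p\ot q)=p\z\ot p\o{}\tuno{}\ot_B p\o{}\tdue{}\ot q$, apply $\star\ot\star$; compute $\Delta_R$ from \eqref{equ. left coproduct to right coproduct} with $S$ the flip; and match using Lemma~\ref{lem. *-Hopf Galois extension} to convert $*$ of translation-map legs, plus the identity $p\z\ot S(p\o)\ot q=p\ot q\o\ot q\z$ on $\cL(P,H)$ and commutativity of $H$.

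The main obstacle I expect is the coproduct-compatibility computation: it mixes the translation map $\tuno h\ot_B\tdue h$ (whose behaviour under $*$ is governed by Lemma~\ref{lem. *-Hopf Galois extension}, involving both $S$ and $S^{-1}$ and a flip of legs) with the antipode $S=\mathrm{flip}$ and the coaction identities \eqref{equ. translation map 1}--\eqref{equ. translation map 5}, and one has to track carefully over which balanced tensor product each equality holds. Commutativity of $H$ is essential here — it is what makes $S$ the flip and lets the Hopf-algebra factors be reordered — and the bookkeeping of Sweedler indices for $p$, $q$, and the translation legs is where errors are most likely. Everything else is a short and routine verification.
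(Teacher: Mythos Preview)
Your proposal is correct and follows essentially the same approach as the paper: both verify directly that $(p\ot q)^\star=p^*\ot q^*$ is a well-defined antilinear involutive anti-algebra automorphism satisfying the axioms of Definition~\ref{def. star Hopf algebroids}, computing the right bialgebroid data $s_R,t_R,\varepsilon_R,\Delta_R$ from $S=\mathrm{flip}$ via Proposition~\ref{def. full Hopf algebroid2}, and handling the coproduct condition with Lemma~\ref{lem. *-Hopf Galois extension}, the identity $p\z\ot S(p\o)\ot q=p\ot q\o\ot q\z$, and commutativity of $H$. Your diagnosis of where the real work lies (the coproduct check) and the tools needed for it match the paper exactly; the only minor remark is that your verification that $p^*\ot q^*\in\cL(P,H)$ takes a small unnecessary detour through $q^*\ot p^*$---once you have $p^*\z\ot q^*\z\ot q^*\o p^*\o=p^*\ot q^*\ot 1$, commutativity of $H$ gives the condition for $p^*\ot q^*$ directly.
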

\begin{proof}
 Let $p\ot q\in \cL(P, H)$, then $p^*\ot q^*\in \cL(P, H)$
 since $q\ot p\in \cL(P, H)$ and the coaction is unitary. By (\ref{equ. left source target maps to right source and target maps}), (\ref{equ. left coproduct to right coproduct}) and (\ref{equ. left counit to right counit}), we can see
 \begin{align*}
     s_{R}(b)=1\ot b,\quad t_{R}(b)=b\ot 1, \quad \varepsilon_{R}(p\ot q)=qp,
 \end{align*}
 and
 \begin{align*}
     \Delta_{R}(p\ot q)=&S(S^{-1}(p\ot q)\t)\ot_{B^{op}} S(S^{-1}(p\ot q)\o)\\
     =&S(q\o\tdue{}\ot p)\ot_{B^{op}} S(q\z\ot q\o\tuno{})\\
     =&p\ot q\o\tdue{}\ot_{B^{op}} q\o\tuno{}\ot q\z.
 \end{align*}
 Clearly, $s_{L}(b)^{\star}=t_{R}(b^{*})$, $t_{L}(b)^{\star}=s_{R}(b^{*})$ and $\varepsilon_{L}(p^*\ot q^*)=p^{*}q^{*}=\varepsilon_{R}(p\ot q)^{*}$.
 We can also see
 \begin{align*}
     \Delta_{R}((p\ot q)^{\star})=&p^{*}\ot q^{*}\o\tdue{}\ot_{B^{op}} q^{*}\o\tuno{}\ot q^{*}\z\\
     =&p^{*}\ot S(q)\o\tuno{}{}^{*}\ot_{B^{op}} S(q)\o\tdue{}{}^{*}\ot q\z{}^{*}\\
     =&p\z{}^{*}\ot p\o\tuno{}{}^{*}\ot_{B^{op}} p\o\tdue{}{}^{*}\ot q{}^{*}\\
     =&(\star\ot \star)\circ \Delta_{L}(p\ot q).
 \end{align*}
 where the 2nd step uses Lemma \ref{lem. *-Hopf Galois extension}, the 3rd step uses the fact that $H$ is commutative.
\end{proof}

\subsubsection{$\theta$-deformation of $SU(2)$-fibration}\label{secthetasph}

In this subsection, we first present an example of Hopf Galois extension with classical fibre given in \cite{LS04}. More precisely, the total space is   the algebra $\CA(S_{\theta}^{7})$ of polynomial functions on the noncommutative sphere $S_{\theta}^{7}$, which is generated by elements
$z_a, z_a^*$, $a=1,\dots,4$, subject to relations
\begin{align}
    \label{s7t}
z_a z_b = \lambda_{a b} \, z_b z_a, \quad  z_a z_b^* = \lambda_{b a} \, z_b^* z_a,
\quad z_a^*z_b^* = \lambda_{a b} \, z_b^* z_a^* ,
\end{align}
and with the spherical relation $\sum_a z_a^* z_a=1$, where $\lambda_{a b} = e^{2 \pi i \theta_{ab}}$ and $(\theta_{ab})$ a real antisymmetric matrix given by
\begin{align*}
\theta_{ab}=\frac{\theta}{2}\begin{pmatrix} 0 & 0 & -1 & 1 \\
0 & 0 & 1 & -1 \\
1 & -1 & 0 & 0 \\
-1 & 1 & 0 & 0  \end{pmatrix}.
\end{align*}
The Hopf algebra is $\mathcal{A}(SU(2))$ is an unital complex $*$-algebra generated by $\omega_{1}, \overline{\omega}_{1}, \omega_{2}, \overline{\omega}_{2}$ subject to the relation $\omega_{1}\overline{\omega}_{1}+\omega_{2}\overline{\omega}_{2}=1$. The coproduct, counit and antipode is given by:
\begin{align*}
\Delta : \begin{pmatrix} \omega_{1} &-\overline{\omega}_{2}\\\omega_{2} & \overline{\omega}_{1} \end{pmatrix}\mapsto  \begin{pmatrix} \omega_{1} &-\overline{\omega}_{2}\\\omega_{2} & \overline{\omega}_{1} \end{pmatrix} \otimes  \begin{pmatrix} \omega_{1} &-\overline{\omega}_{2}\\\omega_{2} & \overline{\omega}_{1} \end{pmatrix},
\end{align*}
with counit $\varepsilon(\omega_{1})=\varepsilon(\overline{\omega}_{1})=1$, $\varepsilon(\omega_{2})=\varepsilon(\overline{\omega}_{2})=0$ and antipode $S(\omega_{1})=\overline{\omega}_{1}$, $S(\omega_{2})=-\omega_{2}$.  If we denote  $\CA(S_{\theta}^{7})$ and $\CA(SU(2))$ by matrix-valued function by
\begin{align*}
    \Psi  =
\begin{pmatrix}
z_1 & - z^*_2 \\
z_2 & z^*_1 \\
z_3 & -z^*_4 \\
z_4& z^*_3
\end{pmatrix},\qquad
\omega=\begin{pmatrix}
    \omega_{1} &-\overline{\omega}_{2}\\\omega_{2} & \overline{\omega}_{1}
\end{pmatrix},
\end{align*}
then the coaction can be written as $\delta(\Psi)=\Psi\ot\omega$. This means in components $\delta(\Psi_{ik})=\Psi_{ij}\ot\omega_{jk}$.

The algebra $\CA(S_{\theta}^{4})$ generated by   $\zeta_1=  z_1 z^{*}_3 + z^*_2 z_4$, $\zeta_2 =  z_2 z^*_3 - z^*_1 z_4$ and $\zeta_0 = z_1 z^*_1 + z^*_2 z_2 = 1 - z_3 z^*_3 - z^*_4 z_4$
  is the subalgebra of coinvariant. Moreover, $\CA(S_{\theta}^{4})\subseteq\CA(S_{\theta}^{7})$ is a $\CA(SU(2))$-Galois extension. The corresponding translation map $\tau$ is given by\cite{HLL22}
\begin{align*}
    \tau(\Psi)=\Psi\ot_{\CA(S_{\theta}^{4})}\Psi^{\dagger},\quad \mathrm{or}\quad\tau(\Psi_{ik})=\Psi_{ij}\ot_{\CA(S_{\theta}^{4})}\Psi^{\dagger}_{jk}.
\end{align*}
We can define two projections by
\begin{align}
    p = \Psi \cdot \Psi^\dagger =
\begin{pmatrix}
\zeta_0 & 0 & \zeta_1 & - \bar{\mu} \zeta_2^* \\
0 & \zeta_0 & \zeta_2  & \mu \zeta_1^* \\
\zeta_1^*& \zeta_2^* & 1-\zeta_0 & 0\\
-\mu \zeta_2 & \bar{\mu}   \zeta_1 & 0 & 1-\zeta_0
\end{pmatrix},\qquad q =  \Psi \cdot_{op} \Psi^\dagger =
\begin{pmatrix}
\zeta_0 & 0 & \bar{\mu} \zeta_1 & - \zeta_2^* \\
0 & \zeta_0 &  \mu \zeta_2  & \zeta_1^* \\
 \mu \zeta_1^*& \bar{\mu} \zeta_2^* & 1- \zeta_0 & 0\\
- \zeta_2 & \zeta_1 & 0 & 1- \zeta_0
\end{pmatrix}.
\end{align}

Let $P=\CA(S_{\theta}^{7})$, $B=\CA(S_{\theta}^{4})$ and $H=\CA(SU(2))$, $\cL(P,H)$ is generated
by the entries of the tensor products $p \ot 1$, $1\ot {q}$ and $V:=\Psi\ot \Psi^{\dagger}$ by \cite{HLL22}. As a coring, the coproduct is given by
\begin{align*}   \Delta(V)=V\ot_{B}V=\Psi\ot \Psi^{\dagger}\ot_{B}\Psi\ot \Psi^{\dagger}.
\end{align*}
The antipode is the flip. More precisely, $S(V_{ik})=S(\Psi_{ij}\ot \Psi^{\dagger}_{jk})=\Psi^{\dagger}_{jk}\ot \Psi_{ij}$. Moreover, the $\star$-structure is given by
\begin{align*}
    V_{ik}^{\star}=\Psi_{ij}^{*}\ot\Psi^{T}_{jk}.
\end{align*}

\subsection{Bar-category of modules of a full $*$-Hopf algebroid}\label{sec:bar}

The central notion behind our approach to $*$-structures is that of a bar category\cite{BM09}:

\begin{definition}
A strict monoidal category $(\CC, \ot, 1_{\CC})$ is called  a bar category if there is a monoidal functor $bar:\CC\to \CC^{op}$, denoted $V\mapsto \overline{V}$ on objects and $\phi\mapsto\overline{\phi}$ on morphisms with  natural equivalences $\Upsilon_{V,W}:\overline{V\ot W}\to \overline{W}\ot\overline{V}$ and $\star: 1_{\CC}\mapsto \overline{1_{\CC}}$, such that
\begin{enumerate}
    \item[(a)] $(\id\ot\Upsilon_{U,V})\circ\Upsilon_{U\ot V, W}=(\Upsilon_{V,W}\ot \id)\circ \Upsilon_{U,V\ot W}:\overline{U\ot V\ot W}\to \overline{W}\ot \overline{V}\ot \overline{U}$.
    \item[(b)] natural equivalence $bb$ between the identity and the $bar\circ bar$ functors on $\CC$, such that $\overline{\star}\circ \star=\id_{1_{\CC}}:1_{\CC}\to \overline{\overline{1_{\CC}}}$ and $\overline{bb_{V}}=bb_{\overline{V}}:\overline{V}\to \overline{\overline{\overline{V}}}$.
\end{enumerate}
Given $(\CC, \ot, 1_{\CC})$ and $(\CD, \ot, 1_{\CD})$ two bar categories, a bar functor $F: \CC\to \CD$  is a monoidal functor  together with a natural equivalence from $bar\circ F$ to $F\circ bar$, i.e. $fb_{Y}:\overline{F(Y)}\to F(\overline{Y})$, such that
\begin{enumerate}
    \item For the unit, $F(\overline{1})=\overline{F(1)}$, and for the $\star:1\to \overline{1}$ morphism, $F(\star)=\star$;
    \item $F(bb_{Y})=fb_{\bar{Y}}\circ\overline{fb_{Y}}\circ bb_{F(Y)}$.
    \item The following diagram commutes:
     \[
\begin{tikzcd}
  &\overline{F(X\ot Y)} \arrow[d, "fb_{X\ot Y}"] \arrow[r, "\overline{F^{-1}_{X, Y}}"] & \overline{F(X)\ot F(Y)}\arrow[r, "\Upsilon"] &\overline{F(Y)}\ot \overline{F(X)}\arrow[d, "fb_{Y}\ot fb_{X}"]&\\
   & F(\overline{X\ot Y})   \arrow[r, "F(\Upsilon)"] & F(\overline{Y}\ot\overline{X})\arrow[r, "F^{-1}_{\overline{Y},\overline{X}}"]&F(\overline{Y})\ot F(\overline{X}) .&
\end{tikzcd}
\]
\end{enumerate}

\end{definition}
 We also recall that a bar category $(\CC, \ot, 1_{\CC})$ is braided with antireal braiding if it is also a braided category and the braiding $\sigma$ makes the following diagram commute: \[
\begin{tikzcd}
  &\overline{X}\ot \overline{Y} \arrow[d, "\Upsilon^{-1}"] \arrow[r, "\sigma"] & \overline{Y}\ot\overline{X} \arrow[d, "\Upsilon^{-1}"] &\\
   & \overline{Y\ot X}   \arrow[r, "\overline{\sigma^{-1}}"] & \overline{X\ot Y}. &
\end{tikzcd}
\]

It is known\cite{BM09} that if  $A$ be a $*$-algebra then the category of $A$-bimodules is a bar category, which we recall as follows. Let $V$ be a $A$-bimodule. We
take its conjugate $\overline{V}$ to be the same as $V$ as a set, writing its element as $\overline{v}\in \overline{V}$ for any $v\in V$ with $\overline{v}+\overline{u}=\overline{v+u}$. The $A$-bimodule structure on $\overline{V}$ is given by $a\overline{v}a'=\overline{a'^{*}va^{*}}$. A linear map $T:V\to W$ gives another linear map $\overline{T}:\overline{V}\to \overline{W}$ by $\overline{T}(\overline{v})=\overline{T(v)}$. The map $\Upsilon$ is
\[ \Upsilon_{V\ot_{A} W}(\overline{V\ot_{A} W})=\overline{W}\ot_{A}\overline{V},\quad \Upsilon_{V\ot_{A} W}(\overline{v\ot_{A} w})=\overline{w}\ot_{A}\overline{v}\]
 which factors through the balanced tensor product, and $bb_{V}:V\to \overline{\overline{V}}$ is given by $bb_{V}(v)=\overline{\overline{v}}$.
It is also known\cite{BM09} that if $H$ is a Hopf $*$-algebra then its category of  $H$-comodules is a bar category. Similarly, we have

\begin{proposition}\label{propfullbar} Let $\CH=(\cL, \cR, S)$ be a full $*$-Hopf algebroid (with $\cL$ a left $B$-bialgebroid).

  (1) The category of right $\CH$-comodules $\mathcal{M}^{\CH}$ is a bar category with $\star$, $\Upsilon$, $bb$ as for the underlying $A$-bimodule structure of $\Gamma\in \CM^{\CH}$. The conjugate coactions on $\bar\Gamma$ are
\[\delta_{R}(\overline{\rho})=\overline{\rho\z}\ot_{A}\rho\o{}^{\star},\quad \delta_{L}(\overline{\rho})=\overline{\rho\rz}\ot_{B}\rho\ro{}^{\star},\]
for all $\rho\in \Gamma\in \CM^\CH$.

(2) The category of right $\CR$-modules $\mathcal{M}_{\CR}$ is a bar category with $\star$, $\Upsilon$, $bb$ as for the underlying $A$-bimodule  and the conjugate action
    \[\overline{\rho}\ra X=\overline{\rho\ra S^{-1}(X^{\star})},\]
    for any $\rho\in \Gamma\in \CM_{\CR}$.

    (3) Similarly,  category of left $\CL$-modules ${}_\CL\mathcal{M}$ is a bar category with $\star$, $\Upsilon$, $bb$ as for the underlying $B=A^{op}$-bimodule  and the conjugate action
    \[X\la \overline{\rho} =\overline{S(X^{\star})\la \rho},\]
    for any $\rho\in \Gamma\in {}_\CL\CM$.

\end{proposition}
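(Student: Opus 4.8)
The three statements have the same shape, so the plan is to prove them in parallel. In each case the proposed $bar$ functor is the restriction of the $bar$ functor on $A$-bimodules (for (1) and (2), with $A=B^{op}$) or on $B$-bimodules (for (3)), and these are already bar categories by \cite{BM09}; in particular $\Upsilon$, $bb$, the morphism $\star:1\to\overline{1}$ and the coherence axioms (a), (b) all hold at the level of (bi)modules, and on morphisms $\overline{f}$ is colinear (resp.\ linear) whenever $f$ is, which is immediate. So the work reduces to four points: (i) the stated conjugate coaction (resp.\ action) on $\overline{\Gamma}$ is well defined, i.e.\ it factors through the relevant balanced tensor products and has image in the appropriate Takeuchi subspace; (ii) $\overline{\Gamma}$ equipped with it is again an object of the category, i.e.\ it satisfies the comodule (resp.\ module) axioms and, for (1), the two compatibility identities of Definition~\ref{def. comodule of full Hopf algebroid}; (iii) $\Upsilon$, $bb$ and $\star:1\to\overline{1}$ are morphisms \emph{in the category}, not merely of (bi)modules; and (iv) $bar$ is monoidal as a functor of the category, i.e.\ it intertwines the monoidal structures built from $\Delta_L,\Delta_R$ (resp.\ from the coactions/actions on tensor products).

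For (1) the structural input is that $\star$ interchanges $\cL$ and $\cR$, is an involution, and by Definition~\ref{def. star Hopf algebroids} intertwines $\Delta_L$ with $\Delta_R$ and $\varepsilon_L$ with $\varepsilon_R$ (an anti-coalgebra property). Point (i) is the comodule analogue of Lemma~\ref{lem. star exchange the balanced tensor product}: from $s_L(b)^{\star}=t_R(\und{b^{*}})$ one checks that $\overline{(-)\z}\ot_A(-)\o{}^{\star}$ descends from $\Gamma\times_B\cL$ to $\overline{\Gamma}\times_A\cR$, and symmetrically for the other conjugate coaction. Counitality follows from the counit conditions on $\star$ and counitality on $\Gamma$; coassociativity follows from coassociativity on $\Gamma$ together with $X^{\star}\ro\ot X^{\star}\rt=X\o{}^{\star}\ot X\t{}^{\star}$ (and its $(\star\ot\star)$-image $X^{\star}\o\ot X^{\star}\t=X\ro{}^{\star}\ot X\rt{}^{\star}$); and the two compatibility identities of Definition~\ref{def. comodule of full Hopf algebroid} for $\overline{\Gamma}$ are obtained by applying $(\star\ot\star)$ to those for $\Gamma$. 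For (iii), $\Upsilon$ being colinear uses only that $\star$ reverses the product appearing in the tensor-product coaction, and $bb$ uses $\star^{2}=\id$; for (iv) one expands the tensor-product coaction on $\overline{\Gamma\ot_A\Gamma'}$ and matches it termwise through $\Upsilon$ with that on $\overline{\Gamma'}\ot_A\overline{\Gamma}$, again using only that $\star$ is an anti-algebra map.

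For (2) and (3) the conjugate actions $\overline{\rho}\ra X=\overline{\rho\ra S^{-1}(X^{\star})}$ and $X\la\overline{\rho}=\overline{S(X^{\star})\la\rho}$ are actions because $S^{\mp 1}$ and $\star$ are anti-algebra maps and $\star$ is an involution; compatibility with the $A$- (resp.\ $B$-) bimodule structure follows from the source/target and counit conditions of Definition~\ref{def. star Hopf algebroids} together with $S\circ t_L=s_L$. The crucial point is again (iii): that $\Upsilon$ is an $\cR$-module (resp.\ $\cL$-module) map. For (2) this reduces to $S^{-1}(X^{\star})\ro\ot S^{-1}(X^{\star})\rt=S^{-1}(X\rt{}^{\star})\ot S^{-1}(X\ro{}^{\star})$, which follows by combining the identity $S^{\pm}(X)\ro\ot S^{\pm}(X)\rt=S^{\pm}(X\t)\ot S^{\pm}(X\o)$ (with $\pm=-$) recalled after Proposition~\ref{def. full Hopf algebroid2} with the $\star$-coproduct relation; for (3) the same computation \emph{forces} $S$ in place of $S^{-1}$ and instead uses $S^{\pm}(X)\o\ot S^{\pm}(X)\t=S^{\pm}(X\rt)\ot S^{\pm}(X\ro)$ (with $\pm=+$). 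Finally, $bb$ being a module map is exactly Corollary~\ref{coro. antipode with star}: $X\la bb_{\Gamma}(\rho)=\overline{\overline{S(S(X^{\star})^{\star})\la\rho}}=\overline{\overline{X\la\rho}}=bb_{\Gamma}(X\la\rho)$, and similarly for (2) with $S^{-1}$ in place of $S$; and $\star:1\to\overline{1}$ is a module map by the counit conditions of Definition~\ref{def. star Hopf algebroids}.

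I expect the main obstacle to be bookkeeping rather than conceptual: keeping straight the several balanced tensor products ($\ot_B$, $\ot^B$, $\ot_A$, $\ot^A$) and the $B$-, $A$- and $B^{op}$-bimodule structures on the conjugate object, and, for (1), checking the \emph{full} list of axioms in Definition~\ref{def. comodule of full Hopf algebroid} rather than a single coaction axiom. The one place where a genuine choice is pinned down is the $S$-versus-$S^{-1}$ asymmetry between (2) and (3): each is forced by the requirement that $\Upsilon$ be a morphism, after which everything else follows mechanically from the anti-algebra and anti-coalgebra properties of $\star$, the anti-comultiplicativity identities for $S^{\pm 1}$, and Corollary~\ref{coro. antipode with star}.
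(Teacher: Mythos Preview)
Your proposal is correct and follows essentially the same approach as the paper: reduce to the known bar structure on $A$-bimodules, invoke the analogue of Lemma~\ref{lem. star exchange the balanced tensor product} for well-definedness, verify the comodule/module axioms and the Definition~\ref{def. comodule of full Hopf algebroid} compatibilities via the $\star$-coproduct relation, and check $\Upsilon$ and $bb$ are morphisms using the anti-comultiplicativity of $S^{\pm 1}$ together with Corollary~\ref{coro. antipode with star}. Your organization into points (i)--(iv) is slightly more systematic than the paper's (which, for instance, leaves coassociativity and counitality in (1) implicit and dispatches the $\Upsilon$, $bb$ colinearity as ``a direct computation''), but the ingredients and the order of deduction are the same.
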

\begin{proof} (1) By the same reason as Lemma \ref{lem. star exchange the balanced tensor product}, the coactions are well defined. Moreover, for any $a,a'\in B^{op}=A$
    \begin{align*}
        \delta_{R}(a\overline{\rho}a')=&\delta_{R}(\overline{a'^{*}\rho a^*})=\overline{\rho\z}\ot_{A}(s_{R}(a'^{*})\rho\o s_{R}(a^*))^\star\\
        =&\overline{\rho\z}\ot_{A}t_{L}(\und{a})\rho\o{}^\star t_{L}(\und{a'})=\overline{\rho\z}\ot_{A}s_{R}(a)\rho\o{}^\star s_{R}(a').
    \end{align*}
    Similarly, $\delta_{L}$ is $A$-bilinear as well. To see $(\id\ot_{B}\Delta_{R})\circ\delta_{L}=(\delta_{L}\ot_{A}\id)\circ\delta_{R}$, we have
    \begin{align*}
        (\id\ot_{B}\Delta_{R})\circ\delta_{L}(\overline{\rho})=&\overline{\rho\rz}\ot \rho\ro{}^{\star}\ro\ot \rho\ro{}^{\star}\rt=\overline{\rho\rz}\ot \rho\ro\o{}^{\star}\ot \rho\ro\t{}^{\star}\\
        =&\overline{\rho\z\rz}\ot \rho\z\ro{}^{\star}\ot \rho\o{}^{\star}=(\delta_{L}\ot_{A}\id)\circ\delta_{R}(\overline{\rho}).
    \end{align*}
    Similarly, $(\id\ot_{A}\Delta_{L})\circ\delta_{R}=(\delta_{R}\ot_{B}\id)\circ\delta_{L}$. So $\overline{\Gamma}\in \CM^{\CH}$. It is a direct computation to check $\Upsilon_{\Gamma,\Lambda}$ and $bb_{\Gamma}$ is right $\cR$ and $\cL$-colinear for any $\Gamma, \Lambda\in \CM^{\CH}$ (which is also the same with the Hopf algebra case).

    (2)  First, we can see for any $a, a'\in B^{op}=A$
    \begin{align*}
        \overline{\rho}\ra s_{R}(a')t_{R}(a)=&\overline{\rho\ra S^{-1}(s_{R}(a')^{\star}t_{R}(a)^{\star})}=\overline{\rho\ra S^{-1}(t_{L}(\und{a'^*})s_{L}(\und{a^*}))}=\overline{\rho\ra t_{R}(a'^*)t_{L}(a^*)}\\
        =&\overline{a'^{*}\rho a^{*}}=a\overline{\rho}a'.
    \end{align*}
    where the 3rd step uses (\ref{equ. left source target maps to right source and target maps}). Also we can see
    \begin{align*}
        bb(\rho)\ra X=\overline{\overline{\rho}}\ra X=\overline{\overline{\rho}\ra S^{-1}(X^{\star})}=\overline{\overline{\rho\ra S^{-1}(S^{-1}(X^{\star})^{\star})}}=\overline{\overline{\rho\ra X}},
    \end{align*}
    where the last step uses Corollary \ref{coro. antipode with star}. Moreover,
    \begin{align*}
        \Upsilon((\overline{\rho\ot_{A}\eta})\ra X)=&\Upsilon(\overline{\rho\ra S^{-1}(X^{\star})\ro\ot_{A}\eta\ra S^{-1}(X^{\star})\rt })\\
        =&\Upsilon(\overline{\rho\ra S^{-1}(X^{\star}\t)\ot_{A}\eta\ra S^{-1}(X^{\star}\o) })\\
        =&\Upsilon(\overline{\rho\ra S^{-1}(X\rt{}^{\star})\ot_{A}\eta\ra S^{-1}(X\ro{}^{\star}) })\\
        =&\overline{\eta\ra S^{-1}(X\ro{}^{\star})}\ot_{A}\overline{\rho\ra S^{-1}(X\rt{}^{\star})}\\
        =&\Upsilon(\overline{\rho\ot_{A}\eta})\ra X.
    \end{align*}

    (3) This is similar to (2).
\end{proof}

These results justify the notion of a full $*$-Hopf algebroid and described its representation theory. One can prove many other parallel results to the Hopf case and the next section is relevant to the study of differential structures of Hopf algebroids to be studied elsewhere.

\subsection{Bar category of crossed or Drinfeld-Yetter modules}

\begin{definition}\label{def. right-right YD}\label{sec:cross}
    Let $\cR$ be a right bialgebroid over $A$, a right-right Yetter-Drinfeld module of $\cR$ is a right $\cR$-comodule  and a right $\cR$-module $\Lambda$, such that
 \[(\rho\ra X\rt)\rz\ot_{A}X\ro(\rho\ra X\rt)\ro=\rho\rz \ra X\ro \ot_{A}\rho\ro X\rt\]
 for all  $X\in \cR, \rho\in \Lambda$.  We denote the category of `right crossed' or right-right Yetter-Drinfeld modules of $\cR$ by $\mathcal{YD}_{\cR}^{\cR}$.
\end{definition}

Recall that\cite{XH23}
\begin{definition}
    Let $\CH=(\cL, \cR, S)$ be a full Hopf algebroid, a full right-right Yetter-Drinfeld module of $\CH$ is a right-right Yetter-Drinfeld module of $\cR$ as well as a right comodule of $\CH$, such that the underlying right $\cR$-comodule structure (of the right-right Yetter-Drinfeld module structure of $\cR$) is the same as the underlying right $\cR$-comodule structure (of the right comodule structure of $\CH$). We denoted the category of full right-right Yetter-Drinfeld modules of $\cR$ by $\mathcal{YD}_{\cR}^{\cH}$.
    \end{definition}

It is given in \cite{XH23} that, if $(\cL, \cR, S)$ is a full Hopf algebroid and $\Lambda\in \mathcal{YD}^{\cR}_{\cR}$, then the Yetter-Drinfeld condition is equivalent to
\begin{align}\label{equ. YD condition}
    (\eta\ra X)\rz\ot_{A}(\eta\ra X)\ro=\eta\rz\ra X\t\ro\ot_{A}S(X\o)\eta\ro X\t\rt
\end{align}
    for any $\eta\in \Lambda$. Moreover, we can show

\begin{proposition}\label{prop. YD module}
    Let $\CH=(\cL, \cR, S)$ be a full Hopf algebroid and $\Lambda\in \mathcal{YD}^{\cH}_{\cR}$, then we have
    \[(\eta\ra X)\z\ot_{B}(\eta\ra X)\o=\eta\z\ra X\rt\o\ot_{A}S(X\ro)\eta\o X\rt\t,\]
    for any $\eta\in\Lambda$ and $X\in \cR$.
\end{proposition}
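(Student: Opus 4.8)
The plan is to derive the claimed identity for the $\cL$-comodule coaction $\delta_L$ from the already-established formula \eqref{equ. YD condition} for the $\cR$-comodule coaction $\delta_R$, using the compatibility axioms relating $\delta_L$ and $\delta_R$ in Definition~\ref{def. comodule of full Hopf algebroid} together with the interplay \eqref{equ. right Hopf and anti-right Hopf} between $S$ and the canonical maps. First I would record that, since $\Lambda\in \mathcal{YD}^{\cH}_{\cR}$ is in particular a right comodule of $\CH$, we may apply $\id\otimes_A\Delta_L$ (or $\id\otimes_B\Delta_R$) to both sides of \eqref{equ. YD condition} and then use $(\id\ot_{A}\Delta_{L})\circ\delta_{R}=(\delta_{R}\ot_{B}\id)\circ\delta_{L}$ to convert the $\delta_R$-expression on the left into a $\delta_L$-expression. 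Concretely, starting from $(\eta\ra X)\rz\ot_{A}(\eta\ra X)\ro=\eta\rz\ra X\t\ro\ot_{A}S(X\o)\eta\ro X\t\rt$, I would apply $\Delta_L$ in the $\cR$-comodule slot on the right-hand factor and then recognise $(\eta\ra X)\rz\ot_B(\eta\ra X)\ro$ (the $\delta_L$ of $\eta\ra X$) appearing after using the comodule-of-$\CH$ axiom; the right-hand side then needs to be massaged using $S^{\pm}(Y)\o\ot_{B}S^{\pm}(Y)\t=S^{\pm}(Y\rt)\ot_{B}S^{\pm}(Y\ro)$ and the coassociativity of $\Delta_R$ on $X$.

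Alternatively — and this is probably cleaner — I would work directly: expand $(\eta\ra X)\z\ot_B(\eta\ra X)\o$ using the $\CH$-comodule axiom $(\id\ot_{A}\Delta_{L})\circ\delta_{R}=(\delta_{R}\ot_{B}\id)\circ\delta_{L}$ read backwards, i.e.\ write the desired $\delta_L(\eta\ra X)$ in terms of $\delta_R(\eta\ra X)$ by applying $\varepsilon_R$ or the coring structure appropriately, substitute \eqref{equ. YD condition}, and then re-expand the $\cR$-coproduct $\Delta_R(X)$ using the identities $S^{\pm}(X)\ro\ot_{A} S^{\pm}(X)\rt=S^{\pm}(X\t)\ot_{A} S^{\pm}(X\o)$ and $X\m\ot_{A}X\p=S(X\o)\ot_{A}X\t$ from \eqref{equ. right Hopf and anti-right Hopf}. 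The key bookkeeping step is to relate the Sweedler components $X\t\ro, X\t\rt, X\o$ appearing in \eqref{equ. YD condition} to the components $X\rt\o, X\rt\t, X\ro$ appearing in the target formula; this is exactly the content of the mixed coassociativity condition $(\id\ot_{B}\Delta_{R})\circ\Delta_{L}=(\Delta_{L}\ot_{A}\id)\circ\Delta_{R}$ and its partner in Proposition~\ref{def. full Hopf algebroid2}(2), so I would invoke those to rewrite $X\o\ot_A X\t\ro\ot_A X\t\rt$ as $X\ro\ot_A X\rt\o\ot_B X\rt\t$ (up to placement of $S$).

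The main obstacle I anticipate is purely organisational rather than conceptual: keeping track of which balanced tensor product ($\ot_A$ versus $\ot_B$, with $A=B^{op}$) each Sweedler leg lives over, and ensuring every intermediate expression is well defined over the correct balanced tensor product — in particular that the $S$ applied to $X\o$ versus $X\ro$ lands the target/source maps on the correct side. I would handle this the way the analogous lemmas in the excerpt (e.g.\ Lemma~\ref{lem. full star Hopf algebroid}) are handled: first check well-definedness over the balanced tensor products using the Takeuchi conditions and the source/target relations of Proposition~\ref{def. full Hopf algebroid2}(1), then verify the identity by a direct Sweedler-notation computation, citing \eqref{equ. YD condition}, \eqref{equ. right Hopf and anti-right Hopf}, and the mixed coassociativity of $(\Delta_L,\Delta_R)$ at the relevant steps. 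No genuinely new idea beyond the structure already set up should be required.
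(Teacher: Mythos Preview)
Your proposal is correct and matches the paper's approach almost exactly: the paper first checks well-definedness over the balanced tensor product, then expresses $(\eta\ra X)\z\ot_B(\eta\ra X)\o$ in terms of $\delta_R(\eta\ra X)$ via the $\CH$-comodule axiom (inserting an $\varepsilon_R$ and using $(\delta_{R}\ot_{B}\id)\circ\delta_{L}=(\id\ot_{B}\Delta_{R})\circ\delta_{L}$ read through the coring counit), substitutes \eqref{equ. YD condition}, and reorganises using the mixed coassociativity of $(\Delta_L,\Delta_R)$ together with \eqref{equ. right Hopf and anti-right Hopf} and the identity $t_{R}\circ\varepsilon_{R}(X\m)X\p=X$. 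Your ``cleaner alternative'' paragraph is precisely the route taken, and your anticipation that the only real work is bookkeeping over $\ot_A$ versus $\ot_B$ is accurate.
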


\begin{proof}
First, the formulae on the right hand side is well defined, here we only check it factors through the balanced product $\eta\z\ot_{B}\eta\o$. Indeed,
\begin{align*}
    (a\eta\z)\ra X\rt\o\ot_{A}S(X\ro)\eta\o X\rt\t=&\eta\z\ra(t_{R}(a) X\rt\o)\ot_{A}S(X\ro)\eta\o X\rt\t\\
    =&\eta\z\ra(t_{R}(a) X\o\rt)\ot_{A}S(X\o\ro)\eta\o X\t\\
    =&\eta\z\ra( X\o\rt)\ot_{A}S(s_{R}(a)X\o\ro)\eta\o X\t\\
    =&\eta\z\ra( X\o\rt)\ot_{A}S(X\o\ro)s_{L}(a)\eta\o X\t.
\end{align*}
Now, we can check
    \begin{align*}
        (\eta&\ra X)\z\ot_{B}(\eta\ra X)\o\\
        =&(\eta\ra X)\z\rz \varepsilon_{R}((\eta\ra X)\z\ro)\ot_{B}(\eta\ra X)\o\\
        =&(\eta\ra X)\rz \varepsilon_{R}((\eta\ra X)\ro\o)\ot_{B}(\eta\ra X)\ro\t\\
        =&\eta\rz\ra X\t\ro  \varepsilon_{R}(S(X\o)\o\eta\ro\o X\t\rt\o)\ot_{B}S(X\o)\t\eta\ro\t X\t\rt\t\\
        =&\eta\z\rz\ra X\ro\t  \varepsilon_{R}(S(X\ro\o)\o\eta\z\ro X\rt\o)\ot_{B}S(X\ro\o)\t\eta\o X\rt\t\\
        =&\eta\z\rz\ra X\ro\t  \varepsilon_{R}(t_{R}\circ\varepsilon_{R}(S(X\ro\o)\o)\eta\z\ro X\rt\o)\ot_{B}S(X\ro\o)\t\eta\o X\rt\t\\
        =&(\varepsilon_{R}(S(X\ro\o)\o)\eta\z\rz)\ra X\ro\t  \varepsilon_{R}(\eta\z\ro X\rt\o)\ot_{B}S(X\ro\o)\t\eta\o X\rt\t\\
        =&\eta\z\rz \ra (t_{R}\circ\varepsilon_{R}(S(X\ro\o)\o)X\ro\t)  \varepsilon_{R}(\eta\z\ro X\rt\o)\ot_{B}S(X\ro\o)\t\eta\o X\rt\t\\
        =&\eta\z\rz \ra (t_{R}\circ\varepsilon_{R}(S(X\ro\o\rt))X\ro\t)  \varepsilon_{R}(\eta\z\ro X\rt\o)\ot_{B}S(X\ro\o\ro)\eta\o X\rt\t\\
        =&\eta\z\rz \ra (t_{R}\circ\varepsilon_{R}(S(X\ro\rt\o))X\ro\rt\t)  \varepsilon_{R}(\eta\z\ro X\rt\o)\ot_{B}S(X\ro\ro)\eta\o X\rt\t\\
        =&\eta\z\rz \ra (t_{R}\circ\varepsilon_{R}(X\ro\rt\m)X\ro\rt\p)  \varepsilon_{R}(\eta\z\ro X\rt\o)\ot_{B}S(X\ro\ro)\eta\o X\rt\t\\
        =&\eta\z\rz \ra (X\ro\rt)  \varepsilon_{R}(\eta\z\ro X\rt\o)\ot_{B}S(X\ro\ro)\eta\o X\rt\t\\
        =&\eta\z\rz \ra (X\o\ro\rt)  \varepsilon_{R}(t_{R}\circ\varepsilon_{R}(\eta\z\ro) X\o\rt)\ot_{B}S(X\o\ro\ro)\eta\o X\t\\
        =&\eta\z\rz \ra (s_{R}\circ\varepsilon_{R}(\eta\z\ro)X\o\ro\rt)  \varepsilon_{R}( X\o\rt)\ot_{B}S(X\o\ro\ro)\eta\o X\t\\
        =&(\eta\z\rz \varepsilon_{R}(\eta\z\ro))\ra(X\o\ro\rt)  \varepsilon_{R}( X\o\rt)\ot_{B}S(X\o\ro\ro)\eta\o X\t\\
        =&\eta\z\ra(X\o\ro\rt s_{L}\circ \varepsilon_{R}( X\o\rt))\ot_{B}S(X\o\ro\ro)\eta\o X\t\\
        =&\eta\z\ra X\rt\o\ot_{A}S(X\ro)\eta\o X\rt\t,
    \end{align*}
where the 10th step uses (\ref{equ. right Hopf and anti-right Hopf}), the 11st step uses the fact that $t_{R}\circ\varepsilon_{R}(X\m)X\p=X$ which is given in \cite{XH23}.
\end{proof}

 It is given in \cite{XH23} that, $\mathcal{YD}_{\cR}^{\CH}$ is a braided monoidal category with the braiding and its inverse given by
    \begin{align*}
        \sigma(\rho\ot_{A}\eta)=\eta\rz\ot_{A}\rho\ra \eta\ro,\quad \sigma^{-1}(\eta\ot_{A}\rho)=\rho\ra S^{-1}(\eta\o)\ot_{A}\eta\z,
    \end{align*}
    for any $\eta\in \Lambda\in \mathcal{YD}_{\cR}^{\CH}$ and $\rho\in \Gamma\in \mathcal{YD}_{\cR}^{\CH}$. Moreover, we can show

\begin{theorem}\label{thm. bar category of YD module}
      Let $\CH=(\cL, \cR, S)$ be a full $*$-Hopf algebroid, then the category $\mathcal{YD}_{\cR}^{\CH}$ is an anti-real bar category with coaction and action as in Proposition \ref{propfullbar}.
\end{theorem}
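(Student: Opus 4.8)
The plan is to transfer the bar structures already established on the category $\CM_\cR$ of right $\cR$-modules and on the category $\CM^\CH$ of right $\CH$-comodules in Proposition~\ref{propfullbar} to the category $\mathcal{YD}_\cR^\CH$, and then to check the one genuinely extra requirement, antireality of the braiding of \cite{XH23}. So for $\Lambda\in\mathcal{YD}_\cR^\CH$ I would equip $\overline\Lambda$ with the conjugate right $\cR$-action $\overline\eta\ra X=\overline{\eta\ra S^{-1}(X^\star)}$ of Proposition~\ref{propfullbar}(2) and the conjugate right $\CH$-comodule structure $\delta_R(\overline\eta)=\overline{\eta\z}\ot_A\eta\o{}^\star$, $\delta_L(\overline\eta)=\overline{\eta\rz}\ot_B\eta\ro{}^\star$ of Proposition~\ref{propfullbar}(1), and then verify in turn: (i) that $\overline\Lambda$ is again an object of $\mathcal{YD}_\cR^\CH$; (ii) that $\Upsilon_{\Lambda,\Gamma}$, $bb_\Lambda$ and $\star$ are morphisms in $\mathcal{YD}_\cR^\CH$; (iii) that the braiding is antireal.

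For step (i), that $\overline\Lambda$ is a right $\cR$-module and a right $\CH$-comodule with matching underlying $\cR$-comodule structures is exactly the content of Proposition~\ref{propfullbar}; what is left is the Yetter--Drinfeld compatibility, which I would check in the equivalent form \eqref{equ. YD condition}. Substituting the conjugate action and coaction, the left-hand side turns into $\delta_L$ of a right $\cR$-action on $\Lambda$ followed by $\star$, which is precisely what Proposition~\ref{prop. YD module} evaluates; I would then move every $\star$ to the outside using the coproduct relation in Definition~\ref{def. star Hopf algebroids}, the canonical-map relations of Lemma~\ref{lem. full star Hopf algebroid}, Corollary~\ref{coro. antipode with star} ($S\circ\star\circ S\circ\star=\id$) and the behaviour of $\star$ on $s,t$, after which the two sides should coincide. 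I expect this to be the main obstacle: the conjugate $\cR$-coaction on $\overline\Lambda$ is manufactured out of the \emph{$\cL$}-coaction of $\Lambda$, so the $\cL$-coaction/$\cR$-action bridge supplied by Proposition~\ref{prop. YD module} is exactly the nontrivial ingredient, and assembling it correctly against the $\star$-relations is the delicate part; everything else is a careful but mechanical bookkeeping of Sweedler indices.

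Step (ii) should be immediate: the conjugate $\overline f$ of a morphism $f$ is $\cR$-linear and $\cR$- and $\cL$-colinear by Proposition~\ref{propfullbar}, hence a morphism, and $\Upsilon_{\Lambda,\Gamma}$, $bb_\Lambda$, $\star$ are morphisms in $\mathcal{YD}_\cR^\CH$ for the same reason, their $\cR$-linearity and colinearity having already been verified there, while the pentagon for $\Upsilon$ and the coherences relating $bb$, $\star$ and $bar\circ bar$ are inherited verbatim from the $A$-bimodule bar category. This makes $\mathcal{YD}_\cR^\CH$ a bar category, and since it is already braided by \cite{XH23} only step (iii) remains. Here I would unwind the braiding on conjugates using the conjugate action and coaction and $\star^2=\id$ to get $\sigma(\overline\rho\ot_A\overline\eta)=\overline{\eta\z}\ot_A\overline{\rho\ra S^{-1}(\eta\o)}$, so that $\Upsilon^{-1}\circ\sigma$ sends $\overline\rho\ot_A\overline\eta$ to $\overline{(\rho\ra S^{-1}(\eta\o))\ot_A\eta\z}$, and then compare with $\overline{\sigma^{-1}}\circ\Upsilon^{-1}$, which sends it to $\overline{\sigma^{-1}(\eta\ot_A\rho)}$; these agree by the explicit formula for $\sigma^{-1}$ once the $\delta_L$--$\delta_R$ compatibility of Definition~\ref{def. comodule of full Hopf algebroid} (together with \eqref{equ. right Hopf and anti-right Hopf}) is used to reconcile the two coactions. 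This would finish the antireality square, and hence the theorem.
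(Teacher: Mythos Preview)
Your proposal is correct and follows essentially the same path as the paper: verify the Yetter--Drinfeld condition \eqref{equ. YD condition} for $\overline\Lambda$ by invoking Proposition~\ref{prop. YD module} together with the $\star$-relations of Definition~\ref{def. star Hopf algebroids} and Corollary~\ref{coro. antipode with star}, note that $\Upsilon$ and $bb$ are inherited from Proposition~\ref{propfullbar}, and then check antireality directly from the explicit formulae for $\sigma$ and $\sigma^{-1}$. One small simplification in step (iii): you will not need the $\delta_L$--$\delta_R$ compatibility of Definition~\ref{def. comodule of full Hopf algebroid}, because the conjugate $\cR$-coaction on $\overline\Lambda$ is already expressed through $\delta_L$ of $\Lambda$, and the given formula for $\sigma^{-1}$ is likewise in terms of $\delta_L$, so both sides of the antireality square match immediately after using $\star^2=\id$.
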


\begin{proof}
    Let $\Gamma\in \mathcal{YD}_{\CR}^{\CH}$, then we first need to show $\overline{\Gamma}\in \mathcal{YD}_{\CR}^{\CH}$,\begin{align*}
        (\overline{\rho}\ra X)\rz\ot_{A}(\overline{\rho}\ra X)\ro=&\overline{(\rho\ra S^{-1}(X^{\star}))\z}\ot_{A}(\rho\ra S^{-1}(X^{\star}))\o{}^{\star}\\
        =&\overline{\rho\z\ra S^{-1}(X^{\star})\o\rt}\ot_{A}(S(S^{-1}(X^{\star})\o\ro)\rho\o S^{-1}(X^{\star})\t)^{\star}\\
        =&\overline{\rho\z\ra S^{-1}(X^{\star}\rt\o)}\ot_{A}(X^{\star}\rt\t\rho\o S^{-1}(X^{\star}\ro))^{\star}\\
        =&\overline{\rho\z\ra S^{-1}(X\t\ro{}^{\star})}\ot_{A}(X\t\rt{}^{\star}\rho\o S^{-1}(X\o{}^{\star}))^{\star}\\
        =&\overline{\rho\z\ra S^{-1}(X\t\ro{}^{\star})}\ot_{A}S^{-1}(X\o{}^{\star})^{\star}\rho\o{}^{\star}X\t\rt\\
        =&\overline{\rho\z\ra S^{-1}(X\t\ro{}^{\star})}\ot_{A}S(X\o)\rho\o{}^{\star}X\t\rt\\
        =&\overline{\rho\z}\ra X\t\ro\ot_{A}S(X\o)\rho\o{}^{\star}X\t\rt\\
    \end{align*}
    where the 2nd step uses Proposition \ref{prop. YD module}.  To check $\mathcal{YD}_{\cR}^{\CH}$ is a anti-real bar category,   for any $\eta\in \Lambda\in \mathcal{YD}_{\cR}^{\CH}$ and $\rho\in \Gamma\in \mathcal{YD}_{\cR}^{\CH}$ we have
    \begin{align*}
        \Upsilon^{-1}\circ \sigma(\overline{\rho}\ot\overline{\eta})=&\Upsilon^{-1}(\overline{\eta}\rz\ot_{A}\overline{\rho}\ra\overline{\eta}\ro)
        =\Upsilon^{-1}(\overline{\eta\z}\ot_{A}\overline{\rho}\ra \eta\o{}^{\star})
        =\overline{\rho\ra S^{-1}(\eta\o)\ot_{A}\eta\z}\\
        =&\overline{\sigma^{-1}(\eta\ot_{A}\rho)}
        =\overline{\sigma^{-1}}\circ\Upsilon^{-1}(\overline{\rho}\ot_{A}\overline{\eta}).
    \end{align*}
    \end{proof}

\section{More general formulation of left-right $*$-bialgebroid pairs}\label{secpair}

Here we introduce a more general notion of $*$-structures for bialgebroids and Hopf algebroids which is still, however, sufficient to generate a bar category of modules.

\subsection{Left-right pairs of bialgebroids}

\begin{definition} \label{oru}
 Let $\CL$, $\CR$ be left and right bialgebroids over $A$.
The category ${}_\mathcal{L}\mathcal{J}{}_\mathcal{R}$ of \em{joint $\mathcal{L}$-$\mathcal{R}$ modules} consists of left $\mathcal{L}$-modules which are also right
$\mathcal{R}$-modules such that the  $\mathcal{L}$ and $\mathcal{R}$ forgetful functors agree (giving
$F:{}_\mathcal{L}\mathcal{J}{}_\mathcal{R} \to {}_A\mathcal{M}{}_A $). This condition is equivalent to
the compatibility conditions
\[
s_{L}(a) \la m = m\ra t_{R}(a), \quad
t_{L}(a) \la m = m\ra s_{R}(a)\ .
\]
\end{definition}

Note that this definition does not give an $\CL$-$\CR$-bimodule in general, as the left and right actions will not commute. We now suppose that the left and right actions are related.

\begin{proposition}\label{prop. star related module is bar category}
    Let $(\cL, \cR)$ be $*$-related as in Definition~\ref{pye}. The category ${}_{\cL}J_{\cR}$ is a bar category, with
    \[\overline{m}\ra Y=\overline{Y^{\circledast^{-1}}\la m},\quad X\la \overline{m}=\overline{m\ra X^{\circledast}},\]
    for any $X\in\cL$, $Y\in \cR$ and $m\in M\in {}_{\cL}J_{\cR}$.
\end{proposition}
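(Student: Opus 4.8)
The plan is to build the bar functor on ${}_{\cL}\CJ_{\cR}$ on top of the bar category of $A$-bimodules recalled above (so $\overline M = M$ as a set, with $a\,\overline m\,a' = \overline{a'^*\,m\,a^*}$, and $\Upsilon$, $bb$, $\star$ inherited verbatim), and to check only the extra structure, namely that the stated formulas $\overline m\ra Y := \overline{Y^{\circledast^{-1}}\la m}$ and $X\la\overline m := \overline{m\ra X^{\circledast}}$ make $\overline M$ an object of ${}_{\cL}\CJ_{\cR}$ and that $\Upsilon$, $bb$ are morphisms there. First I would record that $\circledast:\cL\to\cR$ is an invertible antilinear anti-algebra map, so $\circledast^{-1}:\cR\to\cL$ is as well; antimultiplicativity is exactly what converts the anti-action appearance into genuine actions, e.g. $X\la(Y\la\overline m) = X\la\overline{m\ra Y^{\circledast}} = \overline{(m\ra Y^{\circledast})\ra X^{\circledast}} = \overline{m\ra(Y^{\circledast}X^{\circledast})} = \overline{m\ra(XY)^{\circledast}} = (XY)\la\overline m$, and similarly for the right $\cR$-action via $\circledast^{-1}$; antilinearity handles scalars.

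Next I would verify the compatibility conditions of Definition~\ref{oru} for $\overline M$, i.e. $s_{L}(a)\la\overline m = \overline m\ra t_{R}(a)$ and $t_{L}(a)\la\overline m = \overline m\ra s_{R}(a)$. Using $s_{L}(a)^{\circledast} = s_{R}(a^*)$ and $t_{L}(a)^{\circledast} = t_{R}(a^*)$ from Definition~\ref{pye} (equivalently $t_{R}(a)^{\circledast^{-1}} = t_{L}(a^*)$, $s_{R}(a)^{\circledast^{-1}} = s_{L}(a^*)$), one gets $s_{L}(a)\la\overline m = \overline{m\ra s_{R}(a^*)} = \overline{t_{L}(a^*)\la m}$, while $\overline m\ra t_{R}(a) = \overline{t_{L}(a^*)\la m}$, so the two agree; the $t_L/s_R$ case is identical. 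This also confirms that the $A$-bimodule structure coming from the $\cL$- and $\cR$-actions on $\overline M$ coincides with the conjugate bimodule structure $a\,\overline m\,a' = \overline{a'^*\,m\,a^*}$, since e.g. $a.\overline m = s_{L}(a)\la\overline m = \overline{t_{L}(a^*)\la m} = \overline{m.a^*}$ and $\overline m.a' = \overline m\ra s_{R}(a') = \overline{t_{L}(a'^*)\la m} = \overline{a'^*.m}$, matching the bimodule rule; thus $F\circ bar = bar\circ F$ holds strictly.

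Finally I would check that $bb_M:M\to\overline{\overline M}$ and $\Upsilon_{M,N}:\overline{M\otimes_A N}\to\overline N\otimes_A\overline M$ are morphisms in ${}_{\cL}\CJ_{\cR}$; the $A$-bimodule and monoidal-coherence axioms (a), (b) are then inherited from ${}_A\CM_A$. For $bb_M$ one computes $bb_M(m)\ra Y = \overline{\overline m}\ra Y = \overline{\overline{Y^{\circledast^{-1}}\la m}}$ wait—more carefully $\overline{\overline m}\ra Y = \overline{Y^{\circledast^{-1}}\la\overline m} = \overline{\overline{m\ra(Y^{\circledast^{-1}})^{\circledast}}}$, and $(Y^{\circledast^{-1}})^{\circledast} = Y$, so $bb_M(m)\ra Y = \overline{\overline{m\ra Y}} = bb_M(m\ra Y)$; the $\cL$-action case is the same with $\circledast\circ\circledast^{-1} = \id$. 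For $\Upsilon$, since the action on a tensor product is via $\Delta_{L}$ on the left and $\Delta_{R}$ on the right, the computation $\Upsilon\big((\overline{m\otimes_A n})\ra Y\big)$ unwinds using $\Delta_{R}(Y^{\circledast^{-1}}) $ versus $\mathrm{flip}(\circledast^{-1}\otimes\circledast^{-1})\Delta_{L}(Y)$—that is, the anti-coalgebra property of $\circledast$ from Definition~\ref{pye}, which is precisely what makes the Sweedler legs swap correctly under the flip in $\Upsilon$, exactly as in the $\cR$-colinearity check at the end of the proof of Proposition~\ref{propfullbar}. The main obstacle, and the one place to be careful, is precisely this last point: matching the coproduct legs through $\Upsilon$ requires the flip-twisted compatibility $\mathrm{flip}(\circledast\otimes\circledast)\Delta_{L} = \Delta_{R}\circ\circledast$ (and its inverse form), together with Lemma~\ref{lem. star exchange the balanced tensor product} to know everything descends to the relevant balanced tensor products; everything else is a routine application of antimultiplicativity and the $s,t$-intertwining relations.
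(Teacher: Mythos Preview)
Your proposal is correct and follows essentially the same route as the paper's proof: verify that the conjugate actions are genuine actions via antimultiplicativity of $\circledast$, check the joint-module compatibility via the $s,t$-intertwining relations of Definition~\ref{pye}, confirm that $bb$ intertwines using $(\circledast)\circ(\circledast^{-1})=\id$, and handle $\Upsilon$ via the anti-coalgebra property $\mathrm{flip}(\circledast\tens\circledast)\Delta_L=\Delta_R\circ\circledast$. One small slip: in the line ``$\overline m.a' = \overline m\ra s_{R}(a') = \overline{t_{L}(a'^*)\la m}$'' the $t_L$ should be $s_L$ (since $s_R(a')^{\circledast^{-1}}=s_L(a'^*)$), but the conclusion $\overline{a'^*.m}$ is correct.
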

\begin{proof}
    First, we can see
    \begin{align*}
        \overline{m}\ra t_{R}(a)=\overline{m}\ra t_{L}(a^*)^{\circledast}=\overline{t_{L}(a^*)\la m}=\overline{ma^*}=a\overline{m},
    \end{align*}
    and
    \begin{align*}
        s_{L}(a)\la \overline{m}=s_{R}(a^*)^{\circledast^{-1}}\la \overline{m}=\overline{m\ra s_{R}(a^*)}=\overline{ma^*}=a\overline{m}.
    \end{align*}
    So we have $s_{L}(a)\la \overline{m}=\overline{m}\ra t_{R}(a)$. Similarly, $t_{L}(a)\la \overline{m}=\overline{m}\ra s_{R}(a)$. Second, we can check
    \begin{align*}
        \overline{m}\ra (YZ)=\overline{(YZ)^{\circledast^{-1}}\la m}=\overline{Z^{\circledast^{-1}}\la(Y^{\circledast^{-1}}\la m)}=\overline{(Y^{\circledast^{-1}}\la m)}\ra Z=(\overline{m}\ra Y)\ra Z,
    \end{align*}
    for any $Y, Z\in \cR$. Similarly, $(XW)\la \overline{m}=X\la(W\la \overline{m})$, for any $X, W\in \cL$. To see this is a bar category,
    \begin{align*}
        \Upsilon(\overline{m\ot_{A}n}\ra Y)=&\Upsilon(\overline{Y^{\circledast^{-1}}\la (m\ot_{A}n)})=\Upsilon(\overline{Y^{\circledast^{-1}}\o\la m\ot_{A}Y^{\circledast^{-1}}\t\la n})\\
        =&\Upsilon(\overline{Y\rt{}^{\circledast^{-1}}\la m\ot_{A}Y\ro{}^{\circledast^{-1}}\la n})=\overline{Y\ro{}^{\circledast^{-1}}\la n}\ot_{A}\overline{Y\rt{}^{\circledast^{-1}}\la m}\\
        =&\overline{n}\ra Y\ro\ot_{A}\overline{m}\ra Y\rt=\Upsilon(\overline{m\ot_{A}n})\ra Y.
    \end{align*}
    Similarly, $\Upsilon(X\la\overline{(m\ot_{A}n)})=X\la\Upsilon(\overline{m\ot_{A}n})$. Moreover,
    \begin{align*}
        X\la bb(m)=X\la\overline{\overline{m}}=\overline{\overline{m}\ra X^{\circledast}}=\overline{\overline{X\la m}}.
    \end{align*}
    Similarly, $bb(m)\ra Y=\overline{\overline{m\ra Y}}$.
\end{proof}

Let $\cL$ be a left $A^{op}$-bialgebroid and $\cR$ be a right $A$-bialgebroid. Define $\CJ^{(\cL,\cR)}$ the the category of right $\cL$-comodule and right $\cR$-comodule, such that the underlying $A$-bimodules are the same.
Similarly to Proposition \ref{propfullbar} for full $*$-Hopf algebroids, we have:

\begin{proposition}
    Let $\cL$ be a left $A^{op}$-bialgebroid and $\cR$ be a right $A$-bialgebroid, such that $(\cL^{cop}, \cR)$ be $*$-related. $\CJ^{(\cL,\cR)}$ is a bar category with
    \[\delta_{R}(\overline{\rho})=\overline{\rho\z}\ot_{A}\rho\o{}^{\circledast},\quad \delta_{L}(\overline{\rho})=\overline{\rho\rz}\ot_{A}\rho\ro{}^{\circledast^{-1}},\]
for all $\rho\in \Gamma\in \CJ^{(\CL,\cR)}$.
\end{proposition}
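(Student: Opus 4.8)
The plan is to follow the proof of Proposition~\ref{propfullbar}(1) essentially line by line, since $\CJ^{(\cL,\cR)}$ is to the $*$-related pair $(\cL^{cop},\cR)$ what $\CM^{\CH}$ is to a full $*$-Hopf algebroid, except that no compatibility between the two coactions is imposed beyond agreement of the underlying $A$-bimodules. Write $\circledast\colon\cL\to\cR$ for the invertible antilinear anti-algebra map witnessing that $(\cL^{cop},\cR)$ is $*$-related. Since $\cL$ is a left $A^{op}$-bialgebroid, its co-opposite $\cL^{cop}$ is a left $A$-bialgebroid with $s^{cop}_{L}=t_{L}$, $t^{cop}_{L}=s_{L}$ and $\Delta^{cop}_{L}(X)=X\t\ot X\o$ by (\ref{LLcop}); substituting this into the conditions of Definition~\ref{pye} (the flip there cancelling the flip coming from $cop$) gives $s_{R}(a^{*})=t_{L}(a)^{\circledast}$, $t_{R}(a^{*})=s_{L}(a)^{\circledast}$, $\varepsilon_{R}(X^{\circledast})=\varepsilon_{L}(X)^{*}$ and, in $\cL$-Sweedler notation, $X^{\circledast}{}\ro\ot X^{\circledast}{}\rt=X\o{}^{\circledast}\ot X\t{}^{\circledast}$ for $X\in\cL$ (the analogue of the last line of Definition~\ref{def. star Hopf algebroids}). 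Applying $\circledast^{-1}$ to both legs and relabelling yields the companion identity $X^{\circledast^{-1}}{}\o\ot X^{\circledast^{-1}}{}\t=X\ro{}^{\circledast^{-1}}\ot X\rt{}^{\circledast^{-1}}$ for $X\in\cR$. These two identities say precisely that $\circledast$ and $\circledast^{-1}$ are anti-coalgebra maps, and they are what make the conjugate $\cR$-coaction on $\overline{\Gamma}$ come from the $\cL$-coaction of $\Gamma$ and the conjugate $\cL$-coaction come from the $\cR$-coaction.

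First I would check that the two conjugate coactions are well defined: by the same argument as Lemma~\ref{lem. star exchange the balanced tensor product}, the exchange of source and target maps shows that $\overline{\rho}\mapsto\overline{\rho\z}\ot\rho\o{}^{\circledast}$ and $\overline{\rho}\mapsto\overline{\rho\rz}\ot\rho\ro{}^{\circledast^{-1}}$ factor through the relevant balanced tensor products over $A$ and have image in $\overline{\Gamma}\times_{A}\cR$ and $\overline{\Gamma}\times_{A}\cL$ respectively. Then $A$-bilinearity for the conjugate bimodule structure $a\overline{\rho}a'=\overline{a'^{*}\rho a^{*}}$ follows from the $A$-colinearity of $\delta_{L},\delta_{R}$ on $\Gamma$ together with the $s,t$-exchange above, exactly as in the first display of the proof of Proposition~\ref{propfullbar}(1).

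The core step is to verify that $\delta_{R}$ makes $\overline{\Gamma}$ a right $\cR$-comodule and $\delta_{L}$ a right $\cL$-comodule. Coassociativity of $\delta_{R}$ on $\overline{\Gamma}$ is a formal consequence of coassociativity of $\delta_{L}$ on $\Gamma$ and the identity $X^{\circledast}{}\ro\ot X^{\circledast}{}\rt=X\o{}^{\circledast}\ot X\t{}^{\circledast}$: writing $\rho\z\z\ot\rho\z\o\ot\rho\o$ for the iterated $\cL$-coaction, both $(\delta_{R}\ot\id)\delta_{R}(\overline{\rho})$ and $(\id\ot\Delta_{R})\delta_{R}(\overline{\rho})$ reduce to $\overline{\rho\z\z}\ot(\rho\z\o)^{\circledast}\ot(\rho\o)^{\circledast}$. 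Counitality follows from $\varepsilon_{R}(X^{\circledast})=\varepsilon_{L}(X)^{*}$ and the counit axiom for the $\cL$-comodule $\Gamma$, bearing in mind that the right $A$-action on $\overline{\Gamma}$ is the left $A$-action on $\Gamma$ precomposed with $*$. The corresponding statements for $\delta_{L}$ on $\overline{\Gamma}$ are the same computations run with $\circledast^{-1}$ in place of $\circledast$ and the right $\cR$-comodule axioms of $\Gamma$ in place of the $\cL$-ones, using the companion identity. Since the underlying $A$-bimodule of $\overline{\Gamma}$ in both structures is the one fixed bar-bimodule, the two coactions are over the same base, so $\overline{\Gamma}\in\CJ^{(\cL,\cR)}$.

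Finally, that $\overline{\phi}$ is a morphism in $\CJ^{(\cL,\cR)}$ whenever $\phi$ is, that $\overline{(\,\cdot\,)}$ is monoidal, and that $\Upsilon_{\Gamma,\Lambda}$, $bb_{\Gamma}$ and $\star\colon 1\to\overline{1}$ are morphisms in $\CJ^{(\cL,\cR)}$, is checked exactly as in Proposition~\ref{propfullbar}(1) and the $A$-bimodule bar category of \cite{BM09}: these maps are already morphisms of $A$-bimodules, and their right $\cR$- and right $\cL$-colinearity is the same direct calculation as there, using the two Sweedler identities for $\Delta_{R}\circ\circledast$ and $\Delta_{L}\circ\circledast^{-1}$. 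I expect the only real obstacle to be bookkeeping: keeping straight the three base conventions ($\cL$ over $A^{op}$, $\cL^{cop}$ over $A$, $\cR$ over $A$), the flip built into Definition~\ref{pye}, and the $*$-twist in the bar-bimodule, so that the coassociativity and Takeuchi-product membership checks line up; once the two displayed identities are in hand, everything else is routine.
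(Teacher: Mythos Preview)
Your proposal is correct and follows essentially the same approach as the paper's own proof: both check $A$-bilinearity of the conjugate coactions using the $s,t$-exchange from $*$-relatedness, and then observe that colinearity of $\Upsilon$ and $bb$ is a direct computation. Your write-up is in fact considerably more detailed than the paper's, which verifies only the bilinearity display and then declares the rest ``straightforward''; in particular you spell out the well-definedness over the balanced tensor products and the coassociativity/counitality of the conjugate coactions, which the paper omits entirely.
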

\begin{proof}We can see that for any $a, a'\in A$
    \begin{align*}
        \delta_{R}(a\overline{\rho}a')=&\delta_{R}(\overline{a'^{*}\rho a^*})=\overline{\rho\z}\ot_{A}(t_{L}(\und{a'^{*}})\rho\o t_{L}(\und{a^*}))^\circledast
=\overline{\rho\z}\ot_{A}s_{R}(a)\rho\o{}^\circledast s_{R}(a').
    \end{align*}
    Similarly, $\delta_{L}(a\overline{\rho}a')=\overline{\rho\rz}\ot_{A}t_{L}(\und{a})\rho\ro{}^{\circledast^{-1}} t_{L}(\und{a'})$. It is straightforward computation to check $(\Upsilon\ot \id)\circ \delta_{R}=\delta_{R}\circ \Upsilon$, $(\Upsilon\ot \id)\circ \delta_{L}=\delta_{L}\circ \Upsilon$, and $\delta_{R}\circ bb=(bb\ot \id)\circ \delta_{R}$, $\delta_{L}\circ bb=(bb\ot \id)\circ \delta_{L}$.
\end{proof}

\begin{example} For the case of Example~\ref{fullcase} of a full Hopf algebroid, we can apply the above to $\CL^{cop},\CR$, then (a) the category $\CJ^{(\CL^{cop},\CR)}$ amounts  to Definitiion \ref{def. comodule of full Hopf algebroid}  without the 2nd conditon  (b) in the full $*$-Hopf algebroid, we have $\CL^{cop}$ and $\CR$ $*$-related and we have a bar category for the category (a).
\end{example}

\begin{proposition}\label{prop. reflexive module equivalence} \label{pux}
 Let $\CL,\CR$  be left and right bialgebroids over a base $A$. We say that the pair $(\CL,\CR)$ is {\em reflexive} if there is a map $\Phi:\mathcal{L} \to \mathcal{R}$  which is an invertible linear anitalgebra coalgebra map such that
\[ \Phi(1)=1,\quad \Phi\circ s_{L}=t_{R},\quad \Phi\circ t_{L}=s_{R}.\]
Then the equation
\[
X\la m = m\ra\Phi(X)
\]
gives a monoidal isomorphism  ${}_\mathcal{L}\mathcal{M}\cong\mathcal{M}{}_\mathcal{R}$ compatible with the respective forgetful functors to ${}_A\CM_A$.  Moveover,  ${}_\CL\CM, \CM_\CR\hookrightarrow {}_\CL\CJ_\CR$ are (isomorphic) full subcatgories, where the action of the other side is given by the displayed equation.
\end{proposition}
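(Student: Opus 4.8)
The plan is to move module structures back and forth along $\Phi$ and $\Phi^{-1}$, verifying in turn that this gives a well-defined isomorphism of categories, that it respects the forgetful functors, that it is monoidal, and finally that both ${}_\CL\CM$ and $\CM_\CR$ land inside ${}_\CL\CJ_\CR$ as the same full subcategory. Concretely, I would define $G:{}_\CL\CM\to\CM_\CR$ to be the identity on underlying spaces, giving a left $\CL$-module $(M,\la)$ the right action $m\ra Y:=\Phi^{-1}(Y)\la m$; since $\Phi^{-1}$ is again an invertible anti-algebra map with $\Phi^{-1}(1)=1$, we get $(m\ra Y)\ra Z=(\Phi^{-1}(Z)\Phi^{-1}(Y))\la m=\Phi^{-1}(YZ)\la m=m\ra(YZ)$ and $m\ra 1=m$, so $G(M)$ is a right $\CR$-module, with inverse $G'$ sending $(N,\ra)$ to $X\la m:=m\ra\Phi(X)$. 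On morphisms both functors are the identity map, and a map is $\CL$-linear if and only if it is $\CR$-linear for the transported actions (since $f(m\ra Y)=f(\Phi^{-1}(Y)\la m)=\Phi^{-1}(Y)\la f(m)=f(m)\ra Y$, and conversely), so $G$ and $G'$ are mutually inverse isomorphisms of categories.

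Compatibility with the forgetful functors to ${}_A\CM_A$ then follows from $\Phi\circ s_L=t_R$, $\Phi\circ t_L=s_R$ and the commutation of $s_L$ with $t_L$: the $\CR$-induced bimodule structure on $G(M)$ is $a.m.b=m\ra t_R(a)s_R(b)=m\ra\Phi(s_L(a))\Phi(t_L(b))=m\ra\Phi\big(t_L(b)s_L(a)\big)=m\ra\Phi\big(s_L(a)t_L(b)\big)=\big(s_L(a)t_L(b)\big)\la m$, which is exactly the $\CL$-induced one.

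For monoidality, note that since the two forgetful functors agree, $M\ot_A N$ is literally the same $A$-bimodule whether formed in ${}_\CL\CM$ or in $\CM_\CR$, so it is enough to check that $G$ identifies the two coproduct actions on it. Here the hypothesis that $\Phi$ is a coalgebra map enters, in the form $(\Phi\ot_A\Phi)\circ\Delta_L=\Delta_R\circ\Phi$; granting it, for $m\ot n\in M\ot_A N$ and $Y\in\CR$ one computes $(m\ot n)\ra Y=\Phi^{-1}(Y)\la(m\ot n)=(\Phi^{-1}(Y)\o\la m)\ot(\Phi^{-1}(Y)\t\la n)=\big(m\ra\Phi(\Phi^{-1}(Y)\o)\big)\ot\big(n\ra\Phi(\Phi^{-1}(Y)\t)\big)=(m\ra Y\ro)\ot(n\ra Y\rt)$, using $\Phi(\Phi^{-1}(Y)\o)\ot\Phi(\Phi^{-1}(Y)\t)=\Delta_R(Y)$. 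So $G$ is strictly monoidal with identity coherence morphisms, and it carries the unit $A$ with its $\varepsilon_L$-action to $A$ with its $\varepsilon_R$-action by the counit part $\varepsilon_R\circ\Phi=\varepsilon_L$ of the coalgebra-map condition together with the $s/t$-relations.

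Finally, any $M\in{}_\CL\CM$ with the added action $m\ra Y:=\Phi^{-1}(Y)\la m$ satisfies the compatibility conditions of Definition~\ref{oru}, since $m\ra t_R(a)=\Phi^{-1}(t_R(a))\la m=s_L(a)\la m$ and $m\ra s_R(a)=\Phi^{-1}(s_R(a))\la m=t_L(a)\la m$; this gives a functor ${}_\CL\CM\hookrightarrow{}_\CL\CJ_\CR$, and symmetrically $\CM_\CR\hookrightarrow{}_\CL\CJ_\CR$ via $X\la m:=m\ra\Phi(X)$. Both are fully faithful: a ${}_\CL\CJ_\CR$-morphism between objects of this shape is in particular $\CL$-linear (resp. $\CR$-linear), and conversely any $\CL$-linear map between them is automatically $\CR$-linear (resp. any $\CR$-linear map $\CL$-linear) by the computation above; composing $G$ with the second embedding recovers the first, so the two have the same image and are thus isomorphic full subcategories. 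I expect the only genuinely delicate point to be the internal consistency of the phrase ``coalgebra map'' for $\Phi$ between a left and a right bialgebroid over the same base --- i.e. that $\Phi\ot_A\Phi$ descends to $\CL\ot_A\CL\to\CR\ot_A\CR$ and carries $\CL\times_A\CL$ into $\CR\times_A\CR$ (again a consequence of the $s/t$-exchange and the $s_L$--$t_L$ commutation); everything else is a routine Sweedler-notation check parallel to the classical Hopf-algebra case.
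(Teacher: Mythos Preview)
Your proof is correct and follows essentially the same route as the paper's: transport actions along $\Phi$ and $\Phi^{-1}$, check the module axioms via the anti-algebra property, the forgetful-functor compatibility via $\Phi\circ s_L=t_R$, $\Phi\circ t_L=s_R$, and the monoidal compatibility via the coalgebra-map property. The paper's version is terser (it starts from $\CM_\CR$ rather than ${}_\CL\CM$, and does not spell out the morphism check, the full-faithfulness, or the well-definedness of $\Phi\ot_A\Phi$ that you rightly flag), but the argument is the same.
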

\noindent\textbf{Proof:}\quad Suppose that $M\in \mathcal{M}{}_\mathcal{R}$. We define a left action of $\mathcal{L}$ on $M$ by the displayed equation in the statement for $X\in \mathcal{L}$ and $m\in M$. Now
\[
X\la (Y\la m) = X\la (m\ra\Phi(Y))= m\ra\Phi(Y)\Phi(X)= m\ra\Phi(XY)=(XY)\la m
\]
and for $n\in N\in \mathcal{M}{}_\mathcal{R}$
\begin{align*}
X\la (n\tens m) &=  (n\tens m)  \ra \Phi(X) = n\ra \Phi(X)\ro \tens m\ra \Phi(X)\rt \cr
&= n\ra \Phi(X\o) \tens m\ra \Phi(X\t ) = X\o\la n\tens  X\t\la m\ .
\end{align*}
For the forgetful functors, we have
\[
s_{L}(a) \la m = m\ra \Phi(s_{L}(a) ) = m\ra t_{R}(a)
\]
so the two expressions for $a.m$ agree. On swapping $s$ and $t$ the two  expressions for $m.a$ agree.
As $\Phi$ is invertible we can define a right $\mathcal{R}$ action on a left $\mathcal{L}$ module $P$
by $p\ra z=\Phi^{-1}(z) \la p$, and the other way round follows. \hfill $\square$

\begin{definition} A $*$-bialgebroid pair $(\CL,\CR)$ over $A$ means both $*$-related by $\circledast$ and reflexive by $\Phi$, such that $\Phi^{-1}\circ \circledast=\circledast^{-1}\circ \Phi$.  A $*$-Hopf algebroid pair $(\CL,\CR)$ is the same when $\CL$ is a left and anti-left Hopf algebroid (or $\CR$ is a right and anti-right Hopf algebroid).  \end{definition}

A motivation behind this definition, aside from the fact that it results in a bar category as we shall see, is that in this case  $\theta:=\Phi^{-1}\circ \circledast:\CL\to \CL$ is then an antilinear algebra and anti-coalgebra map which plays the role of the composite $*\circ S$ in the case of an usual Hopf $*$-algebra. The two versions of the condition for a $*$-bialgebroid pair to be a $*$-Hopf algebroid pair are equivalent due to the reflexivity.

\begin{theorem}\label{thmpairbar}
    If $(\cL, \cR)$ is a $*$-bialgebroid pair then  ${}_\CL\CM$ and  $\CM_{\cR}$ are bar categories such their inclusion in ${}_\CL\CJ_\CR$ in Proposition \ref{prop. reflexive module equivalence} is as bar categories. Here if $m\in M\in \CM_\CR$,
    \[ \overline{m}\ra Y=\overline{m\ra \Phi(Y^{\circledast^{-1}})}.\]
    \end{theorem}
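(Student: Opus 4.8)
The plan is to reduce everything to the already-proven results. By Proposition~\ref{pux} (reflexivity) we already have the monoidal isomorphism ${}_\CL\CM\cong\CM_\CR$ and the identification of both with full subcategories of ${}_\CL\CJ_\CR$ via $X\la m=m\ra\Phi(X)$. Also, by Proposition~\ref{prop. star related module is bar category}, the larger category ${}_\CL\CJ_\CR$ is already a bar category with $\overline{m}\ra Y=\overline{Y^{\circledast^{-1}}\la m}$ and $X\la\overline{m}=\overline{m\ra X^{\circledast}}$. So the only thing that really needs checking is that the bar functor on ${}_\CL\CJ_\CR$ \emph{restricts} to the subcategories ${}_\CL\CM$ and $\CM_\CR$; i.e.\ that the conjugate $\overline{M}$ of an object of (say) $\CM_\CR$, with its induced left $\CL$-action, actually lies in the subcategory — equivalently, that the left and right actions on $\overline{M}$ are related by the \emph{same} $\Phi$, $X\la\overline{m}=\overline{m}\ra\Phi(X)$. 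Once this is established, $\Upsilon$ and $bb$ are inherited automatically since they are already bar-category data on ${}_\CL\CJ_\CR$ and the inclusions are monoidal, so the inclusions are bar functors.

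First I would work out the induced right $\CR$-action on $\overline{M}$ when $M\in\CM_\CR$. The left $\CL$-action on $\overline M$ coming from Proposition~\ref{prop. star related module is bar category} is $X\la\overline m=\overline{m\ra X^{\circledast}}$. To see $\overline M$ is in the image of $\CM_\CR$, I compute the right $\CR$-action that this left action forces through the isomorphism of Proposition~\ref{pux}: it is $\overline m\ra Y:=\Phi^{-1}(Y)\la\overline m=\overline{m\ra\Phi^{-1}(Y)^{\circledast}}=\overline{m\ra(\circledast\circ\Phi^{-1})(Y)}$. Now use the defining hypothesis of a $*$-bialgebroid pair, $\Phi^{-1}\circ\circledast=\circledast^{-1}\circ\Phi$, which rearranges (applying $\Phi$ on the left and $\circledast^{-1}$ on the right, using that $\Phi,\circledast$ are invertible) to $\circledast\circ\Phi^{-1}=\Phi\circ\circledast^{-1}$. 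Hence $\overline m\ra Y=\overline{m\ra\Phi(Y^{\circledast^{-1}})}$, which is exactly the displayed formula in the statement and manifestly a right $\CR$-action on $\overline M$. So the formula $X\la\overline m=\overline{m\ra X^{\circledast}}$ together with $\overline m\ra Y=\overline{m\ra\Phi(Y^{\circledast^{-1}})}$ is consistent with $X\la\overline m=\overline m\ra\Phi(X)$, i.e.\ $\overline M\in\CM_\CR\hookrightarrow{}_\CL\CJ_\CR$; symmetrically for ${}_\CL\CM$.

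Next I would note that $\Upsilon_{M,N}$ and $bb_M$ are already morphisms in ${}_\CL\CJ_\CR$ by Proposition~\ref{prop. star related module is bar category}, and since they are built from the underlying ${}_A\CM_A$ bar structure and the inclusions ${}_\CL\CM,\CM_\CR\hookrightarrow{}_\CL\CJ_\CR$ are monoidal and compatible with the forgetful functors, the coherence axioms (a), (b) and the monoidality of $bar$ descend verbatim to the subcategories; likewise $\star\colon1\to\overline1$ is the same. Thus ${}_\CL\CM$ and $\CM_\CR$ are bar categories and the inclusions are bar functors, and under the monoidal isomorphism ${}_\CL\CM\cong\CM_\CR$ the two bar structures correspond. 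The main obstacle is the single computation above showing closure of the conjugate under the subcategory, and this is exactly where the compatibility condition $\Phi^{-1}\circ\circledast=\circledast^{-1}\circ\Phi$ in the definition of a $*$-bialgebroid pair is used; everything else is transport of structure. One should also double-check that $\theta=\Phi^{-1}\circ\circledast$ being an antilinear algebra and anti-coalgebra map (as remarked after the definition) is consistent with the induced action being well defined over the relevant balanced tensor products, but this is routine given Lemma~\ref{lem. star exchange the balanced tensor product} and the argument already carried out in Proposition~\ref{prop. star related module is bar category}.
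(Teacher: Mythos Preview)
Your approach is essentially the same as the paper's: restrict the bar structure on ${}_\CL\CJ_\CR$ from Proposition~\ref{prop. star related module is bar category} to the image of $\CM_\CR$ using the compatibility $\Phi^{-1}\circ\circledast=\circledast^{-1}\circ\Phi$, and the paper likewise finishes with a direct check of the $A$-bimodule, $bb$ and $\Upsilon$ compatibilities. One small point to make explicit: after computing the ``forced'' right action $\Phi^{-1}(Y)\la\overline m=\overline{m\ra\Phi(Y^{\circledast^{-1}})}$, you should note that this coincides with the bar-structure right action $\overline m\ra Y=\overline{Y^{\circledast^{-1}}\la m}=\overline{m\ra\Phi(Y^{\circledast^{-1}})}$ (the last equality by reflexivity on $M$), since an object of ${}_\CL\CJ_\CR$ carries \emph{both} actions and closure under the bar functor means both must be $\Phi$-related---the paper does this comparison as its first displayed line.
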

\begin{proof} As $\CL,\CR$ are $*$-related, we know that ${}_\CL\CJ_\CR$ is a bar category. As the pair is also reflexive and $\Phi$ is compatible with $*$, the bar category structure restricts to the image of  $\CM_\CR$, say (similarly for ${}_\CL\CM$). Thus, we consider $m\in M\in \CM_\CR$ viewed via $\Phi$ in ${}_\CL\CJ_\CR$ and check using the bar category structure there that
    \[\overline{m}\ra Y=\overline{Y^{\circledast^{-1}}\la m}=\overline{m\ra \Phi(Y^{\circledast^{-1}})}\]
 for the conjugate left action for all $Y\in \cR$ as stated.  There is also a conjugate left action as an object of ${}_{\CL}\CJ_\CR$ and we check that the result is still in the image of $\CM_\CR$,
    \[ X\la \overline{m}=\overline{m\ra X^{\circledast}}=\overline{\Phi^{-1}(X^{\circledast})\la m}=\overline{\Phi(X)^{\circledast^{-1}}\la m}=\overline{m}\ra \Phi(X).\]
for all $X\in\cL$.

One can also see directly that  $\CM{}_{\cR}$ is a bar category with the stated conjugate action. Indeed,
\begin{align*}
        \overline{m}\ra s_{R}(a')t_{R}(a)=&\overline{m\ra \Phi(s_{R}(a')^{\circledast^{-1}}t_{R}(a)^{\circledast^{-1}})}=\overline{m\ra \Phi(s_{L}(a'^*)t_{L}(a^*))}=\overline{\rho\ra t_{R}(a'^*)s_{R}(a^*)}\\
        =&\overline{a'^{*}m a^{*}}=a\overline{m}a'.
    \end{align*}
   Also we can see
    \begin{align*}
        bb(m)\ra Y=\overline{\overline{m}}\ra Y=\overline{\overline{m}\ra \Phi(Y^*)}=\overline{\overline{m\ra \Phi(\Phi(Y^{\circledast^{-1}})^{\circledast^{-1}})}}=\overline{\overline{m\ra Y}},
    \end{align*}
    and
    \begin{align*}
        \Upsilon((\overline{m\ot_{A}n})\ra Y)=&\Upsilon(\overline{m\ra \Phi(Y^{\circledast^{-1}})\ro\ot_{A}n\ra \Phi(Y^{\circledast^{-1}})\rt })\\
        =&\Upsilon(\overline{m\ra \Phi(Y^{\circledast^{-1}}\o)\ot_{A}n\ra \Phi(Y^{\circledast^{-1}}\t) })\\
        =&\Upsilon(\overline{m\ra \Phi(Y\rt{}^{\circledast^{-1}})\ot_{A}n\ra \Phi(Y\ro{}^{\circledast^{-1}}) })\\
        =&\overline{n\ra \Phi(Y\ro{}^{\circledast^{-1}})}\ot_{A}\overline{m\ra \Phi(Y\rt{}^{\circledast^{-1}})}\\
        =&\overline{n}\ra Y\ro\ot_{A}\overline{m}\ra Y\rt\\
        =&\Upsilon(\overline{m\ot_{A}n})\ra Y.
    \end{align*}
   Similarly, ${}_{\cL}\CM$ is also a bar category with
    \[X\la\overline{m}=\overline{\Phi^{-1}(X^\circledast)\la m},\]
    for any $x\in\cL$ and $m\in M\in {}_{\cL}\CM$. \end{proof}

\begin{example}\label{exfullpair}
   In the case of Remark~\ref{fullcase}, let $(\cL, \cR, S)$ be a full $*$-Hopf algebroid.  If we set $\Phi=S^{-1}$ and $\circledast=\star$ then $(\cL^{cop},\cR)$ is a $*$-Hopf algebroid pair. To prove this, note that
    $S^{-1}:\cL^{cop}\to \cR$ is an anitalgebra coalgebra map. We can also see, $S^{-1}\circ s_{L}^{cop}=S^{-1}\circ t_{L}=t_{R}$ and $S^{-1}\circ t_{L}^{cop}=S^{-1}\circ s_{L}=t_{L}=s_{R}$. Moreover, $\Phi$ is a coring map
    \begin{align*}
        S^{-1}(X\t)\ot S^{-1}(X\o)=S^{-1}(X)\ro\ot S^{-1}(X)\rt.
    \end{align*}
    As we know $\cL^{cop}$ and $\cR$ are $\star$-related and by Corollary~\ref{coro. antipode with star}, we have $S^{-1}\circ \star=\star\circ S$, so $(\cL^{cop}, \cR)$ is a $*$-Hopf algebroid pair.
\end{example}

\begin{proposition}\label{propES}
    Let $B\subseteq P$ be a faithfully flat $H$-Galois extension. If $P$ is a $*$-algebra and $H$ is a Hopf $*$-algebra with bijective antipode such that the coaction is unitary then $(\cL(P, H), \cL^{op}(P, H))$ is a $*$-Hopf algebroid pair, with
    \[(p\ot q)^{\circledast}=q^{*}\ot p^{*},\quad \Phi=\id,\]
    for any $p\ot q\in \cL(P, H)$.
\end{proposition}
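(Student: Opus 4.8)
The plan is to verify, for $\cL:=\cL(P,H)$ and $\cR:=\cL^{op}(P,H)$, the ingredients in the definition of a $*$-Hopf algebroid pair: that $\cL$ is simultaneously a left and an anti-left Hopf algebroid; that $(\cL,\cR)$ is reflexive via $\Phi=\id$; that $(\cL,\cR)$ is $*$-related via $\circledast\colon p\ot q\mapsto q^*\ot p^*$; and finally the compatibility $\Phi^{-1}\circ\circledast=\circledast^{-1}\circ\Phi$.

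First I would record the structure maps. By Definition~\ref{def:ec}, $\cL$ is a left $B$-bialgebroid over $B=P^{coH}$, which is a $*$-subalgebra of $P$ by unitarity of the coaction (for $b\in B$, $\delta_R(b^*)=(b\z)^*\ot(b\o)^*=b^*\ot 1$), and by \cite{HM22} it is a left Hopf algebroid; since $H$ has bijective antipode it is moreover an anti-left Hopf algebroid, the canonical map $\mu$ of Definition~\ref{def. left Hopf and anti-Hopf algebroids} being inverted through the anti-right translation map $h\mapsto h^{[1]}\ot_B h^{[2]}$ of \cite{HS24}. Then by Proposition~\ref{prop. oppo bialgebroid}, $\cR=\cL^{op}$ is a right $B$-bialgebroid with opposite product $(p\ot q)\cdot_\cR(r\ot u)=rp\ot qu$, with $s_R=t_L$, $t_R=s_L$, and, since $s_L$ and $t_L$ commute so that $\cR$ and $\cL$ carry the same underlying $B$-bimodule structure (hence $\cR\ot_B\cR=\cL\ot_B\cL$ and $\cR\times_B\cR=\cL\times_B\cL$), with $\Delta_R=\Delta_L$ and $\varepsilon_R=\varepsilon_L$. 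Reflexivity with $\Phi=\id$ is then immediate: $\id$ is invertible, fixes $1$, is an anti-algebra map $\cL\to\cR$ because $\Phi(XY)=XY=Y\cdot_\cR X=\Phi(Y)\cdot_\cR\Phi(X)$, is a coalgebra map because $\Delta_R\circ\Phi=\Delta_L=(\Phi\ot\Phi)\circ\Delta_L$, and obeys $\Phi\circ s_L=s_L=t_R$ and $\Phi\circ t_L=t_L=s_R$.

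For $*$-relatedness I would first check that $\circledast$ is well defined, i.e.\ $q^*\ot p^*\in\cL(P,H)$ whenever $p\ot q\in\cL(P,H)$: evaluating the coinvariance map $x\ot y\mapsto x\z\ot y\z\ot x\o y\o$ on $q^*\ot p^*$ and using unitarity of the coaction and $(q\o)^*(p\o)^*=(p\o q\o)^*$ gives $(q\z)^*\ot(p\z)^*\ot(p\o q\o)^*$, which equals $q^*\ot p^*\ot 1$ by applying the antilinear leg-wise map that also flips the first two factors to the defining relation $p\z\ot q\z\ot p\o q\o=p\ot q\ot 1$. Then $\circledast$ is antilinear and invertible with $\circledast^{-1}\colon r\ot u\mapsto u^*\ot r^*$; it is an anti-algebra map $\cL\to\cR$ since $((p\ot q)(r\ot u))^\circledast=(pr\ot uq)^\circledast=q^*u^*\ot r^*p^*=(u^*\ot r^*)\cdot_\cR(q^*\ot p^*)$; it intertwines source and target, $s_L(b)^\circledast=1\ot b^*=s_R(b^*)$ and $t_L(b)^\circledast=b^*\ot 1=t_R(b^*)$; and it intertwines the counits, $\varepsilon_R((p\ot q)^\circledast)=q^*p^*=(pq)^*=\varepsilon_L(p\ot q)^*$. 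The compatibility $\Phi^{-1}\circ\circledast=\circledast^{-1}\circ\Phi$ then holds because, $\Phi$ being the identity on the underlying space, both composites are the map $p\ot q\mapsto q^*\ot p^*$; note $\theta:=\Phi^{-1}\circ\circledast$ is then the ``flip'' antipode of Proposition~\ref{propEScom} post-composed with $*$, even though no such antipode exists for noncommutative $H$.

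The one genuinely computational point, which I expect to be the main obstacle, is the coproduct condition $\mathrm{flip}(\circledast\ot\circledast)\circ\Delta_L=\Delta_R\circ\circledast$; that both sides descend to the common balanced tensor product $\cL\ot_B\cL=\cR\ot_B\cR$ follows as in Lemma~\ref{lem. star exchange the balanced tensor product}. Unpacking with $\Delta_L(p\ot q)=p\z\ot p\o\tuno{}\ot_B p\o\tdue{}\ot q$ and $\Delta_R=\Delta_L$, the left-hand side is $(q^*\ot(p\o\tdue{})^*)\ot_B((p\o\tuno{})^*\ot(p\z)^*)$, while the right-hand side, after unitarity and Lemma~\ref{lem. *-Hopf Galois extension} to evaluate the translation map on a $*$, becomes $(q\z)^*\ot(S^{-1}(q\o)\tdue{})^*\ot_B(S^{-1}(q\o)\tuno{})^*\ot p^*$; since $*$ is injective leg-wise, the claim reduces to the identity $q\ot p\o\tdue{}\ot_B p\o\tuno{}\ot p\z=q\z\ot S^{-1}(q\o)\tdue{}\ot_B S^{-1}(q\o)\tuno{}\ot p$ in $P\ot P\ot_B P\ot P$, which I would prove from the relation $p\z\ot S(p\o)\ot q=p\ot q\o\ot q\z$ on $\cL(P,H)$ (equivalently $p\z\ot p\o\ot q=p\ot S^{-1}(q\o)\ot q\z$, using $S$ bijective) together with the translation-map identities (\ref{equ. translation map 1})--(\ref{equ. translation map 5}); this is the step where faithful flatness and bijectivity of $S$ are really used, and where a reduction to checking on $s_L(B)$, $t_L(B)$ and a generating set of $\cL(P,H)$ would be convenient. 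Granting this identity, $(\cL,\cR)$ is $*$-related, reflexive and compatible, and $\cL$ is left and anti-left Hopf, so $(\cL(P,H),\cL^{op}(P,H))$ is a $*$-Hopf algebroid pair with $\circledast$ and $\Phi=\id$ as stated.
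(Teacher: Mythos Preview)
Your proposal is correct and follows essentially the same route as the paper's proof: you verify well-definedness, the anti-algebra property, the source/target/counit compatibilities, reflexivity via $\Phi=\id$, the compatibility $\Phi^{-1}\circ\circledast=\circledast^{-1}\circ\Phi$, and identify the coproduct condition as the only nontrivial step, to be settled using Lemma~\ref{lem. *-Hopf Galois extension} together with the relation $p\z\ot p\o\ot q=p\ot S^{-1}(q\o)\ot q\z$ on $\cL(P,H)$. The paper carries out exactly this computation (no reduction to a generating set is needed---the identity holds elementwise, and the translation-map identities beyond Lemma~\ref{lem. *-Hopf Galois extension} are not invoked), and for the anti-left Hopf property cites \cite{HM22} and \cite{HS25} rather than \cite{HS24}, but otherwise your outline matches the paper's argument.
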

\begin{proof}
    First, we can see $p\ot q\in \cL(P, H)$, $q^{*}\ot p^{*}\in \cL(P, H)$. Indeed,
    \begin{align*}
        \delta(q^{*}\ot p^{*})=&q\z{}^{*}\ot p\z{}^{*}\ot q\o{}^{*}p\o{}^{*}=q\z{}^{*}\ot p\z{}^{*}\ot (p\o q\o){}^{*}\\
        =&q^{*}\ot p^{*}\ot 1.
    \end{align*}
    We can see $\circledast$ is an anti-algebra map
    \begin{align*}
        ((p\ot q)\cdot_{\cL}(p'\ot q'))^{\circledast}=&(pp'\ot q'q)^\circledast=q^{*}q'^{*}\ot p'^{*}p^{*}=(q'^{*}\ot p'^{*})\cdot_{\cL^{op}}(q^{*}\ot p^{*})\\
        =&(p'\ot q')^{\circledast}\cdot_{\cL^{op}}(p\ot q)^{\circledast}.
    \end{align*}
   Moreover,
    \begin{align*}
        s_{L}(b)^\circledast=(b\ot 1)^\circledast=1\ot b^*=s_{L}^{op}(b^{*}),
    \end{align*}
    for any $b\in B$. Similarly, $t_{L}(b)^\circledast=t_{L}^{op}(b^*)$. And
    \begin{align*}
        \varepsilon((p\ot q)^\circledast)=q^* p^*=(pq)^*=(\varepsilon_{L}(p\ot q))^{*}=(\varepsilon_{L}^{op}(p\ot q))^{*}.
    \end{align*}
    Finally,
    \begin{align*}
        \Delta_{L}^{op}((p\ot q)^\circledast)=&\Delta_{L}((p\ot q)^\circledast)\\
        =&q^{*}\z\ot q^{*}\o\tuno{}\ot_{B}q^{*}\o\tdue{}\ot p^*\\
        =&q\z{}^{*}\ot S^{-1}(q\o)\tdue{}{}^{*}\ot_{B}S^{-1}(q\o)\tuno{}{}^{*}\ot p^*\\
        =&q^{*}\ot p\o\tdue{}{}^{*}\ot_{B}p\o\tuno{}{}^{*}\ot p\z{}^{*}\\
        =&(p\o\tdue{}\ot q)^\circledast\ot_{B}(p\z\ot p\o\tuno{})^\circledast\\
        =&\mathrm{flip}\circ (\circledast\ot\circledast)\circ\Delta_{L}(p\ot q).
    \end{align*}
    where the 3rd step uses Lemma \ref{lem. *-Hopf Galois extension}, the 4th step uses the fact that
    \[p\z\ot p\o\ot q=p\z\ot S^{-1}(p\t q\o)p\o\ot q\z=p\ot S^{-1}(q\o)\ot q\z.\]
    It is not hard to see $\Phi=\id$ is a reflexive structure by Proposition \ref{prop. oppo bialgebroid}. It is given by \cite{HM22} and \cite{HS25} that $\cL(P, H)$ is left and anti-left Hopf algebroid. Also, since $\id\circ \circledast=\circledast^{-1}\circ\id$, we have the result.
\end{proof}

To conclude this section we make some general remarks about $\CL,\CR$ pairs as above.

\begin{proposition}\label{propodot}  Given $\CL$, $\CR$ left and right bialgebroids over $A$, there is a left bialgebroid $\CL\odot \CR^{op}$ over $A$ such that ${}_{\CL\odot \CR^{op}}\CM={}_\CL\CJ_\CR$. We similarly define a right $A$-bialgebroid $\CL^{op}\und{\odot}\CR$ such that $\CM_{\CL^{op}\und{\odot}\CR}={}_\CL\CJ_\CR$. Moreover if   $(\cL, \cR)$ are $*$-related as in Definition~\ref{pye} then  there is a map
\[\circledast:\CL\odot\CR^{op} \to \CL^{op} \,\und{\odot} \, \CR\]
making these a $*$-bialgebroid pair with $\Phi$ the identity on the generators and given by the original $\circledast: \CL\to \CR$ and its inverse. Moreover, ${}_{\CL\odot \CR^{op}}\CM={}_\CL\CJ_\CR$ as bar category.
\end{proposition}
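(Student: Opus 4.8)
The plan is to realise $\CL\odot\CR^{op}$ as the coproduct $\CL\ast_{A^e}\CR^{op}$ in the category of $A^e$-rings, where $\CL$ carries its tautological $A^e$-structure $\eta_L$ and $\CR^{op}$ (a left $A$-bialgebroid with source $t_R$ and target $s_R$) carries the structure $a\tens\und{b}\mapsto t_R(a)s_R(b)$, which is an algebra map $A^e\to\CR^{op}$ since $s_R,t_R$ commute and $t_R$ is antimultiplicative while $s_R$ is multiplicative. By the universal property of this pushout a left $\CL\odot\CR^{op}$-module is exactly a left $\CL$-module together with a left $\CR^{op}$-module (equivalently a right $\CR$-module) structure with the same underlying $A^e$-action, i.e. precisely an object of ${}_\CL\CJ_\CR$ in the sense of Definition~\ref{oru}; so ${}_{\CL\odot\CR^{op}}\CM={}_\CL\CJ_\CR$ compatibly with the forgetful functors to ${}_A\CM_A$. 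I would give $\CL\odot\CR^{op}$ the source and target inherited from $\CL$ (they still commute), the coproduct the unique algebra map restricting to $\Delta_L$ on $\CL$ and to $\Delta_R$ on $\CR^{op}$ --- this is well defined because the two restrictions agree on the amalgamating $A^e$ (both send $s_L(a)=t_R(a)\mapsto s_L(a)\tens 1$ and $t_L(a)=s_R(a)\mapsto 1\tens t_L(a)$) and lands in the Takeuchi product, which is a subalgebra containing the images of $\Delta_L$ and $\Delta_R$ --- and the counit $\varepsilon(Z):=Z\la 1_A$ for the canonical action of $\CL\odot\CR^{op}$ on the unit object $A$ of the monoidal category ${}_\CL\CJ_\CR$ (which restricts to $\varepsilon_L$ on $\CL$ and $\varepsilon_R$ on $\CR^{op}$). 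The right $A$-bialgebroid $\CL^{op}\und{\odot}\CR:=\CL^{op}\ast_{A^e}\CR$, built the same way from the right-bialgebroid $A^e$-structures, then satisfies $\CM_{\CL^{op}\und{\odot}\CR}={}_\CL\CJ_\CR$ by the mirror argument.

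Assuming now $(\CL,\CR)$ are $*$-related by $\circledast:\CL\to\CR$ as in Definition~\ref{pye}, I would define $\circledast:\CL\odot\CR^{op}\to\CL^{op}\und{\odot}\CR$ to restrict to the original $\circledast:\CL\to\CR\subset\CL^{op}\und{\odot}\CR$ on the $\CL$-copy and to $\circledast^{-1}:\CR^{op}\to\CL^{op}\subset\CL^{op}\und{\odot}\CR$ on the $\CR^{op}$-copy; both are antilinear anti-algebra maps, the second because two algebra-reversals turn the anti-algebra map $\circledast^{-1}:\CR\to\CL$ into an anti-algebra map $\CR^{op}\to\CL^{op}$. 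These agree on the amalgamating $A^e$: the identification $s_L(a)=t_R(a)$ is sent to $s_L(a)^{\circledast}=s_R(a^*)$ from the $\CL$-copy and to $t_R(a)^{\circledast^{-1}}=t_L(a^*)$ from the $\CR^{op}$-copy, and these are equal by the relation $t_L=s_R$ holding in $\CL^{op}\und{\odot}\CR$; likewise for $t_L(a)=s_R(a)$, using the $s,t$-conditions of Definition~\ref{pye}. Hence, by the universal property of the pushout applied to anti-algebra maps, $\circledast$ is a well-defined invertible antilinear anti-algebra map, and that it intertwines the remaining structure of Definition~\ref{pye} reduces on generators to the corresponding identities for the original $\circledast$, in particular $\mathrm{flip}(\circledast\tens\circledast)\Delta_L=\Delta_R\circ\circledast$ and its $\circledast^{-1}$-counterpart.

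For reflexivity I would take $\Phi:\CL\odot\CR^{op}\to\CL^{op}\und{\odot}\CR$ to be the identity on generators, $X\in\CL\mapsto X\in\CL^{op}$ and $Y\in\CR^{op}\mapsto Y\in\CR$; this is legitimate because the defining relations of source and target are literally the same ($s_L(a)=t_R(a)$, $t_L(a)=s_R(a)$), and it is then an invertible linear anti-algebra coalgebra map with $\Phi\circ s_L=t_R$, $\Phi\circ t_L=s_R$ by construction, i.e. a reflexive structure in the sense of Proposition~\ref{prop. reflexive module equivalence}. The remaining axiom $\Phi^{-1}\circ\circledast=\circledast^{-1}\circ\Phi$ is checked on generators: on $X\in\CL$ both sides equal $X^{\circledast}$ in the $\CR^{op}$-copy of $\CL\odot\CR^{op}$, and on $Y\in\CR^{op}$ both sides equal $Y^{\circledast^{-1}}$ in the $\CL$-copy, since $\Phi$ and $\circledast^{-1}$ each swap the two copies in the expected way. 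Hence $(\CL\odot\CR^{op},\CL^{op}\und{\odot}\CR)$ is a $*$-bialgebroid pair, and Theorem~\ref{thmpairbar} makes ${}_{\CL\odot\CR^{op}}\CM$ and $\CM_{\CL^{op}\und{\odot}\CR}$ bar categories. Evaluating the conjugate action $Z\la\overline{m}=\overline{m\ra Z^{\circledast}}$ of Theorem~\ref{thmpairbar} on $Z=X\in\CL$ gives $\overline{m\ra X^{\circledast}}$ with the original right $\CR$-action, and on $Z=Y\in\CR^{op}$ gives $\overline{Y^{\circledast^{-1}}\la m}$ with the original left $\CL$-action; these are exactly the conjugate actions of Proposition~\ref{prop. star related module is bar category} on ${}_\CL\CJ_\CR$, while $\Upsilon$ and $bb$ agree since both are taken on the underlying $A$-bimodule, so ${}_{\CL\odot\CR^{op}}\CM={}_\CL\CJ_\CR$ as bar categories.

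The main obstacle is the first step: checking that $\CL\odot\CR^{op}=\CL\ast_{A^e}\CR^{op}$ genuinely carries a left $A$-bialgebroid structure --- that $\Delta$ is a well-defined algebra map into the Takeuchi product, is coassociative, and that the unit-object counit obeys the counit identities --- since on mixed words in $\CL$ and $\CR^{op}$ these do not follow termwise from the axioms of $\CL$ and $\CR$ separately. This is precisely the place to invoke Eilenberg--Moore reconstruction \cite{BM} for the monoidal forgetful functor ${}_\CL\CJ_\CR\to{}_A\CM_A$ rather than grinding through the word combinatorics; everything downstream --- the maps $\circledast$ and $\Phi$, the $*$-bialgebroid pair axioms, and the identification of bar structures --- is then a routine check on the generating subalgebras $\CL$ and $\CR^{op}$.
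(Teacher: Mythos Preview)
Your proposal is correct and follows essentially the same route as the paper: both construct $\CL\odot\CR^{op}$ as the amalgamated free product (your $\CL\ast_{A^e}\CR^{op}$) with the relations $s_L(a)=t_R(a)$, $t_L(a)=s_R(a)$, both define $\circledast$ on generators by the original $\circledast$ and its inverse, both take $\Phi$ to be the identity on generators, and both finish by invoking Theorem~\ref{thmpairbar} and comparing the resulting conjugate actions with those of Proposition~\ref{prop. star related module is bar category}. The only noteworthy difference is that the paper simply asserts the left-bialgebroid axioms for $\CL\odot\CR^{op}$ (``Clearly, $\cL\odot\cR^{op}$ is a $A^e$-ring and $A$-coring\dots''), whereas you flag this as the genuine technical point and propose Eilenberg--Moore reconstruction as a cleaner way to obtain it; either route is acceptable here, and your acknowledgement of this step is if anything more honest than the paper's treatment.
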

\proof (1) We write the product in $\CL$ as $\bullet$ and in $\cR$ as $\Rbullet$.
We define $\CL\odot\CR^{op}$ as a quotient of an amalgamated free product of $\CL$ and $\CR^{op}$ by the relations that $s_{\cL}(a)=t_{\cR}(a)$ and $t_{\cL}(a)=s_{\cR}(a)$ for any $a\in A$. To make this explicit, we replace the copy of $A^e$ in $\CR$ by writing $a\in A\subset \CR$ as $\und a\in A^{op}\subset \CL$
and $\und a\in A^{op}\subset \CR$ as $a\in A\subset \CL$.
We have the relations (using $\bullet$ for the product in $\CL\odot\CR^{op}$), for $Y_i\in\CR$,
$Y_1\bullet Y_2=Y_2\Rbullet Y_1$, the $\CL$ product is not affected.
The source and target is
\[s_{\CL\odot\CR^{op}}(a)=s_{\cL}(a)=a,\quad t_{\CL\odot\CR^{op}}(a)=t_{\cL}(a)=\und a.\]
The coproduct and counit on $\CL\odot \CR^{op}$ is the same as that on $\CL$ and $\CR$ on the respective generators, and on $X\bullet Y$ satisfies
\[\Delta_{\CL\odot\CR^{op}}(X\bullet Y)=X\o\bullet Y\o\ot_{A} X\t\bullet Y\t,\quad \varepsilon_{\CL\odot\CR^{op}}(X\bullet Y)=\varepsilon_{\CL}(X\,\varepsilon_{\CR^{op}}(Y)),\]
Clearly, $\cL\odot \cR^{op}$ is a $A^e$-ring and $A$-coring with the image of the coproduct belongs to the Takeuchi product. Moreover, the coproduct is an algebra map and the counit satisfies the axiom of a left bialgebroid. Hence, $\cL\odot \cR^{op}$ is a left bialgebroid over $A$.
The left module for $\CL\odot\CR^{op}$ corresponding to the joint module has $x\la m$ defined as usual for $x\in \CL$ and $y\la m=m\ra y$ for $y\in \CR$. Therefore, it is not hard to see ${}_{\CL\odot \CR^{op}}\CM={}_\CL\CJ_\CR$.

(2) We similarly define the right bialgebroid $\CL^{op}\und{\odot}\CR$ another quotient of an amalgamated free product of $\CL^{op}$ and $\CR$ by the relations that $s_{\cL}(a)=t_{\cR}(a)$ and $t_{\cL}(a)=s_{\cR}(a)$ for any $a\in A$. To make this explicit, we replace the copy of $A^e$ in $\CL$ by writing $a\in A\subset \CL$ as $\und a\in A^{op}\subset \CR$
and $\und a\in A^{op}\subset \CL$ as $a\in A\subset \CR$.
We have the relations (using $\Rbullet$ for the product in $\CL\odot\CR^{op}$), for $X_i\in\CL$,
$X_1\Rbullet X_2=X_2\bullet X_1$, the $\CR$ product is not affected.
The source and target is
\[s_{\CL^{op}\und{\odot}\CR}(a)=s_{\cR}(a)=a,\quad t_{\CL^{op}\und{\odot}\CR}(a)=t_{\cR}(a)=\und a.\]
The coproduct and counit on $\CL^{op}\und{\odot}\CR$ is the same as that on $\CL$ and $\CR$ on the respective generators, and on $X\bullet Y$ satisfies
\[\Delta_{\CL^{op}\und{\odot}\CR}(X\Rbullet Y)=X\o\Rbullet Y\o\ot_{A} X\t\Rbullet Y\t,\quad \varepsilon(X\Rbullet Y)=\varepsilon_{\CR}(\varepsilon_{\CL^{op}}(X)\,Y).\]

(3) Now we suppose that $(\CL,\CR)$ are $*$-related by $\circledast:\CL\to \CR$ and show that $(\CL\odot\CR^{op},\CL^{op}\und{\odot}\CR)$ is a $*$-bialgebroid pair. For the purposes of the proof, to avoid confusion, we denote the desired $\circledast$ on the product by a different symbol $ \divideontimes:\CL\odot\CR^{op}\to \CL^{op}\und{\odot}\CR$ and set $X^{ \divideontimes}=X^{\circledast}, Y^{ \divideontimes}=Y^{\circledast^{-1}}$ as stated, which obeys $(X\bullet Y)^{\divideontimes}=Y^{\circledast^{-1}}\Rbullet X^{\circledast}$. We set  $\Phi:\CL\odot\CR^{op}\to \CL^{op}\und{\odot}\CR$ to be the identity on the vector space $\CL\odot\CR^{op}$. Clearly, $\divideontimes$ an invertible antilinear anti-algebra map. Also,
\[s_{\CL^{op}\und{\odot}\CR}(a^*)=s_{\cR}(a^*)=s_{\cL}(a)^{\circledast}=s_{\CL\odot\CR^{op}}(a)^{\divideontimes}.\]
Similarly, $t_{\CL^{op}\und{\odot}\CR}(a^*)=t_{\CL\odot\CR^{op}}(a)^{\divideontimes}.$ Moreover,
\[\varepsilon_{\CL^{op}\und{\odot}\CR}(X^{\divideontimes})=\varepsilon_{\cR}(X^\circledast)=\varepsilon_{\cL}(X)^*= \varepsilon_{\CL\odot\CR^{op}}(X)^*.\]
Similarly, $\varepsilon_{\CL^{op}\und{\odot}\CR}(Y^{\divideontimes})=\varepsilon_{\cL}(Y^{\circledast^{-1}})=\varepsilon_{\cR}(Y)^*= \varepsilon_{\CL\odot\CR^{op}}(X)^*$. Also, it is not hard to see
$\mathrm{flip}(\divideontimes \tens \divideontimes)\Delta_\mathcal{\CL\odot\CR^{op}} =\Delta_\mathcal{\CL^{op}\und{\odot}\CR}\circ \divideontimes$. So $\divideontimes$ make the bialgebroids above $*$-related. We can also see the $\Phi$ being the identity on the vector space $\CL\odot\CR^{op}$ is a anti-algebra map and a coalgebra map. Moreover, $\Phi^{-1}\circ \divideontimes=\divideontimes^{-1}\circ \Phi$. As $(\cL, \cR)$ is $*$-related we know that  ${}_\CL\CJ_\CR$ is a bar category by Proposition \ref{prop. star related module is bar category} and ${}_{\CL\odot\CR^{op}}\CM$ is a bar categories by Proposition \ref{thmpairbar}.  We can see ${}_{\CL\odot \CR^{op}}\CM={}_\CL\CJ_\CR$ as bar categories. Indeed,
\begin{align*}
    X\la \overline{m}=\overline{\Phi^{-1}(X^{\divideontimes})\la m}=\overline{X^\circledast\la m}=\overline{ m\ra X^\circledast},
\end{align*}
for any $m\in M\in {}_{\CL\odot \CR^{op}}\CM$, where the last step use correspondence between ${}_{\CL\odot \CR^{op}}\CM$ and ${}_\CL\CJ_\CR$. Similarly,
\begin{align*}
     Y\la \overline{m}=\overline{\Phi^{-1}(Y^{\divideontimes})\la m}=\overline{Y^{\circledast^{-1}}\la m}=\overline{ m\ra Y^{\circledast^{-1}}}
\end{align*}
as required. \endproof

We mention that in the $*$-related case we can also define a $*$-algebra structure on $\CL\odot\CR^{op}$ by $X^\circledast$ and $Y^{\circledast^{-1}}$ which is an antilinear, antialgebra and coalgebra map.

\subsection{$*$-Hopf algebroid pair for action bialgebroids}\label{sec:pairaction}

By a similar construction to Section~\ref{sec:fullaction}\cite{Lu}, one has from the same data of a braided-commutative algebra $B\in\CM^H_H$ a right $B$-Hopf algebroid  $\cR=H\#B$ with
\[ (h\tens a)(g\tens b)=h g\o\tens (a\ra g\t) b,\quad s_{R}(a)=1\tens a,\quad t_{R}(b)=S^{-1}b\o\tens b\z\]
\[ \varepsilon_{R}(h\tens a)=\varepsilon(h)a,\quad \Delta_{R}(h\tens a)=h\o\tens 1\tens h\t\tens a.\]
We can also view the right coaction as a left action $a\mapsto S^{-1}a\o\tens a\z$. We already saw in Theorem \ref{thmact}  that $\cL=B\#H^{op}$ is part of a full Hopf algebroid.

\begin{theorem}\label{thmactpair}Let $B$ be a braided-commutative algebra in $\CM^H_H$ in the $*$-algebra and Hopf $*$-algebra case with unitary (co)actions. Then $(B\# H^{op},H\#B)$ as above is a $*$-Hopf algebroid pair with
\[ (a\tens h)^\circledast=S^{-1}(h^*)\tens a^*,\quad  \Phi(a\tens h)=h S^{-1}a\o\tens a\z,\quad \Phi^{-1}(h\tens a)=a\z\tens h a\o.  \]
\end{theorem}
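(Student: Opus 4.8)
The plan is to check the two conditions defining a $*$-Hopf algebroid pair for $\CL:=B\#H^{op}$ and $\CR:=H\#B$: that $\circledast$ makes them $*$-related in the sense of Definition~\ref{pye}, that $\Phi$ (with the stated $\Phi^{-1}$) is a reflexive structure in the sense of Proposition~\ref{pux} satisfying $\Phi^{-1}\circ\circledast=\circledast^{-1}\circ\Phi$, and that $\CL$ is a left and anti-left Hopf algebroid. The last point is free: Proposition~\ref{propact} shows $\CL=B\#H^{op}$ is a full Hopf algebroid, and by Proposition~\ref{def. full Hopf algebroid2}(3) a full Hopf algebroid is in particular left and anti-left Hopf. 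Throughout I will use that $H$ has bijective antipode with $S^{-1}=*\circ S\circ*$, the Hopf $*$-algebra identity $(Sh)^*=S^{-1}(h^*)$, the identities $S^{-1}(h\t)h\o=\varepsilon(h)1=h\t S^{-1}(h\o)$, the comodule-algebra and counit properties of the coaction on $B$, its braided commutativity $b\z(a\ra b\o)=ab$ and $(a\ra S^{-1}b\o)b\z=ba$, the Yetter--Drinfeld identity $(a\ra h)\z\tens(a\ra h)\o=a\z\ra h\t\tens(Sh\o)a\o h\th$ following (\ref{equ. Drinfeld-Yetter equation}), and the unitarity hypotheses $a^*{}\z\tens a^*{}\o=a\z{}^*\tens a\o{}^*$ and $(a\ra h)^*=a^*\ra S^{-1}(h^*)$.

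For the $\circledast$ part, since $\CL$ and $\CR$ are $B\tens H$ and $H\tens B$ as vector spaces, $\circledast$ is visibly an invertible antilinear map with $\circledast^{-1}(h\tens a)=a^*\tens S^{-1}(h^*)$. The source/target conditions are immediate: $s_L(a)^\circledast=(a\tens1)^\circledast=1\tens a^*=s_R(a^*)$, and $t_L(a)^\circledast=(a\z\tens a\o)^\circledast=S^{-1}(a\o{}^*)\tens a\z{}^*=t_R(a^*)$ by unitarity of the coaction; the counit condition reads $\varepsilon_R((a\tens h)^\circledast)=\varepsilon(h^*)a^*=(a\,\varepsilon(h))^*=\varepsilon_L(a\tens h)^*$. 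That $\circledast$ is an anti-algebra map is short: expanding $((a\tens h)(b\tens g))^\circledast$ and $(b\tens g)^\circledast(a\tens h)^\circledast$ in $\CR$, both reduce to $S^{-1}(g^*)S^{-1}(h\t{}^*)\tens(b^*\ra S^{-1}(h\o{}^*))a^*$ using that $S^{-1}$ is anti-algebra and anti-coalgebra and that $(b\ra h\o)^*=b^*\ra S^{-1}(h\o{}^*)$. Since $\circledast$ swaps the $s,t$ maps, Lemma~\ref{lem. star exchange the balanced tensor product} applies, so $\mathrm{flip}(\circledast\tens\circledast)\Delta_L=\Delta_R\circ\circledast$ is an identity between well-defined maps; on $a\tens h$ both sides equal $(S^{-1}(h\t{}^*)\tens1)\tens_B(S^{-1}(h\o{}^*)\tens a^*)$, again by $S^{-1}$ reversing coproducts. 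Hence $(\CL,\CR)$ is $*$-related.

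The reflexive structure $\Phi$ is where the actual work lies. First $\Phi$ is bijective with inverse as stated: $\Phi(\Phi^{-1}(h\tens a))=\Phi(a\z\tens ha\o)=(ha\o{}\t)S^{-1}(a\o{}\o)\tens a\z$ collapses to $h\tens a$ via coassociativity, $h\t S^{-1}(h\o)=\varepsilon(h)1$ and the counit of the coaction, and symmetrically $\Phi^{-1}\circ\Phi=\id$ via $S^{-1}(h\t)h\o=\varepsilon(h)1$. Next $\Phi(1)=1$, $\Phi\circ s_L=t_R$ and $\Phi\circ t_L=s_R$ are all one-liners (e.g. $\Phi(a\z\tens a\o)=a\o{}\t S^{-1}(a\o{}\o)\tens a\z=1\tens a=s_R(a)$), and combined with the anti-algebra property below they make $\Phi$ a $B$-bimodule map, so $(\Phi\tens\Phi)$ descends to $\CL\tens_B\CL\to\CR\tens_B\CR$. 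The main obstacle is then to show $\Phi$ is an anti-algebra and a coalgebra map. For the former it is cleanest to show $\Phi\colon\CL^{op}\to\CR$ is an algebra map: one expands $\Phi\bigl(b(a\ra g\o)\tens hg\t\bigr)$, uses the comodule-algebra property and the Yetter--Drinfeld identity to compute the coaction of $b(a\ra g\o)$, simplifies the resulting $H$-string with $h\t S^{-1}(h\o)=\varepsilon(h)1$, expands $\Phi(a\tens h)\,\Phi(b\tens g)$ in $H\#B$, and matches the two after commuting a $B$-factor past an $\ra$ by braided commutativity. For the coalgebra property $\Delta_R\circ\Phi=(\Phi\tens\Phi)\circ\Delta_L$ one does the analogous computation, the new subtlety being to recognise the two sides as equal in $\CR\tens_B\CR$ using the balanced-tensor relation $Xs_R(b)\tens Y=X\tens t_R(b)Y$; coassociativity of the coaction and the Yetter--Drinfeld identity again do the job. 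I expect this double verification, plus the bookkeeping of the many $H$- and $B$-Sweedler legs, to be the most laborious step of the proof.

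Finally, the compatibility $\Phi^{-1}\circ\circledast=\circledast^{-1}\circ\Phi$ is equivalent to $\circledast\circ\Phi^{-1}\circ\circledast=\Phi$, which collapses directly: $\circledast(\Phi^{-1}(S^{-1}(h^*)\tens a^*))=\circledast(a^*{}\z\tens S^{-1}(h^*)a^*{}\o)$, and applying $\circledast$ once more and simplifying with $(S^{-1}(h^*))^*=S(h)$, the anti-algebra property of $S^{-1}$ and unitarity of the coaction (so $(a^*{}\z)^*=a\z$, $(a^*{}\o)^*=a\o$) yields $hS^{-1}(a\o)\tens a\z=\Phi(a\tens h)$. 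Together with the Hopf condition noted at the start, this shows $(B\#H^{op},H\#B)$ is a $*$-Hopf algebroid pair.
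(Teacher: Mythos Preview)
Your proposal is correct and follows essentially the same approach as the paper's proof: both verify the $*$-related conditions for $\circledast$ (anti-algebra, source/target, counit, anti-coalgebra), the reflexive conditions for $\Phi$ (anti-algebra, coalgebra, source/target, bijectivity), the compatibility $\Phi^{-1}\circ\circledast=\circledast^{-1}\circ\Phi$, and invoke Proposition~\ref{propact} for the Hopf condition. The only cosmetic differences are that you check compatibility via the equivalent form $\circledast\circ\Phi^{-1}\circ\circledast=\Phi$, and that your description of the balanced-tensor relation in $\CR\tens_B\CR$ should read $Xs_R(b)\tens Y=X\tens Y\,t_R(b)$ rather than $X\tens t_R(b)Y$.
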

\proof For $\circledast$,
\begin{align*}
((a\tens h)(b\tens g))^\circledast&= S^{-1}((g h\t)^*)\tens (a (b\ra h\o))^*=S^{-1}(g^*)S^{-1}(h\t^*)\tens (b^*\ra S^{-1}(h\o^*))a^*\\
&=S^{-1}(g^*)S^{-1}(h^*)\o\tens (b^*\ra S^{-1}(h^*)\t)a^*=(S^{-1}(g^*)\tens b^*)(S^{-1}(h^*)\tens a^*)\\
&=(b\tens g)^\circledast (a\tens h)^\circledast\\
\Delta_{R}((a\tens h)^\circledast)&=\Delta_{R}(S^{-1}(h^*)\tens a^*)=S^{-1}(h^*)\o\tens 1\tens S^{-1}(h^*)\t\tens a^*\\
&=S^{-1}(h^*\t)\tens 1\tens S^{-1}(h^*\o)\tens a^*={\rm flip}(\circledast\tens\circledast)(a\tens h\o\tens 1\tens h\t)\\
&={\rm flip}(\circledast\tens\circledast)\Delta_{L}(a\tens h).
\end{align*}
We also have
\[ s_{L}(a)^\circledast=(a\tens 1)^\circledast=1\tens a^*=s_{R}(a^*),\quad t_{L}(a)^\circledast=(a\z\tens a\o)^\circledast=S^{-1}(a^*\o)\tens a^*\z=t_{R}(a^*).\]
For $\Phi$,
\begin{align*}
\Phi(&(a\tens h)(b\tens g))=\Phi(a(b\ra h\o)\tens g h\t)=g h\t S^{-1}(a(b\ra h\o)\o)\tens (a (b\ra h\o))\z\\
&= g S^{-1}((b\ra h\o)\o Sh\t)S^{-1}a\o\tens a\z (b\ra h\o)\z= g S^{-1}(Sh\o b\o)S^{-1}a\o\tens a\z (b\z\ra h\t)\\
&= g (S^{-1}b\o)h\o S^{-1}a\o\t\tens (b\z\ra(h\t S^{-1}a\o\o))a\z=(g S^{-1} b\o\tens b\z)(h\t S^{-1}a\o\tens a\z)\\
&=\Phi(b\tens g)\Phi(a\tens h),
\end{align*}
where the 3rd equality uses the crossed module condition and the 5th uses braided commutativity. Next,
\begin{align*}\Delta_{R}\Phi(a\tens h)&=\Delta_{R}(h S^{-1}a\o\tens a\z)=(h S^{-1}a\o)\o\tens 1\tens (h S^{-1}a\o)\t\tens a\z\\
&=h\o S^{-1}a\o\tens 1\tens h\t S^{-1}a\z\o\tens a\z\z= h\o S^{-1}a\o\tens 1\tens (h\t \tens 1)t_{R}(a\z)\\
&=(h\o S^{-1}a\o\tens 1)s_{R}(a\z)\tens h\t\tens 1=h\o S^{-1}a\o\tens a\z\tens h\t\tens 1\\
&=(\Phi\tens\Phi)(a\tens h\o\tens 1\tens h\t)=(\Phi\tens\Phi)\Delta_{L}(a\tens h)
\end{align*}
where the 5th equality holds because the image of $\Delta_{R}$ being considered is in $\CR\tens_{B}\CR$. We also have
\begin{align*} \Phi(s_L(a))&=\Phi(a\tens 1)=S^{-1}a\o\tens a\z=t_R(a),\\
 \Phi( t_L(a))&=\Phi(a\z\tens a\o)=a\o S^{-1}a\z\o\tens a\z\z=a\o\t S^{-1} a\o\o\tens a\z=1\tens a=s_{R}(a)
 \end{align*}
It is clear that $\circledast$ is invertible, with inverse given by $(h\ot a)^{\circledast^{-1}}=a^*\ot S^{-1}(h^*)$. We can check that $\Phi$ is invertible. Indeed, Also,
\[\Phi\circ \Phi^{-1}(h\ot a)=\Phi(a\z\ot h a\o)=h a\t S^{-1}a\o\ot a\z=h\ot a.\]
Also,
\[\Phi^{-1}\circ \Phi(a\ot h)=\Phi^{-1}(h S^{-1}a\o\ot a\z)=a\z\ot h S^{-1}(a\t)a\o=a\ot h.\]
Finally, on the one hand
\begin{align*}
    \Phi^{-1}\circ \circledast(a\ot h)=\Phi^{-1}(S^{-1}(h^{*})\ot a^*)=a\z{}^{*}\ot S^{-1}(h^*)a\o{}^{*},
\end{align*}
on the other hand
\begin{align*}
    \circledast^{-1}\circ\Phi(a\ot h)=(hS^{-1}(a\o)\ot a\z)^{\circledast^{-1}}=a\z{}^*\ot S^{-1}((hS^{-1}(a\o))^{*})=a\z{}^{*}\ot S^{-1}(h^*)a\o{}^{*}.
\end{align*}
Finally, since  $\cL$ is a full Hopf algebroid, it is in particular a left and anti-left Hopf algebroid. Hence, $(\cL, \CR)$ is a $*$-Hopf algebroid pair.
\endproof

\begin{corollary}\label{coractfull}
    If $B$ is a braided-commutative algebra in $\CM^H_H$ and $\CR=H\#B$ as above, $\CR^{op}$ is a full $*$-Hopf algebroid with antipode and the star structure
        \[S(h\ot a)=S^{-1}(a\o)S^{-1}(h\t)\ot a\z\ra S^{-1}(h\o),\quad S^{-1}(h\ot a)=S(h\t a\t)\ot a\z\ra S(h\o a\o),\]
    \[(h\ot a)^{\star}=h^*\o\ot a^*\ra h^*\t,\]
    for all $h\ot a\in \CR^{op}$.
\end{corollary}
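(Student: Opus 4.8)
The plan is to obtain the full $*$-Hopf algebroid structure on $\CR^{op}$, with $\CR=H\#B$, by transport of structure from $\cL=B\#H^{op}$ along the reflexivity isomorphism $\Phi:\cL\to\CR$ of the $*$-Hopf algebroid pair $(B\#H^{op},H\#B)$ supplied by Theorem~\ref{thmactpair}. The two essential inputs are that $\cL=B\#H^{op}$ is itself a full $*$-Hopf algebroid by Theorem~\ref{thmact} (with antipode $S_\cL$, its inverse $S_\cL^{-1}$ from Proposition~\ref{propact}, and $*$-structure $\star_\cL$ written out in Theorem~\ref{thmact}), and that $\Phi$ is invertible.

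First I would check that $\Phi$ is an isomorphism of left $B$-bialgebroids $\cL\to\CR^{op}$, where $\CR^{op}=(H\#B)^{op}$ carries the canonical left $B$-bialgebroid structure obtained from the right $B$-bialgebroid $\CR$ by the right-bialgebroid analogue of Proposition~\ref{prop. oppo bialgebroid}, so that $s_{\CR^{op}}=t_R$, $t_{\CR^{op}}=s_R$ and $\Delta_{\CR^{op}}=\Delta_R$. Indeed $\Phi$, being an anti-algebra map $\cL\to\CR$, is an algebra map $\cL\to\CR^{op}$; it is invertible; $\Phi\circ s_L=t_R=s_{\CR^{op}}$ and $\Phi\circ t_L=s_R=t_{\CR^{op}}$ by reflexivity; and $\Phi$ is a coalgebra map, $\Delta_R\circ\Phi=(\Phi\tens\Phi)\circ\Delta_L$, which is precisely the identity established inside the proof of Theorem~\ref{thmactpair} (the counit being then intertwined automatically). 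Transporting along $\Phi$ the remaining full $*$-Hopf algebroid data of $\cL$ -- the right $B^{op}$-bialgebroid structure carried by the algebra $\cL$, the antipode $S_\cL$, and $\star_\cL$ -- thus equips $\CR^{op}$ with a full $*$-Hopf algebroid structure, since every clause of Definitions~\ref{def. full Hopf algebroid1} and~\ref{def. star Hopf algebroids} is preserved when all structure maps are conjugated by $\Phi$, and well-definedness over the relevant balanced tensor products is inherited from $\cL$.

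It then remains to identify the transported maps $S=\Phi\circ S_\cL\circ\Phi^{-1}$, $S^{-1}=\Phi\circ S_\cL^{-1}\circ\Phi^{-1}$ and $\star=\Phi\circ\star_\cL\circ\Phi^{-1}$ on $\CR^{op}$ with the stated formulas. Using $\Phi(a\tens h)=h\,S^{-1}(a\o)\tens a\z$ and $\Phi^{-1}(h\tens a)=a\z\tens h\,a\o$ from Theorem~\ref{thmactpair}, I would evaluate each of these on a general $h\tens a\in\CR^{op}$, substitute the explicit forms of $S_\cL$, $S_\cL^{-1}$ (Proposition~\ref{propact}) and $\star_\cL$ (Theorem~\ref{thmact}), and then collapse the result using coassociativity of the $B$-coaction together with the crossed-module identity (\ref{equ. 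Drinfeld-Yetter equation}) and braided-commutativity of $B$. For instance $(h\tens a)^\star=\Phi(\star_\cL(a\z\tens h\,a\o))$ should simplify to $h^*\o\tens a^*\ra h^*\t$, and similarly one recovers the formulas for $S$ and $S^{-1}$.

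The only genuine work is this final bookkeeping: the Sweedler indices proliferate because each of $\Phi$, $\Phi^{-1}$ and $S_\cL$ (or $\star_\cL$) contributes a $B$-coaction and/or an $H$-coproduct, so one must repeatedly pass terms through (\ref{equ. Drinfeld-Yetter equation}) and braided-commutativity to reorganise them into the claimed shape -- but there is no conceptual obstacle, and the verification of the full $*$-Hopf algebroid axioms themselves is automatic from the transport. As an independent check one could verify the three displayed formulas directly against Definitions~\ref{def. full Hopf algebroid1} and~\ref{def. star Hopf algebroids} using the right-bialgebroid structure of $\CR=H\#B$ recalled at the start of Section~\ref{sec:pairaction}; the transport route is, however, much shorter.
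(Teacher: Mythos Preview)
Your proposal is correct and takes essentially the same approach as the paper: the paper's proof simply observes that $\Phi:\cL\to\CR^{op}$ is an isomorphism of left bialgebroids (since $\cL$ is a full $\star$-Hopf algebroid by Theorem~\ref{thmact}) and transports the antipode and $\star$-structure as $S_{\CR^{op}}=\Phi\circ S_\cL\circ\Phi^{-1}$ and $\star_{\CR^{op}}=\Phi\circ\star_\cL\circ\Phi^{-1}$, leaving the explicit verification of the displayed formulas to the reader. Your version is more explicit about why $\Phi$ is a left bialgebroid isomorphism and about the bookkeeping required to recover the stated formulas, but the route is the same.
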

\proof
Since $\Phi:\cL\to \cR^{op}$ is an isomorphism of left bialgebroids, and $\cL$ is a full $\star$-Hopf algebroid, then the corresponding structure of $\cR$ can be inherited from $\cL$. Namely,
\[s_{R}:=\Phi\circ s_{L}^{-1}\circ\Phi^{-1},\quad \star_{\cR}=\Phi\circ \star_{\cL}\circ\Phi^{-1}.\]
\endproof

Note both the full $*$-Hopf algebroid in Section~\ref{sec:fullaction} and the one in the corollary imply $*$-Hopf algebroid pairs by Example~\ref{exfullpair}, but  $*$-Hopf algebroid pairs are a more general concept and the one constructed above is in general different from both of these.

\begin{proposition} For $B$ a braided-commutative algebra in $\CM^H_H$ and $\CL,\CR$ as above.
\newline (i) There is a monoidal isomorphism
\[  {}_\CL\CM\cong {}_B(\CM_H)_B\]
compatible with the induced forgetful functor to ${}_B\CM_B$. This sends a left $\CL$-module to
\[ a.m.b=(a\tens 1)(b\z\tens b\o)\la m=(ab\z\tens b\o)\la m,\quad  m \blacktriangleleft h=(1\tens h)\la m.\]
 (ii) There is a monoidal isomorphism
\[ \CM_\CR\cong {}_B(\CM_H)_B\]
compatible with the induced forgetful functor to ${}_B\CM_B$. This sends a right $\CR$-module to
\[ a.m.b=m\ra (1\tens b)(S^{-1}a\o\tens a\z) =m\ra (S^{-1}a_{1}\tens a\z b),\quad m  \blacktriangleleft h= m\ra (h\tens 1).\]
(iii) The condition to be a joint $\CL$ and $\CR$ module in ${}_\CL\CJ_\CR$  is, for all $m$ in the module and $a,b\in B$,
\[
(ab\z\tens b\o)\la m = m\ra (S^{-1}a_{1}\tens a\z b)\ .
\]
\end{proposition}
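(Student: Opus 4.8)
The plan is to recognise parts (i) and (ii) as instances of the standard identification ``modules over a smash product $=$ module objects in the module category'', and then to obtain (iii) simply by writing out, in the coordinates supplied by (i) and (ii), the condition of Definition~\ref{oru} that the two forgetful functors to ${}_B\CM_B$ coincide.

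For (i) I would define ${}_\CL\CM\to {}_B(\CM_H)_B$ by equipping a left $\CL$-module $M$ with the left $B$-action $a.m=s_L(a)\la m=(a\tens 1)\la m$, the right $H$-action $m\blacktriangleleft h=(1\tens h)\la m$, and the right $B$-action $m.b=t_L(b)\la m=(b\z\tens b\o)\la m=b\z.(m\blacktriangleleft b\o)$. Since $(a\tens h)=(a\tens 1)(1\tens h)$ and $(1\tens h)(a\tens 1)=(a\ra h\o\tens h\t)$ in $B\#H^{op}$, being a left $\CL$-module is equivalent to a left $B$-action and a right $H$-action on $M$ with $(a.m)\blacktriangleleft h=(a\ra h\o).(m\blacktriangleleft h\t)$, i.e.\ to $M$ being a left $B$-module object in $\CM_H$ (the right $B$-action then being the canonical one through the coaction); the inverse functor sends such an $M$ to $(a\tens h)\la m:=a.(m\blacktriangleleft h)$. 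Monoidality is checked against $\Delta_L(a\tens h)=(a\tens h\o)\tens(1\tens h\t)$, giving $(a\tens h)\la(m\tens_B n)=(a.(m\blacktriangleleft h\o))\tens_B(n\blacktriangleleft h\t)$, which is the diagonal $H$-action with factorwise $B$-action on $M\tens_B N$; compatibility with the forgetful functors to ${}_B\CM_B$ is immediate, $s_L(a)t_L(b)\la m=(ab\z\tens b\o)\la m$ being exactly the displayed bimodule. Part (ii) is entirely symmetric for $\CR=H\#B$: one sets $m.b=m\ra s_R(b)=m\ra(1\tens b)$, $m\blacktriangleleft h=m\ra(h\tens 1)$, $a.m=m\ra t_R(a)=m\ra(S^{-1}a\o\tens a\z)$, uses $(h\tens a)=(h\tens 1)(1\tens a)$ and $(1\tens a)(h\tens 1)=(h\o\tens 1)(1\tens(a\ra h\t))$ to obtain the crossed relation $(m.b)\blacktriangleleft h=(m\blacktriangleleft h\o).(b\ra h\t)$, and verifies monoidality against $\Delta_R(h\tens a)=(h\o\tens 1)\tens(h\t\tens a)$.

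For (iii), once (i) and (ii) are in place an object of ${}_\CL\CJ_\CR$ is a left $\CL$-module and right $\CR$-module whose induced $B$-bimodules agree. By (i) the $\CL$-induced bimodule has $a.m.b=s_L(a)t_L(b)\la m$ with $s_L(a)t_L(b)=(a\tens 1)(b\z\tens b\o)=ab\z\tens b\o$ in $B\#H^{op}$; by (ii) the $\CR$-induced bimodule has $a.m.b=m\ra t_R(a)s_R(b)$, and since source and target commute we may compute $t_R(a)s_R(b)=(S^{-1}a\o\tens a\z)(1\tens b)=S^{-1}a\o\tens a\z b$ in $H\#B$, the last equality being immediate as right multiplication by $1\tens b$ uses only $a\z\ra 1_H=a\z$. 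Hence the two forgetful functors coincide exactly when $(ab\z\tens b\o)\la m=m\ra(S^{-1}a\o\tens a\z b)$ for all $a,b\in B$ and all $m$ in the module, which is the asserted condition; and this agrees with the general criterion of Definition~\ref{oru}, since setting $b=1$ (using $\delta(1_B)=1\tens 1$) recovers $s_L(a)\la m=m\ra t_R(a)$ and setting $a=1$ recovers $t_L(b)\la m=m\ra s_R(b)$.

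The main obstacle is the bookkeeping in (i) and (ii): verifying that the proposed functors are well defined on morphisms, strictly monoidal, and intertwine the forgetful functors, the one delicate point being that one of the two $B$-actions in ${}_B(\CM_H)_B$ is twisted by the $H$-coaction on $B$ through the target map, which is precisely what the crossed-module axioms of $B$ reconcile. None of this holds any surprises --- it is the familiar cross-product module dictionary --- and once it is established (iii) is a one-line comparison of $s_L(a)t_L(b)\la m$ with $m\ra t_R(a)s_R(b)$.
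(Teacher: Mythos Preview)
Your proposal is correct and follows essentially the same approach as the paper's proof, which is considerably terser: the paper simply asserts that the displayed formulae give a $B$-bimodule and right $H$-module, identifies the compatibility $(a.m.b)\blacktriangleleft h=(a\ra h).(m\blacktriangleleft h).(b\ra h)$ (implicit Sweedler indices) as the remaining check for (i), treats (ii) by the same pattern, and derives (iii) by comparing the two induced $B$-bimodule expressions. Your smash-product dictionary framing, the explicit verification of monoidality against $\Delta_L$ and $\Delta_R$, and the observation that specialising $a=1$ or $b=1$ recovers the separate conditions of Definition~\ref{oru} are all more detailed than what the paper supplies, but the underlying argument is the same.
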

\proof (i) It is immediate that the first formula displayed in (i) gives an $B$-bimodule structure, and $\blacktriangleleft$ gives a right $H$-module. It remains to check that
\[
(a.m.b)  \blacktriangleleft h = (a  \ra h ).(m \blacktriangleleft h ).(b \ra h )\ .
\]
(ii) follows the same pattern as (i), and (iii) follows from the two expressions for the $A$-bimodule structure.
\endproof

Part (iii) of the Proposition  also coincides with the $\Phi$-reflexive structure between $\cL$ and $\cR$. Indeed, by Theorem \ref{thmactpair},
    \begin{align*}
        \Phi(ab\z\ot b\o)=b\t S^{-1}(a\o b\o)\ot a\z b\z= S^{-1}(a\o)\ot a\z b.
    \end{align*}

\section{$*$-structure for Ghobadi bialgroids}\label{sec:dif}

The goal of this section is to show when $A$ is a $*$ algebra and $(\Omega,\extd)$ is a $*$-calculus that Ghobadis bialgebroid
is a $*$-bialgebroid pair. The philosophy,  roughly speaking, is that the bialgebroid pair $(\CL,\CR)$ over base $A$ is reconstructed from monoidal category of ${}_A\CE_A$ of bimodules with bimodule connection and the forgetful functor to ${}_A\CM_A$, and our new result is that all of this works at the bar category level in the $*$-algebra and $*$-calculus setting.

\subsection{Preliminaries on differentials and vector fields.}\label{sec:precalc}

Suppose that $A$ is a unital algebra.
For $a\in A$ and $A^{op}$ the algebra with the opposite product, recall that we write elements $\underline{a}\in A^{op}$ where the overline is simply a reminder of being in the opposite algebra. Then we have $\underline{a_1}\cdot \underline{a_2}=\underline{a_2 a_1}$.

A first order differential calculus (FODC) over an algebra $A$ is a $A$-bimodule $\Omega^{1}$ together with a linear map $\extd: A\to \Omega^{1}$, such that
\begin{enumerate}
    \item $\extd(ab)=(\extd a)b+a(\extd b)$ for any $a, b\in A$;
    \item  $\Omega^{1}=\textup{span}\{a\extd b | a,b\in A\}.$
\end{enumerate}
A $*$-FODC over a $*$-algebra $A$ is a FODC $\Omega^1$ over $A$ together with an antilinear map $*:\Omega^1\to \Omega^1$, such that $\extd(a^*)=(\extd a)^*$ and $(a\omega a')^*=a'^* \omega^* a^*$ for any $a, a'\in A$ and $\omega\in \Omega^1$.

For an algebra $A$ we define $A^e=A\tens A^{op}$ with the product simply the product in each factor, i.e.
\[
(a_1\tens \underline{a_2})(a_3\tens \underline{a_4})=a_1a_3\tens \underline{a_4a_2}\ .
\]
A left  $A^e$-module $K$ is essentially the same as an $A$-bimodule, using
\begin{equation}\label{Aeconv}
(a_1\underline{a_2}).k=a_1k\, a_2\ .
\end{equation}
For $E\in {}_A\mathcal{M}{}_A$ and $F\in {}_{A^{op}}\mathcal{M}{}_{A^{op}}$ we define the $A^e$-bimodule
$E\tens F$ in a similar fashion by taking actions separately on the first and second tensor factor.
We shall often to write $a_1\underline{a_2}\in A^e$ rather than $a_1\tens \underline{a_2}\in A^e$ to avoid confusion with other tensor products, and also $(e,f)\in E\tens F$ rather than $e \tens f\in E\tens F$.

Suppose that $\Omega^1$ is right fgp $A$-module and we define the right vector fields $\cX^R=\mathrm{Hom}_A(\Omega^1,A)$ and the coevaluation $\coev^{L}(1)=\omega_i\tens_{A} x_i$ (summation implicit). If we denote $\ev^{L}(x,\omega):=x(\omega)$, and define a $A$-bimodule structure on $\cX^{R}$ such that $\ev^{L}(axb, \omega)= a \ev^{L}(x, b\omega)$ then
\[\omega=\omega_{i}\ev^{L}(x_{i}, \omega),\quad x=\ev^{L}(x, \omega_{i})x_{i}.\]
for any $a, b\in A$ and $X\in \cX^{R}$, $\omega\in \Omega^{1}$. Moreover, the coevaltation commutes with $A$, namely, $a\omega_{i}\ot x_{i}=\omega_{i}\ot x_{i}a$. Similarly, suppose $\Omega^{1}$ is a left fgp A-module and define $\cX^{L}:={}_{A}\Hom(\Omega, A)$, there is $\ev^{R}:\Omega^{1}\ot_{A} \cX^{L}\to A$ and $\coev^{R}(1)=y_{j}\ot_{A}\eta_{j}$, such that $\ev^{R}(a\omega, byc)=a\ev^{R}(\omega b, y)c$ for any $a, b, c\in A$ and $y\in \cX^{L}$, $\omega\in \Omega^{1}$. Moreover, we have
\[\omega=\ev^{R}(\omega, y_{j})\eta_{j},\quad y=y_{j}\ev^{R}(\eta_{j}, y),\quad a y_{j}\ot \eta_{j}=y_{j}\ot \eta_{j}a.\]

\subsection{Ghobadi's  bialgebroid $\cI\CB(\Omega^1)$} \label{secBpiv} Given a left and right fgp $A$-bimodule $\Omega^1$, we have that $\cX^R\tens\Omega^{1}$  and $\Omega^{1}\ot \cX^L$ are $A^e$-bimodules with the bimodule structure given by
\begin{align} \label{ufyt}
a_1\underline{a_2}(x,\omega)a_3\underline{a_4}=(a_1 x a_3  ,  a_4 \omega a_2),\quad a_1\underline{a_2}(\omega,y)a_3\underline{a_4}=(a_1 x a_3  ,  a_4 \omega a_2).
\end{align}
 Let $\CI\CB(\Omega^1)$ be the quotient of the  free product of tensor algebras
$T_{A^e} (\cX^R\tens\Omega^{1})$ and $T_{A^e} (\Omega^{1}\ot \cX^L)$ by the relations
\begin{align}\label{equ. relations 1}
    (\omega_{i},y)(x_{i}, \omega)=\underline{\ev^{R}(\omega, y)},\quad (x,\eta_{j})(w, y_{j})=\ev^{L}(x, w).
\end{align}
It is shown in \cite{AryGho1} that $\CI\CB(\Omega^1)$ is a left $A$-bialgebroid with the coproduct and counit given by
\begin{align}\label{equ. coproduct 1}
    \Delta_{L}(\omega,y)=(\omega,y_{j})\ot_{A}(\eta_{j}, y),\quad \Delta_{L}(x,\omega)=(x,\omega_{i})\ot_{A}(x_{i}, \omega),
\end{align}
and
\begin{align}\label{equ. counit 1}
    \varepsilon_{L}(\omega,y)=\ev^{R}(\omega, y),\quad\varepsilon_{L}(x,\omega)=\ev^{L}(x,\omega).
\end{align}

Define ${}_{A}\mathcal{IM}_A^{\Omega^1}$ be the category of objects  $(M, \sigma_{M})$, where $M$ is an $A$-bimodule, $\sigma_{M}:M\ot_{A}\Omega^1\to \Omega^1\ot_A M$ is an invertible $A$-bimodule map. A morphism from $(M, \sigma_{M})$ to  $(N, \sigma_{N})$ consists of an $A$-bimodule map $f:M\to N$, such that $(\id\ot f)\sigma_{M}=\sigma_{N}(f\ot \id):M\ot \Omega^1\to \Omega^1\ot N$. It is clear that ${}_{A}\mathcal{IM}_{A}^{\Omega^1}$ is a monoidal category with
\[\sigma_{M\ot N}=(\sigma_{M}\ot\id)\circ(\id\ot \sigma_{N}):M\ot N\ot \Omega^1\to \Omega^1\ot M\ot N,\]
for any $(M, \sigma_{M})$ and $(N, \sigma_{N})\in {}_{A}\mathcal{IM}^{\Omega^1}_A$.

\begin{proposition}\label{prop. relation between invertible bimodule and left module}\cite{AryGho1}
There is an equivalence of monoidal categories ${}_{\CI\CB(\Omega^1)}\mathcal{M} \cong   {}_A \mathcal{IM}_{A}^{\Omega^1}$. The map
$\sigma_{M}$ and the left $\CI\CB(\Omega^1)$ action on $M$ can be given in terms of each other by
\begin{align*}
(x,\omega)\la m=(\ev^L\tens \id)(x\tens\sigma_{M}(m\tens\omega))\ ,\quad (\omega,y)\la m=(\id\ot \ev^R)(\sigma_{M}^{-1}(\omega,m)\ot y).
\end{align*}
Conversely,
\begin{align*}
\sigma_{M}(m\tens\omega)=\omega_i\tens (x_i, \omega)\la m\ ,\quad \sigma_{M}^{-1}(\omega\ot m)=(\omega, y_{j})\la m\ot \eta_{j}.
\end{align*}
for any $m\in M$, $\omega\in\Omega^1$ and $x\in\cX^R$.
\end{proposition}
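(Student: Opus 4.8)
The plan is to realise the equivalence through the two explicit assignments in the statement and to check that they are mutually quasi-inverse and monoidal. The only inputs are the finitely-generated-projective ``snake'' identities $\omega=\omega_i\,\ev^L(x_i,\omega)=\ev^R(\omega,y_j)\eta_j$, $x=\ev^L(x,\omega_i)x_i$, $y=y_j\,\ev^R(\eta_j,y)$, the commutation of the coevaluations with $A$, the $A^e$-bimodule structure (\ref{ufyt}) on $\cX^R\ot\Omega^1$ and $\Omega^1\ot\cX^L$, and the defining relations (\ref{equ. relations 1}) of $\CI\CB(\Omega^1)$.

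\emph{From a $\CI\CB(\Omega^1)$-module $M$ to $(M,\sigma_M)$.} Regarding $M$ as an $A$-bimodule via the forgetful functor, set $\sigma_M(m\ot_A\omega):=\omega_i\ot_A(x_i,\omega)\la m$. Using $a\omega_i\ot x_i=\omega_i\ot x_i a$ and (\ref{ufyt}), together with the identification of the $s_L,t_L$-actions with the $A$-bimodule structure of $M$, one checks this factors through $\ot_A$ on both sides and is an $A$-bimodule map $M\ot_A\Omega^1\to\Omega^1\ot_A M$. The crucial point is invertibility, with inverse $\sigma_M^{-1}(\omega\ot_A m):=(\omega,y_j)\la m\ot_A\eta_j$:
\begin{align*}
\sigma_M^{-1}\sigma_M(m\ot_A\omega)&=(\omega_i,y_j)\la\big((x_i,\omega)\la m\big)\ot_A\eta_j=\big((\omega_i,y_j)(x_i,\omega)\big)\la m\ot_A\eta_j\\
&=\und{\ev^R(\omega,y_j)}\la m\ot_A\eta_j=m\,\ev^R(\omega,y_j)\ot_A\eta_j=m\ot_A\ev^R(\omega,y_j)\eta_j=m\ot_A\omega,
\end{align*}
using the first relation of (\ref{equ. relations 1}), then that $\und{a}$ acts as right multiplication by $a$, then the snake identity; $\sigma_M\sigma_M^{-1}=\id$ is symmetric via the second relation. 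Thus $(M,\sigma_M)\in{}_A\CI\CM_A^{\Omega^1}$.

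\emph{From $(M,\sigma_M)$ to a $\CI\CB(\Omega^1)$-module.} Define the actions of the generators by $(x,\omega)\la m=(\ev^L\ot\id)(x\ot\sigma_M(m\ot_A\omega))$ and $(\omega,y)\la m=(\id\ot\ev^R)(\sigma_M^{-1}(\omega\ot_A m)\ot y)$. One checks these are compatible with (\ref{ufyt}), so they extend to the free product of the tensor algebras $T_{A^e}(\cX^R\ot\Omega^1)$ and $T_{A^e}(\Omega^1\ot\cX^L)$, and then that they annihilate the relations (\ref{equ. relations 1}): rewriting both sides through $\sigma_M$ and $\sigma_M^{-1}$ and using $\sigma_M\sigma_M^{-1}=\id=\sigma_M^{-1}\sigma_M$ together with the snake identities, the left-hand sides of (\ref{equ. relations 1}) acting on $m$ collapse to the actions of their right-hand sides $\und{\ev^R(\omega,y)}$ and $\ev^L(x,w)$. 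Hence one gets a well-defined $\CI\CB(\Omega^1)$-action on $M$. Finally the two assignments are mutually inverse on objects: $\sigma_M$ reconstructed from the generator actions is again $\sigma_M$ (snake identity), and the action reconstructed from $\sigma_M$ is the original one because $\ev^L(x,\omega_i)\,(x_i,\omega)=(x,\omega)$ and the analogous identity for $(\omega,y)$ hold in $\CI\CB(\Omega^1)$ by (\ref{ufyt}) and the snake identities.

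\emph{Morphisms, monoidality, and the main obstacle.} An $A$-bimodule map $f:M\to N$ is $\CI\CB(\Omega^1)$-linear iff it commutes with all generator actions, and reading this off from $\sigma_M(m\ot_A\omega)=\omega_i\ot_A(x_i,\omega)\la m$ shows it is equivalent to $(\id_{\Omega^1}\ot f)\circ\sigma_M=\sigma_N\circ(f\ot\id_{\Omega^1})$, the converse using that $f$ then also intertwines $\sigma_M^{-1}$ with $\sigma_N^{-1}$. For monoidality, the $\CI\CB(\Omega^1)$-module $M\ot_A N$ built from $\Delta_L$ in (\ref{equ. coproduct 1}) satisfies $\sigma_{M\ot_A N}=(\sigma_M\ot\id_N)\circ(\id_M\ot\sigma_N)$ -- expand $(x_i,\omega)\la(m\ot n)$ via $\Delta_L(x_i,\omega)=(x_i,\omega_k)\ot(x_k,\omega)$ and compare with the definitions of $\sigma_M,\sigma_N$ -- and the unit object $A$ (with its standard action through $\varepsilon_L$) carries $\sigma_A$ equal to the canonical isomorphism $A\ot_A\Omega^1\cong\Omega^1\cong\Omega^1\ot_A A$, since $\sigma_A(1_A\ot_A\omega)=\omega_i\ot_A\varepsilon_L(x_i,\omega)=\omega_i\,\ev^L(x_i,\omega)\ot_A 1_A=\omega\ot_A 1_A$; the coherence hexagons then hold automatically. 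I expect the main obstacle to be purely organisational: carefully tracking which of the four $A$-actions on $\cX^R\ot\Omega^1$ and $\Omega^1\ot\cX^L$ in (\ref{ufyt}) corresponds to left/right multiplication by $s_L$ and by $t_L$, and using this consistently to prove well-definedness over every balanced tensor product above. Once the formulas are known to be well defined, the equivalence is forced by the two relations (\ref{equ. relations 1}) and the snake identities.
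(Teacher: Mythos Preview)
Your proof is correct and follows the standard reconstruction argument. Note that the paper itself does not prove this proposition at all: it is stated with the citation \cite{AryGho1} and no proof is given, so there is nothing to compare against beyond saying that your argument is precisely the expected direct verification of Ghobadi's result.
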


Note that if we did not require $\sigma_M$ to be invertible, i.e. the category ${}_A\CM^{\Omega^1}_A$, then we would use $\CB(\Omega^1)$ with just the $\cX^R\tens\Omega^{1})$ generators.

Now let $A$ be a $*$-algebra and  $\Omega^1$ a $*$-bimodule in the sense of a $*$-object in the bar category of $A$-bimodules, i.e. a $*$-object in the bar category of $A$-bimodules. The latter means that there is an antilinear involution $*:\Omega^1\to \Omega^1$ such that $(a\omega)^*=\omega^* a^*$  (as for a $*$-calculus). There is an antilinear map $\circledast:\cX^R\to \cX^L$ given by
\begin{align}\label{equ. star translate right vector field to left vector field}
    \ev^R(\omega, x^\circledast)=\ev^{L}(x,\omega^*)^*.
\end{align}
To check this is well defined, we can see
    \begin{align*}
        \ev^R(a\omega,
  x^\circledast )=\ev^{L}(x,(a\omega)^*)^*=\ev^{L}(x,\omega^* a^*)^*=a \ev^{L}(x,\omega^* )^*=a\ev^R(\omega, x^\circledast).
  \end{align*}
Similarly, $\circledast^{-1}:\cX^{L}\to \cX^R$ can be given by
\[\ev^L(y^{\circledast^{-1}},\omega)=\ev^R(\omega^*,y)^*.\]
Also, we can see
\begin{align}\label{equ. coev in terms of star}
    \omega_{i}^*\ot x_{i}^\circledast=\coev^R(1),\quad y_{j}^{\circledast^{-1}}\ot \eta_{j}^*=\coev^{L}(1).
\end{align}
Indeed,
\begin{align*}
    \ev^R(\omega, x_{i}^\circledast)\omega_{i}^*=\ev^L(x_{i},\omega^*)^* \omega_{i}^*=(\omega_{i}\ev^L(x_i, \omega^*))^*=\omega.
\end{align*}
Moreover, we have $(axb)^\circledast=b^* x^\circledast a^*$. Indeed,
\begin{align*}
    \ev^R(\omega, b^* x^\circledast a^*)=&\ev^R(\omega b^*, x^\circledast)a^*=\ev^L(x, b\omega^*)^* a*=(a\ev^L(x, b\omega^*))^*= (\ev^L(axb, \omega^*))^*\\
    =&\ev^R((axb)^\circledast, \omega).
\end{align*}

\begin{theorem}\label{thm. first bar category equ}
    If $A$ is a $*$-algebra $\Omega^{1}$ is an fgp $*$-bimodule  then

   (1) ${}_{\CI\CB(\Omega^1)}\mathcal{M}$ is a bar category with $\overline{M}$ an object by
      \[(\omega, y)\la \overline{m}=\overline{(y^{\circledast^{-1}}, \omega^*)\la m},\quad (x,\omega)\la \overline{m}=\overline{(\omega^*,x^{\circledast})\la m},\quad a\und{b}\la \overline{m}=\overline{b^* m a^*}, \]
    for any $m\in M\in {}_{\CI\CB(\Omega^1)}\mathcal{M}$.

    (2)   ${}_A \cI\mathcal{M}_{A}^{\Omega^1}$ is a bar category, with $(\overline{M},\sigma_{\overline{M}})$ defined by
    \[\sigma_{\overline{M}}(\overline{m}\ot \omega)=\omega_{i}\ot \overline{(\id\ot \ev^R)(\sigma^{-1}_{M}(\omega^{*}, m)\ot x_{i}^\circledast)},\]
    with inverse
    \[\sigma^{-1}_{\overline{M}}(\omega\ot\overline{m})=\overline{(\ev^{L}\ot\id)(y_{j}^{\circledast^{-1}}\ot \sigma_{M}(m,\omega^*))}\ot \eta_{j}\]
 for all $m\in M\in {}_A \cI\mathcal{M}_{A}^{\Omega^1}$.

 (3) The bar categories in (1) and (2) are isomorphic.
\end{theorem}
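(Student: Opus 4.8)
The plan is to show that the monoidal equivalence $F\colon {}_{\CI\CB(\Omega^1)}\CM\to{}_A\cI\CM_A^{\Omega^1}$ of Proposition~\ref{prop. relation between invertible bimodule and left module} is an equivalence of bar categories, with bar-functor coherence map $fb$ equal to the identity. Recall that $F$ is the identity on underlying $A$-bimodules and on morphisms, sending an $\CI\CB(\Omega^1)$-module $M$ to the pair $(M,\sigma_M)$ with the mutually-inverse descriptions $\sigma_M(m\ot\omega)=\omega_i\ot(x_i,\omega)\la m$ and $\sigma_M^{-1}(\omega\ot m)=(\omega,y_j)\la m\ot\eta_j$. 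In both (1) and (2) the conjugate of $M$ has as underlying $A$-bimodule the standard conjugate $\overline M$ of the bar category ${}_A\CM_A$ ($\overline M=M$ as a set, $a\,\overline m\,b=\overline{b^*ma^*}$), and the structure transformations $\Upsilon$, $bb$, $\star$ are inherited from ${}_A\CM_A$ on both sides. Hence the only thing to prove is that $F(\overline M)=\overline{F(M)}$ as objects of ${}_A\cI\CM_A^{\Omega^1}$, i.e. that the $A$-bimodule map $\sigma$ induced on $\overline M$ from the conjugate $\CI\CB(\Omega^1)$-action of (1) via the dictionary of Proposition~\ref{prop. relation between invertible bimodule and left module} coincides with $\sigma_{\overline M}$ as defined in (2).

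The first thing I would do is run that comparison directly. Using $\sigma_{\overline M}(\overline m\ot\omega)=\omega_i\ot(x_i,\omega)\la\overline m$, substituting the conjugate action $(x_i,\omega)\la\overline m=\overline{(\omega^*,x_i^\circledast)\la m}$ of part (1), and re-expanding $(\omega^*,x_i^\circledast)\la m=(\id\ot\ev^R)(\sigma_M^{-1}(\omega^*,m)\ot x_i^\circledast)$ by Proposition~\ref{prop. relation between invertible bimodule and left module}, one lands exactly on $\sigma_{\overline M}(\overline m\ot\omega)=\omega_i\ot\overline{(\id\ot\ev^R)(\sigma_M^{-1}(\omega^*,m)\ot x_i^\circledast)}$, which is the formula in (2); the expression for $\sigma_{\overline M}^{-1}$ follows identically from $\sigma_{\overline M}^{-1}(\omega\ot\overline m)=(\omega,y_j)\la\overline m\ot\eta_j$, the conjugate action $(\omega,y_j)\la\overline m=\overline{(y_j^{\circledast^{-1}},\omega^*)\la m}$, and $(x,\omega')\la m=(\ev^L\ot\id)(x\ot\sigma_M(m\ot\omega'))$. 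In carrying this out one keeps using the defining relations (\ref{equ. star translate right vector field to left vector field}) and (\ref{equ. coev in terms of star}) between $\circledast$ and the (co)evaluations, and one checks descent to the balanced tensor products; but all of this was already done in proving (1) and (2), so no new verification is required.

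Next I would dispatch the three bar-functor axioms with $fb=\id$. For the unit, since $F$ fixes the underlying bimodule and is the identity on morphisms, $F(\overline 1)=\overline{F(1)}$ and $F(\star)=\star$ hold at once. The $bb$ axiom collapses to $F(bb_M)=bb_{F(M)}$, true because $bb$ is the $bb$ of ${}_A\CM_A$ on both sides. For the $\Upsilon$ square I would use that $F$ is monoidal with $F(M\ot N)=(M\ot N,\sigma_{M\ot N})$ and $\sigma_{M\ot N}=(\sigma_M\ot\id)\circ(\id\ot\sigma_N)$, so that $\overline{M\ot N}$ and $\overline N\ot\overline M$ coincide as objects on both sides and the diagram reduces to the identity $\Upsilon(\overline{m\ot n})=\overline n\ot\overline m$ of ${}_A\CM_A$. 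Since $F$ is already an equivalence of monoidal categories and the unit/counit isomorphisms are identities on underlying bimodules, hence bar-natural, this shows the two bar categories are equivalent, and in particular (2) is just the transport of (1).

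I do not expect a real obstacle here: the argument is bookkeeping built on Proposition~\ref{prop. relation between invertible bimodule and left module} and parts (1)--(2). The one point requiring attention is getting the directions right --- $\circledast$ versus $\circledast^{-1}$ and $\ev^L$ versus $\ev^R$ --- so that the two displayed formulas for $\sigma_{\overline M}$ agree on the nose rather than up to an unwanted discrepancy; this is precisely the condition under which the conjugate actions in (1) were chosen.
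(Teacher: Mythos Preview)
Your strategy for (3) is correct and matches the paper's: both show $fb=\id$ by computing $\sigma_{F(\overline M)}(\overline m\ot\omega)=\omega_i\ot(x_i,\omega)\la\overline m=\omega_i\ot\overline{(\omega^*,x_i^{\circledast})\la m}=\sigma_{\overline{F(M)}}(\overline m\ot\omega)$ via the dictionary of Proposition~\ref{prop. relation between invertible bimodule and left module}. Your further idea of obtaining (2) as the transport of (1) along the equivalence is more economical than the paper, which proves (2) independently by checking directly that $\sigma_{\overline M}$ descends to $\ot_A$, is $A$-bilinear, is invertible with the stated inverse, and that $\Upsilon$ and $bb$ intertwine it.

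However, your claim that ``the only thing to prove is $F(\overline M)=\overline{F(M)}$'' is too strong. That identity presupposes that $\overline M$ with the conjugate action of (1) is already an $\CI\CB(\Omega^1)$-module, so that the dictionary applies; and transporting the bar structure requires $\Upsilon$ and $bb$ to be morphisms on at least one side. Neither follows from the matching of formulas alone. The paper supplies exactly these verifications for (1): it checks the $A^e$-compatibility $s_L(a)t_L(b)\la\overline m=a\overline m b$, that $bb$ is equivariant, and that $\Upsilon$ intertwines the action via the computation $\Upsilon((x,\omega)\la\overline{m\ot n})=(x,\omega)\la\Upsilon(\overline{m\ot n})$ using (\ref{equ. coev in terms of star}). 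Implicit in this is that the conjugate formulas respect the relations (\ref{equ. relations 1}), which is what makes $\overline M$ a genuine module. Once you add the direct verification of (1) to your plan, your route is complete and cleaner than the paper's, since (2) then comes for free.
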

\begin{proof}
    (1) We show ${}_{\CI\CB(\Omega^1)}\mathcal{M}$ is a bar category. We can see $(s_{L}(a)t_{L}(b))\la \overline{m}=(a,b)\la \overline{m}=\overline{(b^*,a^*)\la m}=\overline{b^* m a^*}=a\overline{m}b$.
    We also have
    \begin{align*}
        (\omega,y)\la \overline{\overline{m}}=\overline{(y^{\circledast^{-1}},\omega^*)\la \overline{m}}=\overline{\overline{(\omega,y)\la m}}.
    \end{align*}
    Similarly, $(x,\omega)\la \overline{\overline{m}}=\overline{\overline{(x,\omega)\la m}}$. And
    \begin{align*}
        \Upsilon((x,\omega)\la \overline{m\ot n})=&\Upsilon(\overline{(\omega^*, x^\circledast)\o\la m\ot (\omega^*, x^\circledast)\t\la n})
        =\Upsilon(\overline{ (\omega^*, y_{j})\la m\ot (\rho^j, x^\circledast)\la n})\\
        =&\Upsilon(\overline{ (\omega^*, x_{i}^\circledast)\la m\ot (\omega_{i}^{*}, x^\circledast)\la n})=\overline{(\omega_{i}^{*}, x^\circledast)\la n}\ot\overline{ (\omega^*, x_{i}^\circledast)\la m}\\
        =&(x,\omega_{i})\la \overline{n}\ot (x_{i},\omega)\la\overline{m}=(x,\omega)\la(\Upsilon(\overline{m\ot n})).
    \end{align*}
    Similarly, $\Upsilon((\omega,y)\la \overline{m\ot n})=(\omega,y)\la(\Upsilon(\overline{m\ot n}))$.

    (2) Next, we check ${}_A \cI\mathcal{M}_{A}^{\Omega^1}$ is a bar category. We can see
    \begin{align*}
        \sigma_{\overline{M}}(\overline{m}\ot \omega)=\omega_{i}\ot \overline{(\id\ot \ev^R)(\sigma^{-1}_{M}(\omega^{*}, m)\ot x_{i}^\circledast)}=\omega_{i}\ot \overline{(\omega^*, x_{i}^\circledast)\la m}.
    \end{align*}
    And
    \begin{align*}
        \sigma^{-1}_{\overline{M}}(\omega\ot\overline{m})=\overline{(\ev^{L}\ot\id)(y_{j}^{\circledast^{-1}}\ot \sigma_{M}(m,\omega^*))}\ot \eta_{j}=\overline{(y_{j}^{\circledast^{-1}},\omega^*)\la m}\ot\eta_{j}.
    \end{align*}
    We can see $\sigma_{\overline{M}}$ factors through the balanced product between $\overline{m}$ and $\omega$. Indeed,
    \begin{align*}
        \sigma_{\overline{M}}(\overline{m}a\ot \omega)=&\omega_{i}\ot \overline{(\omega^*, x_{i}^\circledast)\la (a^* m)}=\omega_{i}\ot \overline{(\omega^* a^*, x_{i}^\circledast)\la  m}=\omega_{i}\ot \overline{((a\omega)^*, x_{i}^\circledast)\la  m}\\
        =&\sigma_{\overline{M}}(\overline{m}\ot a\omega).
    \end{align*}
    Also,
    \begin{align*}
        \sigma_{\overline{M}}(a\overline{m}\ot \omega b)=&\omega_{i}\ot \overline{(b^* \omega^*, x_{i}^\circledast)\la (ma^*)}=\omega_{i}\ot (\overline{( \omega^*,  a^* x_{i}^\circledast)\la m})b=\omega_{i}\ot (\overline{( \omega^*,  (x_{i}a)^\circledast)\la m})b\\
        =&a\omega_{i}\ot (\overline{( \omega^*,  x_{i}^\circledast)\la m})b= a\sigma_{\overline{M}}(\overline{m}\ot \omega )b.
    \end{align*}
    So $\sigma_{\overline{M}}$ is $A$-bilinear. Similarly, we can show $\sigma^{-1}_{\overline{M}}$ is well defined. We can see $\sigma_{\overline{M}}$ is invertible
    \begin{align*}
        \sigma_{\overline{M}}\circ \sigma_{\overline{M}}^{-1}(\omega\ot \overline{m})=&\omega_{i}\ot \overline{((\eta_{j}^*, x_{i}^\circledast)(y_{j}^{\circledast^{-1}},\omega^*))\la m}=\omega_{i}\ot \overline{\und{\ev^R(\omega^*, x_{i}^\circledast)}\la m}\\
        =&\omega_{i}\ot \overline{\und{\ev^L(x_{i}, \omega)^*}\la m}=\omega_{i}\ot \overline{m \ev^L(x_{i}, \omega)^*}=\omega_{i}\ev^L(x_{i}, \omega)\ot \overline{m}\\
        =&\omega\ot \overline{m}.
    \end{align*}
    And
    \begin{align*}
        \sigma^{-1}_{\overline{M}}\circ \sigma_{\overline{M}}(\overline{m}\ot \omega)=&\overline{(y_{j}^{\circledast^{-1}},\omega_{i}^*)(\omega^*, x_{i}^\circledast)\la m}\ot\eta_{j}=\overline{\ev^{L}(y_{j}^{\circledast^{-1}}, \omega^*)\la m}\ot\eta_{j}\\
        =&\overline{m}\ot \ev^{R}(\omega, y_{j})\eta_{j}=\overline{m}\ot \omega.
    \end{align*}
    Assume $f:(M,\sigma_{M})\to (N, \sigma_{N})$ is a morphism in the category of ${}_{A}\mathcal{IM}_A{}^{\Omega^1}$, we have
    \begin{align*}
        (\id\ot \overline{f})\circ \sigma_{\overline{M}}(\overline{m}\ot \omega)=&\omega_{i}\ot \overline{f\circ (\id\ot \ev^R)(\sigma^{-1}_{M}(\omega^{*}, m)\ot x_{i}^\circledast)}\\
        =&\omega_{i}\ot \overline{(\id\ot \ev^R)(\sigma^{-1}_{M}(\omega^{*}, f(m))\ot x_{i}^\circledast)}\\  =&\sigma_{\overline{N}}\circ (\overline{f}\ot\id)(\overline{m}\ot \omega).
    \end{align*}
    We can also see
    \begin{align*}
        \sigma_{\overline{\overline{M}}}(\overline{\overline{m}}\ot \omega)=\omega_{i}\ot \overline{(\omega^*, x_{i}^\circledast)\la \overline{m}}=\omega_{i}\ot \overline{\overline{(x_{i},\omega)\la m}}.
    \end{align*}
    And on the one hand
    \begin{align*}
       (\id&\ot\Upsilon_{\overline{M\ot N}})\circ\sigma_{\overline{M\ot N}}(\overline{m\ot n}\ot\omega)\\
       =&(\id\ot\Upsilon_{\overline{M\ot N}})(\omega_{i}\ot \overline{(\omega^*, x_{i}^\circledast)\la (m\ot n)})\\
       =&(\id\ot\Upsilon_{\overline{M\ot N}})(\omega_{i}\ot \overline{(\omega^*, x_{i}^\circledast)\o\la m\ot (\omega^*, x_{i}^\circledast)\t\la n})\\
       =&\omega_{i}\ot\overline{(\omega^*, x_{i}^\circledast)\t\la n}\ot \overline{(\omega^*, x_{i}^\circledast)\o\la m}\\
       =&\omega_{i}\ot\overline{(\eta_{j}, x_{i}^\circledast)\la n}\ot \overline{(\omega^*, y_{j})\la m}
    \end{align*}
 We already know that ${}_{\CI\CB(\Omega^1)}\mathcal{M} \cong   {}_A \mathcal{IM}_{A}^{\Omega^1}$ are equivanlent as monoidal category according to
        \[\sigma_{M\ot N}(m\ot n\ot \omega)=\omega_{i}\ot (x_{i},\omega)\la(m\ot n).\]
    So on the other hand, we have
    \begin{align*}
        \sigma_{\overline{N}\ot\overline{M}}&\circ (\Upsilon_{\overline{M\ot N}}\ot\id)(\overline{m\ot n}\ot\omega)       =\sigma_{\overline{N}\ot\overline{M}}(\overline{n}\ot\overline{m}\ot\omega)
        =\omega_{i}\ot (x_{i},\omega)\la(\overline{n}\ot\overline{m})\\
        =&\omega_{i}\ot (x_{i},\omega)\o\la\overline{n}\ot (x_{i},\omega)\t\la\overline{m}
        =\omega_{i}\ot (x_{i},\omega_{j})\la\overline{n}\ot (x_{j},\omega)\la\overline{m}\\
        =&\omega_{i}\ot \overline{(\omega_{j}^{*}, x_{i}^{\circledast})\la n}\ot \overline{(\omega^*, x_{j}^{\circledast})\la m}.
    \end{align*}

   (3)  Let $F:{}_{\CI\CB(\Omega^1)}\mathcal{M}\to   {}_A \mathcal{IM}_{A}^{\Omega^1}$ be the functor. We can see $fb_{M}:\overline{F(M)}\to F(\overline{M})$ is identity, since
    \begin{align*}
        \sigma_{\overline{F(M)}}(\overline{m}\ot \omega)=\omega_{i}\ot \overline{(\omega^*,x_{i}^{*})\la m}=\omega_{i}\ot (x_{i},\omega)\la \overline{m}=\sigma_{F(\overline{M})}(\overline{m}\ot \omega).
    \end{align*}
    So it is not hard to see $F$ is an equivalence bar functor.
\end{proof}

This bar category structure comes from $\CI\CB(\Omega^1)$ being part of a $*$-bialgebroid pair. We omit details of the other half here since it follows the same line  as for $\CI\CL(\Omega^1)$ covered later.

\subsection{Full $*$-Hopf algebroid $\CI\CI\CB(\Omega^1)$.}\label{sec:pivotal}
In this subsection, given $\Omega^1$ left and right fgp, we let $\cX$ be an $A$-bimodule equipped with both $\ev^L,\coev^L$ and $\ev^R,\coev^R$ structures making it both left and right dual to $\Omega^1$, so   $\cX=\cX^R=\cX^L$ simultaneously. By \cite{AryGho1}, we can define
 $\CI\CI\CB(\Omega^1)$ is the quotient of $\CI\CB(\Omega^1)$ by the relations
\begin{align}\label{equ. relations 2}
    (y_{j},\omega)(\eta_{j}, x)=\underline{\ev^{L}(x, \omega)},\quad (\omega,x_{i})(x, \omega_{i})=\ev^{R}(\omega, x).
\end{align}
$\CI\CI\CB(\Omega^1)$ is a full Hopf algebroid with antipode given by
\[S(x,\omega)=(\omega, x),\quad S(\omega, x)=(x,\omega).\]
The left bialgebroid structure can be inherited from $\CI\CB(\Omega^1)$. By Proposition \ref{def. full Hopf algebroid2}, its corresponding right bialgebroid structure is given by
\begin{align}\label{equ. source and target maps of right bialgebroid}
    s_{R}(\und a)=\und a,\quad t_{R}(\und a)=a
\end{align}
and the coproduct
\begin{align}\label{equ. coproduct of right bialgebroid}
   \Delta_{R}(\omega, y)=(\omega, x_i)\ot_{A^{op}} (w_{i}, y),\quad \Delta_{R}(x, \omega)=(x,\eta_{j})\ot_{A^{op}} (y_{j}, \omega).
\end{align}
And the counit
\begin{align}\label{equ. counit of right bialgebroid}
    \varepsilon_{R}(\omega, y)=\und{\ev^L(y,\omega)},\quad\varepsilon_{R}(x,\omega)=\und{\ev^R(\omega,x)}.
\end{align}
Indeed, for the coproduct of the right bialgebroid, by (\ref{equ. left coproduct to right coproduct}), we have
\begin{align*}
    \Delta_{R}(\omega, y)=&S(S^{-1}(\omega, y)\t)\ot S(S^{-1}(\omega, y)\o)=S(x_{i}, \omega)\ot S(y, \omega_{i}) \\
    =&(\omega, x_i)\ot (w_{i}, y),
\end{align*}
similar for the rest.

This recaps the construction of \cite{AryGho1} in our notations. Now we recall that for a $*$-algebra $A$ and a fgp $*$-bimodule $\cX$, we have a map $\circledast: \cX\to \cX$ satisfying (\ref{equ. star translate right vector field to left vector field}) and $(\ref{equ. coev in terms of star})$.

\begin{proposition}\label{Prop. full star Hopf algebroid H omega} If $\Omega^1$ is an fgp $*$-bimodule and $\cX$ a left and right dual as above.  If $\circledast=\circledast^{-1}$ then $\CI\CI\CB(\Omega^1)$ is a full $\star$-Hopf algebroid with
    \begin{align}
       s_{L}(a)^\star=a^*,\quad t_{L}(a)^\star=\underline{a^*},\quad(\omega, y)^\star=(\omega^*, y^{\circledast}),\quad (x, \omega)^\star=(x^\circledast, \omega^*).
    \end{align}
\end{proposition}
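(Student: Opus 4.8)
The plan is to verify the defining axioms of a full $\star$-Hopf algebroid (Definition~\ref{def. star Hopf algebroids}) directly on the generators $(x,\omega)$ and $(\omega,y)$ of $\CI\CI\CB(\Omega^1)$, using the given formula for $\star$ and the already-established full Hopf algebroid structure. First I would check that $\star$ is well-defined on $\CI\CI\CB(\Omega^1)$, i.e.\ that it respects the balanced tensor products $\tens_A$ over which the generators live and, crucially, that it sends the relations (\ref{equ. relations 1}) and (\ref{equ. relations 2}) to relations in the algebra. This is where the identities $(axb)^\circledast=b^* x^\circledast a^*$ and $\omega_i^*\tens x_i^\circledast=\coev^R(1)$, $y_j^{\circledast^{-1}}\tens\eta_j^*=\coev^L(1)$ from Section~\ref{secBpiv} do the work; for instance, applying $\star$ to $(\omega_i,y)(x_i,\omega)=\und{\ev^R(\omega,y)}$ should, after relabelling via these coevaluation identities, land on one of the defining relations of $\CI\CI\CB(\Omega^1)$ reorganised through $\circledast=\circledast^{-1}$.

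Next I would check $\star$ is involutive on generators: $(\omega,y)^{\star\star}=(\omega^*{}^*,(y^\circledast)^\circledast)=(\omega,y)$ using $**=\id$ on $\Omega^1$ and $\circledast\circledast=\circledast\circledast^{-1}=\id$ on $\cX$ (this is exactly where the hypothesis $\circledast=\circledast^{-1}$ is needed). Then antimultiplicativity: it suffices to check $\star$ reverses products of generators, and the key cases are the ``mixed'' products $(x,\omega)(\omega',y')$ and $(\omega,y)(x',\omega')$ — but since $\CI\CI\CB(\Omega^1)$ is generated with only the relations (\ref{equ. relations 1}), (\ref{equ. relations 2}) identifying certain products with base elements, antimultiplicativity on the free-product generators is essentially formal once $\star$ is a well-defined antilinear map compatible with the $A^e$-bimodule structure; the substance is again that it respects the relations, already handled. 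I would then verify the source/target conditions $s_L(a)^\star=a^*=t_R(\und{a^*})$ and $t_L(a)^\star=\und{a^*}=s_R(\und{a^*})$ using (\ref{equ. source and target maps of right bialgebroid}), which is immediate from the stated formulas.

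The remaining axioms are the counit condition $\und{\varepsilon_L(X^\star)}=\varepsilon_R(X)^*$ and the coproduct condition $X^\star{}_{[1]}\tens X^\star{}_{[2]}=X_{(1)}{}^\star\tens X_{(2)}{}^\star$. For the counit, on $(\omega,y)$ we compute $\varepsilon_L((\omega,y)^\star)=\varepsilon_L(\omega^*,y^\circledast)=\ev^R(\omega^*,y^\circledast)$, and by the definition (\ref{equ. star translate right vector field to left vector field}) of $\circledast$ (together with $\circledast=\circledast^{-1}$) this equals $\ev^L(y,\omega)^*$, which matches $\varepsilon_R(\omega,y)^*=\und{\ev^L(y,\omega)}^*$ under the identification of $A$ with $A^{op}$ via the underline; the $(x,\omega)$ case is symmetric using $\varepsilon_R$ from (\ref{equ. counit of right bialgebroid}). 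For the coproduct condition I would use $\Delta_L$ from (\ref{equ. coproduct 1}), $\Delta_R$ from (\ref{equ. coproduct of right bialgebroid}), and the antipode $S(x,\omega)=(\omega,x)$; applying $\star\tens\star$ to $\Delta_L(x,\omega)=(x,\omega_i)\tens_A(x_i,\omega)$ gives $(x^\circledast,\omega_i^*)\tens(x_i^\circledast,\omega^*)$, and I would match this against $\Delta_R((x,\omega)^\star)=\Delta_R(x^\circledast,\omega^*)$ computed from (\ref{equ. coproduct of right bialgebroid}) after rewriting $\omega_i^*\tens x_i^\circledast$ via $\coev^R$; the comparison reduces to the identities relating the two coevaluations on a pivotal $\cX$. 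I expect the main obstacle to be precisely this bookkeeping of coevaluations — keeping straight which of $\coev^L,\coev^R$ appears and how the four families $\{\omega_i,x_i\}$ and $\{y_j,\eta_j\}$ interchange under $\circledast$ — since a sign or side error there propagates through both the relation-compatibility check and the coproduct axiom; everything else is routine once the well-definedness on relations is secured.
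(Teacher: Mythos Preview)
Your proposal is correct and follows essentially the same approach as the paper: both verify the axioms of Definition~\ref{def. star Hopf algebroids} directly on the generators $(x,\omega)$ and $(\omega,y)$, using the coevaluation identities $\omega_i^*\tens x_i^\circledast=\coev^R(1)$ and $y_j^{\circledast^{-1}}\tens\eta_j^*=\coev^L(1)$ together with the defining property (\ref{equ. star translate right vector field to left vector field}) of $\circledast$ to check that $\star$ respects the relations (\ref{equ. relations 1}),~(\ref{equ. relations 2}) and satisfies the source/target, counit, and coproduct compatibilities. One minor remark: the generators $(x,\omega)$ and $(\omega,y)$ live in the vector-space tensor products $\cX\tens\Omega^1$ and $\Omega^1\tens\cX$ (as $A^e$-bimodules), not in balanced tensor products over $A$, so there is no $\tens_A$-well-definedness to check at that stage---but this does not affect your argument.
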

\begin{proof}
    Clearly, $s_{L}(a)^\star=a^*=t_{R}(\underline{a^*})$ and $t_{L}(a)^\star=\underline{a^*}=s_{R}(\underline{a^*})$. Also, $\varepsilon_{L}((\omega, y)^\star)=\varepsilon_{L}(\omega^*, y^{\circledast})=\ev^R(\omega^*, y^{\circledast})=\ev^L(y, \omega)^*=\und{\varepsilon_{R}(\omega, y)^*}$. Similarly, $\varepsilon_{L}((x, \omega)^\star)=\und{\varepsilon_{R}(x, \omega)^*}$. We can see on the one hand,
    \begin{align*}
        (x,\omega)^\star\ro\ot (x,\omega)^\star\rt=(x^\circledast, \omega^*)\ro\ot (x^\circledast, \omega^*)\rt=(x^*, \eta_{j})\ot (y_{j}, \omega^*).
    \end{align*}
    On the other hand,
    \begin{align*}
        (x,\omega)\o{}^\star\ot (x,\omega)\t{}^\star=(x,\omega_{i}){}^\star\ot (x_{i},\omega){}^\star=(x^\circledast, \omega_{i}^{*})\ot (x_{i}^\circledast, \omega^*).
    \end{align*}
    To see $\star$ is an anti-algebra map that preserve the relations (\ref{equ. relations 1}) and (\ref{equ. relations 2}), we have
    \begin{align*}
        ((\omega_{i}, y)(x_{i}, \omega))^\star=&\underline{\ev^R(\omega, y)}^\star=\underline{\ev^R(\omega, y)^*}=\underline{\ev^L(y^\circledast, \omega^*)}=(x_{i}^\circledast, \omega^*)(\omega_{i}^*, y^\circledast)\\
        =&(x_{i}, \omega)^\star(\omega_{i}, y)^\star.
    \end{align*}
    And
    \begin{align*}
        ((x,\eta_{j})(w, y_{j}))^\star=&\ev^{L}(x, \omega)^\star=\ev^{L}(x, \omega)^*=\ev^{R}(\omega^*, x^\circledast)=(\omega^*, y_{j}^\circledast)(x^\circledast, \eta_{j}^*)\\
        =&(w, y_{j})^\star (x,\eta_{j})^\star.
    \end{align*}
    The rest is similar.
\end{proof}

Let ${}_{A}\CI\CI\CM_{A}^{\Omega^1}$ be the submonoidal category of ${}_{A}\mathcal{IM}_{A}^{\Omega^1}$ consisting of $M\in {}_{A}\mathcal{IM}_{A}^{\Omega^1}$ such that the $A$-bimodule map
\begin{align}
    \tau_{M}:\cX\ot_A M\to M\ot_A \cX,\quad \tau_{M}(x\ot m)=(x, \omega_{i})\la m\ot x_{i}
\end{align}
is invertible with inverse given by
\begin{align}
    \tau_{M}^R(m\ot x)=y_{j}\ot (\eta_{j}, x)\la m.
\end{align}
A morphism $f:(M, \sigma_{M}, \tau_{M})\to (N, \sigma_{N}, \tau_{N})$ satisfies in additional $(f\ot\id)\circ \tau_{M}=\tau_{N}\circ (\id\ot f)$.
It is given by \cite{AryGho1} that ${}_{A}\mathcal{IIM}_{A}^{\Omega^1}\cong {}_{\CI\CI\CB(\Omega^1)}\mathcal{M}$ as monoidal category.

\begin{theorem}\label{thm. biinvertible module} Under the assumptions of Proposition~\ref{Prop. full star Hopf algebroid H omega}, the isomorphism ${}_{A}\CI\CI\CM_{A}^{\Omega^1}\cong {}_{\CI\CI\CB(\Omega^1)}\mathcal{M}$ is as bar monoidal categories, as  bar subcategories of ${}_{A}\mathcal{IM}_{A}^{\Omega^1}$ and $ {}_{\cI\CB(\Omega^1)}\mathcal{M}$ respectively.
\end{theorem}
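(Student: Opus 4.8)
The plan is to leverage the two bar category structures that have already been established --- the one on ${}_{A}\mathcal{IM}_{A}^{\Omega^1}$ and the equivalent one on ${}_{\CI\CB(\Omega^1)}\mathcal{M}$ from Theorem~\ref{thm. first bar category equ} --- and show that they restrict to the respective full subcategories ${}_{A}\CI\CI\CM_{A}^{\Omega^1}$ and ${}_{\CI\CI\CB(\Omega^1)}\mathcal{M}$. First I would observe that since these are \emph{full} subcategories, the only thing that can go wrong is closure under the bar functor: for $M$ in the subcategory we must check that $\overline{M}$ is again in the subcategory, i.e.\ that if $\tau_M$ (equivalently the biinvertibility data) is invertible then so is $\tau_{\overline{M}}$. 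The maps $\Upsilon$, $bb$ and $\star$ are inherited unchanged from the ambient bar category, so once closure is verified the bar-category axioms are automatic, and the monoidal equivalence ${}_{A}\CI\CI\CM_{A}^{\Omega^1}\cong {}_{\CI\CI\CB(\Omega^1)}\mathcal{M}$ (already known) will then be an equivalence of bar categories by the same $fb_M=\id$ argument as in part (3) of Theorem~\ref{thm. first bar category equ}.

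The key computation is therefore to produce $\tau_{\overline{M}}$ and its inverse explicitly in terms of $\tau_M$, $\tau_M^R$ and the antilinear map $\circledast:\cX\to\cX$, in parallel with the formula for $\sigma_{\overline{M}}$ in Theorem~\ref{thm. first bar category equ}(2). Concretely, I expect
\[
\tau_{\overline{M}}(x\ot \overline{m}) = y_j \ot \overline{(\eta_j^\circledast,\, x^\circledast)\la m}\quad\text{(up to the right index conventions)},
\]
obtained by transporting the defining formula $\tau_M(x\ot m)=(x,\omega_i)\la m\ot x_i$ through $\circledast$ and the coevaluation identities (\ref{equ. coev in terms of star}); the candidate inverse comes similarly from $\tau_M^R$. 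I would then verify $A$-bilinearity (well-definedness over the balanced tensor product), using $(axb)^\circledast=b^*x^\circledast a^*$ and $(a\omega)^*=\omega^* a^*$ exactly as in the $\sigma_{\overline{M}}$ checks, and verify $\tau_{\overline{M}}\circ\tau_{\overline{M}}^R=\id$, $\tau_{\overline{M}}^R\circ\tau_{\overline{M}}=\id$ by feeding the relations (\ref{equ. relations 2}) (together with (\ref{equ. relations 1})) through $\star$, which is legitimate because Proposition~\ref{Prop. full star Hopf algebroid H omega} already tells us $\star$ preserves exactly those relations. One must also check the compatibility of $\tau_{\overline{M}}$ with $\Upsilon$ on tensor products (using $\tau_{M\ot N}=(\tau_M\ot\id)(\id\ot\tau_N)$ and the full-Hopf-algebroid coproduct identity $S^{\pm}(X)\o\ot S^{\pm}(X)\t=S^{\pm}(X\rt)\ot S^{\pm}(X\ro)$), and with $bb$, where Corollary~\ref{coro. antipode with star} (equivalently $\circledast=\circledast^{-1}$) is what makes the double-bar come out right.

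The main obstacle I anticipate is bookkeeping rather than conceptual: matching up the many coevaluation indices ($\omega_i\ot x_i$, $y_j\ot\eta_j$, and their starred versions) so that the candidate $\tau_{\overline{M}}$ is manifestly the transport of $\tau_M$ under the bar functor, and then confirming that the two composites collapse using precisely the relations (\ref{equ. relations 2}). A secondary subtlety is that $\tau_{\overline{M}}$ should really be computed from the intrinsic description of biinvertibility inside ${}_{\CI\CI\CB(\Omega^1)}\mathcal{M}$ --- namely that the action of the extra generators $(y_j,\omega)$, $(\omega,x_i)$ is controlled by $\tau_M^R$ --- so that closure of the subcategory under bar is equivalent to $\overline{M}$ being a module over the \emph{quotient} $\CI\CI\CB(\Omega^1)$ and not merely over $\CI\CB(\Omega^1)$; this is where the fact that $\star$ descends to the quotient (Proposition~\ref{Prop. full star Hopf algebroid H omega}) does the real work. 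Once that is in hand, assembling the bar-functor isomorphism is a direct repeat of the argument already given for $\CI\CB(\Omega^1)$.
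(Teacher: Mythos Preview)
Your proposal is correct and follows essentially the same approach as the paper: verify closure under bar by showing $\tau_{\overline{M}}$ is invertible (using the relations (\ref{equ. relations 2}) and that $\star$ descends to the quotient), then check compatibility with $\Upsilon$ and $bb$, and conclude with the $fb_M=\id$ argument. The only minor difference is that the paper invokes Proposition~\ref{propfullbar} directly to get the bar structure on ${}_{\CI\CI\CB(\Omega^1)}\mathcal{M}$ from the full $*$-Hopf algebroid structure (yielding $(x,\omega)\la\overline{m}=\overline{S((x,\omega)^\star)\la m}$), and then observes this coincides with the restriction from ${}_{\CI\CB(\Omega^1)}\mathcal{M}$ --- whereas you frame it purely as a restriction-and-closure argument; the computations are the same either way, and your anticipated ``bookkeeping'' issue with the index conventions for $\tau_{\overline{M}}$ is real but harmless (the paper simply writes $\tau_{\overline{M}}(x\ot\overline{m})=(x,\omega_i)\la\overline{m}\ot x_i=\overline{(\omega_i^*,x^\circledast)\la m}\ot x_i$ directly from the general definition rather than transporting).
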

\begin{proof}
    Since $\CI\CI\CB(\Omega^1)$ is a full $*$-Hopf algebroid, ${}_{\CI\CI\CB(\Omega^1)}\mathcal{M}$ is a bar category by Proposition \ref{propfullbar}. More precisely, given $m\in M\in {}_{\CI\CI\CB(\Omega^1)}\mathcal{M}$,
    \[(x,\omega)\la \overline{m}=\overline{S((x,\omega)^\star)\la m}=\overline{(\omega^*, x^\circledast)\la m},\]
    and similarly,
    \[(\omega, y)\la \overline{m}=\overline{(y^{\circledast^{-1}}, \omega^*)\la m}.\]
    It is not hard to see that this defines an $\CI\CI\CB(\Omega^1)$-module structure on $\overline{M}$ descending from the left $\CI\CB(\Omega^1)$-module structure given in Theorem \ref{thm. first bar category equ}. As a result, $ {}_{\CI\CI\CB(\Omega^1)}\mathcal{M}$ is a bar subcategory of $ {}_{\CI\CB(\Omega^1)}\mathcal{M}$. To see ${}_{A}\mathcal{IIM}_{A}^{\Omega^1}$ is a bar category with the bar structure inherited from ${}_{A}\mathcal{IM}_{A}^{\Omega^1}$, we first observe that $\tau_{\overline{M}}$ given by $\tau_{\overline{M}}(x\ot \overline{m})=(x, \omega_{i})\la \overline{m}\ot x_{i}$ is well defined. We can also  check $\tau_{\overline{M}}$ is invertible with the given fomular above. Indeed,
    \begin{align*}
        \tau_{\overline{M}}\circ \tau_{\overline{M}}^{R}(\overline{m}\ot x)=&(y_{j}, \omega_{i}) (\eta_{j},x)\la \overline{m}\ot x_{i}=  \ev^L(x,\omega_{i})\la \overline{m}\ot x_{i}=\overline{m}\ot \ev^L(x,\omega_{i})x_{i}
        =\overline{m}\ot x.
    \end{align*}
    Similarly, $\tau_{\overline{M}}^{R}\circ \tau_{\overline{M}}(x\ot \overline{m})=x\ot \overline{m}$. Let $f:(M, \sigma_{M}, \tau_{M})\to (N, \sigma_{N}, \tau_{N})$ be a morphism. We can see
    \begin{align*}
        (\overline{f}\ot\id)\circ \tau_{\overline{M}}(x\ot \overline{m})=&(\overline{f}\ot\id)((x, \omega_{i})\la \overline{m}\ot x_{i})=\overline{f((\omega_{i}^*, x^\circledast)\la m)}\ot x_{i}=\overline{f((\omega_{i}^*, x^\circledast)\la m)}\ot x_{i}^{**}\\
        =&\overline{f((\eta_{j}, x^\circledast)\la m)}\ot y_{j}^{*}=\overline{(\eta_{j}, x^\circledast)\la f(m)}\ot y_{j}^{*}=\tau_{N}\circ (\id\ot \overline{f})(x\ot\overline{m}).
    \end{align*}
    Moreover,
    \begin{align*}
        \tau_{\overline{\overline{M}}}(x\ot\overline{\overline{m}})=\overline{\overline{(x, \omega_{i})\la m}}\ot x_{i}.
    \end{align*}
     And on the one hand
    \begin{align*}
       (\Upsilon_{\overline{M\ot N}}&\ot\id)\circ\tau_{\overline{M\ot N}}(x\ot \overline{m\ot n})\\
       =&(\Upsilon_{\overline{M\ot N}}\ot\id)(\overline{(\omega_{i}^*, x^\circledast)\la (m\ot n)}\ot x_{i})\\
       =&(\Upsilon_{\overline{M\ot N}}\ot\id)(\overline{(\omega_{i}^*, x^\circledast)\o\la m\ot (\omega_{i}^*, x^\circledast)\t\la n}\ot x_{i})\\
       =&\overline{(\omega_{i}^*, x^\circledast)\t\la n}\ot \overline{(\omega_{i}^*, x^\circledast)\o\la m}\ot x_{i}\\
       =&\overline{(\eta_{j}, x^\circledast)\la n}\ot \overline{(\omega_{i}^*, y_{j})\la m}\ot x_i,
    \end{align*}
    on the other hand,
    \begin{align*}
        \tau_{\overline{N}\ot\overline{M}}&\circ (\id\ot\Upsilon_{\overline{M\ot N}})(x\ot \overline{m\ot n})       =\tau_{\overline{N}\ot\overline{M}}(x\ot\overline{n}\ot\overline{m})
        =(x,\omega_{i})\la(\overline{n}\ot\overline{m})\ot x_i\\
        =& (x,\omega_{i})\o\la\overline{n}\ot (x,\omega_{i})\t\la\overline{m}\ot x_i
        = (x,\omega_{j})\la\overline{n}\ot (x_{j},\omega_i)\la\overline{m}\ot x_i\\
        =& \overline{(\omega_{j}^{*}, x^{\circledast})\la n}\ot \overline{(\omega_{i}^*, x_{j}^{\circledast})\la m}\ot x_i.
    \end{align*}
    Let $F:{}_{\CI\CI\CB(\Omega^1)}\mathcal{M}\to   {}_{A}\mathcal{IM}_{A}^{\Omega^1}$ be the functor. We can see $fb_{M}:\overline{F(M)}\to F(\overline{M})$ is identity, since
    \begin{align*}
        \tau_{\overline{F(M)}}(x\ot\overline{m})=\overline{(\omega_{i}^*,x^{\circledast})\la m}\ot x_i=(x_i, \omega)\la\overline{m}\ot x_i=\tau_{F(\overline{M})}(x\ot\overline{m}).
    \end{align*}
    So it is not hard to see $F$ is an equivalence bar functor.
\end{proof}

This upgrades the construction of $\CI\CI\CB(\Omega^1)$ and its representation category in \cite{AryGho1} to the full $*$-Hopf algebroid case. This is warm up for the next section.

\subsection{Preliminary background on bimodule connections}\label{sec:preconn}

 In this section we let $(\Omega^1,\extd)$ be  differential calculus on $A$. We first recall that a left bimodule connection $(E, \nabla_E,\sigma_E)$ consist of an $A$-bimodule $E$, an left connection $\nabla_{E}$ in the sense of a linear map obeying
\[   \nabla_{E}(a e)=a\nabla_{E}(e)+\extd a \ot e\]
  for any $a\in A$ and $e\in E$, and a $A$-bimodule map (`generalised braiding') $\sigma_{E}:E\ot_{A}\Omega^1\to \Omega^1\ot_{A} E$ such that
  \begin{align*}
 \nabla_{E}(e a)=\nabla_{E}(e)\, a+ \sigma_{E}(e\ot \extd a),
  \end{align*}
  for any $a\in A$ and $e\in E$. Such objects form a monoidal category which we will denote ${}_A\CE_A$. A morphism $f:(E, \nabla_{E},\sigma_{E})\to (F, \nabla_{F},\sigma_{F})$ is a $A$-bimodule map $f: E\to F$, such that
  \[\sigma_{F}\circ(f\ot\id)=(\id\ot f)\circ \sigma_{E},\quad (\id\ot f)\circ\nabla_{E}=\nabla_{F}\circ f.\]
  The monoidal structure is
  \[\nabla_{E\ot F}=\nabla_{E}\ot\id+(\sigma_{E}\ot\id)\circ(\id\ot \nabla_{F}),\quad \sigma_{E\ot F}=(\sigma_{E}\ot\id)\circ(\id\ot\sigma_{F}),\]
for any $(E, \nabla_{E},\sigma_{E})$, $(F, \nabla_{F},\sigma_{F})\in {}_{A}\CE_{A}$
  We denote by ${}_{A}\cI\CE_{A}$  the monoidal subcategory with invertible braiding. Similarly, a  right bimodule connection $(F, \nabla^R_F,\sigma^R_F)$ consists of an $A$-bimodule $F$ and intertwining $\sigma_F^R:\Omega^1\ot F\to F\ot \Omega^1$, and right connection $\nabla_{F}^R:F\to F\ot\Omega^1$, such that
  \[ \nabla_{F}^R(fa)=\nabla_{F}^R(f)a+f\ot \extd a,\quad \nabla_{F}^R(af)=a\nabla_{F}^R(f)+\sigma_F(\extd a\ot f).\] We denote the category of right $A$-bimodule connection by ${}_{A}\CE_{A}^R$ and its subcategory ${}_{A}\cI\CE^{R}_{A}$ with invertible braiding, which is isomorphic with  ${}_{A}\cI\CE_{A}$ as monoidal category. More precisely, for  any  $(E,\sigma_{E}, \nabla_{E})\in{}_{A}\cI\CE_{A}$, we can give a right bimodule connection by
\begin{align}\label{LRconnE}
    \sigma_E^R:=(\sigma_E)^{-1},\quad \nabla^R_E=(\sigma_E)^{-1}\circ \nabla_E.
\end{align}
This gives a monoidal isomorphism ${}_{A}\cI\CE_{A}\cong {}_{A}\cI\CE_{A}^R$. It is not hard to see that ${}_{A}\cI\CE_{A}$ is a submonoidal category of ${}_{A}\cI\CM_{A}^{\Omega^1}$. Also, if $A$ is a $*$-algebra and $(\Omega^1,\extd)$ a $*$-calculus then ${}_{A}\cI\CE_{A}$  is, moreover, a bar category\cite{BM} with
\begin{align}\label{equ. star compatible}
   \nabla_{\overline{E}}(\overline{e})=\dagger(\overline{(\sigma_{E})^{-1}\circ \nabla_{E}(e)}),\quad \sigma_{\overline{E}}(\overline{e}\ot \omega)=\dagger( \overline{(\sigma_{E})^{-1}(\omega^*\ot e)}),
\end{align}
where $\dagger:\overline{E\ot_A \Omega^1}\to \Omega^1\ot_A \overline{E}$ is defined by $\dagger(\overline{e\ot \omega})=\omega^*\ot \overline{e}$.
More precisely, if we denote $e^\alpha\ot e^\beta:=(\sigma_{E})^{-1}\circ \nabla_{E}(e)$ then $\nabla_{\overline{E}}(\overline{e})=e^{\beta^{*}}\ot \overline{e^\alpha}$. In this case, there is a forgetful functor ${}_{A}\cI\CE_{A}$ to ${}_{A}\cI\CM_{A}^{\Omega^1}$ in the previous section.

\begin{definition} For $\Omega^1$ f.g.p. and pivotal i.e. with a single $\cX=\cX^R=\cX^L$, we say that a left bimodule connection is {\em bi-invertible} if $\sigma_E$ is invertible with inverse $\sigma^R_E$ as before {\em and} its `transpose'  $\tau_{E}$ defined by
\[\tau_E=(\ev^L\ot\id_{E\ot \cX})\circ(\id_{\cX}\ot \sigma_{E}\ot \id_{\cX})\circ (\id_{\cX\ot E}\ot \coev^L):\cX\ot_A E\to E\ot_A \cX\]
is invertible with inverse
\[\tau_E^R=(\id_{\cX\ot E}\ot \ev^R)\circ(\id_{\cX}\ot \sigma_{E}^R\ot \id_{\cX})\circ (\coev^R\ot\id_{E\ot \cX}).\]
A morphism $f:M\to N$ between biinvertible connections needs to satisfies in addition $\tau_F\circ(\id\ot f)=(f\ot\id)\circ \tau_E$.
\end{definition}

It is not hard to see ${}_A\cI\cI\CE_A$  has a forgetful functor to ${}_{A}\mathcal{IIM}_{A}^{\Omega^1}$ in Section~\ref{sec:pivotal}. Also, it is shown by \cite{AryGho1} that this is a monoidal subcategory  of ${}_A\cI\CE_A$, which we denote ${}_A\cI\cI\CE_A$.

\begin{lemma} In the $*$-algebra case with a $*$-calculus,  ${}_A\cI\cI\CE_A\hookrightarrow {}_A\cI\CE_A$ as bar categories compatibly with the forgetful functors to${}_{A}\mathcal{IIM}_{A}^{\Omega^1}\hookrightarrow{}_{A}\mathcal{IM}_{A}^{\Omega^1}$ respectively.
\end{lemma}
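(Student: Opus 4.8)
The strategy is to reduce the statement to the module picture, where the corresponding fact is already Theorem~\ref{thm. biinvertible module}. The crucial observation is that bi-invertibility of a left bimodule connection $(E,\nabla_E,\sigma_E)$ depends only on the braiding $\sigma_E$ and not on the connection $\nabla_E$: unwinding the definition, $\tau_E=(\ev^L\ot\id)\circ(\id\ot\sigma_E\ot\id)\circ(\id\ot\coev^L)$ sends $x\ot e$ to $(\ev^L\ot\id)(x\ot\sigma_E(e\ot\omega_i))\ot x_i$, which by Proposition~\ref{prop. relation between invertible bimodule and left module} equals $(x,\omega_i)\la e\ot x_i$, i.e. the transpose $\tau_{FE}$ of the object $FE=(E,\sigma_E)$ regarded in ${}_{A}\mathcal{IM}_{A}^{\Omega^1}$ via the forgetful functor $F$; similarly $\tau_E^R$ matches $\tau_{FE}^R$. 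Hence $E\in{}_A\cI\CE_A$ is bi-invertible precisely when $FE$ lies in ${}_{A}\mathcal{IIM}_{A}^{\Omega^1}$, so that ${}_A\cI\cI\CE_A=F^{-1}({}_{A}\mathcal{IIM}_{A}^{\Omega^1})$ as a full subcategory, and the square of inclusions and forgetful functors commutes on objects.

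Next I would verify that $F:{}_A\cI\CE_A\to{}_{A}\mathcal{IM}_{A}^{\Omega^1}$ is a bar functor with structure map $fb_E=\id$. For the data $\Upsilon$, $bb$, the unit object and the morphism $\star$ this is immediate, since in both bar categories these are inherited from the bar category of $A$-bimodules of \cite{BM09}. For the object part one must check that $\sigma_{\overline E}$, given by the connection formula (\ref{equ. star compatible}) as $\sigma_{\overline E}(\overline e\ot\omega)=\dagger(\overline{\sigma_E^{-1}(\omega^*\ot e)})$, agrees with $\sigma_{\overline{FE}}$ of Theorem~\ref{thm. first bar category equ}(2), which is $\omega_i\ot\overline{(\omega^*,x_i^\circledast)\la e}$. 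Writing $e'\ot\rho:=\sigma_E^{-1}(\omega^*\ot e)$ and using $(\omega^*,x_i^\circledast)\la e=(\id\ot\ev^R)(\sigma_E^{-1}(\omega^*\ot e)\ot x_i^\circledast)=e'\,\ev^R(\rho\ot x_i^\circledast)$ together with $\ev^R(\rho\ot x_i^\circledast)=\ev^L(x_i,\rho^*)^*$, the coevaluation identity $\omega_i\,\ev^L(x_i,\rho^*)=\rho^*$, and $\overline{e'c}=c^*\overline{e'}$ in $\overline E$ for $c\in A$, one obtains $\omega_i\ot\overline{(\omega^*,x_i^\circledast)\la e}=\rho^*\ot\overline{e'}=\dagger(\overline{e'\ot\rho})$, as needed. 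I expect this identity $\sigma_{\overline E}=\sigma_{\overline{FE}}$ to be the only non-formal point of the proof, since it forces a match between the connection-theoretic conjugate braiding of \cite{BM} and the module-theoretic one built from $\CI\CB(\Omega^1)$ via the map $\circledast$ on vector fields.

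Finally I would conclude formally. Let $E\in{}_A\cI\cI\CE_A$. Its conjugate $\overline E$ already lies in the bar category ${}_A\cI\CE_A$ by \cite{BM}. Since $F$ is a bar functor, $F\overline E=\overline{FE}$; moreover $FE\in{}_{A}\mathcal{IIM}_{A}^{\Omega^1}$ by the first paragraph, and ${}_{A}\mathcal{IIM}_{A}^{\Omega^1}\hookrightarrow{}_{A}\mathcal{IM}_{A}^{\Omega^1}$ is a bar subcategory by Theorem~\ref{thm. biinvertible module}, so $\overline{FE}\in{}_{A}\mathcal{IIM}_{A}^{\Omega^1}$. Hence $F\overline E\in{}_{A}\mathcal{IIM}_{A}^{\Omega^1}$, and therefore $\overline E\in{}_A\cI\cI\CE_A$ again by the first paragraph. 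Thus the $bar$ functor of ${}_A\cI\CE_A$ restricts to ${}_A\cI\cI\CE_A$ (with $\Upsilon$, $bb$ and $\star$ inherited), so ${}_A\cI\cI\CE_A\hookrightarrow{}_A\cI\CE_A$ is an inclusion of bar categories, and since all $fb$ maps in sight are identities this inclusion is compatible with the bar functors $F$ and with the bar subcategory inclusion ${}_{A}\mathcal{IIM}_{A}^{\Omega^1}\hookrightarrow{}_{A}\mathcal{IM}_{A}^{\Omega^1}$, which is the assertion.
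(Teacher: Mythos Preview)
Your proposal is correct and follows the same approach the paper intends: the paper's proof consists solely of the remark ``The proof is similar to the proof of Theorem~\ref{thm. biinvertible module}, so we omit details,'' and your argument is precisely a spelled-out reduction to that theorem. Your explicit verification that the forgetful functor $F:{}_A\cI\CE_A\to{}_{A}\mathcal{IM}_{A}^{\Omega^1}$ is a bar functor with $fb_E=\id$ (in particular the computation matching $\sigma_{\overline E}$ from (\ref{equ. star compatible}) with $\sigma_{\overline{FE}}$ from Theorem~\ref{thm. first bar category equ}(2)) is the key step the paper leaves implicit, and your final paragraph packaging the conclusion as ``preimage under a bar functor of a bar subcategory is a bar subcategory'' is a clean way to organise what a direct verification in the style of Theorem~\ref{thm. biinvertible module} would also yield.
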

\proof The proof is similar to the proof of Theorem \ref{thm. biinvertible module}, so we omit details. \endproof

\subsection{$*$-bialgebroid pair $\cI\cL(\Omega^1), \cI\cR(\Omega^1)$}\label{secIL}

Now, following  \cite{AryGho1}, assume that $\Omega^1$ is right fgp.
Define $\CL(\Omega^1)$ to be the algebra generated by $a\in A$, $\underline a\in A^{op}$, $x\in \cX^R$ and $(x, \xi)\in \cX^R\tens \Omega^1$ subject to the relations
\begin{align}\label{equ. relations 3}
&a\bullet (x,\xi) = (ax,\xi) \ ,\quad (x, \xi)\bullet a = (xa, \xi) \cr
&\underline a \bullet(x, \xi) = (x, \xi a) \ ,\quad (x, \xi)\bullet \underline a = (x, a\xi) \cr
& a\bullet x = a\,x\ ,\quad x\bullet a = x\, a + \ev^L(x, \extd a)\ ,\quad
x\bullet \underline a =  \underline a \bullet x + (x, \extd a), \quad a\underline{a}=\underline{a}a,
\end{align}
where we use $\bullet$ to denote the product of $\CL(\Omega^1)$ (in order to distinguish the bimodule structure of $\cX^R$).
By \cite{AryGho1}, $\CL(\Omega^1)$ is a left $A$-bialgebroid with the coring  structure
\begin{align}\label{equ. coring structure 1}
&\Delta_{L}(x) = x\tens 1 + (x,  \omega_{i})\tens x_i\ ,\quad \varepsilon_{L}(x)=0\ ,\cr
&\Delta_{L}(a) =a\tens 1\ ,\quad \Delta_{L}(\underline a) =1\tens\underline a\ ,\quad
\varepsilon_{L}(a)=a\ ,\quad \varepsilon_{L}(\underline a)=a\ ,\cr
&\Delta_{L}(x, \xi) = (x, \omega_{i})\tens (x_i, \xi)\ ,\quad
\varepsilon_{L}(x, \omega) =\ev^L(x,  \omega) \ .
\end{align}

\begin{proposition}\label{prop. relation between bimodule connection and left module}\cite{AryGho1}
   There exist an isomorphism of monoidal category ${}_{\CL(\Omega^1)}\CM\cong {}_{A}\CE_{A}$. More precisely, for $e\in E\in {}_{A}\CE_{A}$,
   \begin{align*}
&a \la e = ae\ ,\quad \underline a \la e = ea\ ,\quad x\la e = (\ev^L\tens\id)(x\tens\nabla_{E} e) \ ,\cr
&(x\tens\xi)\la e=(\ev^L\tens\id)(x\tens\sigma_{E}(e\tens\xi))\ .
\end{align*}
Conversely, for $e\in E\in {}_{\CL(\Omega^1)}\CM$,
\begin{align*}
    \sigma_{E}(e\ot \omega)=\omega_i\ot (x_{i}, \omega)\la e,\quad \nabla_{E}(e)=\omega_i\ot x_{i}\la e.
\end{align*}
\end{proposition}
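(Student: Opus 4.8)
The plan is to produce two functors $G:{}_A\CE_A\to{}_{\CL(\Omega^1)}\CM$ and $G':{}_{\CL(\Omega^1)}\CM\to{}_A\CE_A$ given by the displayed formulas, show they are mutually inverse, and check compatibility with the monoidal structures and with the forgetful functors to ${}_A\CM_A$. Since $\CL(\Omega^1)$ is presented by the generators $a,\und a,x,(x,\xi)$ and the relations (\ref{equ. relations 3}), defining $G$ on $(E,\nabla_E,\sigma_E)\in{}_A\CE_A$ amounts to checking that the operators $a\la e=ae$, $\und a\la e=ea$, $x\la e=(\ev^L\ot\id)(x\ot\nabla_E e)$, $(x\ot\xi)\la e=(\ev^L\ot\id)(x\ot\sigma_E(e\ot\xi))$ satisfy (\ref{equ. relations 3}). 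Here $a\bullet(x,\xi)=(ax,\xi)$ and $(x,\xi)\bullet a=(xa,\xi)$ follow from $\ev^L(axb,\omega)=a\,\ev^L(x,b\omega)$ and left $A$-linearity of $\sigma_E$; $\und a\bullet(x,\xi)=(x,\xi a)$ and $(x,\xi)\bullet\und a=(x,a\xi)$ follow from the right and middle $A$-linearity of $\sigma_E$; $x\bullet a=xa+\ev^L(x,\extd a)$ is the left Leibniz rule $\nabla_E(ae)=a\nabla_E(e)+\extd a\ot e$; $x\bullet\und a=\und a\bullet x+(x,\extd a)$ is the twisted Leibniz rule $\nabla_E(ea)=\nabla_E(e)a+\sigma_E(e\ot\extd a)$; and $a\und a=\und a\,a$ is associativity of the bimodule action. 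A morphism of bimodule connections intertwines $\nabla$ and $\sigma$, hence the four generator actions, so $G$ is a functor and, by construction, the pullback along $\eta_L$ of $G(E)$ recovers the original $A$-bimodule $E$.

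For $G'$, given $E\in{}_{\CL(\Omega^1)}\CM$ with the $A$-bimodule structure $a.e.b=(a\bullet\und b)\la e$ inherited via $\eta_L$, I would set $\nabla_E(e)=\omega_i\ot x_i\la e$ and $\sigma_E(e\ot\omega)=\omega_i\ot(x_i,\omega)\la e$. The first task is well-definedness over the balanced tensor products: $\omega_i\ot x_i\la e$ and $\omega_i\ot(x_i,\omega)\la e$ descend to $\Omega^1\ot_A E$ because of $a\bullet x=ax$ and $a\bullet(x,\omega)=(ax,\omega)$ (using the left $A$-action on $\cX^R\ot\Omega^1$ from (\ref{ufyt})), and $\sigma_E$ descends from $E\ot_A\Omega^1$ by $(x,\omega)\bullet\und a=(x,a\omega)$. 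Then: $A$-bilinearity of $\sigma_E$ follows from $(x,\xi)\bullet a=(xa,\xi)$ and $\und a\bullet(x,\xi)=(x,\xi a)$ combined with the fact that $\coev^L(1)=\omega_i\ot_A x_i$ commutes with $A$; the left Leibniz rule follows from $x\bullet a=xa+\ev^L(x,\extd a)$, again using that $\coev^L(1)$ commutes with $A$ together with the resolution identity $\extd a=\omega_i\,\ev^L(x_i,\extd a)$; and the twisted Leibniz rule follows directly from $x\bullet\und a=\und a\bullet x+(x,\extd a)$. Hence $G'(E)\in{}_A\CE_A$, and $\CL(\Omega^1)$-linear maps automatically intertwine $\nabla$ and $\sigma$, so $G'$ is a functor.

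That $G$ and $G'$ are mutually inverse is then a short verification on objects (and automatic on morphisms, since both leave underlying maps unchanged). Writing $\nabla_E(e)=\eta\ot e'$ (sum implicit), one has $\omega_i\ot x_i\la e=\omega_i\ot\ev^L(x_i,\eta)\,e'=\omega_i\,\ev^L(x_i,\eta)\ot e'=\eta\ot e'$ by $\omega=\omega_i\,\ev^L(x_i,\omega)$, and similarly for $\sigma_E$, so $G'G=\id$; conversely $(\ev^L\ot\id)(x\ot\nabla_E e)=\ev^L(x,\omega_i)\la(x_i\la e)=(\ev^L(x,\omega_i)\bullet x_i)\la e=x\la e$ by $x=\ev^L(x,\omega_i)x_i$, and likewise $(\ev^L\ot\id)(x\ot\sigma_E(e\ot\xi))=(x,\xi)\la e$, so $GG'=\id$. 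For the monoidal structure, $\Delta_{L}(x)=x\ot 1+(x,\omega_i)\ot x_i$ and $\Delta_{L}(x,\xi)=(x,\omega_i)\ot(x_i,\xi)$ from (\ref{equ. coring structure 1}) translate, after substituting the formulas for $\nabla$ and $\sigma$ and using $\sigma_E(e\ot\omega_i)=\omega_j\ot(x_j,\omega_i)\la e$, into exactly $\nabla_{E\ot F}=\nabla_E\ot\id+(\sigma_E\ot\id)(\id\ot\nabla_F)$ and $\sigma_{E\ot F}=(\sigma_E\ot\id)(\id\ot\sigma_F)$; the unit objects match ($A$ with $\nabla_A=\extd$ and $\sigma_A$ the canonical flip), the coherence isomorphisms are the evident ones, and the two forgetful functors agree because $a.e=a\la e$ and $e.a=\und a\la e$ on both sides.

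The step I expect to be the main obstacle is the well-definedness together with the two Leibniz rules for $G'$: one has to track the $A$-balancings carefully and repeatedly exploit that $\coev^L(1)$ commutes with $A$, which is precisely what allows the left $A$-action on the $\Omega^1$-leg to be traded for the right $A$-action on the $\cX^R$-leg and then, via the relations (\ref{equ. relations 3}), converted into an action on $E$. The remaining parts are routine manipulations with (\ref{equ. relations 3}), (\ref{equ. coring structure 1}), and the evaluation/coevaluation identities of Section~\ref{sec:precalc}.
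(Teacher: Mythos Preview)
The paper does not supply its own proof of this proposition: it is stated with the citation \cite{AryGho1} and immediately followed by the analogous construction on the right side, so there is nothing in the paper to compare against directly. Your argument is correct and is essentially the canonical one (and the one in \cite{AryGho1}): verify the generator relations (\ref{equ. relations 3}) on the operators built from $(\nabla_E,\sigma_E)$, reconstruct $(\nabla_E,\sigma_E)$ from the action via the dual basis, use the evaluation/coevaluation identities to see the two constructions are inverse, and read off the monoidal compatibility from the coproduct formulas (\ref{equ. coring structure 1}). Your identification of the only delicate point---well-definedness of $\nabla_E,\sigma_E$ over $\ot_A$ and the Leibniz rules via the centrality of $\coev^L(1)$---is accurate.
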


Similarly, if $\Omega^1$ is left fgp, we  define $\CR(\Omega^1)$ to be an algebra generated by $a\in A$, $\underline a\in A^{op}$, $y\in \cX^L$ and $(\xi, y)\in  \Omega^1\tens_{A}\cX^L$ subject to the relations
\begin{align}\label{equ. relations 4}
&a\Rbullet (\xi, y) = (\xi, a\,y)  \ ,\quad (\xi, y)\Rbullet  a = (\xi, y\,a) \cr
&\underline a \Rbullet(\xi, y)  = (\xi \, a, y)  \ ,\quad (\xi, y)\Rbullet  \underline a = (a\,\xi, y) \ ,\cr
& y\Rbullet a = y\,a\ ,\quad a\Rbullet y = a\, y + \ev^R(\extd a, y)\ ,\quad
 \underline a \Rbullet  y = y \Rbullet \underline a + (\extd a , y),\quad a\underline{a}=\underline{a}a.
\end{align}
where we use $\Rbullet$ to denote the product of $\CR(\Omega^1)$. $\CR(\Omega^1)$ have the right $A$-bialgebroid structure
\begin{align}\label{equ. coring structure 2}
&\Delta_{R}(y) = 1\tens y + y_j\tens (\eta_{j}, y)\ ,\quad \varepsilon_{R}(y)=0\ ,\cr
&\Delta_{R}(a) =1\tens a \ ,\quad \Delta_{R}(\underline a) =\underline a\tens 1\ ,\quad
\varepsilon_{R}(a)=a\ ,\quad \varepsilon_{R}(\underline a)=a\ ,\cr
&\Delta_{R}(\xi, y) = (\xi\tens y_j)\tens (\eta_{j}\tens y)\ ,\quad
\varepsilon_{R}(y,\omega) =\ev^R(y,\omega) \ .
\end{align}
By a parallel construction, the category ${}_A\CE_A^R$ is isomorphic with $\CM_{\CR(\Omega^1)}$. More precisely,
if $F$ is an $A$-bimodule with a right bimodule connection $(\nabla^R_F,\sigma^R_F)$ then $F$ is a right
$\CR(\Omega^1)$-module, by
\begin{align*}
& f\ra a = fa\ ,\quad f\ra  \underline a  = af\ ,\quad f\ra y = (\id\tens\ev^R)(\nabla^R_F f\tens y) \ ,\cr
&f\ra (\xi, y)=(\id\tens\ev^R)(\sigma^R_F(\xi\tens f) \tens y)\ .
\end{align*}
Conversely, if $F$ is a right $\CR(\Omega^1)$-module,
\begin{align}\label{equ. right bimodule connection with right module}
    \sigma_F^R(\omega\ot f)=f\ra (\omega,y_j)\ot \eta_{j},\quad \nabla_F^R(f)=f\ra y_j\ot \eta_{j}.
\end{align}

This completes our summary of \cite{AryGho1}. Now let $A$ be a $*$-algebra and $(\Omega^1,\extd)$ an fgp $*$-differential calculus. As before, now recall that there is a map $\circledast:\cX^R\to \cX^L$ satisfying (\ref{equ. star translate right vector field to left vector field}) and $(\ref{equ. coev in terms of star})$.

\begin{lemma}\label{lem. star related LX and RX} If $A$ is a $*$-algebra and $(\Omega^1,\extd)$ an fgp $*$-differential calculus $\Omega^1$ then $\CL(\Omega^1)$ and $\CR(\Omega^1)$ are $*$-related, with
    \[a^\circledast=a^*,\quad \underline{a}^\circledast=\underline{a^*},\quad (\underline{a}\bullet x)^\circledast=x^\circledast\Rbullet\underline{a^*},\quad
(x,\xi)^\circledast = (\xi^*, x^\circledast )\]
\end{lemma}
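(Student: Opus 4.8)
The strategy is to verify directly that the prescribed map $\circledast$ extends to a well-defined invertible antilinear anti-algebra map $\CL(\Omega^1)\to\CR(\Omega^1)$ and then check the four conditions in Definition~\ref{pye}. Since $\CL(\Omega^1)$ is presented by generators $a$, $\underline a$, $x$, $(x,\xi)$ and the relations \eqref{equ. relations 3}, I would define $\circledast$ on generators by the stated formulas and extend antimultiplicatively; the first task is to confirm this respects the relations. The middle relations, those of the form $a\bullet(x,\xi)=(ax,\xi)$ etc., follow from the identity $(axb)^\circledast=b^*x^\circledast a^*$ established just before Theorem~\ref{thm. first bar category equ} together with the bimodule formula \eqref{ufyt}; for instance $(a\bullet(x,\xi))^\circledast=((ax,\xi))^\circledast=(\xi^*,(ax)^\circledast)=(\xi^* a^*, x^\circledast)=(x^\circledast,\xi^*)\Rbullet\underline{a^*}$, matching $(x,\xi)^\circledast\Rbullet a^\circledast$ as required. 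The Leibniz-type relations $x\bullet a=xa+\ev^L(x,\extd a)$ and $x\bullet\underline a=\underline a\bullet x+(x,\extd a)$ are the ones to treat with care: applying $\circledast$ to the first should give, using $\extd(a^*)=(\extd a)^*$ and \eqref{equ. star translate right vector field to left vector field}, an identity in $\CR(\Omega^1)$ that is exactly (a conjugate of) the relation $a\Rbullet y=ay+\ev^R(\extd a,y)$ from \eqref{equ. relations 4}, and likewise the second relation maps to $\underline a\Rbullet y=y\Rbullet\underline a+(\extd a,y)$. This is the computational heart of the argument.

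Once $\circledast$ is well-defined, invertibility is immediate by writing down the inverse $\circledast^{-1}:\CR(\Omega^1)\to\CL(\Omega^1)$ from $\circledast^{-1}:\cX^L\to\cX^R$ (the map with $\ev^L(y^{\circledast^{-1}},\omega)=\ev^R(\omega^*,y)^*$) on generators and checking it is a two-sided inverse on generators, which reduces to $(x^\circledast)^{\circledast^{-1}}=x$ and $(\xi^*)^*=\xi$ — both of which hold since $\Omega^1$ and $\cX$ are $*$-objects and $\circledast,\circledast^{-1}$ are mutually inverse on vector fields. The source/target conditions $s_R(a^*)=s_L(a)^\circledast$ and $t_R(a^*)=t_L(a)^\circledast$ are read off directly: $s_L(a)=a$, $t_L(a)=\underline a$ in $\CL(\Omega^1)$ and $s_R(a)=a$, $t_R(a)=\underline a$ in $\CR(\Omega^1)$ (from the presentations), so they hold by $a^\circledast=a^*$, $\underline a^\circledast=\underline{a^*}$.

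For the counit condition $\varepsilon_R(X^\circledast)=\varepsilon_L(X)^*$, I would check it on generators using \eqref{equ. coring structure 1} and \eqref{equ. coring structure 2}: on $a$ it reads $\varepsilon_R(a^*)=a^*=\varepsilon_L(a)^*$, on $x$ it reads $\varepsilon_R(x^\circledast)=0=\varepsilon_L(x)^*$ since $\varepsilon_R(y)=0$, and on $(x,\omega)$ it reads $\varepsilon_R((\omega^*,x^\circledast))=\ev^R(\omega^*,x^\circledast)=\ev^L(x,\omega)^*=\varepsilon_L(x,\omega)^*$ by \eqref{equ. star translate right vector field to left vector field}; then extend to products using that both $\varepsilon_L,\varepsilon_R$ behave compatibly with $\circledast$ being an anti-algebra map (one uses the module-algebra identity for $\varepsilon$ stated in the bialgebroid axioms). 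For the coproduct condition $\mathrm{flip}(\circledast\tens\circledast)\Delta_L(X)=\Delta_R(X^\circledast)$, I check on generators again: on $x$, $\Delta_L(x)=x\tens1+(x,\omega_i)\tens x_i$, so $\mathrm{flip}(\circledast\tens\circledast)$ gives $1\tens x^\circledast+x_i^\circledast\tens(\omega_i^*,x^\circledast)$, and this must equal $\Delta_R(x^\circledast)=1\tens x^\circledast+y_j\tens(\eta_j,x^\circledast)$ by \eqref{equ. coring structure 2}; the two agree after using \eqref{equ. coev in terms of star}, which says $y_j^{\circledast^{-1}}\tens\eta_j^*=\coev^L(1)$, equivalently $\omega_i^*\tens x_i^\circledast=\coev^R(1)$, matching coevaluations on the two sides. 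The analogous check on $(x,\xi)$ uses $\Delta_L(x,\xi)=(x,\omega_i)\tens(x_i,\xi)$ versus $\Delta_R(\xi^*,x^\circledast)=(\xi^*\tens y_j)\tens(\eta_j\tens x^\circledast)$, again reconciled by \eqref{equ. coev in terms of star}. Lemma~\ref{lem. star exchange the balanced tensor product} guarantees the output of $\mathrm{flip}(\circledast\tens\circledast)$ actually lands in the correct Takeuchi product, so no separate well-definedness check over the balanced tensor product is needed. I expect the only genuinely delicate point to be bookkeeping the $\cX^R$-bimodule structure signs and the interaction of $\circledast$ with the two Leibniz relations; everything else is a direct translation between the mirror-image presentations of $\CL(\Omega^1)$ and $\CR(\Omega^1)$.
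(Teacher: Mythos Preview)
Your proposal is correct and follows essentially the same route as the paper's proof: both verify directly on generators that $\circledast$ is an antilinear anti-algebra map respecting the relations \eqref{equ. relations 3}, then check the source/target, counit, and coproduct conditions of Definition~\ref{pye}, invoking \eqref{equ. star translate right vector field to left vector field} and \eqref{equ. coev in terms of star} at the same points. Your sketch is, if anything, slightly more thorough in spelling out invertibility; be aware of a small slip in your sample computation, where $(\xi^*,(ax)^\circledast)=(\xi^*,x^\circledast a^*)$ rather than $(\xi^* a^*,x^\circledast)$, and the target element should stay in the $(\omega,y)$ form in $\CR(\Omega^1)$.
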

\begin{proof}
    First, we have
    \begin{align*}
        s_{L}(a)^\circledast=a^\circledast=a^*=s_{R}(a^*).
    \end{align*}
    Similarly, $t_{L}(a)^\circledast=t_{R}(a^*)$. Also
    \begin{align*}
        \varepsilon_{R}((x,\omega)^\circledast)=\ev^R(\omega^*, x^\circledast)=\ev^L(x,\omega)^*=\varepsilon_L(x,\omega)^*.
    \end{align*}
    To see $\circledast$ is an anti-algebra map, we have
    \begin{align*}
        (s_{L}(a)(x,\omega))^\circledast=(ax, \omega)^\circledast=(\omega^*, x^\circledast a^*)=(x,\omega)^\circledast s_{R}(a^*)=(x,\omega)^\circledast s_{L}(a)^\circledast.
    \end{align*}
    Similar for the rest. Also,
    \begin{align*}
        (a\bullet x)^\circledast=(ax)^\circledast=x^\circledast a^*=x^\circledast \Rbullet a^*=x^\circledast s_{R}(a^*)=x^\circledast s_{L}(a)^\circledast,
    \end{align*}
    and
    \begin{align*}
        (x\bullet a)^\circledast=(xa)^\circledast+\ev^L(x, \extd a)^*=a^* x^\circledast+\ev^R(\extd a^*, x^\circledast)=a^*\Rbullet x^\circledast,
    \end{align*}
    and
    \begin{align*}
        (x\bullet\underline{a})^\circledast=(\underline{a}\bullet x)^\circledast+(da^*, x^\circledast)=x^\circledast\Rbullet \underline{a^*}+(da^*, x^\circledast)=\underline{a}^\circledast\Rbullet x^\circledast.
    \end{align*}
    For the coproduct,
    \begin{align*}
        \textup{flip}\circ (\circledast\ot\circledast)\Delta_{L}(x)=1\ot x^\circledast+x_{i}^\circledast\ot (\omega_{i}^*, x^\circledast)=\Delta_{R}(x^\circledast).
    \end{align*}
    And
    \begin{align*}
        \textup{flip}\circ (\circledast\ot\circledast)\Delta_{L}(x, \xi)=(\xi^*, x_i^\circledast)\ot (\omega_{i}^*, x^\circledast)=\Delta_{R}((x,\xi)^\circledast).
    \end{align*}
\end{proof}

Following \cite{AryGho1}, but in our notation, we now define $\cI\cL(\Omega^1)$ to be  free product of tensor algebras
$T_{A^e} (\cX^R\tens\Omega^{1})$,  $T_{A^e} (\Omega^{1}\ot \cX^L)$ and $T(\cX^R)$ by the relations (\ref{equ. relations 1}) (now written with $\bullet$)  and (\ref{equ. relations 3}). $\cI\cL(\Omega^1)$ is a left bialgebroid with $A^e$-ring structure given by (\ref{ufyt}),
and coring structure given by (\ref{equ. coproduct 1}), (\ref{equ. counit 1}) and (\ref{equ. coring structure 1}).
Similarly, we can define $\cI\cR(\Omega^1)$ to be free product of tensors products $T_{A^e} (\cX^R\tens\Omega^{1})$,  $T_{A^e} (\Omega^{1}\ot \cX^L)$ and $T(\cX^L)$ by the relations (\ref{equ. relations 4}) and
\begin{align}\label{equ. relations 5}
    (x_i, \omega)\Rbullet(\omega_i, y)=\ev^R(\omega, y),\quad (\omega, y_j)\Rbullet(x, \eta_{j})=\underline{\ev^L(x, \omega)}.
\end{align}
We can see $\cI\cR(\Omega^1)$ is a right bialgebroid with $A^e$-ring structure
\begin{align}
    s_R(a)=a,\quad t_R(a)=\und{a},
\end{align}
and $A$-coring given by (\ref{equ. coring structure 2}) and
\begin{align}
    \Delta_R(x,\omega)=(x,\omega_i)\ot (x_i,\omega),\quad \varepsilon_R(x, \omega)=\ev^L(x, \omega).
\end{align}

By Propositions \ref{prop. relation between invertible bimodule and left module} and \ref{prop. relation between bimodule connection and left module}, as in \cite{AryGho1}, we  have isomorphisms of monoidal categories
\begin{equation} \label{prop. invertible bimodule connection}
     {}_{\cI\CL(\Omega^1)}\CM\cong {}_{A}\cI\CE_{A},\quad {}\CM_{\cI\CR(\Omega^1)}\cong {}_{A}\cI\CE_{A}^R,
\end{equation}
and similarly for the right side.

\begin{theorem}\label{thm star bialgebroid pair IL and IR}
    If $A$ is a $*$-algebra and $(\Omega^1, \extd)$ is an fgp $*$-differential calculus then
$(\cI\cL(\Omega^1), \cI\cR(\Omega^1))$ is a $*$-bialgebroid pair, with $\circledast$ given by Lemma \ref{lem. star related LX and RX} and additionally
\[(\omega, y)^\circledast=(y^{\circledast^{-1}}, \omega^*).\]
And $\Phi:\cI\cL(\Omega^1)\to \cI\cR(\Omega^1)$ given by
\[\Phi(x, \omega)=(x,\omega),\quad \Phi(\omega, y)=(\omega, y),\quad \Phi((\omega_i, y)\bullet x_i)=y,\quad \Phi(x)=y_j\Rbullet(x, \eta_j),\quad \Phi(a\und{b})=\und{a}b.\]
\end{theorem}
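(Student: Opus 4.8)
The plan is to verify the two defining conditions for a $*$-bialgebroid pair, namely that $(\cI\cL(\Omega^1),\cI\cR(\Omega^1))$ are $*$-related via $\circledast$ (in the sense of Definition~\ref{pye}) and reflexive via $\Phi$, and that these are compatible in the sense $\Phi^{-1}\circ\circledast=\circledast^{-1}\circ\Phi$; since $\cI\cL(\Omega^1)$ is a left and anti-left Hopf algebroid (the canonical maps $\lambda,\mu$ being invertible, as part of Ghobadi's construction recalled in \eqref{prop. invertible bimodule connection}, the invertibility of $\sigma_E$), this upgrades the pair to a $*$-Hopf algebroid pair. Most of the work on the generators $a,\underline a,(x,\omega),(\omega,y)$ has already been done: Lemma~\ref{lem. star related LX and RX} establishes that the restriction of $\circledast$ to the subalgebra generated by $A,A^{op}$ and $\cX^R\tens\Omega^1$ is an antilinear anti-algebra map intertwining $s,t,\varepsilon$ and exchanging $\Delta_L$ with $\Delta_R$ via flip; so the first step is to extend this check to the extra generators of $\cI\cL(\Omega^1)$, i.e. the $(\omega,y)$ from $\Omega^1\tens\cX^L$ and the bare vector fields $x\in\cX^R$, and to verify that $\circledast$ respects the new relations \eqref{equ. relations 1}. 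For $(\omega,y)^\circledast=(y^{\circledast^{-1}},\omega^*)$ this is the mirror image of the $(x,\omega)$ computation, using $\ev^L(y^{\circledast^{-1}},\omega)=\ev^R(\omega^*,y)^*$ and \eqref{equ. coev in terms of star}; the relations \eqref{equ. relations 1} go over to \eqref{equ. relations 5} under $\circledast$ by the same bookkeeping as in Proposition~\ref{Prop. full star Hopf algebroid H omega}.

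Next I would check that $\Phi:\cI\cL(\Omega^1)\to\cI\cR(\Omega^1)$ as defined on generators is a well-defined invertible antialgebra coalgebra map with $\Phi(1)=1$, $\Phi\circ s_L=t_R$, $\Phi\circ t_L=s_R$. Well-definedness means $\Phi$ respects the generating relations \eqref{equ. relations 1} (with $\bullet$) and \eqref{equ. relations 3}: for the $A$-bimodule relations this is immediate from $\Phi(a\underline b)=\underline a b$ and the bimodule structure \eqref{ufyt}; the relations $x\bullet a=xa+\ev^L(x,\extd a)$ and $x\bullet\underline a=\underline a\bullet x+(x,\extd a)$ of \eqref{equ. relations 3} translate under $\Phi$ into the corresponding relations \eqref{equ. relations 4} in $\cI\cR(\Omega^1)$ read backwards (since $\Phi$ is an antialgebra map), which is exactly why $\Phi(x)=y_j\Rbullet(x,\eta_j)$ is the right formula — it is the element whose left-multiplication behaviour mirrors $x$. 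That $\Phi$ is a coalgebra map is the identity $(\Phi\tens\Phi)\Delta_L=\Delta_R\Phi$ on generators: on $a,\underline a$ trivial, on $(x,\omega)$ it is $(x,\omega_i)\ot(x_i,\omega)\mapsto(x,\omega_i)\ot(x_i,\omega)$, and on $x$ one needs $x\tens 1+(x,\omega_i)\tens x_i\mapsto 1\tens\Phi(x)+\Phi((x,\omega_i))\tens\Phi(x_i)$ to match $\Delta_R$ applied to $\Phi(x)=y_j\Rbullet(x,\eta_j)$, which unwinds using $\Delta_R(\xi,y)=(\xi\tens y_j)\tens(\eta_j\tens y)$ together with the identities relating $\omega_i\tens x_i$ and $y_j\tens\eta_j$ (duality/coevaluation compatibility). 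Invertibility of $\Phi$ follows because Proposition~\ref{prop. oppo bialgebroid} already provides the isomorphism $\cI\cR(\Omega^1)\cong\cI\cL(\Omega^1)^{op}$ type relation at the level of the bare subalgebras, and on generators one writes down $\Phi^{-1}$ explicitly ($\Phi^{-1}(\xi,y)=(\xi,y)$, $\Phi^{-1}((\omega,y)\Rbullet\text{...})=\text{...}$, $\Phi^{-1}(y)=(\omega_i,y)\bullet x_i$, $\Phi^{-1}(\underline a b)=a\underline b$) and checks it undoes $\Phi$.

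Finally I would verify the compatibility $\Phi^{-1}\circ\circledast=\circledast^{-1}\circ\Phi$, equivalently $\Phi\circ\circledast^{-1}=\circledast\circ\Phi^{-1}$ or that $\theta:=\Phi^{-1}\circ\circledast:\cI\cL(\Omega^1)\to\cI\cL(\Omega^1)$ is an involution up to the expected conjugation; it suffices to check it on generators, where on $(x,\omega)$ both sides give $(\omega^*,x^\circledast)$ mapped back by $\Phi^{-1}$ to the $\cI\cL$-element $(\omega^*,\cdot)$, and on $a,\underline a$ it is $a\mapsto a^*$, $\underline a\mapsto\underline{a^*}$; the only genuinely new check is on $x$ and on $(\omega,y)$, where one must confirm $\Phi^{-1}((y^{\circledast^{-1}},\omega^*))=\circledast^{-1}(\Phi(x))$ etc., again a matter of tracking coevaluations through $\circledast$. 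The main obstacle I anticipate is purely the relation-checking for well-definedness of $\Phi$ on the mixed relations of $\cI\cL(\Omega^1)$ — in particular showing that $\Phi((\omega_i,y)\bullet x_i)=y$ is consistent, i.e. that the element $(\omega_i,y)\bullet x_i$ is independent of the choice of dual basis and that $\Phi$ sends the defining relation \eqref{equ. relations 1}, $(\omega_i,y)\bullet(x_i,\omega)=\underline{\ev^R(\omega,y)}$, to a true identity in $\cI\cR(\Omega^1)$ — since this is where the two one-sided fgp structures and their coevaluations have to interlock correctly; everything else is bookkeeping already modelled on Lemma~\ref{lem. star related LX and RX} and Proposition~\ref{Prop. full star Hopf algebroid H omega}.
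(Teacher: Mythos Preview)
Your proposal follows essentially the same route as the paper's proof: extend the $\circledast$ of Lemma~\ref{lem. star related LX and RX} to the $(\omega,y)$ generators and verify it respects relations~\eqref{equ. relations 1}; check $\Phi$ is a well-defined antialgebra coalgebra map on generators (the paper does the explicit coalgebra check for $(\omega_i,y)\bullet x_i$ and for $x$ exactly as you anticipate, via the $\Delta_R(\xi,y)$ formula and the coevaluation identities); then verify $\Phi^{-1}\circ\circledast=\circledast^{-1}\circ\Phi$ on the generator $x$.

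One caution: your parenthetical remark that this ``upgrades the pair to a $*$-Hopf algebroid pair'' because ``the canonical maps $\lambda,\mu$ [are] invertible \ldots\ the invertibility of $\sigma_E$'' conflates two different things. The invertibility of $\sigma_E$ is what cuts down ${}_A\CE_A$ to ${}_A\cI\CE_A$ and gives the bialgebroid $\cI\cL(\Omega^1)$; it does not by itself make the Hopf algebroid canonical maps $\lambda,\mu$ of Definition~\ref{def. left Hopf and anti-Hopf algebroids} invertible. The theorem as stated only claims a $*$-\emph{bialgebroid} pair, and that is all the paper proves here; the Hopf case is deferred to the pivotal quotient $\cI\cI\cL(\Omega^1)$ in Proposition~\ref{prop. star Hopf algebroid pair IL and IR}. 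Drop that clause and your outline matches the paper.
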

\begin{proof}
    By the proof of Lemma \ref{lem. star related LX and RX}, we only need to show
    \[((\omega_i, y)\bullet(x_i, \omega))^\circledast=(x_i, \omega)^\circledast \Rbullet (\omega_i, y)^\circledast,\quad ((x,\eta_{j})\bullet(\omega,y_j))^\circledast=(\omega, y_j)^\circledast \Rbullet(x, \eta_{j})^\circledast.\]
    On the one hand,
    \begin{align*}
       ((\omega_i, y)\bullet(x_i, \omega))^\circledast=\underline{\ev^R(\omega, y)}^\circledast=\underline{\ev^R(\omega, y)^*}.
    \end{align*}
    On the other hand,
    \begin{align*}
        (x_i, \omega)^\circledast\Rbullet (\omega_i, y)^\circledast=(\omega^*, x_i^\circledast)\Rbullet(y^{\circledast^{-1}}, \omega_i^*)=\underline{\ev^L(y^{\circledast^{-1}}, \omega^*)},
    \end{align*}
    so they are equal. The rest is similar. We will also see
    \begin{align*}
        \textup{flip}\circ (\circledast\ot \circledast)\circ \Delta_L(\xi, y)=(y^{\circledast^{-1}}, \eta_{j}^*)\ot (y_j^{\circledast^{-1}}, \xi^*)=\Delta_R(y^{\circledast^{-1}}, \xi^*)=\Delta_R((\xi, y)^\circledast).
    \end{align*}
    To see the bialgebroid pair is $\Phi$-reflexive. We can first observe that $\Phi((\omega_i, y)\bullet(x_i, \omega))=\Phi((x_i, \omega))\Rbullet\Phi((\omega_i, y))$ and $\Phi((x,\eta_{j})\bullet(\omega,y_j))=\Phi(\omega, y_j)\Rbullet\Phi(x, \eta_{j})$. Also,
    \begin{align*}
       \Phi(x_i)\Rbullet \Phi(\omega_i, y)=y_j\Rbullet (x_i, \eta_j)\Rbullet (\omega_i, y)=y_j\Rbullet \ev^R(\eta_j, y)=y=\Phi((\omega_i, y)\bullet x_i).
    \end{align*}
    To see $\Phi$ is compatible with the coproduct, we have
    \begin{align*}
        (\Phi\ot& \Phi)\circ \Delta_L((\omega_{i}, y)\bullet x_i)=(\Phi\ot \Phi)(\Delta_L(\omega_{i}, y)\bullet\Delta_L(x_i))\\
        =&(\Phi\ot \Phi)\big(((\omega_{i}, y_j)\ot (\eta_{j}, y))\bullet((x_i\ot 1)+(x_i, \omega_{k})\ot x_k)\big)\\
        =&(\Phi\ot \Phi)\big((\omega_{i}, y_j)\bullet x_i \ot (\eta_{j}, y) +(\omega_{i}, y_j)\bullet(x_i, \omega_{k})\ot (\eta_{j}, y)\bullet x_k\big)\\
        =&(\Phi\ot \Phi)\big((\omega_{i}, y_j)\bullet x_i \ot (\eta_{j}, y) +t_L(\ev^R(\omega_{k}, y_j))\ot (\eta_{j}, y)\bullet x_k\big)\\
        =&(\Phi\ot \Phi)\big((\omega_{i}, y_j)\bullet x_i \ot (\eta_{j}, y) +1 \ot (\ev^R(\omega_{k}, y_j)\eta_{j}, y)\bullet x_k\big)\\
        =&(\Phi\ot \Phi)\big((\omega_{i}, y_j)\bullet x_i \ot (\eta_{j}, y) +1 \ot (\omega_{k}, y)\bullet x_k\big)\\
        =&y_j\ot (\eta_{j}, y)+1\ot y\\
        =&\Delta_R\circ \Phi((\omega_{i}, y)\bullet x_i).
    \end{align*}
 Similarly, we will see $(\Phi\ot \Phi)\circ \Delta_L(x)=\Delta_R\circ \Phi(x)$. Indeed, on the one hand,
 \begin{align*}
     (\Phi\ot& \Phi)\circ \Delta_L(x)= (\Phi\ot \Phi)(x\ot 1+(x, \omega_{i})\ot x_i)\\
     =&y_j\Rbullet(x, \eta_j)\ot 1+(x, \omega_{i})\ot y_j\Rbullet(x_i, \eta_j),
 \end{align*}
 on the other hand,
 \begin{align*}
     \Delta_R\circ& \Phi(x)= \Delta_R(y_j)\Rbullet  \Delta_R(x,\eta_j)\\
     =&(1\ot y_j+y_k\ot (\eta_{k}, y_j))\Rbullet ((x,\omega_i)\ot (x_i, \eta_j))\\
     =&(x,\omega_i)\ot y_j\Rbullet (x_i, \eta_j)+y_k\Rbullet (x,\omega_i)\ot (\eta_{k}, y_j)\Rbullet(x_i, \eta_j)\\
     =&(x,\omega_i)\ot y_j\Rbullet (x_i, \eta_j)+y_k\Rbullet (x,\omega_i)\ot t_R(\ev^L(x_i, \eta_{k}))\\
     =&(x,\omega_i)\ot y_j\Rbullet (x_i, \eta_j)+y_k\Rbullet (x,\omega_i \ev^L(x_i, \eta_{k}))\ot 1\\
     =&(x,\omega_i)\ot y_j\Rbullet (x_i, \eta_j)+y_k\Rbullet (x,\eta_{k})\ot 1.
 \end{align*}
 Also, we have $\varepsilon_R\circ \Phi((\omega_{i}, y)\bullet x_i)=\varepsilon_R(y)=0$ and $\varepsilon_R\circ\Phi(x)=\varepsilon_R(y_j\Rbullet(x, \eta_j))=\varepsilon_R(t_R(\varepsilon_R(y))(x, \eta_j))=0$. Moreover,
 \begin{align*}
     \Phi^{-1}\circ \circledast(x)=\Phi^{-1}(x^\circledast)=(\omega_{i}, x^\circledast)\bullet x_i=(\eta_j^*, x^\circledast)\bullet y_j^{\circledast^{-1}}=(y_j\Rbullet(x, \eta_j))^{\circledast^{-1}}=\circledast^{-1}\circ \Phi(x).
 \end{align*}
\end{proof}
By Theorem~\ref{thmpairbar}, we have
\begin{corollary}\label{coro. invertible connection}
     If $A$ is a $*$-algebra and $(\Omega^1, \extd)$ is an fgp $*$-differential calculus then  ${}_{\CI\CL(
     \Omega^1)}\CM$  is a bar subcategory of ${}_{\cI\cL(\Omega^1)}\CJ_{\cI\cR(\Omega^1)}$. Moreover, ${}_{\cI\cL(\Omega^1)}\CM\cong \CM_{\cI\cR(\Omega^1)}\cong {}_A\cI\CE_A$ as bar categories.
\end{corollary}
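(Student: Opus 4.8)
The plan is to deduce Corollary~\ref{coro. invertible connection} by combining Theorem~\ref{thm star bialgebroid pair IL and IR} with the general Theorem~\ref{thmpairbar}, and then matching the abstract bar structure so obtained against the differential-geometric bar structure on ${}_A\cI\CE_A$ recorded in~(\ref{equ. star compatible}). First, Theorem~\ref{thm star bialgebroid pair IL and IR} asserts that $(\cI\cL(\Omega^1),\cI\cR(\Omega^1))$ is a $*$-bialgebroid pair, with $\circledast$ as in Lemma~\ref{lem. star related LX and RX} and $\Phi$ as displayed there. Theorem~\ref{thmpairbar} then applies verbatim: ${}_{\cI\cL(\Omega^1)}\CM$ and $\CM_{\cI\cR(\Omega^1)}$ are bar categories, their inclusions into ${}_{\cI\cL(\Omega^1)}\CJ_{\cI\cR(\Omega^1)}$ (via the reflexive structure $\Phi$ of Proposition~\ref{prop. reflexive module equivalence}) are bar functors, and the two are isomorphic as bar categories. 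This is already the first assertion of the Corollary; explicitly, the conjugate left action on $\overline E$ is $X\la\overline e=\overline{\Phi^{-1}(X^\circledast)\la e}$ and the conjugate right action on $\overline F$ is $\overline f\ra Y=\overline{f\ra\Phi(Y^{\circledast^{-1}})}$.

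Next, recall from~(\ref{prop. invertible bimodule connection}) that ${}_{\cI\cL(\Omega^1)}\CM\cong{}_A\cI\CE_A$ and $\CM_{\cI\cR(\Omega^1)}\cong{}_A\cI\CE_A^R$ as monoidal categories, and from~(\ref{LRconnE}) that ${}_A\cI\CE_A\cong{}_A\cI\CE_A^R$ monoidally; composing gives a monoidal equivalence ${}_{\cI\cL(\Omega^1)}\CM\cong\CM_{\cI\cR(\Omega^1)}\cong{}_A\cI\CE_A$. Since ${}_A\cI\CE_A$ is a bar category by~\cite{BM} with conjugate connection $\nabla_{\overline E}(\overline e)=e^{\beta^{*}}\ot\overline{e^\alpha}$, $e^\alpha\ot e^\beta=\sigma_E^{-1}\nabla_E(e)$, and $\sigma_{\overline E}(\overline e\ot\omega)=\dagger(\overline{\sigma_E^{-1}(\omega^*\ot e)})$ as in~(\ref{equ. star compatible}), it remains to check that the equivalence is a bar equivalence, i.e. that under the dictionary of Proposition~\ref{prop. relation between bimodule connection and left module} this conjugate connection corresponds to the conjugate $\cI\cL(\Omega^1)$-module structure above. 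On the $A^e$-generators this is just the $A$-bimodule bar structure and is immediate. On $x\in\cX^R$ one computes, using the explicit $\Phi^{-1}$ of the proof of Theorem~\ref{thm star bialgebroid pair IL and IR},
\[ \nabla_{\overline E}(\overline e)=\omega_i\ot(x_i\la\overline e)=\omega_i\ot\overline{\big((\omega_k,x_i^\circledast)\bullet x_k\big)\la e}, \]
and then expands the right-hand action via the $\CI\CB(\Omega^1)$-action formulas of Proposition~\ref{prop. relation between invertible bimodule and left module} and Proposition~\ref{prop. relation between bimodule connection and left module}, substituting the defining relations~(\ref{equ. star translate right vector field to left vector field}) and~(\ref{equ. coev in terms of star}) for $\circledast$, to recognise exactly $e^{\beta^{*}}\ot\overline{e^\alpha}$. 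The same substitution on the generator $(x,\xi)\in\cX^R\ot\Omega^1$ matches $\sigma_{\overline E}$. Thus the comparison map $fb_E$ of the candidate bar functor is the identity, exactly as in the warm-up computations of Theorem~\ref{thm. first bar category equ}(3) and Theorem~\ref{thm. biinvertible module}, so the monoidal equivalence is a bar equivalence; the statement for $\CM_{\cI\cR(\Omega^1)}\cong{}_A\cI\CE_A^R\cong{}_A\cI\CE_A$ follows in the same way, or by transport along the already-established bar isomorphism ${}_{\cI\cL(\Omega^1)}\CM\cong\CM_{\cI\cR(\Omega^1)}$.

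I expect the one genuine step to be this last identification: reconciling the antipode-flavoured expression $\overline{\Phi^{-1}(x^\circledast)\la e}$, which hides the somewhat intricate description of $\Phi^{-1}$ on the vector-field generators, with the clean formula~(\ref{equ. star compatible}) for $\nabla_{\overline E}$ and $\sigma_{\overline E}$. The care needed is bookkeeping of where the vector-field conjugation $\circledast:\cX^R\to\cX^L$, the involution $*$ on $\Omega^1$, and the map $\dagger$ of~(\ref{equ. star compatible}) are inserted; the coevaluation identities~(\ref{equ. coev in terms of star}), together with $a\omega_i\ot x_i=\omega_i\ot x_i a$ and its left-handed analogue, are precisely what force the two descriptions to coincide. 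Everything else — well-definedness over the balanced tensor products and compatibility with $\Upsilon$ and $bb$ — is inherited from Theorem~\ref{thmpairbar} and needs no separate verification.
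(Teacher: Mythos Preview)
Your proposal is correct and follows essentially the same route as the paper: invoke Theorem~\ref{thm star bialgebroid pair IL and IR} and Theorem~\ref{thmpairbar} for the bar-subcategory claim, then verify that under the dictionary of Proposition~\ref{prop. relation between bimodule connection and left module} the conjugate module reproduces the differential-geometric $\nabla_{\overline E}$ and $\sigma_{\overline E}$ of~(\ref{equ. star compatible}), so that $fb$ is the identity.

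The only difference is a small computational choice in the key verification. You propose to unpack $\Phi^{-1}(x_i^\circledast)=(\omega_k,x_i^\circledast)\bullet x_k$ and evaluate this as a left $\cI\cL(\Omega^1)$-action on $e$; this works, since $\omega_k\ot(x_k\la e)=\nabla_E e$ collapses the expression to $(\id\ot\ev^R)(\sigma_E^{-1}\nabla_E e\ot x_i^\circledast)$. The paper instead exploits the joint-module viewpoint and writes $x_i\la\overline m=\overline{m\ra x_i^\circledast}$ directly, then applies the right-action formula $f\ra y=(\id\ot\ev^R)(\nabla_F^R f\ot y)$ together with $\nabla_E^R=\sigma_E^{-1}\nabla_E$ from~(\ref{LRconnE}); this is one step shorter but lands at the same place. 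The paper also remarks that the $\sigma_{\overline E}$ check is implied once $\nabla_{\overline E}$ matches, though it writes it out anyway; you are right that it need not be verified separately.
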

\proof  We combine Theorem \ref{thmpairbar} with the above results and identify ${}_{\cI\cL(\Omega^1)}\CM$ with its image in ${}_{\cI\cL(\Omega^1)}\CJ_{\cI\cR(\Omega^1)}$. Recall that in Theorem \ref{thmpairbar}, the bar structure of $M\in {}_{\cI\cL(\Omega^1)}\CM$ is given by
\[X\la\overline{m}=\overline{\Phi^{-1}(X^\circledast)\la m},\]
    for any $x\in\cL$ and $m\in M\in {}_{\cI\cL(\Omega^1)}\CM$. We also need to check that the bar category structure agrees with that of ${}_A\cI\CE_A$, where $F:{}_{\cI\cL(\Omega^1)}\CM\to {}_A\cI\CE_A^L$ is the monoidal functor given by (\ref{prop. invertible bimodule connection}) with formula given by Proposition \ref{prop. relation between bimodule connection and left module}, then we can check
\begin{align*}
    \nabla_{F(\overline{M})}(\overline{m})=&\omega_{i}\ot x_i\la\overline{m}=\omega_{i}\ot \overline{m\ra x_{i}^\circledast}=\omega_i\ot\overline{(\id\ot \ev^R)((\sigma_{M})^{-1}\circ \nabla_M(m)\ot x_i^\circledast)}\\
    =&\omega_i\ot\overline{m^\alpha \ev^R(m^\beta, x_i^\circledast)}=\omega_i\ev^L(x_i, m^\beta{}^*)\ot\overline{m^\alpha }=m^\beta{}^*\ot \overline{m^\alpha }\\
    =&\nabla_{\overline{F(M)}}(\overline{m}),
\end{align*}
where $m^\alpha\ot m^\beta=(\sigma_{M})^{-1}\circ \nabla_{M}(m)$. We do not need to check it (it is implied) but this also works for the braiding:
\begin{align*}
     \dagger^{-1}\circ\sigma_{F(\overline{M})}(\overline{m}\ot \omega)=&\dagger^{-1}(\omega_i\ot (x_i,\omega)\la \overline{m})=\dagger^{-1}(\omega_i\ot\overline{m\ra (\omega^*, x_i^\circledast)})\\
     =&\overline{m\ra (\omega^*, x_i^\circledast)\ot \omega_i^*}=\overline{(\sigma_{E})^{-1}(\omega^*\ot m)}\\
     =&\dagger^{-1}\circ\sigma_{\overline{F(M)}}(\overline{m}\ot \omega).
\end{align*}
\endproof

\subsection{$*$-Hopf algebroid pair $\cI\cI\cL(\Omega^1),\cI\cI\cR(\Omega^1)$}\label{sec:difpiv}

In this section we assume that $\Omega^1$ is pivotal so that $\cX=\cX^R=\cX^L$. Then \cite{AryGho1} also constructed Hopf algebroids, which we denote  $\cI\cI\cL(\Omega^1), \cI\cI\cR(\Omega^1)$. Here,  $\cI\cI\cL(\Omega^1)$ is the quotient of $\cI\cL(\Omega^1)$ by the relations (\ref{equ. relations 2}) (now written with $\bullet$), while $\cI\cI\cR(\Omega^1)$ is the quotient of $\cI\cR(\Omega^1)$ by the relations
\begin{align}\label{equ. relations 6}
    (\eta_{j}, x)\Rbullet(y_{j},\omega)=\ev^{L}(x, \omega),\quad (x, \omega_{i})\Rbullet(\omega,x_{i})=\und{\ev^{R}(\omega, x)}.
\end{align}
By a similar method as Theorem \ref{thm star bialgebroid pair IL and IR}, we have:
\begin{proposition}\label{prop. star Hopf algebroid pair IL and IR}
    If $A$ is a $*$-algebra and $(\Omega^1, \extd)$ is an fgp $*$-differential calculus then
$(\cI\cI\cL(\Omega^1), \cI\cI\cR(\Omega^1))$ is a $*$-Hopf algebroid pair, with $\circledast$ given by Lemma \ref{lem. star related LX and RX} and additionally
\[(\omega, y)^\circledast=(y^{\circledast^{-1}}, \omega^*).\]
And $\Phi:\cI\cI\cL(\Omega^1)\to \cI\cI\cR(\Omega^1)$ given by
\[\Phi(x, \omega)=(x,\omega),\quad \Phi(\omega, y)=(\omega, y),\quad \Phi((\omega_i, y)\bullet x_i)=y,\quad \Phi(x)=y_j\Rbullet(x, \eta_j).\]
\end{proposition}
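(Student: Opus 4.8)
The plan is to carry over the proof of Theorem~\ref{thm star bialgebroid pair IL and IR} to the quotients, namely the quotient $\cI\cI\cL(\Omega^1)$ of $\cI\cL(\Omega^1)$ by the relations (\ref{equ. relations 2}) and the quotient $\cI\cI\cR(\Omega^1)$ of $\cI\cR(\Omega^1)$ by the relations (\ref{equ. relations 6}), adding exactly two new ingredients: (i) that the maps $\circledast$ and $\Phi$ of Theorem~\ref{thm star bialgebroid pair IL and IR} descend to these quotients, and (ii) that $\cI\cI\cL(\Omega^1)$ (equivalently $\cI\cI\cR(\Omega^1)$) is simultaneously a left Hopf and an anti-left Hopf algebroid, which is the extra data promoting a $*$-bialgebroid pair to a $*$-Hopf algebroid pair.

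For (i) I would first check that the antilinear anti-algebra map $\circledast\colon\cI\cL(\Omega^1)\to\cI\cR(\Omega^1)$ carries the ideal generated by the relations (\ref{equ. relations 2}) into the ideal generated by (\ref{equ. relations 6}), and conversely for $\circledast^{-1}$; likewise for $\Phi$ and its inverse. Using $(x,\xi)^\circledast=(\xi^*,x^\circledast)$, $(\omega,y)^\circledast=(y^{\circledast^{-1}},\omega^*)$, the identity $\ev^L(x,\omega)^*=\ev^R(\omega^*,x^\circledast)$ coming from (\ref{equ. star translate right vector field to left vector field}), and the coevaluation identities (\ref{equ. coev in terms of star}), applying $\circledast$ to the two relations in (\ref{equ. relations 2}) should, after relabelling the (co)evaluation dummy indices, reproduce precisely the two relations in (\ref{equ. relations 6}). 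For $\Phi$ this is quicker still, since $\Phi$ is the identity on the generators $(x,\omega)$ and $(\omega,y)$, is anti-multiplicative, and satisfies $\Phi(a\und b)=\und a\,b$, so it sends (\ref{equ. relations 2}) directly to (\ref{equ. relations 6}). Hence $\circledast$ and $\Phi$ descend to mutually inverse maps between $\cI\cI\cL(\Omega^1)$ and $\cI\cI\cR(\Omega^1)$.

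Next I would note that everything else descends for free: the coring structures of $\cI\cI\cL(\Omega^1)$ and $\cI\cI\cR(\Omega^1)$ are those inherited from $\cI\cL(\Omega^1)$ and $\cI\cR(\Omega^1)$ (\cite{AryGho1}), the quotient maps intertwine all relevant structure, and the remaining $*$-bialgebroid-pair axioms — compatibility of $\circledast$ with $s,t,\varepsilon$, the condition $\mathrm{flip}(\circledast\otimes\circledast)\Delta_L=\Delta_R\circ\circledast$, the fact that $\Phi$ is an anti-algebra coalgebra map with $\Phi s_L=t_R$, $\Phi t_L=s_R$, $\Phi(1)=1$, and the compatibility $\Phi^{-1}\circ\circledast=\circledast^{-1}\circ\Phi$ — all hold one level up by Theorem~\ref{thm star bialgebroid pair IL and IR}, hence on the quotients; this already gives a $*$-bialgebroid pair. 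For (ii): $\cI\cI\cL(\Omega^1)$ is a left Hopf algebroid (invertibility of the Galois map $\lambda$) by \cite{AryGho1}. Since $\Phi$ is an invertible anti-algebra map with $(\Phi\otimes\Phi)\Delta_L=\Delta_R\Phi$ and $\Phi s_L=t_R$, it intertwines $\lambda$ with $\hat\mu$ and $\mu$ with $\hat\lambda$, so $\cI\cI\cL(\Omega^1)$ left Hopf is equivalent to $\cI\cI\cR(\Omega^1)$ anti-right Hopf; and $\cI\cI\cR(\Omega^1)$ is right Hopf by the left–right mirror of the argument in \cite{AryGho1}. Thus $\cI\cI\cR(\Omega^1)$ is right and anti-right Hopf, which by the definition of a $*$-Hopf algebroid pair is exactly what is required. (Concretely this invertibility is the algebraic shadow of the binvertibility built into ${}_A\cI\cI\CE_A\cong{}_{\cI\cI\cL(\Omega^1)}\CM$: $\sigma$-invertibility corresponds to the left/anti-right Hopf direction and $\tau$-invertibility to the anti-left/right Hopf direction.)

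The step I expect to be the genuine obstacle is (ii): the descent of $\circledast$ and $\Phi$ is routine index-chasing of the same flavour as in Theorem~\ref{thm star bialgebroid pair IL and IR}, whereas cleanly pinning down the anti-left Hopf structure of $\cI\cI\cL(\Omega^1)$ — whether by invoking \cite{AryGho1}'s treatment of the pivotal/binvertible case, by transport along $\Phi$ from $\cI\cI\cR(\Omega^1)$, or by writing $X_{[+]}\otimes^B X_{[-]}$ explicitly on the generators $a,\und a,x,(x,\omega),(\omega,y)$ — is where the only substantively new verification lies. Once the pair is established, Theorem~\ref{thmpairbar} then yields the associated bar categories of modules.
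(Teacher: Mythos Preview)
Your proposal is correct and follows the same approach as the paper, which simply says ``By a similar method as Theorem~\ref{thm star bialgebroid pair IL and IR}'' and gives no further proof. You have in fact been more careful than the paper: the descent of $\circledast$ and $\Phi$ through the additional relations (\ref{equ. relations 2}) and (\ref{equ. relations 6}) is exactly the routine check implicit in ``similar method'', and you correctly flag that the Hopf (as opposed to merely bialgebroid) part of the conclusion requires an extra input, namely that $\cI\cI\cL(\Omega^1)$ is left and anti-left Hopf, which the paper takes from \cite{AryGho1} together with the equivalence of the two formulations built into the definition of a $*$-Hopf algebroid pair via reflexivity.
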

By Theorem \ref{thm. biinvertible module} and (\ref{prop. invertible bimodule connection}), we can see that:
\begin{corollary}\label{thm. biinvertible connections as bar category}
      If $A$ is a $*$-algebra and $(\Omega^1, \extd)$ is an fgp $*$-differential calculus then  ${}_A\cI\cI\CE_A\cong {}_{\cI\cI\cL(\Omega^1)}\mathcal{M}\cong \mathcal{M}_{\cI\cI\cR(\Omega^1)}$ as monoidal subcategories of ${}_{\CI\CI\CL(\Omega^1)}\CJ_{\CI\CI\CR(\Omega^1)}$ and as subcategories of   ${}_A\cI\CE_A$,  $ {}_{\cI\cL(\Omega^1)}\mathcal{M}$ and $\mathcal{M}_{\cI\cR(\Omega^1)}$ respectively.
\end{corollary}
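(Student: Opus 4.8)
The plan is to assemble the statement from three ingredients already in place: Proposition~\ref{prop. star Hopf algebroid pair IL and IR} together with Theorem~\ref{thmpairbar}, the monoidal equivalences (\ref{prop. invertible bimodule connection}) for the ``I'' categories, and Theorem~\ref{thm. biinvertible module}, which is exactly the $\CB$-analogue of the present corollary. The only genuinely new work is to check that passing to the ``II'' quotients is compatible with all of this at the bar-category level.

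First, since $(\cI\cI\cL(\Omega^1),\cI\cI\cR(\Omega^1))$ is a $*$-Hopf algebroid pair by Proposition~\ref{prop. star Hopf algebroid pair IL and IR}, Theorem~\ref{thmpairbar} gives that ${}_{\cI\cI\cL(\Omega^1)}\CM$ and $\CM_{\cI\cI\cR(\Omega^1)}$ are bar categories, that they are isomorphic via the reflexive structure $\Phi$, and that both embed as bar categories into ${}_{\cI\cI\cL(\Omega^1)}\CJ_{\cI\cI\cR(\Omega^1)}$, with conjugate left action $X\la\overline m=\overline{\Phi^{-1}(X^\circledast)\la m}$ as in Corollary~\ref{coro. invertible connection}. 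I would then observe that, because $\cI\cI\cL(\Omega^1)$ is a quotient of $\cI\cL(\Omega^1)$ and $\cI\cI\cR(\Omega^1)$ of $\cI\cR(\Omega^1)$, and because the $\circledast,\Phi$ of the ``II'' pair are the descents of those of Theorem~\ref{thm star bialgebroid pair IL and IR}, the conjugate of an $\cI\cI\cL(\Omega^1)$-module $M$ is simply its conjugate as an $\cI\cL(\Omega^1)$-module; one must only verify that this still annihilates the extra relations (\ref{equ. relations 2}). Applying the anti-algebra map $\circledast$ to (\ref{equ. relations 2}) and then $\Phi^{-1}$, and using Lemma~\ref{lem. star related LX and RX}, the coevaluation identities (\ref{equ. coev in terms of star}) and the relations (\ref{equ. relations 6}) of $\cI\cI\cR(\Omega^1)$, returns relations (\ref{equ. relations 2}); hence ${}_{\cI\cI\cL(\Omega^1)}\CM\hookrightarrow{}_{\cI\cL(\Omega^1)}\CM$ as a bar subcategory, and likewise $\CM_{\cI\cI\cR(\Omega^1)}\hookrightarrow\CM_{\cI\cR(\Omega^1)}$.

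Second, following \cite{AryGho1} exactly as for (\ref{prop. invertible bimodule connection}), the extra relations (\ref{equ. relations 2}) of $\cI\cI\cL(\Omega^1)$ correspond on the module side precisely to invertibility of the transpose braiding $\tau_E$, and (\ref{equ. relations 6}) of $\cI\cI\cR(\Omega^1)$ to invertibility of $\tau^R_F$; this gives monoidal isomorphisms ${}_{\cI\cI\cL(\Omega^1)}\CM\cong{}_A\cI\cI\CE_A$ and $\CM_{\cI\cI\cR(\Omega^1)}\cong{}_A\cI\cI\CE^R_A$, the latter identified with ${}_A\cI\cI\CE_A$ via (\ref{LRconnE}). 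What remains is to match the two bar structures on ${}_A\cI\cI\CE_A$: the one inherited from ${}_A\cI\CE_A$ as in the preceding Lemma of Section~\ref{sec:preconn} (formulas (\ref{equ. star compatible}) together with the induced $\tau_{\overline E}$) and the one transported from ${}_{\cI\cI\cL(\Omega^1)}\CM$. For $\nabla_{\overline E}$ and $\sigma_{\overline E}$ this is verbatim the computation at the end of the proof of Corollary~\ref{coro. invertible connection}; for the new datum $\tau_{\overline E}$ it is the short computation $\tau_{F(\overline M)}(x\ot\overline m)=(x,\omega_i)\la\overline m\ot x_i=\overline{(\omega_i^*,x^\circledast)\la m}\ot x_i=\tau_{\overline{F(M)}}(x\ot\overline m)$, mirroring the $\tau_{\overline M}$ checks in the proof of Theorem~\ref{thm. biinvertible module}. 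This identifies $F$ as a bar functor on the ``II'' categories compatibly with the inclusions into the ``I'' categories and with the common image in ${}_{\cI\cI\cL(\Omega^1)}\CJ_{\cI\cI\cR(\Omega^1)}$.

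The main obstacle, such as it is, is purely bookkeeping: checking that $\circledast$ and $\Phi$ descend to the quotients compatibly (equivalently, that the conjugate-module functor preserves the ``II'' relations on both sides), and that the transpose braiding $\tau_{\overline E}$ on the conjugate is compatible with $\Upsilon$ and $bb$. Both parallel verifications already done for $\CI\CI\CB(\Omega^1)$ in Theorem~\ref{thm. biinvertible module} and for the ``I'' Ghobadi bialgebroids in Corollary~\ref{coro. invertible connection}, so no essentially new difficulty arises; the substance of the corollary lies in Proposition~\ref{prop. star Hopf algebroid pair IL and IR} and Theorem~\ref{thm. biinvertible module}, which it merely packages.
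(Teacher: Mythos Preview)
Your proposal is correct and follows essentially the same approach as the paper: invoke the $*$-Hopf algebroid pair structure (Proposition~\ref{prop. star Hopf algebroid pair IL and IR} with Theorem~\ref{thmpairbar}), inherit the bar structures from the ``I'' level via Corollary~\ref{coro. invertible connection}, and then verify that the functor $F$ respects $\tau$ on conjugates by the computation $\tau_{F(\overline E)}(x\ot\overline e)=(x,\omega_i)\la\overline e\ot x_i=\tau_{\overline{F(E)}}(x\ot\overline e)$, exactly paralleling Theorem~\ref{thm. biinvertible module}. The paper's proof is simply a terser version of yours, omitting the explicit check that $\circledast$ and $\Phi$ descend (which it takes as already contained in Proposition~\ref{prop. star Hopf algebroid pair IL and IR}) and the $\Upsilon$, $bb$ bookkeeping for $\tau_{\overline E}$ (which it absorbs by reference to Theorem~\ref{thm. biinvertible module}).
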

\begin{proof}
    By Corollary \ref{coro. invertible connection}, we know ${}_A\cI\CE_A\cong{}_{\cI\cL(\Omega^1)}\mathcal{M}$ as bar category. And ${}_A\cI\cI\CE_A$, ${}_{\cI\cI\cL(\Omega^1)}\CM$ inherited the bar structure from them respectively. Let $F:{}_{\cI\cI\cL(\Omega^1)}\CM\to{}_A\cI\cI\CE_A$ be the functor with formula given by Proposition \ref{prop. relation between bimodule connection and left module}. We can check $ \tau_{F(\overline{E})}(x\ot \overline{e})=(x,\omega_i)\la \overline{e}\ot x_i=\tau_{\overline{F(E)}}(x\ot \overline{e})$ which is the same with the proof of Theorem \ref{thm. biinvertible module}.
\end{proof}

\subsection{The category ${}_{\cL(\Omega^1)}\cK_{\cR(\Omega^1)}$}\label{secK}

Here we give a route to the  $\CI\CL(\Omega^1)$ construction in terms of the $\odot$ construction in Proposition~\ref{propodot}. The starting point is a certain subcategory of $ {}_{\cL(\Omega^1)}\CJ_{\cR(\Omega^1)}$.

\begin{proposition}\label{propK} Let ${}_{\cL(\Omega^1)}\cK_{\cR(\Omega^1)}$  be the full monoidal subcategory of ${}_{\cL(\Omega^1)}\CJ_{\cR(\Omega^1)}$ where we impose the four relations
\begin{align}\label{equ. addition conditions}
(x_i\la e)\ra (\omega_{i}, y) &= e\ra y  \cr
 (x, \eta_i)\la(e\ra y_i) &= x\la e \cr
(x, \eta_i) \la (e\ra (\xi, y_i)) &= \ev^L(x, \xi)\, e \cr
((x_i,\eta)\la e)\ra(\omega_{i}, y) &= e\,\ev^R(\eta, y),
\end{align}
for any $e\in E\in {}_{\cL(\Omega^1)}\cK_{\cR(\Omega^1)}$. This category can be identified with the left modules of $\CL(\Omega^1)\odot \CR(\Omega^1)^{op}$ in Proposition \ref{propodot} modulo the further relations
\[ y=(\omega_i, y)\bullet  x_i,\quad   x=(x,\eta_i)\bullet y_i \]
\[ (x,\eta_{j})\bullet (\omega, y_{j})=\ev^{L}(x, \omega),\quad   (\omega_{i},y)\bullet (x_{i}, \omega)=\underline{\ev^{R}(\omega, y)}. \]
Moreover, this quotient can be identified with $\CI\CL(\Omega^1)$.
\end{proposition}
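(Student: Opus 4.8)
The plan is to proceed in three stages matching the three assertions. First, I would identify ${}_{\cL(\Omega^1)}\cK_{\cR(\Omega^1)}$ with the left modules of $\CL(\Omega^1)\odot\CR(\Omega^1)^{op}$ modulo the displayed relations. By Proposition~\ref{propodot} we already know ${}_{\CL(\Omega^1)\odot \CR(\Omega^1)^{op}}\CM={}_{\cL(\Omega^1)}\CJ_{\cR(\Omega^1)}$, so it suffices to check that the four relations in (\ref{equ. addition conditions}) on a joint module $E$ are \emph{equivalent} to requiring that the four displayed elements $y-(\omega_i,y)\bullet x_i$, $x-(x,\eta_i)\bullet y_i$, $(x,\eta_j)\bullet(\omega,y_j)-\ev^L(x,\omega)$, $(\omega_i,y)\bullet(x_i,\omega)-\underline{\ev^R(\omega,y)}$ act as zero. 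This is a direct translation: using the $\odot$-product rule $y\bullet X=X\ra y$ (i.e. elements of $\CR$ act on the right), one has e.g. $((\omega_i,y)\bullet x_i)\la e=(x_i\la e)\ra(\omega_i,y)$, and the condition that this equal $y\la e=e\ra y$ is exactly the first relation of (\ref{equ. addition conditions}); similarly the other three. Conversely the quotient algebra is generated by these relations, so a module of the quotient is precisely a joint module satisfying (\ref{equ. addition conditions}). I would also note that the monoidal structure is inherited, so this is a monoidal identification.

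Second, I would identify this quotient with $\CI\CL(\Omega^1)$. Recall from Section~\ref{secIL} that $\CI\CL(\Omega^1)$ is the free product of $T_{A^e}(\cX^R\tens\Omega^1)$, $T_{A^e}(\Omega^1\tens\cX^L)$ and $T(\cX^R)$ modulo (\ref{equ. relations 1}) (written with $\bullet$) and (\ref{equ. relations 3}). The quotient of $\CL(\Omega^1)\odot\CR(\Omega^1)^{op}$ constructed above has generators $a,\underline a, x\in\cX^R, (x,\xi)\in\cX^R\tens\Omega^1$ coming from $\CL(\Omega^1)$ and $y\in\cX^L, (\xi,y)\in\Omega^1\tens\cX^L$ coming from $\CR(\Omega^1)^{op}$, subject to (\ref{equ. relations 3}), (\ref{equ. relations 4}), (\ref{equ. relations 5}), the $\odot$-amalgamation $s_{\cL}=t_{\cR}$, $t_{\cL}=s_{\cR}$, the commutation $X_1\bullet X_2=X_2\Rbullet X_1$ for $X_i\in\CR$, and the four new relations. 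The key observation is that the two new relations $y=(\omega_i,y)\bullet x_i$ and $x=(x,\eta_i)\bullet y_i$ \emph{eliminate} the generators $y$ and the relation is reversible, so $y$-generators are redundant; and the relations (\ref{equ. relations 5}), the $\cX^L\tens\Omega^1$-generators $(\xi,y)$, and all the $\CR$-side relations then become consequences of the $\cX^R$-side relations together with the remaining two new relations $(x,\eta_j)\bullet(\omega,y_j)=\ev^L(x,\omega)$ and $(\omega_i,y)\bullet(x_i,\omega)=\underline{\ev^R(\omega,y)}$, which are exactly (\ref{equ. relations 1}). So one builds a mutually inverse pair of algebra maps between the quotient and $\CI\CL(\Omega^1)$: the forward map sends the $\CL$- and $\CR$-generators to their images, and the backward map sends the $\CI\CL(\Omega^1)$-generators in; checking well-definedness on both sides amounts to verifying each defining relation holds in the target, using $\coev^L,\coev^R$ and the evaluation identities from Section~\ref{sec:precalc}. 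One then confirms the left-bialgebroid structures (coproduct, counit from (\ref{equ. coproduct 1}), (\ref{equ. counit 1}), (\ref{equ. coring structure 1})) match under this isomorphism, which is routine once the generators are matched.

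Finally, one assembles: the monoidal category ${}_{\cL(\Omega^1)}\cK_{\cR(\Omega^1)}$ equals the left modules of the quotient, which equals ${}_{\CI\CL(\Omega^1)}\CM$; and this agrees with $\AryGho$'s identification ${}_{\CI\CL(\Omega^1)}\CM\cong{}_A\cI\CE_A$ from (\ref{prop. invertible bimodule connection}). The main obstacle I expect is the careful bookkeeping in the second stage: showing that the $\cX^L$-side generators and the relations (\ref{equ. relations 4})--(\ref{equ. relations 5}) are genuinely redundant given the new relations, i.e. that nothing is lost by presenting $\CI\CL(\Omega^1)$ with only $\cX^R$-generators plus (\ref{equ. relations 1}). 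This requires using the translation-map-type identities $\omega=\omega_i\ev^L(x_i,\omega)$, $x=\ev^L(x,\omega_i)x_i$ and their right-handed counterparts to rewrite any $\cX^L$-expression in $\cX^R$ terms, and then checking the rewritten relations are implied; the bimodule-structure compatibilities (\ref{ufyt}) need to be tracked throughout. The other stages are essentially formal translations once the dictionary is set up.
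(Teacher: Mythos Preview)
Your proposal is correct and follows essentially the same route as the paper: translate the four module conditions (\ref{equ. addition conditions}) into the four algebra relations via the $\odot$-dictionary, then eliminate the $\cX^L$ generators using $y=(\omega_i,y)\bullet x_i$ to land on $\CI\CL(\Omega^1)$. The paper also notes explicitly that the relation $x=(x,\eta_i)\bullet y_i$ is redundant once the other three hold, via $(x,\eta_i)\bullet y_i=(x,\eta_i)\bullet(\omega_j,y_i)\bullet x_j=\ev^L(x,\omega_j)x_j=x$.

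The one point where your treatment diverges is monoidality. You write that ``the monoidal structure is inherited,'' but this is not automatic: one must check either that the four conditions (\ref{equ. addition conditions}) are stable under $\tens_A$ of joint modules, or equivalently that the ideal generated by the four relations is a coideal. The paper does the former directly (this is in fact the bulk of its argument). Your route---identify the quotient with $\CI\CL(\Omega^1)$ as algebras and then verify the coproducts match---is a legitimate alternative that yields the coideal property \emph{a posteriori}, but you should flag that this step is doing real work, not just bookkeeping. A minor slip: you list (\ref{equ. relations 5}) among the relations present in $\CL(\Omega^1)\odot\CR(\Omega^1)^{op}$, but those are $\CI\CR$-relations; only (\ref{equ. relations 3}) and the $\bullet$-reversed form of (\ref{equ. relations 4}) are there before quotienting.
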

\proof (1) We first check that it is monoidal: for any $e\in E\in {}_{\cL(\Omega^1)}\cK_{\cR(\Omega^1)}$ and $f\in F\in {}_{\cL(\Omega^1)}\cK_{\cR(\Omega^1)}$, we have
    \begin{align*}
        (x_i\la& (e\ot f))\ra (\omega_{i}, y)\\
        =&(x_i\la e\ot f)\ra (\omega_{i}, y)+((x_i,\omega_{j})\la e\ot x_j\la f)\ra (\omega_{i}, y)\\
        =&(x_i\la e)\ra (\omega_{i}, y_j)\ot f\ra (\eta_{j}, y)+((x_i,\omega_{j})\la e)\ra (\omega_{i}, y_k)\ot (x_j\la f)\ra (\eta_{k}, y)\\
        =&e\ra y_j\ot f\ra (\eta_{j}, y)+e\, \ev^R(\omega_{j}, y_k)\ot (x_j\la f)\ra (\eta_{k}, y)\\
         =&e\ra y_j\ot f\ra (\eta_{j}, y)+e\, \ot (x_j\la f)\ra ((\eta_{k}, y)t_{R}(\ev^R(\omega_{j}, y_k)))\\
        =&e\ra y_j\ot f\ra (\eta_{j}, y)+e\, \ot (x_j\la f)\ra (\ev^R(\omega_{j}, y_k) \eta_{k}, y)\\
        =&e\ra y_j\ot f\ra (\eta_{j}, y)+e\, \ot (x_j\la f)\ra (\omega_{j}, y)\\
        =&e\ra y_j\ot f\ra (\eta_{j}, y)+e\, \ot f\ra y\\
        =&(e\ot f)\ra y.
    \end{align*}
    Also we have
    \begin{align*}
        (x, &\eta_i) \la ((e\ot f)\ra (\xi, y_i)) \\
        =& (x, \eta_i) \la(e\ra (\xi, y_j)\ot f\ra(\eta_{j}, y_i))\\
        =&(x, \omega_{k})\la (e\ra (\xi, y_j))\ot (x_k, \eta_i)\la (f\ra(\eta_{j}, y_i))\\
        =&(x, \omega_{k})\la (e\ra (\xi, y_j))\ot \ev^L(x_k, \eta_{j})\, f\\
        =&(x, \omega_{k} \ev^L(x_k, \eta_{j}))\la (e\ra (\xi, y_j))\ot  f\\
        =&(x, \eta_{j})\la (e\ra (\xi, y_j))\ot  f\\
        =&\ev^L(x, \xi)\, e\ot  f.
    \end{align*}
    The rest are similar.

    (2) The actions of $\CL(\Omega^1),\CR(\Omega^1)^{op}\subset \CL(\Omega^1)\odot\CR(\Omega^1)$ are
    \[  x.e=x\la e,\quad (x, \omega).e=(x, \omega)\la e, \quad y.e=e\ra y ,\quad (\eta, y).e=e\ra(\eta,y)  \]
   using which the restrictions on the objects translate into the relations stated. For example, the fourth relation in the subcategory can be written
 \[ (\omega_i,y).((x_i,\eta).e)=e\ev^R(\eta,y)=\underline{\ev^R(\eta,y)}.e\]
 using the notation (\ref{Aeconv}). The last two of the stated quotient relations are the same as those of $\CI\CL(\Omega^1)$ and note that if these hold then then the second relation follows from the first,
  \[ (x,\eta_i)y_i=(x,\eta_i)(\omega_j,y_i)x_j=\ev^L(x,\omega_j)x_j=x.\]
  Finally, the first quotient relation allows one to eliminate the $\cX^L$ generators, so that we have exactly $\CI\CL(\Omega^1)$. \endproof

On the other hand, the same category ${}_{\cL(\Omega^1)}\cK_{\cR(\Omega^1)}$ can be expressed as right modules of quotient of $\CL(\Omega^1)^{op}\und{\odot}\CR(\Omega^1)$ by relations
\[  y=x_i\Rbullet (\omega_i,y),\quad x=y_i\Rbullet (x, \eta_i)\]
\[ (\omega,y_i)\Rbullet (x,\eta_i)=\und{\ev^L(x,\omega)},\quad (x_i,\omega)\Rbullet (\omega_i,y)=\ev^R(\omega,y)\]
similarly using
\[ e.x= x\la e,\quad e.(x,\omega)=(x,\omega)\la e,\quad e.y= e\ra y,\quad e.(\eta,y)=e\ra(\eta,y)\]
and the specification of the subcategory. We see that we recover $\CI\CR(\Omega^1)$, using the second relation to replace
$\cX^R$ generators by $\cX^L$ ones, and the first relation then being redundant.

The merit of this treatment is that Proposition~\ref{propodot} tells us that, since we know by Lemma~\ref{lem. star related LX and RX}  that $\CL(\Omega^1), \CR(\Omega^1)$ are $*$-related, $\CL(\Omega^1)\odot\CR(\Omega^1)^{op}, \CL(\Omega^1)^{op}\und{\odot}\CR(\Omega^1)$ are a $*$-bialgebroid pair. One can then check that this is compatible with the relations that we are quotienting by, to recover Theorem~\ref{thm star bialgebroid pair IL and IR}. In categorical terms, this is equivalent to checking that ${}_{\cL(\Omega^1)}\cK_{\cR(\Omega^1)}$ is a bar subcategory, which we check.

\begin{proposition} If $A$ is a $*$-algebra and $(\Omega^1,\extd)$ an fgp $*$-differential calculus $\Omega^1$ then ${}_{\cL(\Omega^1)}\cK_{\cR(\Omega^1)}$ is a bar subcategory of ${}_{\cL(\Omega^1)}\CJ_{\cR(\Omega^1)}$ and isomorphic to ${}_{\cI\cL(\Omega^1)}\CM$ and $\CM_{\cI\cR(\Omega^1)}$ (and  hence ${}_A\cI\CE_A$) as bar categories.
\end{proposition}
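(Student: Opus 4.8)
The plan is to prove the statement in two stages: first that $\cK:={}_{\cL(\Omega^1)}\cK_{\cR(\Omega^1)}$ is closed under the bar functor of the ambient bar category ${}_{\cL(\Omega^1)}\CJ_{\cR(\Omega^1)}$ (which is a bar category by Lemma~\ref{lem. star related LX and RX} and Proposition~\ref{prop. star related module is bar category}), and then that this bar structure transports across the identifications already set up in Proposition~\ref{propK} and Corollary~\ref{coro. invertible connection}.

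For the first stage, recall from Proposition~\ref{prop. star related module is bar category} that the conjugate module $\overline E$ of $E\in{}_{\cL(\Omega^1)}\CJ_{\cR(\Omega^1)}$ carries $X\la\overline m=\overline{m\ra X^{\circledast}}$ and $\overline m\ra Y=\overline{Y^{\circledast^{-1}}\la m}$, with $\circledast$ the map of Lemma~\ref{lem. star related LX and RX}. I would check directly that if $E$ satisfies the four relations (\ref{equ. addition conditions}) then so does $\overline E$. Using $(x,\xi)^\circledast=(\xi^*,x^\circledast)$, the identity (\ref{equ. star translate right vector field to left vector field}) and (\ref{equ. coev in terms of star}) (so that $\coev^L(1)=\omega_i\ot x_i$ may be rewritten in terms of the $\circledast$-images of $y_j,\eta_j$, and $\coev^R(1)=y_j\ot\eta_j$ in terms of those of $\omega_i,x_i$), a short computation shows that the first relation of (\ref{equ. addition conditions}) for $\overline E$ reduces to the second relation of (\ref{equ. addition conditions}) for $E$, and conversely; likewise the third and fourth relations exchange roles under conjugation. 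Since ${}_{\cL(\Omega^1)}\cK_{\cR(\Omega^1)}$ is already a full monoidal subcategory by Proposition~\ref{propK} and contains the unit object, this makes it a bar subcategory.

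For the second stage, Proposition~\ref{propK} identifies $\cK$ with ${}_{\CI\CL(\Omega^1)}\CM$ and with $\CM_{\CI\CR(\Omega^1)}$, while Theorem~\ref{thm star bialgebroid pair IL and IR} equips $(\CI\CL(\Omega^1),\CI\CR(\Omega^1))$ with the structure of a $*$-bialgebroid pair whose $\circledast$ and $\Phi$ descend from those of $\cL(\Omega^1),\cR(\Omega^1)$ through the quotient. I would verify that the bar structure on ${}_{\CI\CL(\Omega^1)}\CM$ supplied by Theorem~\ref{thmpairbar}, namely $X\la\overline m=\overline{\Phi^{-1}(X^\circledast)\la m}$, agrees on generators with the one inherited from $\cK$: for $x\in\cX^R$ one has $\Phi^{-1}(x^\circledast)=(\omega_i,x^\circledast)\bullet x_i$, whence under the identification $\Phi^{-1}(x^\circledast)\la m=(x_i\la m)\ra(\omega_i,x^\circledast)=m\ra x^\circledast$ by the first relation of (\ref{equ. addition conditions}), and similarly for the remaining generators $(x,\omega)$, $y$, $(\xi,y)$ and $a,\underline a$. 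Hence $\cK\cong{}_{\CI\CL(\Omega^1)}\CM$ is an isomorphism of bar categories, and by Corollary~\ref{coro. invertible connection} it chains with the bar isomorphisms ${}_{\CI\CL(\Omega^1)}\CM\cong\CM_{\CI\CR(\Omega^1)}\cong{}_A\cI\CE_A$ to give the claim.

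I expect the main obstacle to be the bookkeeping in the first stage: one has to keep careful track of which coevaluation basis ($\omega_i\ot x_i$, $y_j\ot\eta_j$, or their $\circledast$-images) appears in each of the four relations, and of the order of the tensor factors under $\circledast$, since a mismatched duality pairing or a reversed slot there would break the correspondence between (\ref{equ. addition conditions}) for $E$ and for $\overline E$. Once the bar-subcategory property is established, the transport of structure in the second stage is routine given Proposition~\ref{propodot}, Theorem~\ref{thm star bialgebroid pair IL and IR} and Corollary~\ref{coro. invertible connection}.
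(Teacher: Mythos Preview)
Your proposal is correct and follows essentially the same approach as the paper. The paper checks directly that the four relations (\ref{equ. addition conditions}) hold on $\overline E$ by unwinding the $\circledast$-formulae and using (\ref{equ. coev in terms of star}) to swap dual bases, exhibiting exactly the pairwise exchange you describe (relation one for $\overline E$ reduces to relation two for $E$, etc.); it then declares the rest ``clear'' from the prior identification of $\cK$ with ${}_{\CI\CL(\Omega^1)}\CM$ and the remarks before the proposition that the $*$-bialgebroid pair structure of Proposition~\ref{propodot} descends through the quotient, whereas your second stage makes that transport explicit on generators.
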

\begin{proof}
    The bar structure of ${}_{\cL(\Omega^1)}\cK_{\cR(\Omega^1)}$ is inherited from ${}_{\cL(\Omega^1)}\CJ_{\cR(\Omega^1)}$. Now, we can check for any $e\in E\in {}_{\cL(\Omega^1)}\cK_{\cR(\Omega^1)}$.
    \begin{align*}
        (x_{i}\la \overline{e})\ra (\omega_{i}, y)=\overline{(y^{\circledast^{-1}}, \omega_{i}^*)\la(e\ra x_i^\circledast)}=\overline{y^{\circledast^{-1}}\la e}=\overline{e}\ra y.
    \end{align*}
    Similarly, we have $ (x, \eta_i)\la(\overline{e}\ra y_i)= x\la \overline{e}$. Next,
    \begin{align*}
        (x, \eta_i) \la (\overline{e}\ra (\xi, y_i))=&\overline{((y_i^{\circledast^{-1}}, \xi^*)\la e)\ra (\eta_i^*, x^\circledast)}=\overline{e \ev^R(\xi^*, x^\circledast)}= \ev^{L}(x, \xi)\, \overline{e}.
    \end{align*}
  Similarly for the fourth relation for an object.  So ${}_{\cL(\Omega^1)}\cK_{\cR(\Omega^1)}$ is a bar subcategory. The rest is then clear as we have already identified the category as left $\CI\CL(\Omega^1)$-modules and as right $\CI\CR(\Omega^1)$-modules, given the remarks above and Theorem~\ref{thmpairbar}.
\end{proof}

\subsection{The biparallelisable case}\label{sec:ex}

We note that if $\Omega^1$ is fgp and has a generalised (i.e. not necessarily symmetric) quantum metric in the sense\cite{BM} of $(\ ,\  ):\Omega^1\tens_A\Omega^1\to A$ and $\cg\in \Omega^1\tens_A\Omega^1$ obeying the snake identities then $\Omega^1$ is pivotal with
\begin{equation}\label{pivg} \cX=\Omega^1,\quad \ev^L(\omega, \eta)=(\omega, \eta)=\ev^R(\omega, \eta),\quad \coev^R=\cg=\coev^L.\end{equation}
One can check that the $\circledast$ on $\cX$ coincides with $*$ on $\Omega^1$ if and only if $\cg$ is $*$-compatible in the sense $\dagger(\cg)=\cg$ as in \cite{BM}, although we do not need to assume this nor to assume this form of pivotal structure.
Either way, there are by now many examples of quantum Riemannian geometry sufficient to illustrate  Ghobadi's $\CL(\Omega^1),\CI\CL(\Omega^1)$ and $\CI\CI\CL(\Omega^1)$ and exhibit them as $*$-bialgebroid and $*$-Hopf algebroid pairs.

In the rest of this section we focus on the simplest class of these where  calculus on $A$ has $\Omega^1$ free from both sides, in fact with a finite set $\{\omega_i\}$ which is both a left basis and a right basis over $A$. In this section, we analyse the above constructions at this level of generality. In this case exterior derivative can be written as for all $f\in A$ as
\[ \extd f=\del_i(f)\omega_i= \omega_i\del^R_i(f),\quad \del_i, \del^R_i:A\to A\]
as a definition of `partial derivatives' (this is what they would for a local coordinate basis in the classical case at this point). We sum over repeated indices. Our assumption is that
\[  \omega_j f= C_{ij}(f) \omega_i,\quad  f\omega_j =\omega_i C^{-1}_{ij}(f),\quad   C_{ij},C^{-1}_{ij}: A\to A,\]
for certain `commutation operators' which are mutually inverse as elements of $M_d({\rm Lin}_k(A))$, where $d$ is the order of the basis. The requirements of a differential calculus $(\Omega^1,\extd)$ translate to
\begin{align}\label{Cdel}
C_{ij}(fg)=C_{kj}(f) C_{ik}(g),\quad \del_i(fg)=\del_j(f)C_{ij}(g)+ f \del_i(g)  \nonumber\\
C^{-1}_{ij}(fg)=C^{-1}_{ik}(f)C^{-1}_{kj}(g),\quad \del_i^R(fg)=\del_i^R(f) g+ C_{ij}^{-1}\del_j^R(g)
\end{align}

We let $x_i$ and $y_i$ be corresponding dual bases so that
\[ \ev^L(x_i,\omega_j)=\delta_{ij}=\ev^R(\omega_i,y_j),\quad  \coev^L=\omega_i\tens x_i,\quad \coev^R=y_i\tens\omega_i\]
(so we set $\eta_i=\omega_i$ in the general theory).  The bimodule structures on $\cX^R$ and $\cX^R$ are then
\[ x_i f= C^{-1}_{ij}(f) x_j,\quad f y_i=y_jC_{ij}(f).\]
Also note that
\[ \ev^L(x_i,\extd f)=\del^R_i(f),\quad \ev^R(\extd f,y_i)=\del_i(f)\]
which allows is to identify $x_i=\del_i^R, y_i=\del_i$ corresponding to the conventions of classical differential geometry where vector fields are thought of as differential operators.

Next, $\CL(\Omega^1)$ is generated by $f\in A$, $\und f\in A^{op}$,
$t_{ij}=(x_i,\omega_j)$ and $x_i\in\cX^R$. These are subject to the relations for $A^e$ and (using $\bullet$ for the product in  $\CL(\Omega^1)$)
\begin{align*}
& f \bullet x_i = f\,x_i,\quad x_i\bullet f = x_i\, f + \ev^L(x_i,\extd f),\quad
x_i \bullet \und{f} =  \und{f} \bullet x_i + (x_i, \extd f),\cr
&f \bullet (x,\omega) = (f\,x,\omega) ,\quad (x, \omega)\bullet f = (xf, \omega),\quad \underline f \bullet(x, \omega) = (x, \omega f),\quad (x, \omega)\bullet \underline f = (x,f\omega).
\end{align*}
and the coring structure is
\begin{align*}
&\Delta_{L}(x_i) = x_i\tens 1 +t_{ij}\tens x_j,\quad \varepsilon_{L}(x_i)=0,\cr
&\Delta_{L}(f) =f\tens 1,\quad \Delta_{L}(\underline f) =1\tens\underline f,\quad
\varepsilon_{L}(f)=f,\quad \varepsilon_{L}(\underline f)=f,\cr
&\Delta_{L} (t_{ij}) = t_{ik}\tens t_{kj}   \ ,\quad
\varepsilon_{L}(t_{ij} )=\delta_{ij}.
\end{align*}
The source and target maps are $s_L(f)=f$ and $t_L(f)=\und f$.

Similarly on the other side, $\CR(\Omega^1)$ is generated by $f\in A$, $\und f\in A^{op}$,
${\und t}{}_{ij}=(\eta_i,y_j)$ and $y_i\in\cX^L$, subject to the relations for $A^e$ and using $\und\bullet$ for the product in  $\CR(\Omega^1)$
\begin{align*}
&f  \Rbullet (\omega, y) = (\omega, f\,y)  \ ,\quad (\omega, y) \Rbullet   f = (\omega, y\,f) \cr
&\underline f  \Rbullet (\omega, y)  = (\omega \, f, y)  \ ,\quad (\omega, y) \Rbullet   \underline f = (f\,\omega, y) \ ,\cr
& y_i \Rbullet  f = y_i\,f\ ,\quad f \Rbullet  y_i = f\, y_i + \ev^R(\extd f, y_i)\ ,\quad
 \underline f  \Rbullet   y_i = y_i  \Rbullet  \underline f + (\extd f , y_i)\ ,
\end{align*}
and the coring structure is
\begin{align*}
&\Delta_{R}(y_i) = 1\tens y_i + y_j\tens {\und t}{}_{ji}\ ,\quad \varepsilon_{R}(y_i)=0\ ,\cr
&\Delta_{R}(f) =1\tens f \ ,\quad \Delta_{R}(\underline f) =\underline f\tens 1\ ,\quad
\varepsilon_{R}(f)=f\ ,\quad \varepsilon_{R}(\underline f)=f\ ,\cr
&\Delta_{R}( {\und t}{}_{ij})= {\und t}{}_{ik}  \tens   {\und t}{}_{kj}  \ ,\quad
\varepsilon_{R}( {\und t}{}_{ij}) =\delta_{ij} \ .
\end{align*}
The source and target maps are $s_R(f)=f$ and $t_R(f)=\und f$.

Next we construct $\CI\CL(\Omega^1)$, which has additional relations from $\CR(\Omega^1)$ but these enter with the opposite product  so $Y_1\bullet Y_2=Y_2 \Rbullet  Y_1$ for $Y_i\in\CR(\Omega^1)$. As explained in Section~\ref{secK}, this arises from as a quotient of the left $A$-bialgebroid $\CL\odot \CR^{op}$ in Proposition~\ref{propodot} for $\cL=\CL(\Omega^1), \CR=\CR(\Omega^1)$. As part of the construction, to have consistent source and target maps for  $\CI\CL(\Omega^1)$, we eliminate all mention of $A^e$ coming from $\CR$ by identifying $\und f, \und g\in \CR$ with $f,g\in \CL$ (this is consistent with the product as
$\CL\odot \CR^{op}$ has the $\CR$ product reversed). Thus, we explicitly rewrite the $\CR$ relations above as follows:
\begin{align} \label{ILex1}
& (\omega, y)\bullet \und f = (\omega, f\,y) ,\quad\und f\bullet (\omega, y) = (\omega, y\,f), \cr
&(\omega, y) \bullet f = (\omega \, f, y) ,\quad f\bullet  (\omega, y) = (f\,\omega, y), \cr
&\und f\bullet y_i = y_i\,f,\quad  y_i \bullet \und f = f\, y_i + \ev^R(\extd f, y_i),\quad
   y_i \bullet f = f\bullet y_i + (\extd f , y_i),
\end{align}
as the relevant relations viewed in the $A$-bialgebroid  $\CL\odot \CR^{op}$.
The source and target maps are
\[s_{\CL\odot\CR^{op}}(f)=f,\quad t_{\CL\odot\CR^{op}}(f)=\und f.\]
Finally, as explained in Section~\ref{secK}, we impose relations  between $\CL$ and $\CR^{op}$ coming from the subcategory $\cK$,
\begin{align}
& (\omega_{i}, y_j)\bullet x_i  =  y_j ,\quad (x_j, \eta_i) \bullet y_i = x_j,  \cr
& (x, \eta_i)\bullet (\omega, y_i) = \ev^L(x, \omega) ,\quad (\omega_{i}, y)\bullet (x_i,\omega) = \underline{\ev^R(\omega, y)}. \label{ILex2}
\end{align}
We can use these to rewrite all occurrences of the generators $y_i$ to get $\cI\cL(\Omega^1)$ in the original Ghobadi form in Section~\ref{secIL}. This amounts to $\CL(\Omega^1)$ with additional generators $\und t_{ij}=(\eta_i,y_j)$ with relations given by the first two lines of (\ref{ILex1}) and the last line of (\ref{ILex2}). The latter on the matrix generators and the coalgebra are
\begin{equation}\label{coalgILex}  t_{ij}\bullet \und t_{kj}=\delta_{ik},\quad \und t_{ij}\bullet t_{ik}=\delta_{jk},\quad \Delta_{L}( {\und t}{}_{ij})= {\und t}{}_{ik}  \tens   {\und t}{}_{kj} ,\quad
\varepsilon_{L}( {\und t}{}_{ij}) =\delta_{ij} .  \end{equation}

To obtain $\cI\cR(\Omega^1)$ from the point of view of Section~\ref{secK}, we begin with the right bialgebroid  $\CL^{op} \, \und\odot \, \CR$ but this time we retain the product $\und\bullet$ of the right bialgebroid $\CR$, and write $X_1 \Rbullet  X_2=X_2\bullet X_1=$ for $X_i\in\CL$. Similarly we retain the $\CR$ copy of $A^e$ by identifying $\und f\in \CL$ with
$f\in \CR$, and $\und g\in \CL$ with $g\in \CR$. Thus, we explicitly rewrite the $\CL$ relations above as
$\CL^{op} \, \und\odot \, \CR$ relations:
\begin{align*}
&  x_i \Rbullet \und f = f\,x_i,\quad \und f \Rbullet x_i = x_i\, f + \ev^L(x_i,\extd f),\quad
f \Rbullet x_i  =  x_i \Rbullet f + (x_i, \extd f),\cr
&(x,\omega) \Rbullet \und f = (f\,x,\omega),\quad  \und f \Rbullet(x, \omega)= (xf, \omega),\quad  (x, \omega) \Rbullet f = (x, \omega f),\quad  f \Rbullet (x, \omega) = (x,f\omega).
\end{align*}
The source and target maps are
\[s_{  \CL^{op} \, \und\odot \, \CR }(f)=f,\quad t_{   \CL^{op} \, \und\odot \, \CR }(f)=\und f.\]
Then, as explained in Section~\ref{secK}, we  impose relations from the category $\cK$ as
\begin{align*}
& x_i  \Rbullet     (\omega_{i}, y_j) =  y_j ,\quad y_i  \Rbullet   (x_j, \eta_i) = x_j, \cr
& (\omega, y_i)   \Rbullet   (x, \eta_i)= \underline{\ev^L(x, \omega) } ,\quad
(x_i,\omega)     \Rbullet     (\omega_{i}, y) = \ev^R(\omega, y).
\end{align*}
We can use these to rewrite all occurrences of the generators $x_i$ to get the original description of  $\cI\cR(\Omega^1)$ in Section~\ref{secIL}. This has addtional generators $t_{ij}$ with relations and coalgebra
\[ \und t_{ij}\Rbullet  t_{kj}=\delta_{ik},\quad t_{ij}\Rbullet \und t_{ik}=\delta_{jk},\quad \Delta_{R}( {\und t}{}_{ij})= {\und t}{}_{ik}  \tens   {\und t}{}_{kj} ,\quad
\varepsilon_{R}( {\und t}{}_{ij}) =\delta_{ij}. \]

We now assume that $(\Omega^1,\extd)$ is a $*$-calculus over $\C$ and give the $*$-bialgebroid pair structure of $\CI\CL(\Omega^1),\CI\CR(\Omega^1)$ and starting with how they arise from the  $*$-bialgebroid pair  $\CL\odot \CR^{op}, \CL^{op} \, \und\odot \, \CR$  in Proposition~\ref{propodot}. This has  the antilinear anti-algebra map $\circledast:\CL\odot \CR^{op} \to
\CL^{op} \, \und\odot \, \CR$ given by
\begin{align*}
f^\circledast = f^*,\quad (\und f)^\circledast = \und f^*,\quad x^\circledast = x^*,\quad y^\circledast = y^*,\quad (x,\omega)^\circledast =
(\omega^*,x^*) ,\quad (\omega,y)^\circledast =
(y^*,\omega^*) .
\end{align*}
Here $\omega^*$ is the usual star of a 1-form, and $x^*\in\cX^L$ for $x\in\cX^R$ and $y^* \in\cX^R$ for $y\in\cX^L$ are defined by $\ev^R(\omega\tens x^*)=\ev^L(x\tens \omega^*)^*$ and
$\ev^L(y^*\tens \omega)=\ev^R(\omega^*\tens y)^*$ respectively  (these were denoted $\circledast$ for $\CL,\CR$ $*$-related but we reserve this symbol for the composite to avoid confusion). As a brief check of applying $\circledast$ to the coproduct (recalling that the coproduct on $\CL\odot \CR^{op}$ and $\CL^{op} \, \und\odot \, \CR$  are just given by $\Delta_L$ and $\Delta_R$) we have
\begin{align*}
\Delta_{\CL^{op} \, \und\odot \, \CR}(x^\circledast) &= \Delta_{\CL^{op} \, \und\odot \, \CR}(x^*) =
\Delta_R(x^*) =1\tens x^* + y_j \tens (\eta_j,x^*) \cr
\mathrm{flip}(\circledast \tens \circledast)\Delta_{ \CL\odot \CR^{op}   } (x) &=
\mathrm{flip}(\circledast \tens \circledast)\Delta_{ L  } (x) =
\mathrm{flip}(\circledast \tens \circledast)( x\tens 1 + (x,\omega_i)\tens x_i) \cr
&= 1\tens x^* + x_i{}^* \tens (\omega_i{}^*,x^*),
\end{align*}
which are equal as the dual bases are related by
$y_i\tens \eta_i=x_i{}^*\tens \omega_i{}^*$.
To see that this construction descends to $\cI\cL(\Omega^1)$ and $\cI\cR(\Omega^1)$ (corresponding to $\cK$ being a bar subcategory in Section~\ref{secK}),  we apply $\circledast$ to the $\cK$-relations in $\CL\odot \CR^{op}$ to get
\begin{align*}
& x_i{}^* \Rbullet (y_j{}^*,\omega_i{}^*) =  y_j{}^*,\quad y_i{}^* \Rbullet (\eta_i{}^*,x_j{}^*) = x_j{}^*, \cr
&(y_i{}^*,\xi^*) \Rbullet   ( \eta_i{}^*,x^*) = \ev^L(x, \xi)^*,
\quad (\eta^*,x_i{}^*)  \Rbullet  ( y^* , \omega_{i}{}^*)=  \underline{ \ev^R(\eta, y)^* }.
\end{align*}
These are the same as the $\cK$-relations in $\CL^{op} \, \und\odot \, \CR$. Moreover,  the bialgebroids $\CL\odot \CR^{op}$ and $\CL^{op} \, \und\odot \, \CR$ are  reflexive as in Proposition~\ref{propodot} by the invertible linear anitalgebra coalgebra map $\Phi:\CL\odot \CR^{op} \to
\CL^{op} \, \und\odot \, \CR$ given by
\begin{align*}
\Phi(f) = \und f,\quad  \Phi(\und f)=  f,\quad  \Phi( x) = x,\quad \Phi( y) = y,\quad \Phi(x,\omega)=
(x,\omega) , \quad \Phi(\omega,y) =
(\omega,y).
\end{align*}
To see that this descends to $\cI\cL(\Omega^1)$ and $\cI\cR(\Omega^1)$, we apply $\Phi$ to the $\cK$-relations in $\CL\odot \CR^{op}$ to get
\begin{align*}
& x_i \Rbullet (\omega_{i}, y_j)  =  y_j ,\quad y_i \Rbullet (x_j, \eta_i) = x_j, \cr
&(\xi, y_i) \Rbullet (x, \eta_i)  = \underline{\ev^L(x, \xi)},\quad (x_i,\eta) \Rbullet (\omega_{i}, y) = \ev^R(\eta, y).
\end{align*}
These are the same as the $\cK$-relations in $\CL^{op} \, \und\odot \, \CR$.

Now we move to the pivotal case we set $\cX=\cX^R$ and with $\ev^L,\coev^L$ as above and define $\cX^L=\cX$ also as a bimodule, with
\[  \ev^R(\omega_i, x_j)= \delta_{ij},\quad    \coev^R(1)=x_i\tens\omega_i\]
and viewed in the correct hom space via this $\ev^R$. Effectively, we set $\eta_i=\omega_i$ and $y_i=x_i$ in the above which works provided their bimodule structures to coincide. This happens when
\[  (C^T)^{-1}= ( C^{-1})^T\]
where $(C^T)_{ij}=C_{ji}$.  We then follow the construction in Section~\ref{secIL} where we define $\cI\cI\cL(\Omega^1)$ as the quotient $\cI\cL(\Omega^1)$ by the $\cI\cI\cL$-relations
 (\ref{equ. relations 2}), which on the generators amounts to the additional relations
 \begin{equation}\label{IILex} t_{ji}\bullet \und t_{jk}=\delta_{ik},\quad \und t_{ij}\bullet t_{kj}=\delta_{ik}.  \end{equation}
 Likewise quotienting $\CI\CR(\Omega^1)$, the additional relations for $\CI\CI\CR(\Omega^1)$ are from (\ref{equ. relations 6}) and on the generators amount to
 \[  \und t_{ji}\Rbullet t_{jk}=\delta_{ik},\quad t_{ij}\Rbullet \und t_{kj}=\delta_{ik}. \]
 The reader may check that the $\circledast$ and $\Phi$ maps send the additional $\cI\cI\cL$-relations to the additional $\cI\cI\cR$-relations so that we obtain a $*$-Hopf algebroid pair.

 Returning to $\CL(\Omega^1)$, an easy example of a bimodule connection is $E=A$ and $\nabla_E=\extd: A\to \Omega^1=\Omega^1\tens_A A$. Here $\sigma_E(f\tens\omega)=f\omega\tens 1$, i.e., $\sigma_E=\id$ when $\tens_A A$ and $A\tens_A$ are cancelled. This corresponds to a representation of $\CL(\Omega^1)$ where the actions are
 \[ x_i.f=\ev^L(x_i,\extd f)\tens 1=\del_i^R(f),\quad t_{ij}.f=(\ev^L\tens\id)(x_i\tens \sigma_E(f\tens \omega_j))=C_{ij}^{-1}(f),\quad \und g.f=fg,\]
 which one can check indeed represents the algebra.  Another question we can ask is the
 structure of the algebra of $\CL(\Omega^1)$. From the above, we see that $x_i, t_{ij}$ generate the free associative algebra $k\<x_i, t_{ij}\>$ on $d^3$ generators. This is actually a bialgebra over the field with coalgebra $\Delta_L,\varepsilon_L$ as above on these generators. All of $\CL(\Omega^1)$ factorises into this algebra and $A^e$, which we keep to the left. The cross-relations from the above are that $A, A^{op}$ commute and
 \[ x_i\bullet f=C^{-1}_{ij}(f)\bullet x_j+\del_i^R(f),\quad t_{ij}\bullet f=C^{-1}_{ik}(f)\bullet t_{kj}\]
 \[ x_i\bullet\und f=\und f\bullet x_i+\und{\del_j^R(f)}\bullet t_{ij},\quad  t_{ij}\bullet \und f=\und{C^{-1}_{kj}(f)}\bullet t_{ik}\]
 The part generated by $A$ and the $x_i, t_{ij}$ is a semidirect product $A\rtimes k\<x_i, t_{ij}\>$ by $x_i\la f=\del_i^R(f)$, $t_{ij}\la f= C^{-1}_{ij}(f)$ which one can check makes $A$ into a module algebra of the bialgebra. The part with $A^{op}$ can similarly be viewed as a semidirect or `smash' product $A^{op}\# k\<x_i, t_{ij}\>^{cop}$ for same left action on the vector space of $A^{op}$ and the opposite coproduct.  This simplifies further in the case of $\CI\CI\CL(\Omega^1)$, where we have additional generators $\und t_{ij}$ and relations `
 \[  \und t_{ij}\bullet f=C_{ki}(f) \bullet \und t_{kj}, \quad \und t_{ij}\bullet\und f=\und{C_{jk}(f)}\bullet \und t_{ik} \]
 from the above, as well as (\ref{coalgILex}) and (\ref{IILex}).

 \begin{proposition}\label{propGhoES} In the case of $\Omega^1$ free as above, $H=k\<x_i,t_{ij},St_{ij}\>\subset\CI\CI\CL(\Omega^)$ with relations such that $St_{ij}=\und t_{ji}$ is a Hopf algebra and with  $\Delta_L$ given by the expressions above but now regarded over $k$. Moreover, $\CI\CI\CL(\Omega^1)= A^e\# H$ a cocycle Hopf algebroid \cite[Prop.~5.2]{HM22} with trivial cocycle.
 \end{proposition}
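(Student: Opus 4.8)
The plan is to prove the two claims in turn: that $H$ is a Hopf algebra, and that $\CI\CI\CL(\Omega^1)$ is the asserted smash product, hence a cocycle Hopf algebroid. First I would pin $H$ down abstractly. From the explicit presentation of $\CI\CI\CL(\Omega^1)$ recorded in this section, the cross-relations with $A^e$ never impose a relation purely among the $x_i,t_{ij},\und t_{ij}$ other than the inverse-matrix relations (\ref{coalgILex}) and (\ref{IILex}), and $x_i,t_{ij}$ already generate the free algebra inside $\CL(\Omega^1)$; so $H$ is the free algebra $k\<x_i,t_{ij}\>$ with the transpose-inverse matrix $\und t$ of $t$ adjoined, i.e.\ the algebra on generators $x_i,t_{ij},\und t_{ij}$ subject only to $\sum_k t_{ik}\und t_{jk}=\delta_{ij}=\sum_k\und t_{ki}t_{kj}$ and the matching pair of identities. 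The formulas $\Delta_L(x_i)=x_i\ot 1+t_{ij}\ot x_j$, $\Delta_L(t_{ij})=t_{ik}\ot t_{kj}$, $\Delta_L(\und t_{ij})=\und t_{ik}\ot\und t_{kj}$, $\varepsilon_L(x_i)=0$, $\varepsilon_L(t_{ij})=\varepsilon_L(\und t_{ij})=\delta_{ij}$ have both legs in $H$ and carry no element of $A$, so they extend multiplicatively to maps $\Delta\colon H\to H\ot_k H$ and $\varepsilon\colon H\to k$; well-definedness is the short matrix computation that $\Delta,\varepsilon$ kill the above relations, and coassociativity and counitality are immediate on generators, giving a $k$-bialgebra whose coalgebra structure is $\Delta_L|_H,\varepsilon_L|_H$ as claimed. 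Finally I would define the anti-algebra map $S$ on $H$ by $S(t_{ij})=\und t_{ji}=St_{ij}$, $S(\und t_{ij})=t_{ji}$, $S(x_i)=-\sum_j (St_{ij})\bullet x_j$, check it preserves the matrix relations (using anti-multiplicativity and the four inverse identities), and verify the antipode identities $m(S\ot\id)\Delta_L(h)=m(\id\ot S)\Delta_L(h)=\varepsilon(h)1$ for $h=x_i,t_{ij},\und t_{ij}$ --- e.g.\ $m(S\ot\id)\Delta_L(x_i)=S(x_i)+\sum_j(St_{ij})x_j=0$ and $m(\id\ot S)\Delta_L(x_i)=x_i-\sum_{j,k}(t_{ij}\und t_{kj})x_k=x_i-\sum_k\delta_{ik}x_k=0$. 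This yields the Hopf algebra $H$ of the stated form.

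Next I would exhibit the $H$-module algebra structure on $A^e$: on the $A$-leg set $x_i\la f=\del^R_i(f)$, $t_{ij}\la f=C^{-1}_{ij}(f)$, $\und t_{ij}\la f=C_{ji}(f)$, and symmetrically on the $A^{op}$-leg. That this is a well-defined action of the algebra $H$ is precisely the fact that $(C_{ij})$ and $(C^{-1}_{ij})$ are mutually inverse in $M_d(\mathrm{Lin}_k(A))$, and that it is a module-algebra action against $\Delta$ is precisely the Leibniz and comultiplicativity identities (\ref{Cdel}) for $\del^R_i,C^{-1}_{ij},C_{ij}$. I would then record the linear factorisation that multiplication $A^e\ot H\to\CI\CI\CL(\Omega^1)$ is bijective --- this follows by a straightening / PBW argument from the presentation, extending the known factorisation $\CL(\Omega^1)\cong A^e\ot k\<x_i,t_{ij}\>$ (with $A^e$ on the left) by adjoining the $\und t_{ij}$ with the listed rules $\und t_{ij}\bullet f=\sum_k C_{ki}(f)\bullet\und t_{kj}$ etc. Comparing the cross-relations of $\CI\CI\CL(\Omega^1)$ in this section --- e.g.\ $x_i\bullet\und f=\und f\bullet x_i+\sum_j\und{\del^R_j(f)}\bullet t_{ij}$ and $t_{ij}\bullet\und f=\sum_k\und{C^{-1}_{kj}(f)}\bullet t_{ik}$ --- with the defining relations of the smash product shows this multiplication map is an algebra isomorphism $A^e\#H\cong\CI\CI\CL(\Omega^1)$; here one must fix the placement of the two legs of $A^e$ and the orientation of the matrix coproduct used for the smash product so that everything matches on the nose. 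Since source and target are $s_L(f)=f$, $t_L(f)=\und f$ on both sides and $\Delta_L,\varepsilon_L$ are multiplicative and already matched on the generators of $A^e$ and of $H$, this becomes an isomorphism of left $A$-bialgebroids, $\CI\CI\CL(\Omega^1)\cong A^e\#H$.

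To conclude with the cocycle-Hopf-algebroid statement I would invoke \cite[Prop.~5.2]{HM22}: for the $H$-module algebra $A$ and trivial cocycle $\sigma=\varepsilon\ot\varepsilon$ the Ehresmann--Schauenburg Hopf algebroid $\CL(A\#H,H)$ of the cleft extension $A\subseteq A\#H$ is precisely the cocycle Hopf algebroid on $A\ot A^{op}\ot H$, which in this case is the smash product $A^e\#H$; chasing the cleft description of $(A\#H\ot A\#H)^{coH}$, spanned by the $(f\#h\o)\ot(g\#Sh\t)$, against the structure maps obtained above identifies $\CI\CI\CL(\Omega^1)$ with $\CL(A\#H,H)$, a cocycle Hopf algebroid with trivial cocycle. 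I expect the main obstacle to be exactly this convention bookkeeping in the middle step --- getting the two $A^e$-legs, the coproduct orientation on the $t$-matrix, and the $H$-action on the $A^{op}$-leg mutually consistent so that the explicit straightening rules of $\CI\CI\CL(\Omega^1)$ coincide on the nose with those of $A^e\#H$, and then reconciling this normal form with the one coming from \cite{HM22}'s $\CL(A\#H,H)$ --- whereas the Hopf-algebra part and the module-algebra verification are routine given (\ref{Cdel}) and the matrix relations (\ref{coalgILex}), (\ref{IILex}).
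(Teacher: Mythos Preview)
Your proposal is correct and follows essentially the same route as the paper: verify the Hopf algebra structure on $H$ (same antipode formula), exhibit the action of $H$ on $A$ via $\del^R_i,C^{-1}_{ij},C_{ji}$, and then check that the cross-relations of $\CI\CI\CL(\Omega^1)$ match those of the cocycle Hopf algebroid $A^e\#H$ from \cite[Prop.~5.2]{HM22}. Two points where the paper is more explicit than your sketch and which you should sharpen: (i) the action on the $A^{op}$ leg is not ``symmetric'' but is the \emph{right} action $a\ra h:=Sh\la a$ built from the same left action on $A$, which is exactly what the product formula $(b\tens b'\#h)(c\tens c'\#g)=b(h\o\la c)\tens c'(Sg\t\la b')\#h\t g\o$ encodes; and (ii) the pivotal hypothesis $(C^T)^{-1}=(C^{-1})^T$ is used precisely in verifying the $\und f\bullet\und t_{ij}$ commutation relation, so you should flag this step rather than fold it into general bookkeeping.
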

 \proof Here the Hopf algebra $H$ is the free associative algebra on generators $x_i, t_{ij}, \und t_{ij}$  modulo the relations in (\ref{coalgILex}), (\ref{IILex}) and hence the antipode
 \[ St_{ij}=\und t_{ji},\quad S\und t_{ij}=t_{ji},\quad S x_i = - \und t_{ji}\bullet x_j\]
with the coalgebra as stated, now taken over the field. Next, the  algebra structure of $A^e\# H$ in the case of trivial cocycle  is
 \[ (b\tens b' \#h)(c\tens c'\#g)=b(h\o\la c)\tens c' (Sg\t \la b')\# h\t g\o\]
 The subalgebra $A\tens 1\tens H= A\# H$ has a left handed semidirect form for the action
 \[ x_i\la f=\del_i^R(f),\quad t_{ij}\la f= C^{-1}_{ij}(f),\quad \und t_{ij}\la f=C_{ji}(f)\]
 recovers the stated relations between $A$ and $H$. The subalgebra $1\tens A^{op}\#H$ has the structure of a right-handed semidirect product where the same left action is converted to a right action as $a\ra h:= Sh\la a$. In our case for $\und f\in A^{op}$, we have
 \[ \und f\bullet t_{ij}=t_{ik}\bullet (S t_{kj}\la \und f)=t_{ik}\bullet (\und t_{jk}\la \und f)= t_{ik}\bullet \und{C_{kj}(f)}\]
 which is equivalent to the required relation when applied to $C^{-1}_{jm}(f)$ in place of $f$ and summed over $j$. Similarly
 \[ \und f\bullet \und t_{ij}=t_{ik}\bullet (S \und t_{kj}\la \und f)=t_{ik}\bullet (t_{jk}\la \und f)= t_{ik}\bullet \und{C^{-1}_{jk}(f)}\]
 is equivalent to the required relation when applied to $C_{mj}(f)$ and summed over $j$, provided we are in the pivotal case where we assumed that $(C^T)^{-1}=(C^{-1})^T$. Finally, we have
 \[ \und f\bullet x_i=x_i\bullet \und f+ t_{ij}\bullet (Sx_j\la \und f)=x_i\bullet\und f- t_{ij}\bullet \und{C_{jk}(\del_k^R(f))}\]
 which gives the required commutation relations on using the $\und f, t_{ij}$ relations already proven. This gives the algebra structure of $\CI\CI\CL(\Omega^1)$. One can then check that bialgebroid structures match up as well.
 \endproof

 This is a special case of an Ehresmann-Schauenburg Hopf algebroid $\CL(A\#H,H)\cong A^e\#H$ as explained in \cite[Lem.~5.3]{HM22} for a cleft extension or `trivial bundle' quantum principal bundle, but in our case such that the cleaving map is an algebra map so that there is no cocycle. There is a similar construction for an extension $\widetilde{\CI\CL(\Omega^1)}$ where we keep on adjoining matrix generators $t^{(n+1)}_{ij}=St^{(n)}_{ji}$, where $t^{(0)}_{ij}=t_{ij}$ so that $t^{(1)}_{ij}=\und t_{ij}$.  These have matrix coalgebra $\Delta t^{(n)}_{ij}=t^{(n)}_{ik}\tens t^{(n)}_{kj}$, $\varepsilon(t^{(n)}_{ij})=\delta_{ij}$. The action on $A$ in the $A^e\#H$ construction is again defined iteratively,
 \[  t^{(n)}_{ij}\la f= C^{(n)}_{ij}(f),\quad  C^{(n+1)}= ({C^{(n)}}^{-1})^T,\quad C^{(0)}=C^{-1},\quad C^{(1)}=C^T. \]
 This no longer requires the pivotal construction but is infinitely generated. However, the geometric significance of the action of the higher generators is unclear.

 In the $*$-algebra and $*$-calculus case over $\C$,  if we suppose that $\omega_i^*=\omega_i$ then $x_i^*=y_i$ so that $t_{ij}^*=\und t_{ji}$ from the above $\circledast$.  Then in the pivotal case the above $H$ becomes a {\em flip} Hopf $*$-algebra as in \cite{BM} (where $\Delta$ commutes with $*$ with an extra flip). The axioms for this are such that $S\circ *=*\circ S$. Thus, the $*$-Hopf algebroid pair based on $\CI\CI\CL(\Omega^1)$ is different from the $*$-Hopf algebroid pairs that we obtained on the Ehresmann-Schauenburg Hopf algebroid pair in Proposition~\ref{propES}.

\begin{example}(Integer line graph) \rm
Let $A=\C(\Z)$ with $\Omega^1$ corresponding to the integer line graph. Using the group structure of $\Z$ there is a natural left basis of left-invariant forms which  $\omega_1=e^+,\omega_{-1}=e^-$ in the notations of \cite{BM}. We denote the dual basis of $\cX^R$ by $x_1,x_{-1}$, so that $\ev^L(x_i,\omega_j)=\delta_{ij} $ and $ \coev^L(1)=\omega_i\tens x_i$.
 The bimodule structure on $\Omega^1$ and $\cX^R$ is given by
\[
\omega_if=R_i(f)\, \omega_i,\quad x_if=R_{-i}(f)\, x_i
\]
for $f\in A$ with $R_{\pm1}(f)(i):=f(i\pm 1)$. We define
\[\extd f=(\partial_{1} f)\,\omega_{1} +(\partial_{-1} f)\, \omega_{-1},\]
where $\partial_{\pm1}:=R_{\pm1}-\id$. This is a $*$-calculus with $\omega_i^*=-\omega_{-i}$ when we work over $\C$. Then  $x_i^*=-x_{-i}$ and $y_i^*=-y_{-i}$ and $t_{ij}^*=\und t_{-j-i}=S t_{-i-j}$ which again gives a flip Hopf $*$-algebra.  Here, $C_{ij}(f)=\delta_{ij}R_{i}(f)$, which obeys the condition as assumed for a pivotal structure so that Proposition~\ref{propGhoES} for the structure of $\CI\CI\CL(\Omega^1)$ applies. \end{example}

Another well-studied quantum Riemannian geometry is the fuzzy sphere.

\begin{example} (Fuzzy sphere) \rm
Let $\lambda\ne \pm 1,0$ be a real parameter. The `fuzzy sphere' algebra $A=\C_\lambda[S^2]$ is generated by three generators $z^i, i=1,2,3$, subject to the relations
\[[z^i, z^j]=2\lambda \varepsilon_{ijk}z^k,\quad \sum_{i}(z^i)^2=1-\lambda^2\]
where $\eps_{123}=1$ is the totally antisymmetric symbol, repeated indices will be summed and $[\ ,\ ]$ denotes the commutator. In these conventions, when $\lambda=1/n$ for $n=1,2,\cdots$, there is a natural quotient which is isomorphic to $M_n(\C)$ viewed as matrix fuzzy spheres. The standard (rotationally invariant but 3-dimensional) $\Omega^1$ has central basis $\omega_i=\omega_i^*$ (denoted $s^i$ in \cite{BM}) and exterior derivative
\[\extd f=(\partial_{i} f)\omega_i,\quad  \partial_i(f):=\frac{1}{2\lambda}[z^i, f]\]
for any $f\in A$. This is a $*$-calculus with $\omega_i^*=\omega_i$ and hence $x_i^*=y_i$. Here $C_{ij}(f)=f$ so we are even more simply in the pivotal case where the above results apply and again $H$ is a flip Hopf $*$-algebra.
\end{example}

\end{document}